\newcommand{\+}{\protect\nobreakdash-}
\newcommand{\<}{\protect\nobreakdash--}
\renewcommand{\:}{\colon}
\newcommand{\rarrow}{\longrightarrow}
\newcommand{\ot}{\otimes}
\newcommand{\bec}{\natural}
\newcommand{\dd}{\partial}
\DeclareFontFamily{U}{mathb}{\hyphenchar\font45}
\DeclareFontShape{U}{mathb}{m}{n}{
      <5> <6> <7> <8> <9> <10> gen * mathb
      <10.95> mathb10 <12> <14.4> <17.28> <20.74> <24.88> mathb12
      }{}
\DeclareSymbolFont{mathb}{U}{mathb}{m}{n}
\DeclareMathSymbol{\blackdiamond}{0}{mathb}{"0C}
\newcommand{\bu}{{\text{\smaller\smaller$\scriptstyle\bullet$}}}
\newcommand{\cu}{{\text{\smaller$\scriptstyle\blackdiamond$}}}
\newcommand{\lrarrow}{\mskip.5\thinmuskip\relbar\joinrel\relbar\joinrel
 \rightarrow\mskip.5\thinmuskip\relax}
\DeclareMathOperator{\Hom}{Hom}
\DeclareMathOperator{\cHom}{\mathcal H\mathit{om}}
\DeclareMathOperator{\Ext}{Ext}
\DeclareMathOperator{\Tot}{Tot}
\DeclareMathOperator{\Id}{Id}
\DeclareMathOperator{\id}{id}
\DeclareMathOperator{\cone}{cone}
\DeclareMathOperator{\Fil}{\mathsf{Fil}}
\newcommand{\Sets}{\mathsf{Sets}}
\newcommand{\Ab}{\mathsf{Ab}}
\newcommand{\sA}{\mathsf A}
\newcommand{\sB}{\mathsf B}
\newcommand{\sC}{\mathsf C}
\newcommand{\sD}{\mathsf D}
\newcommand{\sE}{\mathsf E}
\newcommand{\sF}{\mathsf F}
\newcommand{\sG}{\mathsf G}
\newcommand{\sH}{\mathsf H}
\newcommand{\sK}{\mathsf K}
\newcommand{\sL}{\mathsf L}
\newcommand{\sP}{\mathsf P}
\newcommand{\sR}{\mathsf R}
\newcommand{\sS}{\mathsf S}
\newcommand{\sT}{\mathsf T}
\newcommand{\sW}{\mathsf W}
\newcommand{\sX}{\mathsf X}
\newcommand{\sY}{\mathsf Y}
\newcommand{\sZ}{\mathsf Z}
\newcommand{\bA}{\mathbf A}
\newcommand{\bB}{\mathbf B}
\newcommand{\bC}{\mathbf C}
\newcommand{\bE}{\mathbf E}
\newcommand{\bF}{\mathbf F}
\newcommand{\biJ}{\boldsymbol J}
\newcommand{\biL}{\boldsymbol L}
\newcommand{\biM}{\boldsymbol M}
\newcommand{\biN}{\boldsymbol N}
\newcommand{\biP}{\boldsymbol P}
\newcommand{\biR}{\boldsymbol R}
\newcommand{\cC}{\mathcal C}
\newcommand{\cE}{\mathcal E}
\newcommand{\cG}{\mathcal G}
\newcommand{\cH}{\mathcal H}
\newcommand{\cK}{\mathcal K}
\newcommand{\cL}{\mathcal L}
\newcommand{\cP}{\mathcal P}
\newcommand{\cR}{\mathcal R}
\newcommand{\cS}{\mathcal S}
\newcommand{\cW}{\mathcal W}
\newcommand{\cZ}{\mathcal Z}
\newcommand{\Mono}{{\operatorname{-\mathcal M\mathit{ono}}}}
\newcommand{\Epi}{{\operatorname{-\mathcal E\mathit{pi}}}}
\newcommand{\Cof}{\mathcal C\mathit{of}}
\newcommand{\Cell}{\mathcal C\mathit{ell}}
\newcommand{\boZ}{\mathbb Z}
\newcommand{\boI}{\mathbb I}
\newcommand{\sModl}{{\operatorname{\mathsf{--Mod}}}}
\newcommand{\cModl}{{\operatorname{\mathcal{--M}
 \mskip-.7\thinmuskip\mathit{od}}}}
\newcommand{\bModl}{{\operatorname{\mathbf{--Mod}}}}
\newcommand{\sqcoh}{{\operatorname{\mathsf{--qcoh}}}}
\newcommand{\rQcoh}{{\operatorname{\mathrm{--Qcoh}}}}
\newcommand{\rcoh}{{\operatorname{\mathrm{--coh}}}}
\newcommand{\bb}{{\mathsf{b}}}
\newcommand{\abs}{{\mathsf{abs}}}
\newcommand{\bco}{{\mathsf{bco}}}
\newcommand{\bctr}{{\mathsf{bctr}}}
\newcommand{\proj}{{\mathsf{proj}}}
\newcommand{\inj}{{\mathsf{inj}}}
\newcommand{\fp}{{\mathsf{fp}}}
\newcommand{\ac}{{\mathsf{ac}}}
\newcommand{\qc}{{\mathrm{qc}}}
\newcommand{\coh}{{\mathrm{coh}}}
\newcommand{\bproj}{{\mathbf{proj}}}
\newcommand{\binj}{{\mathbf{inj}}}
\newcommand{\bfp}{{\mathbf{fp}}}
\newcommand\hathatRbu{\widehat{\widehat\biR}
 {\vphantom{\widehat\biR^t}}^\bu}
\newcommand{\Section}[1]{\bigskip\section{#1}\medskip}
\theoremstyle{plain}
\newtheorem{thm}{Theorem}[section]
\newtheorem{lem}[thm]{Lemma}
\newtheorem{prop}[thm]{Proposition}
\newtheorem{cor}[thm]{Corollary}
\theoremstyle{definition}
\newtheorem{ex}[thm]{Example}
\newtheorem{exs}[thm]{Examples}
\newtheorem{rem}[thm]{Remark}
\begin{document}

\title{Coderived and contraderived categories \\
of locally presentable abelian DG-categories}

\author{Leonid Positselski}

\address{Leonid Positselski, Institute of Mathematics, Czech Academy
of Sciences \\ \v Zitn\'a~25, 115~67 Praha~1 \\ Czech Republic}

\email{positselski@math.cas.cz}

\author{Jan \v S\v tov\'\i\v cek}

\address{Jan {\v S}{\v{t}}ov{\'{\i}}{\v{c}}ek, Charles University,
Faculty of Mathematics and Physics, Department of Algebra,
Sokolovsk\'a 83, 186 75 Praha, Czech Republic}

\email{stovicek@karlin.mff.cuni.cz}

\begin{abstract}
 The concept of an abelian DG\+category, introduced by the first-named
author in~\cite{Pedg}, unites the notions of abelian categories and
(curved) DG\+modules in a common framework.
 In this paper we consider coderived and contraderived categories in
the sense of Becker.
 Generalizing some constructions and results from the preceding papers
by Becker~\cite{Bec} and by the present authors~\cite{PS4}, we define
the contraderived category of a locally presentable abelian
DG\+category $\bB$ with enough projective objects and the coderived
category of a Grothendieck abelian DG\+category~$\bA$.
 We construct the related abelian model category structures and show
that the resulting exotic derived categories are well-generated.
 Then we specialize to the case of a locally coherent Grothendieck
abelian DG\+category $\bA$, and prove that its coderived category is
compactly generated by the absolute derived category of finitely
presentable objects of~$\bA$, thus generalizing a result from
the second-named author's preprint~\cite{Sto2}.
 In particular, the homotopy category of graded-injective left
DG\+modules over a DG\+ring with a left coherent underlying graded ring
is compactly generated by the absolute derived category of DG\+modules
with finitely presentable underlying graded modules.
 We also describe compact generators of the coderived categories of
quasi-coherent matrix factorizations over coherent schemes.
\end{abstract}

\maketitle

\tableofcontents

\section*{Introduction}
\medskip

 To any abelian category\/ $\sE$, one can assign its unbounded derived
category $\sD(\sE)$.
 To any DG\+ring $\biR^\bu=(R^*,d)$, one can assign the derived category
$\sD(\biR^\bu\bModl)$ of DG\+modules over~$\biR^\bu$.
 Is there a natural common generalization of these two constructions?
 In this paper, following a preceding paper~\cite{Pedg}, we suggest
a framework providing an answer to this question, but there is a caveat.
 We consider \emph{derived categories of the second kind} instead of
the conventional derived categories
(the terminology is inspired by related constructions of derived functors in~\cite{HMS74}).

 Simply put, this means that the homotopy category
$\sH^0(\bC(\sE_\inj))$ of unbounded complexes of injective objects in
$\sE$ and the homotopy category $\sH^0(\bC(\sE_\proj))$ of unbounded
complexes of projective objects in $\sE$ are our two versions of
the derived category of~$\sE$.
 Similarly, the homotopy category $\sH^0(\biR^\bu\bModl_\binj)$ of
DG\+modules $\biJ^\bu=(J^*,d_J)$ over $\biR^\bu$ with injective
underlying graded $R^*$\+modules $J^*$ and the homotopy category
$\sH^0(\biR^\bu\bModl_\bproj)$ of DG\+modules $\biP^\bu=(P^*,d_P)$ over
$\biR^\bu$ with projective underlying graded $R^*$\+modules $P^*$ are
our two versions of the derived category of DG\+modules over~$\biR^\bu$.

Derived categories of the second kind were, in fact, recently
studied in numerous contexts under various names. In connection with
representation theory of finite groups and commutative algebra, they
were studied by J\o rgensen~\cite{Jor} and Krause~\cite{Kra}. They
also naturally appear in the context of Grothendieck duality in the
work of Krause and Iyengar~\cite{IK06} and Gaitsgory
and Rozenblyum~\cite{Gaitsgory,GR}
(the derived ind-completion of the bounded derived category of
coherent sheaves on a nice enough scheme $X$ is equivalent to
$\sH^0(\bC(\sE_\inj))$ for $\sE=X\sqcoh$, essentially by~\cite{Kra}).
Lurie studied their universal properties under the name
``unseparated derived category'' \cite[Appendix C.5]{Lur-SAG}.
They play an essential role in understanding the Koszul duality
(see e.~g.\ \cite{LH03,Pkoszul,Pksurv}).
Last but not least, Becker~\cite{Bec} significantly contributed to 
the theory of derived categories of the second kind for curved
DG\+modules, motivated by the special case of 
categories of matrix factorizations
(whose theory, going back to~\cite{Eisenbud}, requires derived
categories of the second kind for its full development, as
shown in~\cite{Or,EP}).

We will use the terminology from the paper by Becker~\cite{Bec} and 
name the homotopy category of unbounded complexes of injective
objects or the homotopy category of graded-injective DG\+modules
the \emph{coderived category in the sense of Becker}.
 Dually, we call the homotopy category of unbounded complexes
of projective objects or the homotopy category of graded-projective
DG\+modules the \emph{contraderived category in the sense of Becker}.
 To be precise, the definitions of Becker's contraderived and coderived
categories in Sections~\ref{becker-contraderived-subsecn}
and~\ref{becker-coderived-subsecn} are spelled out in a more fancy way:
these derived categories of the second kind are defined as certain
Verdier quotient categories of the homotopy category of arbitrary
complexes or (curved) DG\+modules, rather than subcategories.
 But then Corollaries~\ref{becker-contraderived-corollary}
and~\ref{becker-coderived-corollary} tell that, in particular, in
the case of DG\+modules, the Becker contraderived and coderived
categories as defined in Sections~\ref{becker-contraderived-subsecn}
and~\ref{becker-coderived-subsecn} agree with the homotopy categories
of graded-projective and graded-injective DG\+modules, respectively.

\smallskip


Let us now explain what we believe is a main contribution of the 
present paper. The second-named author proved the following
fact in~\cite{Sto2} (which was one of the main results there):

\begin{thm}[{\cite[Corollary~6.13]{Sto2}}] \label{locally-coherent-abelian-theorem}
 Let\/ $\sA$ be a locally coherent Grothendieck abelian category and\/
$\sH^0(\bC(\sA_\inj))$ be the homotopy category of unbounded complexes
of injective objects in\/~$\sA$.
 Then\/ $\sH^0(\bC(\sA_\inj))$ is a compactly generated triangulated
category.
 The full subcategory of compact objects in\/ $\sH^0(\bC(\sA_\inj))$
is equivalent to the bounded derived category\/ $\sD^\bb(\sA_\fp)$
of the abelian category of finitely presentable objects in\/~$\sA$.
\end{thm}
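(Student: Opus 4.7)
The plan is to verify Neeman's compact generation theorem for $\sH^0(\bC(\sA_\inj))$ using the finitely presentable objects of $\sA$ (viewed as stalk complexes in degree zero) as candidate compact generators, and then identify the subcategory of compact objects with $\sD^\bb(\sA_\fp)$ via the standard Neeman characterization of compacts as the thick-subcategory closure of a generating set.

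For compactness, the key observation is that in a locally coherent Grothendieck category the class of injective objects is closed under filtered colimits; hence arbitrary coproducts of injectives, being filtered colimits of their finite subsums, are again injective. Consequently, coproducts in $\sH^0(\bC(\sA_\inj))$ are formed termwise, coinciding with those in $\bC(\sA)$. For $M\in\sA_\fp$, the functor $\Hom_\sA(M,-)$ commutes with coproducts by the very definition of finite presentability, so $\Hom_\sA(M,\bigoplus_\alpha J_\alpha^\bu)\cong\bigoplus_\alpha \Hom_\sA(M,J_\alpha^\bu)$ as complexes of abelian groups, and passing to $H^n$ yields compactness of each $M\in\sA_\fp$ in $\sH^0(\bC(\sA_\inj))$.

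The central and most delicate step is to show that this set generates, i.e., that any $J^\bu\in\bC(\sA_\inj)$ satisfying $\Hom_{\sH^0}(M,J^\bu[n])=0$ for all $M\in\sA_\fp$ and $n\in\boZ$ is contractible. Rewriting the hypothesis as $H^n\Hom_\sA(M,J^\bu)=0$, I would first deduce acyclicity: the vanishing at level $n$ means every morphism $M\to Z^n(J^\bu)$ from $M\in\sA_\fp$ is null-homotopic and hence has image contained in $B^n(J^\bu)$. Writing $Z^n(J^\bu)=\varinjlim M_i$ as a filtered colimit of finitely presentable objects and observing that $Z^n(J^\bu)$ is the join of the images of the morphisms $M_i\to Z^n(J^\bu)$ (a consequence of the colimit map $\bigoplus_i M_i\twoheadrightarrow Z^n(J^\bu)$ being an epimorphism), we obtain $Z^n(J^\bu)\subseteq B^n(J^\bu)$, so $B^n(J^\bu)=Z^n(J^\bu)$. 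With acyclicity in hand, the short exact sequences $0\to Z^n\to J^n\to Z^{n+1}\to 0$ together with the long exact Ext sequence translate the remaining vanishing into $\Ext^1_\sA(M,Z^n)=0$ for all $M\in\sA_\fp$ and $n$; each cocycle $Z^n$ is therefore fp-injective (absolutely pure). Each such sequence is then pure-exact, since fp-injectivity of $Z^n$ makes $\Hom_\sA(M,J^n)\to\Hom_\sA(M,Z^{n+1})$ surjective for every $M\in\sA_\fp$. The last move—producing splittings of these pure-exact sequences, and thereby a contracting homotopy for $J^\bu$—relies essentially on the locally coherent structure, combining the pure-injectivity of the injective objects $J^n$ with deconstructibility-style arguments for the cocycles $Z^n$.

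For the identification of compacts with $\sD^\bb(\sA_\fp)$, I would verify that the natural functor $\sD^\bb(\sA_\fp)\to\sH^0(\bC(\sA_\inj))$, obtained by passing to bounded-below injective resolutions, is fully faithful: it factors as $\sD^\bb(\sA_\fp)\to\sD^+(\sA)\to\sH^0(\bC(\sA_\inj))$, the first functor being fully faithful because $\sA_\fp$ is an extension-closed abelian subcategory of $\sA$ (by local coherence), and the second because bounded-below complexes of injectives are K-injective in any Grothendieck category. Its essential image lies in the compacts by the first step, and by Neeman's theorem the compacts of $\sH^0(\bC(\sA_\inj))$ are the idempotent closure of the thick subcategory generated by $\sA_\fp$, which is exactly $\sD^\bb(\sA_\fp)$. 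The main obstacle is the splitting step in the generation argument: in a locally coherent but non-Noetherian Grothendieck category, fp-injective objects need not be injective, so the splittings cannot be obtained by a naive injectivity argument, and the proof must genuinely exploit the purity-theoretic features of the locally coherent setting.
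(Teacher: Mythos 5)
Your overall strategy is the purity-theoretic one of~\cite{Sto2} rather than the model-categorical route taken in this paper, but as written it fails at its first key step. The claim that in a locally coherent Grothendieck category the injective objects are closed under filtered colimits is false: by the classical Bass--Papp theorem, closure of injectives under coproducts characterizes the locally \emph{Noetherian} case, and over a coherent non-Noetherian ring (e.g., $k[x_1,x_2,\dots]$) there are coproducts of injective modules that are merely fp\+injective. Consequently, coproducts in $\sH^0(\bC(\sA_\inj))$ are \emph{not} computed termwise: the termwise coproduct leaves the subcategory of complexes of injectives, and the actual coproduct is obtained via the equivalence $\sH^0(\bC(\sA_\inj))\simeq\sD^\bco(\bC(\sA))$, i.e., as the termwise coproduct followed by a special graded-injective preenvelope. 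Your isomorphism $\Hom_\sA(M,\bigoplus_\alpha J_\alpha^\bu)\simeq\bigoplus_\alpha\Hom_\sA(M,J_\alpha^\bu)$ therefore proves compactness of $M\in\sA_\fp$ only in $\sH^0(\bC(\sA))$, not in the homotopy category of injectives; and since passing from weak generation to generation also requires compactness (or well-generatedness), the whole plan is blocked at this point. This is exactly where the locally coherent case diverges from Krause's locally Noetherian theorem, and it is what the machinery of this paper is designed for: finitely presentable objects are compact in $\sH^0(\bC(\sA))$, and compactness descends along the Verdier quotient by the Becker-coacyclics because every coacyclic complex is a directed colimit of absolutely acyclic bounded complexes of finitely presentable objects, equivalently is approachable from them (Propositions~\ref{coacyclics-as-varinjlim-of-abs-acyclics} and~\ref{becker-coacyclics-approachable-prop} combined with Lemma~\ref{approachability-and-compactness}). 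Without a substitute for this mechanism, your candidate generators are not known to be compact.

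The contractibility step is a second genuine gap, which you point at without closing. Your reductions are correct (acyclicity; fp\+injectivity of the cycles $Z^n$; purity of the sequences $0\rarrow Z^n\rarrow J^n\rarrow Z^{n+1}\rarrow0$), but a pure monomorphism into a pure-injective object splits only when the \emph{subobject} is pure-injective, and over a non-Noetherian coherent ring fp\+injective objects are in general neither injective nor pure-injective; so ``pure-injectivity of $J^n$ plus deconstructibility-style arguments'' names the difficulty rather than resolving it. The statement you need---that an acyclic complex of injectives with fp\+injective cycles is contractible---is the central theorem of~\cite{Sto2} and takes substantial purity theory to establish. Note that the present paper avoids it altogether and argues formally: given a graded-injective $J$ receiving no maps from bounded complexes of finitely presentable objects, the class $\sY={}^{\perp_1}\{J[n]\mid n\in\boZ\}$ is shown to be closed under extensions, kernels of epimorphisms, cokernels of monomorphisms, and (by Lemma~\ref{eklof-lemma} together with Proposition~\ref{directed-colimits-from-transfinite-extensions}) directed colimits, hence equals the whole category; so $J$ is coacyclic as well as graded-injective, and therefore contractible (Theorem~\ref{generators-of-coderived-category}). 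Finally, a minor point on the identification of compacts: Neeman's theorem gives the thick closure of your generators, i.e., $\sD^\bb(\sA_\fp)$ only up to direct summands; to obtain the equivalence on the nose you must add that $\sD^\bb(\sA_\fp)$ is idempotent complete (Balmer--Schlichting).
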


 This was already a substantial generalization
of Krause's~\cite[Theorem 1.1(2)]{Kra}, where the same result
was obtained for $\sA$ locally Noetherian.
Here we go further and prove
the following generalization of
Theorem~\ref{locally-coherent-abelian-theorem}, where by a locally
coherent abelian DG\+category, we mean
a DG\+category $\bA$ with finite direct sums, shifts and cones
whose underlying additive category $\sZ^0(\bA)$ is a locally
coherent Grothendieck category.

\begin{thm}[Theorem~\ref{becker-coderived-compactly-generated} below]
\label{locally-coherent-abelian-DG-category-theorem}
 Let\/ $\bA$ be a locally coherent abelian DG\+category and\/
$\sD^\bco(\bA)=\sH^0(\bA_\binj)$ be its Becker's coderived category,
that is,
the homotopy category of graded-injective objects in\/~$\bA$.
 Then\/ $\sD^\bco(\bA)$ is a compactly generated triangulated
category.
 Up to adjoining direct summands, the full subcategory of compact
objects in\/ $\sD^\bco(\bA)$ is equivalent to the absolute
derived category\/ $\sD^\abs(\bA_\bfp)$ of the abelian DG\+category
of finitely presentable objects in\/~$\bA$.
\end{thm}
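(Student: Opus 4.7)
My plan is to adapt the non-DG argument of \cite[Corollary~6.13]{Sto2} to the abelian DG\+category setting, leveraging the abelian model structure presenting $\sD^\bco(\bA)$ constructed earlier in this paper.

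I would first construct a natural comparison functor $\Phi\: \sD^\abs(\bA_\bfp) \rarrow \sD^\bco(\bA)$. The tautological inclusion $\bA_\bfp \hookrightarrow \bA$ sends absolutely acyclic objects of $\bA_\bfp$, that is, totalizations of short exact sequences of finitely presentable objects, to coacyclic objects of $\bA$, and hence descends to such a $\Phi$. Full faithfulness of $\Phi$ should follow from the standard description of $\Hom_{\sD^\bco(\bA)}(X,-)$ for $X \in \bA_\bfp$ as DG\+homotopy classes of morphisms into a graded-injective resolution, matched with the analogous description of $\Hom_{\sD^\abs(\bA_\bfp)}(X,-)$.

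The main technical step is to show that each $X \in \bA_\bfp$ is compact in $\sD^\bco(\bA)$. Given a family of graded-injective objects $(Y_\alpha)$ in $\bA$, one has to verify that the canonical map $\bigoplus_\alpha \Hom_{\sD^\bco(\bA)}(X, Y_\alpha) \rarrow \Hom_{\sD^\bco(\bA)}\bigl(X, \coprod_\alpha Y_\alpha\bigr)$ is bijective. In the locally noetherian case, the Bass--Papp theorem guarantees that the coproduct $\coprod_\alpha Y_\alpha$ is again graded-injective, after which finite presentability of $X$ in $\sZ^0(\bA)$ concludes; under the weaker hypothesis of local coherence, coproducts of graded-injectives need not remain graded-injective, and the argument must be recast along the lines of \cite{Sto2}. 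Concretely, one exploits the fact that the cotorsion pair presenting the Becker model structure on $\bA$ is deconstructible and generated by objects of bounded presentability, so that the replacement needed to compute $\Hom_{\sD^\bco}(X,-)$ on a coproduct can be carried out compatibly with the direct sum structure.

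Generation is then established by the usual orthogonality argument: if $Y \in \bA_\binj$ satisfies $\Hom_{\sD^\bco(\bA)}(X[n], Y) = 0$ for all $X \in \bA_\bfp$ and all $n \in \boZ$, a transfinite filtration of $Y$ by subobjects with finitely presentable quotients, available from local coherence of $\sZ^0(\bA)$, forces $Y$ to be contractible. The identification of compact objects up to adjoining direct summands is then an application of Neeman's theorem, characterizing the compacts of any compactly generated triangulated category as the thick closure of a generating set of compact objects. The chief obstacle I anticipate is the compactness step, which requires importing the deconstructibility machinery of \cite{Sto2} into the abelian DG\+setting and verifying that it handles graded structure and differential simultaneously.
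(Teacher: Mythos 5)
There is a genuine gap, and it sits exactly at the point you pass over most quickly: full faithfulness of $\sD^\abs(\bA_\bfp)\rarrow\sD^\bco(\bA)$. Your claim that $\Hom_{\sD^\abs(\bA_\bfp)}(X,{-})$ admits a description ``analogous'' to computing $\Hom_{\sD^\bco(\bA)}(X,{-})$ via graded-injective resolutions is unfounded: the abelian DG\+category $\bA_\bfp$ has in general no graded-injective objects at all (a finitely presentable graded-injective is a rarity), and the absolute derived category is a Verdier quotient whose Hom groups are computed by a calculus of fractions, not by resolutions. Unwinding the fractions, matching the two Hom groups amounts precisely to showing that every morphism in $\sH^0(\bA)$ from an object of $\bA_\bfp$ to a Becker-coacyclic object factorizes through an object of $\sH^0(\bA_\bfp)_\ac^\abs$ --- the approachability property of Proposition~\ref{becker-coacyclics-approachable-prop}, which is the technical heart of the paper and appears nowhere in your sketch. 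Its proof is not formal: it rests on the closure of $\sZ^0(\bA)_\ac^\bco$ under directed colimits (Corollary~\ref{coacyclics-as-closure-extensions-directed-colimits}, itself requiring the canonical-presentation argument of Proposition~\ref{directed-colimits-from-transfinite-extensions}), on the identification $\sZ^0(\bA)_\ac^\bco=\varinjlim\bigl(\sZ^0(\bA_\bfp)_\ac^\abs\bigr)$ of Proposition~\ref{coacyclics-as-varinjlim-of-abs-acyclics}, and on the Lenzing-type criterion for $\varinjlim\sC$ together with the extension-closure of $\varinjlim\sC$ in locally coherent categories (Proposition~\ref{varinjlim-closed-under-extensions}).

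The compactness step suffers from the same omission. You correctly observe that in the coherent case coproducts of graded-injectives need not be graded-injective, but the proposed remedy --- that deconstructibility of the relevant cotorsion pair lets the fibrant replacement of a coproduct ``be carried out compatibly with the direct sum structure'' --- is not an argument: the fibrant replacement of $\coprod_\alpha Y_\alpha$ is not the coproduct of fibrant replacements, and deconstructibility alone gives no handle on $\Hom_{\sD^\bco(\bA)}\bigl(X,\coprod_\alpha Y_\alpha\bigr)$. Compactness of finitely presentable objects in $\sH^0(\bA)$ is trivial; the issue is that compactness need not survive a Verdier quotient, and in the paper both full faithfulness and compactness fall out simultaneously from a single statement (Lemma~\ref{approachability-and-compactness}) once approachability of the coacyclics is in place. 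So either you prove Proposition~\ref{becker-coacyclics-approachable-prop} (or genuinely transplant the purity machinery of~\cite{Sto2}, e.g.\ an fp-injectivity theory for abelian DG\+categories, which your proposal names but does not develop), or the two central steps remain unproved. Your generation step is essentially sound in outline --- with compact generators, weak generation suffices by Neeman --- but note that making the filtration argument precise is again Theorem~\ref{generators-of-coderived-category}, which needs the Eklof lemma identification $\sY={}^{\perp_1}\{J[n]\mid n\in\boZ\}$ and, once more, closure of that class under directed colimits via Proposition~\ref{directed-colimits-from-transfinite-extensions}; the final identification of compacts up to direct summands by Neeman's theorem is fine and agrees with the paper.
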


 We will explain details in the following paragraphs. 
At the moment, we just point out that when specializing to the
abelian DG\+categories of
curved DG\+modules, we obtain the following corollary,
which seems to be new as well.
 (The reader would not lose much by assuming further that $h=0$,
in which case $\biR^\bu=(R^*,d)$ is a DG\+ring and
the curved DG\+modules mentioned in the corollary are simply
the DG\+modules; but curved DG\+modules are a more natural context for
derived categories of the second kind.)

\begin{cor}[Corollary~\ref{CDG-modules-becker-coderived-comp-gen} below]
\label{graded-coherent-CDG-ring-cor}
 Let $\biR^\cu=(R^*,d,h)$ be a curved DG\+ring whose underlying graded
ring $R^*$ is graded left coherent (i.~e., all finitely generated
homogeneous left ideals in $R^*$ are finitely presentable).
 Then the homotopy category\/ $\sH^0(\biR^\cu\bModl_\binj)$ of left
curved DG\+modules over $\biR^\cu$ with injective underlying graded
$R^*$\+modules is compactly generated.
 Up to adjoining direct summands, the full subcategory of compact
objects in\/ $\sH^0(\biR^\cu\bModl_\binj)$ is equivalent to
the absolute derived category\/ $\sD^\abs(\biR^\cu\bModl_\bfp)$ of
left curved DG\+modules over $\biR^\cu$ with finitely presentable
underlying graded $R^*$\+modules.
\end{cor}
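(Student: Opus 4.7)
The plan is to derive the corollary from Theorem~\ref{locally-coherent-abelian-DG-category-theorem} applied to the abelian DG\+category $\bA = \biR^\cu\bModl$. What needs to be verified is that $\biR^\cu\bModl$ is a locally coherent abelian DG\+category, i.e., that the underlying abelian category $\sZ^0(\biR^\cu\bModl)$ is locally coherent Grothendieck, and that its finitely presentable objects are exactly the curved DG\+modules whose underlying graded $R^*$\+module is finitely presentable.

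First, $\sZ^0(\biR^\cu\bModl)$ is easily seen to be Grothendieck: the forgetful functor $U$ to graded $R^*$\+modules creates limits and colimits (with induced differentials), filtered colimits are therefore exact, and a generator is obtained by applying the left adjoint of $U$ to a generator of graded $R^*$\+modules. Importantly, $U$ has \emph{both} a left adjoint $L$ and a right adjoint $R$, each of which sends a graded module $Y$ to a CDG\+module with underlying graded module $Y \oplus Y[\pm 1]$ and a standard differential adjusted by $h$ so that $d^2 = h\cdot\id$; consequently, $R$ itself preserves filtered colimits. For f.p.\ objects, $U$ preserves and reflects finite presentability: the forward direction follows from $U$ preserving filtered colimits, while for $M$ f.p.\ in $\sZ^0$ the adjunction $U \dashv R$ together with $R$ preserving filtered colimits yields
\[
\Hom(U(M), \operatorname{colim}_i Y_i) \cong \operatorname{colim}_i \Hom(M, R(Y_i)) \cong \operatorname{colim}_i \Hom(U(M), Y_i),
\]
so that $U(M)$ is finitely presentable.

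To verify local coherence of $\sZ^0(\biR^\cu\bModl)$, observe that the graded coherence of $R^*$ makes the category of graded $R^*$\+modules locally coherent, so every graded module is a directed union of its finitely presentable graded submodules. Given a CDG\+module $M$ and a finitely generated graded submodule $N \subseteq M$, the CDG\+submodule $N + d_M(N) \subseteq M$ is still finitely generated as a graded module, hence is contained in some finitely presentable graded submodule of $M$, and hence is itself finitely presentable as a graded module (being a finitely generated submodule of a finitely presentable module over a coherent ring). This exhibits $M$ as a filtered colimit in $\sZ^0(\biR^\cu\bModl)$ of CDG\+submodules with finitely presentable underlying graded module. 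Since kernels and cokernels are computed on underlying graded modules, and graded f.p.\ modules form an abelian subcategory by graded coherence, the subcategory of CDG\+modules with finitely presentable underlying graded module is closed under kernels and cokernels. Hence $\sZ^0(\biR^\cu\bModl)$ is locally coherent.

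With $\biR^\cu\bModl$ established as a locally coherent abelian DG\+category whose finitely presentable objects are precisely $\biR^\cu\bModl_\bfp$, Theorem~\ref{locally-coherent-abelian-DG-category-theorem} applies verbatim and yields the corollary. The main technical hurdle is the local coherence verification, specifically the observation that $N + d_M(N)$ remains finitely generated over $R^*$ and sits inside a finitely presentable graded submodule; after this, no substantive new argument is required.
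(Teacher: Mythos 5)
Your overall route is the same as the paper's: specialize the general theorem (Theorem~\ref{locally-coherent-abelian-DG-category-theorem}, i.~e., Theorem~\ref{becker-coderived-compactly-generated}) to $\bA=\biR^\cu\bModl$, after identifying the finitely presentable objects of $\sZ^0(\biR^\cu\bModl)$ with the CDG\+modules whose underlying graded $R^*$\+module is finitely presentable; your adjunction argument (left adjoint $U$ whose right adjoint preserves filtered colimits) is exactly the mechanism of Lemma~\ref{Phi-Psi-lambda-presentability}, packaged in the paper as Corollary~\ref{finitely-presentable-CDG-modules-coherent-CDG-rings}(a). However, your verification of local coherence contains a genuine error. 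You assert that, by local coherence of $R^*\sModl$, ``every graded module is a directed union of its finitely presentable graded submodules,'' and use this to embed the finitely generated CDG\+submodule $N+d_M(N)$ into a finitely presentable graded submodule of $M$. This is false: in a locally coherent category every object is a directed colimit of finitely presentable objects, but the transition maps need not be monomorphisms, and an object need not contain any nonzero finitely presentable subobject. Concretely, take $R^*=k[x_1,x_2,\dots]$ concentrated in degree~$0$ (a coherent ring), $d=0$, $h=0$, and $M=R^*/(x_1,x_2,\dots)\simeq k$ with zero differential: the only nonzero graded submodule of $M$ is $M$ itself, and $M$ is not finitely presentable, so $M$ is not a directed union of finitely presentable submodules. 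For the same reason your intermediate conclusion---that every CDG\+module is a filtered colimit of CDG\+submodules with finitely presentable underlying graded module---is false as stated, not merely unproven.

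Fortunately, this step is unnecessary: local coherence does not require directed unions of finitely presentable subobjects, but rather a generating set of coherent objects, i.~e., finitely presentable objects all of whose finitely generated subobjects are finitely presentable. Your own computation does the real work, just in the wrong place. A subobject $N\subseteq M$ in $\sZ^0(\biR^\cu\bModl)$ is finitely generated if and only if its underlying graded module is finitely generated over $R^*$: it is generated over $R^*$ by the chosen generators $x_i$ together with $d_M(x_i)$, which is precisely your observation that $N+d_M(N)$ is a finitely generated CDG\+submodule (using $d_M^2(x)=hx$ and the Leibniz rule). Hence, if $M$ is finitely presentable in $\sZ^0(\biR^\cu\bModl)$, so that $M^*$ is finitely presentable over the graded coherent ring $R^*$, every finitely generated CDG\+subobject of $M$ has finitely presentable underlying graded module and is therefore finitely presentable in $\sZ^0(\biR^\cu\bModl)$; a generating set of such coherent objects is given by the shifts $G^+(R^*[n])$, \,$n\in\boZ$, of the free module. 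This is in substance the paper's proof that $R^*[\delta]$ is graded left coherent if and only if $R^*$ is (Corollary~\ref{finitely-presentable-CDG-modules-coherent-CDG-rings}(b), via Proposition~\ref{locally-coherent-DG-category}); note also that the paper's definition of a locally coherent abelian DG\+category asks for $\sZ^0(\bA^\bec)\simeq R^*\sModl$ to be locally coherent as well, which here is immediate from the hypothesis on~$R^*$. With local coherence repaired in this way, the rest of your reduction to Theorem~\ref{locally-coherent-abelian-DG-category-theorem} goes through.
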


 We also work out the following application of
Theorem~\ref{locally-coherent-abelian-DG-category-theorem}
to quasi-coherent matrix factorizations on coherent schemes,
generalizing the previously known result for Noetherian
schemes~\cite[Proposition~1.5(d) and Corollary~2.3(l)]{EP}.
If $X$ is affine and regular and $L=O_X$, the absolute derived category below is none other than the homotopy category of classical matrix factorizations with projective components (there, compact generators were studied in detail in special cases e.g.\ in \cite{Dyc11}).

\begin{cor}[Corollary~\ref{qcoh-factorizations-coderived-comp-gen}
below] \label{matrix-factorizations-cor}
 Let $X$ be coherent scheme (i.~e., $X$ is quasi-compact,
quasi-separated, and locally coherent).
 Let $L$ be a line bundle (invertible sheaf) on $X$ and $w\in L(X)$ be
a global section.
 Then the homotopy category\/ $\sH^0(\bF_\qc(X,L,w)_\binj)$ of injective
quasi-coherent factorizations of the potential~$w$ on $X$ is compactly
generated.
 Up to adjoining direct summands, the full subcategory of compact
objects in\/ $\sH^0(\bF_\qc(X,L,w)_\binj)$ is equivalent to the absolute
derived category of coherent factorizations\/
$\sD^\abs(\bF_\coh(X,L,w))$.
\end{cor}

Along with developing general theory of the derived categories 
of the second kind for abelian DG\+categories,
we also clarify certain rather basic aspects of their construction.
For example, the ordinary derived
category $\sD(\sE)$ of an abelian category $\sE$ is usually
constructed as a Verdier quotient
$\sH^0(\bC(\sE))/\sH^0(\bC(\sE))_\ac$ of the homotopy category of
complexes modulo the full subcategory of acyclic complexes. It has 
been known for some time \cite{Gil,PS4}
that one can construct Becker's coderived category
of a Grothendieck abelian category $\sA$ in 
the same way as $\sH^0(\bC(\sA))/\sH^0(\bC(\sA))_\ac^\bco$.
However, the corresponding  subcategory $\sH^0(\bC(\sA))_\ac^\bco$
of the so-called \emph{Becker-coacyclic objects} did not have an easy
description.
Here we prove in
Corollary~\ref{coacyclics-as-closure-extensions-directed-colimits}
(in fact, in a much more general setting) that the class of
Becker-coacyclic objects viewed in the Grothendieck category of 
complexes $\sZ^0(\bC(\sA))$ is given precisely as the closure
of the class of contractible complexes under extensions and directed
colimits. If we turn back to the locally coherent case, we obtain
the following consequence.

\begin{cor}
 Let\/ $\sA$ be a locally coherent Grothendieck abelian category.
Then the class of Becker-coacyclic objects coincides in
$\sZ^0(\bC(\sA))$ with the direct limit closure of
the subcategory
of acyclic bounded complexes of finitely presentable objects.
\end{cor}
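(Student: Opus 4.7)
The plan is to deduce the corollary from Corollary~\ref{coacyclics-as-closure-extensions-directed-colimits}, which characterizes the Becker-coacyclic objects in $\sZ^0(\bC(\sA))$ as the closure of contractible complexes under extensions and directed colimits. Let $\sS\subset\sZ^0(\bC(\sA))$ denote the class of acyclic bounded complexes of finitely presentable objects of $\sA$, and write $\varinjlim\sS$ for its closure under directed colimits. For the easy inclusion $\varinjlim\sS\subseteq\sZ^0(\bC(\sA))_\ac^\bco$, every complex in $\sS$ is Becker-coacyclic: any bounded acyclic complex admits, via canonical good truncation, a finite filtration by subcomplexes whose successive quotients are two-term contractible complexes $B^i \xrightarrow{\id} B^i$, and the class of Becker-coacyclic objects is closed under extensions (and contains contractibles); directed colimits then preserve Becker-coacyclicity.

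For the opposite inclusion, Corollary~\ref{coacyclics-as-closure-extensions-directed-colimits} reduces the problem to verifying that $\varinjlim\sS$ contains every contractible complex and is closed under extensions. For the first point, any contractible complex decomposes in $\sZ^0(\bC(\sA))$ as a (possibly infinite) direct sum of two-term contractibles of the form $X\xrightarrow{\id}X$, via a chosen contracting homotopy and the existence of arbitrary coproducts. Since $\sA$ is locally coherent, each such $X$ is a directed colimit of finitely presentable objects $X_\alpha$, so $X\xrightarrow{\id}X$ is the directed colimit of the $X_\alpha\xrightarrow{\id}X_\alpha\in\sS$; combining this with the directed colimit over finite subcoproducts (using closure of $\sS$ under finite direct sums) writes the full contractible as a directed colimit of objects of $\sS$.

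The core step, which I expect to be the main obstacle, is the closure of $\varinjlim\sS$ under extensions. Here I would use that $\sZ^0(\bC(\sA))$ is itself a locally coherent Grothendieck category whose finitely presentable objects are precisely the bounded complexes of finitely presentable objects of $\sA$. Within this finitely presentable subcategory, the class $\sS$ is closed under extensions: a short exact sequence $0\to A\to B\to C\to 0$ in $\sZ^0(\bC(\sA))$ with $A,C\in\sS$ is componentwise short exact in $\sA$, so $B$ is a bounded complex of finitely presentable objects (extensions of finitely presentable objects being finitely presentable under local coherence), and $B$ is acyclic because $A$ and $C$ are. The conclusion then follows from the standard fact that in a locally coherent Grothendieck category, the directed colimit closure of an extension-closed class of finitely presentable objects is itself closed under extensions. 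The argument for this general fact uses the commutation of $\Ext^1(F,-)$ with directed colimits for finitely presentable $F$, realizing an arbitrary extension $0\to\varinjlim A_i\to B\to\varinjlim C_j\to 0$ as a directed colimit of small extensions $0\to A_i\to B_{ij}\to C_j\to 0$ between finitely presentable approximations via successive pullback and pushout; executing this carefully in the present setting is the main technical content.
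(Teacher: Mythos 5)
Your proposal is correct and follows essentially the same route as the paper, which obtains this corollary by specializing Proposition~\ref{coacyclics-as-varinjlim-of-abs-acyclics} to $\bA=\bC(\sA)$ --- a proof that, just like yours, rests on Corollary~\ref{coacyclics-as-closure-extensions-directed-colimits} together with Proposition~\ref{varinjlim-of-class-of-finitely-presentables} (closure of $\varinjlim\sS$ under directed colimits) and Proposition~\ref{varinjlim-closed-under-extensions} (extension-closure of $\varinjlim\sS$ via Lemma~\ref{ext-from-coherent-preserves-directed-colimits}). The only differences are cosmetic specializations: you verify directly that the acyclic bounded complexes of finitely presentable objects form an extension-closed class coinciding with the absolutely acyclic objects of $\bC(\sA)_\bfp$ (which the paper packages via Proposition~\ref{absolute-acyclics-described}), and you handle contractible complexes by splitting them into coproducts of two-term discs rather than invoking Lemma~\ref{contractibles-described-lemma} and the preservation of directed colimits by the functor~$\Xi$.
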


This is an amazingly simple description, and the general form for
locally coherent abelian DG\+categories
(see Proposition~\ref{coacyclics-as-varinjlim-of-abs-acyclics}),
which in particular includes the case of (curved) DG\+modules
studied by Becker~\cite{Bec}, is equally nice.
Again, this appears to be completely new.

Now we are going to introduce our aims and the our setting 
more in detail.


\subsection{The starting point}
Speaking of DG\+modules, one observes that the category of DG\+rings
is a (nonfull) subcategory in a larger category of so-called
\emph{curved DG\+rings} (\emph{CDG\+rings}), and the assignment of
the DG\+category of DG\+modules to a DG\+ring extends naturally to
an assignment of the DG\+category of \emph{CDG\+modules} to any
curved DG\+ring~\cite{Pcurv,Pkoszul}.
 As mentioned, CDG\+rings and CDG\+modules, rather than DG\+rings and DG\+modules,
form a natural context for the homotopy categories of graded-injective
and graded-projective differential modules.
 The theory of the homotopy categories of graded-injective and
graded-projective CDG\+modules over a CDG\+ring, together with
the related complete cotorsion pairs and abelian model structures,
was developed in Becker's paper~\cite{Bec}.
 The relevant result in~\cite{Bec} is~\cite[Proposition~1.3.6]{Bec}. 

 Speaking of abelian categories, a natural generality for
the derived categories of the second kind in the sense of Becker is
achieved in certain classes of locally presentable abelian categories.
 The natural context for the homotopy category of unbounded complexes
of injective objects $\sH^0(\bC(\sA_\inj))$ seems to be that of
Grothendieck abelian categories~$\sA$.
 Dually, the natural generality level for the homotopy category of
unbounded complexes of projective objects $\sH^0(\bC(\sB_\proj))$ is
that of locally presentable abelian categories $\sB$ with enough
projective objects.
 This is the point of view suggested in the present authors'
paper~\cite{PS4}.

 The coderived abelian model structure on the category of complexes in
a Grothendieck abelian category $\sA$ was constructed by
Gillespie in~\cite[Theorem~4.2]{Gil}; another exposition can be
found in~\cite[Section~9]{PS4}.
 Inverting the weak equivalences in this model category structure
produces the homotopy category of unbounded complexes of injectives 
in~$\sA$.
 For discussions of the homotopy category of unbounded complexes of
injective objects in a Grothendieck category \emph{not} based on
the notions of cotorsion pairs and abelian model structures,
see~\cite{Neem2,Kra3}.
 The contraderived abelian model structure on the category of complexes
in a locally presentable abelian category $\sB$ with enough
projective objects was constructed in~\cite[Section~7]{PS4}.
 Inverting the weak equivalences in this model category structure
produces the homotopy category of unbounded complexes of projectives
in~$\sB$.


\subsection{Abelian DG\+categories}
 The unifying framework of \emph{abelian} (and \emph{exact})
\emph{DG\+categories} was suggested in the first-named author's
paper~\cite{Pedg}.
 A simple definition of an abelian DG\+category $\bE$ is that $\bE$
is a DG\+category with finite direct sums, shifts, and cones such
that the additive category $\sZ^0(\bE)$ of closed morphisms of
degree~$0$ in $\bE$ is abelian.

 A more substantial approach is based on the idea that there are
\emph{two} abelian categories naturally associated with an abelian
DG\+category.
 For example, to the DG\+category $\biR^\bu\bModl$ of DG\+modules over
a DG\+ring $\biR^\bu=(R^*,d)$ one can assign the abelian category
$\sZ^0(\biR^\bu\bModl)$ of DG\+modules over $\biR^\bu$ and closed
morphisms of degree~$0$ between them, and also the abelian category
$R^*\sModl$ of graded $R^*$\+modules and homogeneous morphisms of
degree~$0$ between them.
 The proper theory of $\biR^\bu\bModl$ as an abelian DG\+category
must involve both the abelian categories $\sZ^0(\biR^\bu\bModl)$ and
$R^*\sModl$, as well as naturally defined functors acting between
these abelian categories.
 Such functors include not only the forgetful functor
$\sZ^0(\biR^\bu\bModl)\rarrow R^*\sModl$ assigning to a DG\+module its
underlying graded module, but also the two functors $G^+$
and $G^-\:R^*\sModl\rarrow\sZ^0(\biR^\bu\bModl)$ left and right adjoint
to the forgetful functor, which happen to differ by a shift:
$G^-=G^+[1]$ (see the left-hand side of
Figure~\ref{the-underlying-categories-fig}).

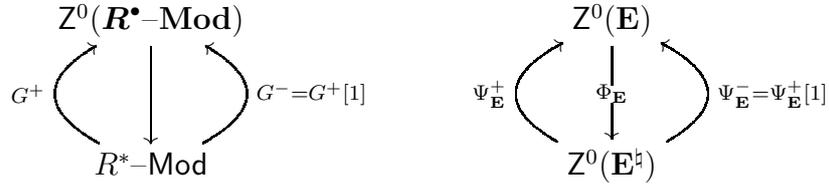
\begin{figure}
\[
\xymatrix@R=3em{
\sZ^0(\biR^\bu\bModl)
\ar@{<-}@/_3pc/[d]_{G^+}
\ar[d]
\ar@{<-}@/^3pc/[d]^{G^-=G^+[1]}
& \qquad\qquad\quad &
\sZ^0(\bE)
\ar@{<-}@/_3pc/[d]_{\Psi_\bE^+}
\ar|{\Phi_\bE}[d]
\ar@{<-}@/^3pc/[d]^{\Psi_\bE^-=\Psi_\bE^+[1]}
\\
R^*\sModl
&&
\sZ^0(\bE^\bec)
}
\]
\caption{Two ``underlying'' additive categories of a DG\+category.}
\label{the-underlying-categories-fig}
\end{figure}

 Given a DG\+category $\bE$ (with finite direct sums), one would like
to construct an additive category which would serve as ``the category
of underlying graded objects'' for the objects of~$\bE$.
 A na\"\i ve approach would be to consider the additive category $\bE^0$
whose objects are the objects of $\bE$ and whose morphisms are
the arbitrary (not necessarily closed) morphisms of degree~$0$ in~$\bE$.
 The problem with this construction is that, taking $\bE=\biR^\bu\bModl$
to be the DG\+category of DG\+modules over a DG\+ring
$\biR^\bu=(R^*,d)$, the additive category $\bE^0$ is usually \emph{not}
abelian, and generally not well-behaved.
 The reason is that, while the morphisms in $\bE^0$ are what
the morphisms in the category of underlying graded objects are supposed
to be, there are too many ``missing objects'' in~$\bE^0$.
 Simply put, an arbitrary graded $R^*$\+module does \emph{not}
admit a DG\+module structure over $(R^*,d)$ in general.

 The main construction of the paper~\cite{Pedg} (first suggested in
the memoir~\cite[Section~3.2]{Pkoszul}) assigns to a DG\+category $\bE$
another DG\+category~$\bE^\bec$.
 Assuming, as we will always do, that the DG\+category $\bE$ has
finite direct sums, shifts, and cones, the DG\+category $\bE^\bec$
comes endowed with an additive functor $\Phi_\bE\:\sZ^0(\bE)\rarrow
\sZ^0(\bE^\bec)$ acting between the additive categories of closed
morphisms of degree~$0$ in $\bE$ and~$\bE^\bec$.
 The functor $\Phi_\bE$ has adjoint functors on both sides, denoted
by $\Psi^+_\bE$ and $\Psi^-_\bE\:\sZ^0(\bE^\bec)\rarrow\sZ^0(\bE)$.
 The two functors $\Psi^+$ and $\Psi^-$ only differ by a shift: one has
$\Psi^-(X)=\Psi^+(X)[1]$ for all $X\in\bE^\bec$ (see the right-hand
side of Figure~\ref{the-underlying-categories-fig}; so the functor
$\Phi_\bE$ is actually a part of a doubly infinite ladder of adjoint
functors acting between $\sZ^0(\bE)$ and $\sZ^0(\bE^\bec)$).
 Furthermore, both the functors $\Phi$ and $\Psi^+$ can be naturally
extended to fully faithful functors acting from the additive
categories of arbitrary morphisms of degree~$0$ in $\bE$ and~$\bE^\bec$;
so there are fully faithful functors $\widetilde\Phi_\bE\:\bE^0
\rarrow\sZ^0(\bE^\bec)$ and $\widetilde\Psi^+_\bE\:(\bE^\bec)^0
\rarrow\sZ^0(\bE)$.

 The existence of a fully faithful functor $\widetilde\Phi_\bE$ is
one way of saying that the additive category $\sZ^0(\bE^\bec)$ is
a candidate for the role of the category of underlying graded objects
for $\bE$, having the correct groups of morphisms between objects
coming from $\bE$ but also some ``missing objects'' added.
 The functor $\Phi_\bE$ plays the role of the forgetful functor
assigning to a (curved) DG\+module its underlying graded module.
%
%
%
This in fact works very well in the case of abelian DG\+categories,
where $\sZ^0(\bE)$ being abelian automatically implies that also
$\sZ^0(\bE^\bec)$ is abelian.
If one wishes to generalize the theory from abelian to exact
DG\+categories (which we are not going to do here),
the category $\sZ^0(\bE^\bec)$ may turn out be  too big
and one may want to consider so-called ``exact DG\+pairs'' 
rather than just exact DG\+categories (we refer to~\cite{Pedg}
for details).

 The construction assigning the DG\+category $\bE^\bec$ to
a DG\+category $\bE$ is an ``almost involution''.
 For any DG\+category $\bE$ with finite direct sums, shifts,
and cones, there is a fully faithful DG\+functor
$\bec\bec\:\bE\rarrow\bE^{\bec\bec}$.
 The passage from a DG\+category $\bE$ to the DG\+category
$\bE^{\bec\bec}$ adjoins all twists of objects with respect to
Maurer--Cartan cochains in their complexes of endomorphisms
(and some direct summands of such twists).
 In fact, for any DG\+category $\bE$, all twists exist in
the DG\+category~$\bE^\bec$.
 For an idempotent-complete DG\+category $\bE$ with finite direct
sums, shifts, and twists, the DG\+functor $\bec\bec\:\bE\rarrow
\bE^{\bec\bec}$ is an equivalence of DG\+categories.

 In particular, any abelian DG\+category $\bE$ is idempotent-complete
and has twists.
 In other words, for any abelian DG\+category $\bE$,
the DG\+functor $\bec\bec\:\bE\rarrow\bE^{\bec\bec}$ is an equivalence
of DG\+categories.
 So the passage from $\bE$ to $\bE^\bec$ is indeed an involution on
abelian DG\+categories.
 The reader should be warned that the assignment of $\bE^\bec$ to
$\bE$ does \emph{not} preserve quasi-equivalences of DG\+categories,
however.
 A DG\+category being abelian is also a property of this DG\+category
considered up to equivalence, but \emph{not} up to quasi-equivalence.


\subsection{Derived categories of the second kind}
This is a common name for
several constructions, most notably the coderived, contraderived, and
\emph{absolute derived} categories.
 The constructions of derived categories of the second kind are
well-defined for abelian (and exact) DG\+categories, which seem to
form their natural context.
 Once again, we should however warn the reader that the construction
of the conventional derived category does \emph{not} seem to make
sense for a general abelian DG\+category.

 In particular, for any DG\+ring $\biR^\bu=(R^*,d)$, and in fact for
any CDG\+ring $\biR^\cu=(R^*,d,h)$, one can easily construct
an \emph{acyclic} DG\+ring $\hathatRbu$ such that the DG\+category of
DG\+modules over $\hathatRbu$ is equivalent to the DG\+category of
(C)DG\+modules over~$\biR^\cu$.
 In this sense, one can say that any CDG\+ring, and in particular
any DG\+ring, is Morita equivalent to an acyclic DG\+ring.
 This kind of Morita equivalence preserves the DG\+category of
(C)DG\+modules, and in fact it preserves the coderived, contraderived,
and absolute derived categories.
 But, of course, the conventional derived category of DG\+modules
over a DG\+ring vanishes if and only if the DG\+ring is acyclic.

 The time has come for us to say explicitly that the coderived and
contraderived categories in the sense of Becker have to be
distinguished from the coderived and contraderived category in
the sense of the first-named author of this paper, as defined
in the monograph~\cite{Psemi}, the memoir~\cite{Pkoszul}, and
subsequent publications such as~\cite{EP,Pps}.
 In fact, it is an open question whether derived categories of
the second kind in the sense of Becker are \emph{ever} different
from the ones in the sense of Positselski within their common
domain of definition.
 But, at least, as long as the question is open, the distinction
has to be made.
 The coderived and contraderived categories in the sense of Positselski
tend to be defined in a greater generality than the ones in the sense
of Becker, but the latter may have better properties in the more narrow
contexts for which they are suited.
 We refer to~\cite[Remark~9.2]{PS4} or~\cite[Section~7]{Pksurv} for
a discussion of the history and philosophy of derived categories
of the second kind.

 For abelian (and exact) DG\+categories, derived categories of
the second kind in the sense of Positselski are defined and their
theory is developed in the paper~\cite{Pedg}.
 The aim of the present paper is to define and study the coderived
and contraderived categories in the sense of Becker for abelian
DG\+categories.
 The notion of the absolute derived category (which has no known
separate Becker's version, but only one in the sense of the first-named
author of the present paper) turns out to play an important role
in this study, as we have seen in Theorem~\ref{locally-coherent-abelian-DG-category-theorem}.


\subsection{Locally presentable and Grothendieck DG\+categories}

Let us say a few words about locally presentable and Grothendieck
DG\+categories.
 Given a DG\+category $\bB$ with shifts and cones such that all
colimits exist in the additive category $\sZ^0(\bB)$, one can show that
the additive category $\sZ^0(\bB)$ is locally $\lambda$\+presentable
(for a fixed regular cardinal~$\lambda$) if and only if the additive
categry $\sZ^0(\bB^\bec)$ is locally $\lambda$\+presentable.
 If the DG\+category $\bB$ is abelian, then the abelian category
$\sZ^0(\bB)$ has enough projective objects if and only if
the abelian category $\sZ^0(\bB^\bec)$ has enough projective objects.
 Hence the class of ``locally presentable abelian DG\+categories
$\bB$ with enough projective objects'', for which the contraderived
category in the sense of Becker is well-defined.

 Given an abelian DG\+category $\bA$ with infinite coproducts,
the abelian category $\sZ^0(\bA)$ is Grothendieck if and only if
the abelian category $\sZ^0(\bA^\bec)$ is Grothendieck.
 Hence the class of ``Grothendieck abelian DG\+categories'', for
which the coderived category in the sense of Becker is well-behaved.
 These are the contraderived and coderived categories which we study
in the present paper, constructing the contraderived abelian model
structure on the abelian category $\sZ^0(\bB)$ in the former case
and the coderived abelian model structure on the abelian category
$\sZ^0(\bA)$ in the latter case.
 In particular, we obtain the semiorthogonal decompositions of
the homotopy categories $\sH^0(\bB)$ and $\sH^0(\bA)$ associated with
the Becker contraderived and coderived categories, and prove that
such derived categories of the second kind $\sD^\bctr(\bB)=
\sH^0(\bB_\bproj)$ and $\sD^\bco(\bA)=\sH^0(\bA_\binj)$ are
well-generated triangulated categories.

 Let us reiterate the warning that, similarly to~\cite{Pedg},
all the theory of DG\+categories developed in this paper is
a \emph{strict} theory.
 We work with complexes of morphisms in DG\+categories up to (natural)
isomorphism, \emph{not} up to quasi-isomorphism; and accordingly
consider DG\+categories up to equivalence, \emph{not} up to
quasi-equivalence.
 Similarly, a fully faithful DG\+functor for us is a DG\+functor
inducing isomorphisms and \emph{not} just quasi-isomorphisms of
the complexes of morphisms.


\subsection{Structure of the paper}
 This paper is based on an abundance of preliminary material.
 The theory of abelian DG\+categories, as developed in
the paper~\cite{Pedg}, is summarized for our purposes
in Sections~\ref{DG-preliminaries-secn}--\ref{abelian-DG-secn}.
 A summary of the theory of abelian model structures, following
the exposition in~\cite{PS4} (which in turn is based on
the results of~\cite{Hov,Bec,PR}), occupies
Sections~\ref{cotorsion-pairs-secn}--%
\ref{abelian-model-structures-secn}.

 A construction of the contraderived abelian model structure for
a locally presentable abelian DG\+category $\bB$ with enough projective
objects is the main result of Section~\ref{contraderived-secn}.
 In Section~\ref{coderived-secn} we not only construct the coderived
model structure for a Grothendieck abelian DG\+category $\bA$, but
also show that the class of all Becker-coacyclic objects is closed
under directed colimits in~$\sZ^0(\bA)$.

 Various descriptions of the class of all Becker-coacyclic objects
in $\bA$ and (in the case of a locally coherent DG\+category~$\bA$)
of the class of all absolutely acyclic objects in $\bA_\bfp$ are
obtained in Sections~\ref{coderived-secn}
and~\ref{locally-coherent-secn}.
 The main results about compact generators of Becker's coderived
category in the locally coherent case, formulated above as
Theorem~\ref{locally-coherent-abelian-DG-category-theorem}
and Corollary~\ref{graded-coherent-CDG-ring-cor}, are proved
at the end of Section~\ref{locally-coherent-secn}.
 The application to algebraic geometry (matrix factorizations on
coherent schemes) is discussed in the last 
Section~\ref{coherent-schemes-matrix-factorizations-secn}, with
Corollary~\ref{matrix-factorizations-cor} proved at the end of the paper.

\subsection*{Acknowledgement}
 We are grateful to an anonymous referee for reading the manuscript
carefully and suggesting relevant corrections and improvements.
 This research is supported by GA\v CR project 20-13778S.
 The first-named author is also supported by the Institute of
Mathematics, Czech Academy of Sciences (RVO:~67985840).

\Section{Preliminaries on DG-Categories} \label{DG-preliminaries-secn}

 In this paper, we will presume all our graded objects and complexes
to be graded by the group of integers~$\boZ$.
 The differentials on complexes raise the degree by~$1$.
 This follows the convention in~\cite{PS4} and constitutes
a restriction of generality as compared to the paper~\cite{Pedg},
where a grading group $\Gamma$ is considered.
 In practice, this is more of a notational convention than an actual
change of setting, because all the definitions and arguments can be
generalized to a grading group $\Gamma$ in a straightforward way.

 The material below in this section is mostly an extraction
from~\cite[Sections~1--2]{Pedg}.
 The reader can consult with~\cite[Section~1.2]{Pkoszul}
and/or~\cite[Sections~1--2]{Pedg} for details.

\subsection{Enriched categories} \label{enriched-subsecn}
 Let $\sC$ be an (associative, unital) monoidal category with
the tensor product operation $\ot\:\sC\times\sC\rarrow\sC$ and
the unit object $\boI\in\sC$.
 Then a \emph{$\sC$\+enriched category} $\cK$ consists of a class
of objects, an object $\cHom_\cK(X,Y)\in\sC$ defined for every
pair of objects $X$, $Y\in\cK$, a multiplication
(also known as composition) morphism
$\cHom_\cK(Y,Z)\ot\cHom_\cK(X,Y)\rarrow\cHom_\cK(X,Z)$ in~$\sC$
defined for every triple of objects $X$, $Y$, $Z\in\cK$, and a unit
morphism $\boI\rarrow\Hom_\cK(X,X)$ in~$\sC$ defined for every
object $X\in\cK$.
 The usual associativity and unitality axioms are
imposed~\cite[Section~1.2]{Kel}.

 The functor $\Hom_\sC(\boI,{-})$ is a lax monoidal functor from
$\sC$ to the monoidal category of sets $\Sets$ (with the Cartesian
product in the role of the tensor product in $\Sets$).
 Consequently, the functor $\Hom_\sC(\boI,{-})$ takes monoids in $\sC$
to monoids in $\Sets$, and $\sC$\+enriched categories to
$\Sets$\+enriched categories, which means the usual categories.
 In other words, the \emph{underlying category} $\sK=\cK_0$ of
a $\sC$\+enriched category $\cK$ is defined by the rule
$\Hom_\sK(X,Y)=\Hom_\sC(\boI,\cHom_\cK(X,Y))$ (where the class
of objects of $\sK$ coincides with the class of objects of~$\cK$)
\cite[Section~1.3]{Kel}.

\subsection{Graded categories}
 A \emph{preadditive category} $\sE$ is a category enriched in
(the monoidal category of) abelian groups, with the operation of
tensor product of abelian groups~$\otimes_\boZ$ defining
the monoidal structure.

 A \emph{graded category} $\cE$ is a category enriched in (the monoidal
category of) graded abelian groups.
 So, for every pair of objects $X$, $Y\in\cE$, a graded abelian group
$\Hom^*_\cE(X,Y)=\bigoplus_{n\in\boZ}\Hom^n_\cE(X,Y)$ is defined,
together with the related multiplication maps $\Hom^*_\cE(Y,Z)\ot_\boZ
\Hom^*_\cE(X,Y)\rarrow\Hom^*_\cE(X,Z)$ for all objects $X$, $Y$,
$Z\in\cE$ and unit elements $\id_X\in\Hom^0_\cE(X,X)$ for all objects
$X\in\cE$.

 The underlying preadditive category $\sE=\cE^0$ of a graded category
$\cE$ is defined by the rule $\Hom_{\cE^0}(X,Y)=\Hom_\cE^0(X,Y)$.
 This is the definition one obtains by specializing to graded categories
the construction of the underlying category of a $\sC$\+enriched
category explained above in Section~\ref{enriched-subsecn}.

 Given an integer $i\in\boZ$ and an object $X\in\cE$, the \emph{shift}
$Y=X[i]$ of the object $X$ is an object of $\cE$ endowed with
a pair of morphisms $f\in\Hom^{-i}_\cE(X,Y)$ and $g\in\Hom^i_\cE(Y,X)$
such that $gf=\id_X$ and $fg=\id_Y$.
 Given a finite collection of objects $X_1$,~\dots, $X_n\in\cE$,
the \emph{direct sum} $\bigoplus_{i=1}^n X_i\in\cE$ can be simply
defined as the direct sum of the same objects in the preadditive
category~$\cE^0$.
 A graded category $\cE$ is called \emph{additive} if all finite
direct sums exist in $\cE$, or equivalently, if the preadditive
category $\cE^0$ is additive (i.~e., has finite direct sums).

 Let $X_\alpha$ be a (possibly infinite) collection of objects in~$\cE$.
 Then the \emph{coproduct} $Y=\coprod_\alpha X_\alpha$ is
an object of $\cE$ such that a natural isomorphism of graded
abelian groups $\Hom_\cE^*(Y,Z)\simeq\prod_\alpha\Hom_\cE^*(X_\alpha,Z)$
is defined for all objects $Z\in\cE$ in a way functorial with respect
to all morphisms $f\in\Hom^*_\cE(Z,Z')$ in~$\cE$.
 Here $\prod_\alpha$ denotes the product functor in the category of
graded abelian groups.
 Dually, the \emph{product} $W=\prod_\alpha X_\alpha$ is
an object of $\cE$ such that a functorial isomorphism of graded abelian
groups $\Hom_\cE^*(Z,W)\simeq\prod_\alpha\Hom_\cE^*(Z,X_\alpha)$ is
defined for all objects $Z\in\cE$.
 
\subsection{DG-categories and related graded/preadditive
categories}
 A \emph{DG\+cat\-e\-gory} $\bE$ is a category enriched in (the monoidal 
category of) complexes of abelian groups.
 So, for every pair of objects $X$, $Y\in\bE$, a complex of abelian
groups $\Hom^\bu_\bE(X,Y)$ with the components $\Hom^n_\bE(X,Y)$,
\,$n\in\boZ$, and a differential $d\:\Hom^n_\bE(X,Y)\rarrow
\Hom^{n+1}_\bE(X,Y)$ is defined, together with the related
multiplication maps $\Hom^\bu_\bE(Y,Z)\otimes_\boZ\Hom^\bu_\bE(X,Y)
\rarrow\Hom^\bu_\bE(X,Z)$, which must be morphisms of complexes of
abelian groups for all objects $X$, $Y$, $Z\in\bE$.
 The unit elements (identity morphisms) $\id_X\in\Hom_\bE^0(X,X)$
must be closed morphisms of degree~$0$, i.~e., $d(\id_X)=0$ for all
objects $X\in\bE$. {\hbadness=1150\par}

 There are three natural functors from the category of complexes of
abelian groups to the category of graded abelian groups: to any
complex of abelian groups, one can assign its underlying graded abelian
group, or its graded subgroup of cocycles, or its graded abelian
group of cohomology.
 The former functor is monoidal, while the latter two functors are
lax monoidal; so all the three functors, applied to the complexes of
morphisms, transform DG\+categories into graded categories.

 The underlying graded category $\cE=\bE^*$ of a DG\+category $\bE$
can be na\"\i vely defined by the rule that the objects of $\cE$ are
the objects of $\bE$ and $\Hom_\cE^*(X,Y)$ is the underlying graded
abelian group of the complex $\Hom_\bE^\bu(X,Y)$ for all objects
$X$ and~$Y$.
 For the proper (non-na\"\i ve) construction of the underlying graded
category of a DG\+category, see Section~\ref{almost-involution-secn}.
 In particular, the underlying preadditive category $\bE^0$ of
a DG\+category $\bE$ is na\"\i vely defined as $\bE^0=(\bE^*)^0$;
so $\Hom_{\bE^0}(X,Y)=\Hom_\bE^0(X,Y)$.

 A morphism $f\in\Hom^n_\bE(X,Y)$ is said to be \emph{closed} if
$d(f)=0$.
 The graded category whose objects are the objects of $\bE$ and
whose graded abelian group of morphisms $X\rarrow Y$ is
the homogeneous subgroup of closed elements in $\Hom_\cE^*(X,Y)$
is denoted by $\cZ(\bE)$.
 In particular, $\sZ^0(\bE)=(\cZ(\bE))^0$ is the preadditive category
whose objects are the objects of $\bE$ and whose morphisms are
the closed morphisms of degree~$0$ in~$\bE$.
 The construction of the underlying category of a $\sC$\+enriched
category from Section~\ref{enriched-subsecn} assigns to
a DG\+category $\bE$ the preadditive category $\sZ^0(\bE)$.

 The graded category whose objects are the objects of $\bE$ and
whose graded abelian group of morphisms $X\rarrow Y$ is the graded
group of cohomology of the complex $\Hom_\bE^\bu(X,Y)$ is denoted
by $\cH(\bE)$.
 In particular, $\sH^0(\bE)=(\cH(\bE))^0$ is the preadditive category
whose objects are the objects of $\bE$ and whose morphisms are
the cochain homotopy classes of degree~$0$ morphisms in~$\bE$.
 The preadditive category $\sH^0(\bE)$ is known as the \emph{homotopy
category} of a DG\+category~$\bE$.
 Morphisms in $\sZ^0(\bE)$ whose images in $\sH^0(\bE)$ agree are
called \emph{homotopic}, and isomorphisms in $\sH^0(\bE)$ are called
\emph{homotopy equivalences} of objects of~$\bE$.

\subsection{Shifts, direct sums, products, and coproducts in
DG-categories} \label{co-products-in-DG-categories}
 Two objects $X$ and $Y$ in a DG\+category $\bE$ are said to be
\emph{isomorphic} if they are isomorphic in $\sZ^0(\bE)$.
 This is the strict notion of isomorphism in DG\+categories.

 The \emph{shifts} and \emph{finite direct sums} in a DG\+category
$\bE$ can be simply defined as the shifts in the graded category
$\cZ(\bE)$ and the finite direct sums in the preadditive category
$\sZ^0(\bE)$, respectively.
 In particular, given an integer $i\in\boZ$ and an object $X\in\bE$,
the \emph{shift} $Y=X[i]$ of the object $X$ is an object of $\bE$
endowed with a pair of closed morphisms $f\in\Hom_\bE^{-i}(X,Y)$ and
$g\in\Hom_\bE^i(Y,X)$ such that $gf=\id_X$ and $fg=\id_Y$.
 So a DG\+category $\bE$ has shifts if and only if the graded category
$\cZ(\bE)$ does.
 In this case, the graded categories $\bE^*$ and $\cH(\bE)$ also
have shifts.

 The definition of the direct sum of a finite collection of objects
in $\bE$ can be spelled out similarly.
 So the direct sum of a finite collection of objects in $\bE$ is
their direct sum in $\sZ^0(\bE)$.
 A DG\+category $\bE$ is called \emph{additive} if all finite direct
sums exists in $\bE$, or equivalently, if the preadditive category
$\sZ^0(\bE)$ is additive.
 In this case, the graded categories $\cZ(\bE)$, $\bE^*$, $\cH(\bE)$
and the preadditive categories $\bE^0$, $\sH^0(\bE)$ are also
additive.

 Let $X_\alpha$ be an (infinite) collection of objects in~$\bE$.
 Then the \emph{coproduct} $Y=\coprod_\alpha X_\alpha$ is
an object of $\bE$ such that a natural isomorphism of complexes of
abelian groups $\Hom_\bE^\bu(Y,Z)\simeq\prod_\alpha
\Hom_\bE^\bu(X_\alpha,Z)$ is defined for all objects $Z\in\bE$ in
a way functorial with respect to all morphisms $f\in\Hom^n_\bE(Z,Z')$,
\,$n\in\boZ$.
 Here $\prod_\alpha$ denotes the product functor in the category of
complexes of abelian groups.
 Dually, the \emph{product} $W=\prod_\alpha X_\alpha$ is an object of
$\bE$ such that a functorial isomorphism of complexes of abelian
groups $\Hom_\bE^\bu(Z,W)\simeq\prod_\alpha\Hom_\bE^\bu(Z,X_\alpha)$ is
defined for all objects $Z\in\bE$.
 The products and coproducts in $\bE$, when they exist, are defined
uniquely up to a unique isomorphism (i.~e., closed isomorphism
of degree~$0$).
Under mild assumptions on $\bE$, the existence of infinite products
and coproducts in $\bE$ is again equivalent to the existence of such products 
and coproducts, respectively, in $\sZ^0(\bE)$. This will be
made precise in Lemma~\ref{coproducts-in-DG-categories}.

 Notice that the functors assigning to a complex of abelian groups its
grading components, groups of cocycles, and groups of cohomology
commute with infinite products in the respective categories.
 Therefore, any (co)product of a family of objects in a DG\+category
$\bE$ is also their (co)product in the graded categories $\cZ(\bE)$,
$\bE^*$, $\cH(\bE)$ and the preadditive categories $\sZ^0(\bE)$,
$\bE^0$, $\sH^0(\bE)$.

\subsection{Twists and cones in DG-categories}
\label{twists-and-cones-subsecn}
 A \emph{Maurer--Cartan cochain} in the complex of endomorphisms of
an object $X\in\bE$ is an element $a\in\Hom_\bE^1(X,X)$ satisfying
the equation $d(a)+a^2=0$ in $\Hom_\bE^2(X,X)$.
 The \emph{twist} $Y=X(a)$ of the object $X$ by a Maurer--Cartan cochain
$a\in\Hom_\bE^1(X,X)$ is an object of $\bE$ endowed with a pair of
morphisms $f\in\Hom_\bE^0(X,Y)$ and $g\in\Hom_\bE^0(Y,X)$ such that
$gf=\id_X$, \ $fg=\id_Y$, and $d(f)=fa$.
 Assuming the former two equations, the latter one is equivalent to
$d(g)=-ag$ (therefore, $Y=X(a)$ implies $X=Y(-fag)$).

 An object $Y\in\bE$ is a twist of an object $X\in\bE$ (by some
Maurer--Cartan cochain in the complex of endomorphisms) if and only
if the objects $X$ and $Y$ are isomorphic in the preadditive
category~$\bE^0$.
 For any pair of mutually inverse isomorphisms $f\:X\rarrow Y$
and $Y\rarrow X$ in $\bE^0$, the elements $a=gd(f)=-d(g)f\in
\Hom_\bE^1(X,X)$ and $-fag=fd(g)=-d(f)g\in\Hom_\bE^1(Y,Y)$ are
Maurer--Cartan cochains.
 The twist of a given object in $\bE$ by a given Maurer--Cartain
cochain in its complex of endomorphisms, when it exists, is defined
uniquely up to a unique closed isomorphism of degree~$0$.

 An object $C\in\bE$ is said to be the \emph{cone} of a closed
morphism $f\:X\rarrow Y$ of degree~$0$ in $\bE$ if an isomorphism
between the complex of abelian groups $\Hom_\bE^\bu(Z,C)$ and the cone
of the morphism of complexes $\Hom_\bE^\bu(Z,f)\:\Hom_\bE^\bu(Z,X)
\rarrow\Hom_\bE^\bu(Z,Y)$ is specified for all objects $Z\in\bE$,
in a way functorial with respect to all morphisms $g\in
\Hom^n_\bE(Z',Z)$, \,$n\in\boZ$.
 Equivalently, $C$ is the cone of~$f$ if an isomorphism between
the complex of abelian groups $\Hom_\bE^\bu(C[-1],Z)$ and the cone
of the morphism of complexes $\Hom_\bE^\bu(f,Z)\:\Hom_\bE^\bu(Y,Z)
\rarrow\Hom_\bE^\bu(X,Z)$ is specified for all objects $Z\in\bE$,
in a way functorial with respect to all morphisms
$g\in\Hom^n_\bE(Z,Z')$.
 The object $C[-1]$ is called the \emph{cocone} of~$f$.
 The (co)cone of a closed morphism in $\bE$, if it exists, is defined
uniquely up to a unique closed isomorphism of degree~$0$.

 If the shift $X[1]$ and the direct sum $Y\oplus X[1]$ exist in $\bE$,
then the cone $C=\cone(f)$ can be constructed as the twist
$(Y\oplus X[1])(a_f)$ of the object $Y\oplus X[1]$ by a suitable
Maurer--Cartan cochain~$a_f$ produced from the morphism~$f$.
 Consequenly, any additive DG\+category with shifts and twists
has cones.
 Conversely, any DG\+category with shifts, cones, and a zero object
is additive.

 Whenever the shift $X[1]$ and the cone $C=\cone(f)$ exist in $\bE$,
there is a natural short sequence $0\rarrow Y\rarrow C\rarrow X[1]
\rarrow0$ in $\sZ^0(\bE)$ which is split exact in~$\bE^0$.
 Conversely, if $\bE$ is a DG\+category with shifts, then any
short sequence $0\rarrow B\rarrow C\rarrow A\rarrow0$ of closed
morphisms of degree~$0$ in $\bE$ which is split exact in $\bE^0$
arises from a closed morphism $f\:A\rarrow B[1]$; so $C=\cone(f)[-1]$.

 For any additive DG\+category $\bE$ with shifts and cones,
the homotopy category $\sH^0(\bE)$ has a natural structure of
triangulated category.

\subsection{Totalizations of complexes in DG-categories}
\label{totalizations-subsecn}
 Let $X^\bu$ be a complex in the preadditive category $\sZ^0(\bE)$.
 So, for every $n\in\boZ$, the differential $d_n\:X^n\rarrow X^{n+1}$
is a closed morphism of degree~$0$ in $\bE$, and the composition
$X^{n-1}\rarrow X^n\rarrow X^{n+1}$ vanishes as a closed morphism
in~$\bE$.

 Then an object $T\in\bE$ is said to be the \emph{product
totalization} of the complex $X^\bu$ and denoted by
$T=\Tot^\sqcap(X^\bu)$ if an isomorphism between the complex of
abelian groups $\Hom_\bE^\bu(Z,T)$ and the totalization of
the bicomplex of abelian groups $\Hom_\bE^\bu(Z,X^\bu)$, constructed
by taking the countable products of abelian groups along the diagonals,
is specified for all objects $Z\in\bE$, in a way functorial with
respect to all morphisms $g\in\Hom_\bE^i(Z',Z)$, \,$i\in\boZ$.
 Dually, an object $T\in\bE$ is said to be the \emph{coproduct
totalization} of the complex $X^\bu$ and denoted by
$T=\Tot^\sqcup(X^\bu)$ if an isomorphism between the complex of
abelian groups $\Hom_\bE^\bu(T,Z)$ and the totalization of the bicomplex
of abelian groups $\Hom_\bE^\bu(X^\bu,Z)$, constructed by taking
the countable products of abelian groups along the diagonals,
is specified for all objects $Z\in\bE$, in a way functorial with
respect to all morphisms $g\in\Hom_\bE^i(Z,Z')$.

 For a finite complex $X^\bu$ in $\bE$, there is no difference between
the product and the coproduct totalization; so we will write simply
$T=\Tot(X^\bu)$.
 The cone of a morphism $f\:X\rarrow Y$ in $\sZ^0(\bE)$
is the totalization of the two-term complex $X\rarrow Y$.
 Conversely, in a DG\+category $\bE$ with shifts and cones,
the totalization of any finite complex can be expressed as
a finitely iterated cone.

\subsection{DG-functors}
 A \emph{DG-functor} is a functor of categories enriched in complexes
of abelian groups.
 More explicitly, given two DG\+categories $\bA$ and $\bB$,
a DG\+functor $F\:\bB\rarrow\bA$ is a rule assigning to every object
$X\in\bB$ an object $F(X)\in\bA$ and to every pair of objects
$X$, $Y\in\bB$ a morphism of complexes $F_{X,Y}\:\Hom_\bB^\bu(X,Y)
\rarrow\Hom_\bA^\bu(F(X),F(Y))$ in such a way that the compositions
of morphisms and the identity morphisms are preserved.

 Any DG\+functor preserves finite direct sums and shifts of objects,
twists of objects by Maurer--Cartan cochains, and cones of closed
morphisms of degree~$0$.
 In other words, one can say that these operations are examples of
``absolute weighted (co)limits'' in DG\+categories, in the sense
of~\cite[Section~5]{NST}.

 A DG\+functor $F\:\bB\rarrow\bA$ is said to be \emph{fully faithful}
if the map $F_{X,Y}$ is an isomorphism of complexes of abelian groups
for all objects $X$, $Y\in\bB$.
 A DG\+functor $F$ is said to be an \emph{equivalence of DG\+categories}
if it is fully faithful and essentially surjective.
 The latter condition means that for every object $W\in\bA$ there exists
an object $X\in\bB$ together with a closed isomorphism $F(X)\simeq W$
of degree~$0$ in~$\bA$.
 The datum of a fully faithful DG\+functor $\bB\rarrow\bA$ means that,
up to an equivalence, $\bB$ can be viewed as a \emph{full
DG\+subcategory} in~$\bA$.

 Given two additive DG\+categories $\bA$ and $\bB$ with shifts and
cones, any DG\+functor $F\:\bB\rarrow\bA$ induces a triangulated
functor $\sH^0(F)\:\sH^0(\bB)\rarrow\sH^0(\bA)$.

\subsection{Example: DG-category of complexes}
\label{DG-category-of-complexes-defined-subsecn}
 Let $\sE$ be a preadditive category.
 Then the preadditive category $\sG(\sE)$ of graded objects in $\sE$
is constructed as follows.
 The objects of $\sG(\sE)$ are collections of objects $X^*=
(X^n\in\sE)_{n\in\boZ}$ in the category $\sE$ indexed by
the integers~$n$.
 The abelian group of morphisms $\Hom_{\sG(\sE)}(X^*,Y^*)$ is defined
as the product $\prod_{n\in\boZ}\Hom_\sE(X^n,Y^n)$.
 The composition of morphisms and the identity morphisms in $\sG(\sE)$
are constructed in the obvious way.
 So $\sG(\sE)=\sE^\boZ$ is simply the Cartesian product of $\boZ$
copies of~$\sE$.

 The shift functor on $\sG(\sE)$ is defined by the rule $X^*[i]^n=
X^{n+i}$ for all $i$, $n\in\boZ$.
 The graded category $\cG(\sE)$ is constructed as follows.
 The objects of $\cG(\sE)$ are the objects of $\sG(\sE)$, and
the graded abelian group of morphisms $\Hom^*_{\cG(\sE)}(X^*,Y^*)$
has the grading components $\Hom^i_{\cG(\sE)}(X^*,Y^*)=
\Hom_{\sG(\sE)}(X^*,Y^*[i])$ for all $i\in\boZ$.
 Once again, we omit the obvious construction of the composition
of morphisms and identity morphisms in $\cG(\sE)$.
 By the definition, one has $(\cG(\sE))^0=\sG(\sE)$.

 The DG\+category $\bC(\sE)$ of complexes in $\sE$ is constructed
as follows.
 The objects of $\bC(\sE)$ are complexes $X^\bu$ with the components
$X^n\in\sE$; the differentials $d_{X,n}\:X^n\rarrow X^{n+1}$ are
morphisms in~$\sE$.
 Let us denote by $X^*\in\cG(\sE)$ the underlying graded object
of~$X^\bu$.
 Given two complexes $X^\bu$ and $Y^\bu\in\bC(\sE)$, the underlying
graded abelian group of the complex of morphisms $\Hom_{\bC(\sE)}^\bu
(X^\bu,Y^\bu)$ is the graded abelian group $\Hom_{\cG(\sE)}^*(X^*,Y^*)$.
 The differential~$d$ in the complex $\Hom_{\bC(\sE)}^\bu
(X^\bu,Y^\bu)$ is defined by the usual formula $d(f)=d_Y\circ f-
(-1)^{|f|}f\circ d_X$, where $f\in\Hom_{\bC(\sE)}^{|f|}(X^\bu,Y^\bu)$.

 By the definition, the composition of morphisms and the identity
morphisms in the DG\+category $\bC(\sE)$ agree with those in the graded
category $\cG(\sE)$.
 Notice that any graded object in $\sE$ admits a differential making
it a complex in~$\sE$ (e.~g., the zero differential).
 So one has $(\bC(\sE))^*=\cG(\sE)$ and $(\bC(\sE))^0=\sG(\sE)$.

 All shifts exist in the DG\+category $\bC(\sE)$; the shift $X^\bu[i]$
of a complex $X^\bu\in\bC(\sE)$ is constructed by the well-known
rules $X^\bu[i]^n=X^{n+i}$ and $d_{X[i],n}=(-1)^i d_{X,n+i}$.
 Furthermore, all twists exist in the DG\+category $\bC(\sE)$.
 When the category $\sE$ is additive, so is the DG\+category $\bC(\sE)$.
 In this case, the DG\+category $\bC(\sE)$ also has cones.
 When the category $\sE$ has infinite (co)products, so does
the DG\+category $\bC(\sE)$.

 The notation for the categories of closed morphisms in $\bC(\sE)$ is
$\cC(\sE)=\cZ(\bC(\sE))$ and $\sC(\sE)=\sZ^0(\bC(\sE))$.
 These are the (respectively, graded and preadditive) \emph{categories
of complexes} in $\sE$, i.~e., the categories of complexes and closed
morphisms between them.
 The usual notation for the homotopy category is
$\sK(\sE)=\sH^0(\bC(\sE))$.

\subsection{Example: DG-category of CDG-modules}
\label{DG-category-of-CDG-modules-defined-subsecn}
 The concept of a \emph{CDG\+algebra} goes back to
the paper~\cite{Pcurv}.
 A more advanced discussion of CDG\+rings can be found
in~\cite[Section~3]{Pkoszul}, and of CDG\+coalgebras,
in~\cite[Section~4]{Pkoszul}.

 By the definition, a \emph{CDG\+ring} $\biR^\cu=(R^*,d,h)$ is
the following set of data:
\begin{itemize}
\item $R^*=\bigoplus_{n\in\boZ}R^n$ is a graded ring;
\item $d$~is an odd derivation of degree~$1$ on $R^*$, i.~e.,
an additive map $d_n\:R^n\rarrow R^{n+1}$ defined for all $n\in\boZ$
and satisfying the Leibniz rule with signs $d(rs)=d(r)s+(-1)^{|r|}rd(s)$
for all $r\in R^{|r|}$ and $s\in R^{|s|}$;
\item $h\in R^2$ is an element of degree~$2$.
\end{itemize}

 Two equations involving~$d$ and~$h$ must be satisfied:
\begin{itemize}
\item $d^2(r)=hr-rh$ for all $r\in R^*$;
\item $d(h)=0$.
\end{itemize}

 So $\biR^\cu=(R^*,d,h)$ is not a complex: the square of
the differential~$d$ is not equal to zero, but rather to
the commutator with~$h$.
 The element $h\in R^2$ is called the \emph{curvature element} of
a CDG\+ring $(R^*,d,h)$.
 We refer to~\cite[Section~3.2]{Prel}, \cite[Section~6]{Pksurv},
or~\cite[Sections~2.2 and~2.4]{Pedg} for further details on CDG\+rings,
including the (nontrivial!) definition of a morphism of CDG\+rings.

 A \emph{left CDG\+module} $\biM^\cu=(M^*,d_M)$ over a CDG\+ring $\biR^\cu$
is the following set of data:
\begin{itemize}
\item $M^*=\bigoplus_{n\in\boZ}M^n$ is a graded left $R^*$\+module;
\item $d_M$~is an odd derivation of degree~$1$ on the graded left
$R^*$\+module $M^*$ compatible with the derivation~$d$ on the graded
ring $R^*$, i.~e., an additive map $d_{M,n}\:M^n\rarrow M^{n+1}$ is
defined for all $n\in\boZ$ and satisfies the Leibniz rule with
signs $d_M(rx)=d(r)x+(-1)^{|r|}rd_M(x)$ for all $r\in R^{|r|}$ and
$x\in M^{|x|}$.
\end{itemize}
 The following equation describing the square of the differential~$d_M$
must be satisfied:
\begin{itemize}
\item $d_M^2(x)=hx$ for all $x\in M^*$.
\end{itemize}

 Similarly, a \emph{right CDG\+module} $\biN^\cu=(N^*,d_N)$ over
a CDG\+ring $\biR^\cu$ is the following set of data:
\begin{itemize}
\item $N^*=\bigoplus_{n\in\boZ}N^n$ is a graded right $R^*$\+module;
\item an additive map $d_{N,n}\:N^n\rarrow N^{n+1}$ is defined for
all $n\in\boZ$ and satisfies the Leibniz rule with signs
$d_N(yr)=d_N(y)r+(-1)^{|y|}yd(r)$ for all $r\in R^{|r|}$ and
$y\in N^{|y|}$.
\end{itemize}
 The next equation describing the square of the differential~$d_N$
must be satisfied:
\begin{itemize}
\item $d_N^2(y)=-yh$ for all $y\in N^*$.
\end{itemize}

 A \emph{DG\+ring} $\biR^\bu=(R^*,d)$ is a CDG\+ring with vanishing
curvature element, $h=0$.
 DG\+modules over a DG\+ring $(R^*,d)$ are the same things as
CDG\+modules over the CDG\+ring $(R^*,d,0)$.
 The definitions below show that the rather familiar construction of
the DG\+category $\biR^\bu\bModl$ of DG\+modules over a DG\+ring
$\biR^\bu$ can be extended to a construction of the DG\+category
of CDG\+modules $\biR^\cu\bModl$ over an arbitrary CDG\+ring $\biR^\cu$ in
a quite natural way.

 Let $R^*$ be a graded ring.
 We denote by $R^*\sModl$ the abelian category of graded left
$R^*$\+modules (and homogeneous morphisms of degree zero between them).
 Let us define the graded category $R^*\cModl$ of graded left
$R^*$\+modules.

 The objects of $R^*\cModl$ are the graded left $R^*$\+modules.
 Given two objects $L^*$, $M^*\in R^*\cModl$, the graded abelian group
of morphisms $\Hom^*_{R^*\cModl}(L^*,M^*)=\Hom_{R^*}^*(L^*,M^*)$ is
constructed by the following rule.
 For every $i\in\boZ$, the degree~$i$ component $\Hom_{R^*}^i(L^*,M^*)$
is the group of all homogeneous maps of graded abelian groups
$f\:L^*\rarrow M^*$ of degree~$i$ which are compatible with left
$R^*$\+module structures in the sense of the sign rule
$f(rz)=(-1)^{i|r|}rf(z)$ for all $r\in R^{|r|}$ and $z\in L^{|z|}$.
 The composition of morphisms and the unit morphisms in the graded
category $R^*\cModl$ are defined in the obvious way.

 Let $\biR^\cu=(R^*,d,h)$ be a CDG\+ring.
 The DG\+category $\biR^\cu\bModl$ of left CDG\+modules over $\biR^\cu$ is
constructed as follows.
 The objects of $\biR^\cu\bModl$ are the left CDG\+modules over~$\biR^\cu$.
 Given two left CDG\+modules $\biL^\cu=(L^*,d_L)$ and $\biM^\cu=(M^*,d_M)
\in\biR^\cu\bModl$, the underlying graded abelian group of the complex of 
morphisms $\Hom^\bu_{\biR^\cu\bModl}(\biL^\cu,\biM^\cu)=
\Hom^\bu_{\biR^\cu}(\biL^\cu,\biM^\cu)$ is the graded abelian group
$\Hom^*_{R^*}(L^*,M^*)$.
 The differential~$d$ in the complex
$\Hom^\bu_{\biR^\cu}(\biL^\cu,\biM^\cu)$ is defined by
the usual formula $d(f)(z)=d_M(f(z))-(-1)^{|f|}f(d_L(z))$
for all $f\in\Hom_{R^*}^{|f|}(L^*,M^*)$ and $z\in L^{|z|}$.

 By the definition, the composition of morphisms and the identity
morphisms in the DG\+category $\biR^\cu\bModl$ agree with those in
the graded category $R^*\cModl$.
 So $(\biR^\cu\bModl)^*$ is the full graded subcategory in $R^*\cModl$
whose objects are all the graded left $R^*$\+modules \emph{that
admit a structure of CDG\+module over~$\biR^\cu$}, and similarly,
$(\biR^\cu\bModl)^0$ is the full additive subcategory in $R^*\sModl$
whose objects are all the graded $R^*$\+modules \emph{that
admit a structure of CDG\+module over~$\biR^\cu$}.
 Not all the graded $R^*$\+modules can be endowed with such
a structure in general; see counterexamples
in~\cite[Examples~3.2 and~3.3]{Pedg}.

 All shifts, twists, and infinite products and coproducts (hence
also all finite direct sums and cones) exist in the DG\+category
$\biR^\cu\bModl$.
 The category $\sZ^0(\biR^\cu\bModl)$ of CDG\+modules over $\biR^\cu$ with
closed morphisms of degree~$0$ is abelian.
 In fact, $\sZ^0(\biR^\cu\bModl)$ is the abelian category of graded modules
over a graded ring denoted by $\widehat\biR^*$, as we will discuss
below in Section~\ref{almost-involution-CDG-modules-subsecn}.

\subsection{Example: DG-category of factorizations}
\label{DG-category-of-factorizations-defined-subsecn}
 The concept of a \emph{matrix factorization} of a polynomial, or
more generally, of a section of a line bundle on a scheme goes back
to the paper~\cite{Eisenbud}; see~\cite{Or,EP} for the nonaffine case.
 The following abstract category-theoretic version was suggested in
the papers~\cite[Section~6 and Appendix~A]{Ef} and~\cite{BDFIK}
(see~\cite[Section~2.5]{Pedg} for an even more general approach).

 We follow~\cite[Remark~2.7]{Pedg} (restricting ourselves to the case
of the grading group $\Gamma=\boZ$).
 Let $\sE$ be a preadditive category and $\Delta\:\sE\rarrow\sE$ be
an autoequivalence.
 Then the preadditive category $\sP(\sE,\Delta)$ of
\emph{$2$\+$\Delta$-periodic objects} in $\sE$ is constructed
as follows.
 An object $X^\circ\in\sP(\sE,\Delta)$ is a collection of objects
$(X^n\in\sE)_{n\in\boZ}$ endowed with isomorphisms
$\delta_X^{n+2,n}\:\Delta(X^n)\overset\simeq\rarrow X^{n+2}$ defined
for all $n\in\boZ$.
 The collection of all objects $(X^n\in\sE)_{n\in\boZ}$ with
the periodicity isomorphisms~$\delta_X^{n+2,n}$ forgotten defines
the underlying graded object $X^*\in\sG(\sE)$ of
a $2$\+$\Delta$-periodic object $X^\circ\in\sP(\sE,\Delta)$.

 For any two objects $X^\circ$ and $Y^\circ\in\sP(\sE,\Delta)$,
the abelian group $\Hom_{\sP(\sE,\Delta)}(X^\circ,Y^\circ)$ is
defined as the subgroup in $\Hom_{\sG(\sE)}(X^*,Y^*)$ consisting of
all the morphisms $(f_n\:X^n\to Y^n)_{n\in\boZ}$ satisfying
the equations $\delta_Y^{n+2,n}\circ\Delta(f_n)=
f_{n+2}\circ\delta_X^{n+2,n}$ for all $n\in\boZ$.
 The category $\sP(\sE,\Delta)$ is naturally equivalent to
the Cartesian product $\sE\times\sE$ of two copies of
the category $\sE$, the equivalence being provided by the functor
taking a $2$\+$\Delta$-periodic object $X^\circ\in\sP(\sE,\Delta)$
to the pair of objects $(X^0,X^1)\in\sE\times\sE$.

 The shift functor on $\sP(\sE,\Delta)$ is defined by the rule
$X^\circ[i]^n=X^{n+i}$ and $\delta^{n,n+2}_{X[i]}=\delta_X^{n+i,n+i+2}$
for all $i$, $n\in\boZ$.
 The graded category $\cP(\sE,\Delta)$ is constructed as follows.
 The objects of $\cP(\sE,\Delta)$ are the objects of $\sP(\sE,\Delta)$,
i.~e., the $2$\+$\Delta$-periodic objects in~$\sE$.
 The graded abelian group of morphisms $\Hom_{\cP(\sE,\Delta)}^*
(X^\circ,Y^\circ)$ has the grading components $\Hom^i_{\cP(\sE,\Delta)}
(X^\circ,Y^\circ)=\Hom_{\sP(\sE,\Delta)}(X^\circ,Y^\circ[i])$ for
all $i\in\boZ$.
 By the definition, one has $(\cP(\sE,\Delta))^0=\sP(\sE,\Delta)$.
 The composition of morphisms in the graded category $\cP(\sE,\Delta)$
agrees with the one in the graded category~$\cG(\sE)$.
 \emph{Unlike} the preadditive category $\sP(\sE,\Delta)$, the graded
category $\cP(\sE,\Delta)$ is not determined by the category $\sE$
alone; it depends on the autoequivalence~$\Delta$.

 A \emph{potential}~$v$ for an autoequivalence $\Delta\:\sE\rarrow\sE$
is a natural transformation $v\:\Id_\sE\rarrow\Delta$ satisfying
the equation $v_{\Delta(E)}=\Delta(v_E)$ for all $E\in\sE$.
 Given a potential~$v$, the DG\+category $\bF(\sE,\Delta,v)$ of 
\emph{factorizations of~$v$} in $\sE$ is constructed as follows.
 A factorization $\biN^\cu=(N^\circ,d_N)$ (in the sense of~\cite{BDFIK})
is a $2$\+$\Delta$-periodic object endowed with a homogeneous
endomorphism $d_N\in\Hom_{\cP(\sE,\Delta)}^1(N,N)$ such that $d_N^2=
\delta_Nv_N$, that is, for every $n\in\boZ$, the composition
$d_{N,n+1}d_{N,n}\:N^{n}\rarrow N^{n+1}\rarrow N^{n+2}$ is equal to
the composition $\delta_N^{n,n+2}v_{N^n}\:N^n\rarrow\Delta(N^n)
\rarrow N^{n+2}$.
 Given two factorizations $L^\cu=(L^\circ,d_L)$ and $M^\cu=
(M^\circ,d_M)$ of the same potential~$v$, the underlying graded abelian
group of the complex $\Hom^\bu_{\bF(\sE,\Delta,v)}(L^\cu,M^\cu)$
is the graded abelian group $\Hom^*_{\cP(\sE,\Delta)}(L^\circ,M^\circ)$.
 The differential~$d$ in the complex
$\Hom^\bu_{\bF(\sE,\Delta,v)}(L^\cu,M^\cu)$ is given by the usual
formula $d(f)=d_M\circ f-(-1)^{|f|}f\circ d_L$.
 One can check that $d^2=0$.

 By the definition, the composition of morphisms and the identity
morphisms in the DG\+category $\bF(\sE,\Delta,v)$ agree with those
in the graded category $\cP(\sE,\Delta)$.
 So $\bF(\sE,\Delta,v)^*$ is the full graded subcategory in
$\cP(\sE,\Delta)$ whose objects are all the $2$\+$\Delta$-periodic
objects in~$\sE$ \emph{that admit a differential defining a structure
of factorization of~$v$}, and similarly, $\bF(\sE,\Delta,v)^0$
is the full preadditive subcategory in $\sP(\sE,\Delta)$ whose objects
are all the $2$\+$\Delta$-periodic objects in~$\sE$ \emph{that admit
a differential defining a structure of factorization of~$v$}.
 Not all the $2$\+$\Delta$-periodic objects in~$\sE$ can be endowed
with such a structure in general, as illustrated
by~\cite[Examples~3.2]{Pedg} interpreted in the context of
factorizations; see~\cite[Example~3.19]{Pedg} for a discussion.

 All shifts and twists exist in the DG\+category $\bF(\sE,\Delta,v)$.
 When the category $\sE$ is additive, so is the DG\+category
$\bF(\sE,\Delta,v)$; hence this DG\+category also has cones.
 When the category $\sE$ has infinite (co)products, so does
the DG\+category $\bF(\sE,\Delta,v)$.

 The notation for the category of closed morphisms in
$\bF(\sE,\Delta,v)$ is $\sF(\sE,\Delta,v)=\sZ^0(\bF(\sE,\Delta,v))$.
 This is the preadditive category of factorizations of~$v$ in~$\sE$.
 The notation for the homotopy category is $\sK(\sE,\Delta,v)=
\sH^0(\bF(\sE,\Delta,v))$.

\Section{The Almost Involution on DG-Categories}
\label{almost-involution-secn}

 This section is an extraction from~\cite[Section~3]{Pedg}.
 Most proofs are omitted and replaced with references to~\cite{Pedg}.

\subsection{The DG-category $\bE^\bec$}
\label{bec-construction-subsecn}
 Let $\bE$ be a DG\+category.
 The DG\+category $\bE^\bec$ is constructed as follows.
 The objects of $\bE^\bec$ are pairs $X^\bec=(X,\sigma_X)$, where
$X$ is an object of $\bE$ and $\sigma_X\in\Hom^{-1}_\bE(X,X)$ is
an endomorphism of degree~$-1$ such that $d(\sigma_X)=\id_X$ and
$\sigma_X^2=0$.
 In other words, $\sigma_X$~is a contracting homotopy for the object
$X\in\bE$ satisfying the additional condition that the square
of~$\sigma_X$ vanishes in $\Hom^{-2}_\bE(X,X)$.

 The construction of the complex of morphisms
$\Hom_{\bE^\bec}^\bu(X^\bec,Y^\bec)$ for a pair of objects $X^\bec$,
$Y^\bec\in\bE^\bec$ involves a change of the sign of the cohomological
grading.
 For every $n\in\boZ$, the group $\Hom_{\bE^\bec}^n(X^\bec,Y^\bec)$
consists of all the \emph{cocycles} in the group
$\Hom_\bE^{-n}(X,Y)$, i.~e., all elements $f\in\Hom_\bE^{-n}(X,Y)$
annihilated by the differential $d_{-n}\:\Hom_\bE^{-n}(X,Y)\rarrow
\Hom_\bE^{-n+1}(X,Y)$.
 The differential~$d^\bec$ in the complex $\Hom_{\bE^\bec}^\bu
(X^\bec,Y^\bec)$ is defined as the commutator with the contracting
homotopies~$\sigma$, that is, $d^\bec(f)=\sigma_Yf-(-1)^nf\sigma_X$.
 The composition of morphisms in the DG\+category $\bE^\bec$ is
induced by the composition of morphisms in $\bE$ in the obvious way,
and the identity morphisms in $\bE^\bec$ are the identity morphisms
in $\bE$, i.~e., $\id_{X^\bec}=\id_X$.

 All twists exist in the DG\+category~$\bE^\bec$.
 Given an object $X^\bec=(X,\sigma_X)\in\bE^\bec$ and
a Maurer--Cartan cochain $a\in\Hom^1_{\bE^\bec}(X^\bec,X^\bec)\subset
\Hom^{-1}_\bE(X,X)$, the twisted object $X^\bec(a)$ is constructed
as $X^\bec(a)=(X,\sigma_X+a)$.
 Finite direct sums, as well as infinite products and/or coproducts,
exist in the DG\+category $\bE^\bec$ whenever they exist in
the DG\+category~$\bE$, and are constructed in the obvious way.
 Shifts exists in $\bE^\bec$ whenever they exist in~$\bE$; here one
has to observe the grading sign change: given an object $X^\bec=
(X,\sigma_X)\in\bE^\bec$, the object $X^\bec[1]$ is given by
the rule $X^\bec[1]=(X[-1],-\sigma_X[-1])$.
 Consequently, the DG\+category $\bE^\bec$ has cones whenever
the DG\+category $\bE$ has shifts and finite direct
sums~\cite[Section~3.2]{Pedg}.

 To be more precise, let us spell out the sign rule (or lack
thereof) in the definition of the morphism
$\sigma_X[-1]\in\Hom_\bE^{-1}(X[-1],X[-1])$ in the formula
for $X^\bec[1]$ above.
 Given an object $A\in\bE$, the object $A[-1]\in\bE$ comes together
with a pair of closed morphisms $s_A\in\Hom_\bE^1(A,A[-1])$
and $t_A\in\Hom_\bE^{-1}(A[-1],A)$ such that $t_As_A=\id_A$
and $s_At_A=\id_{A[-1]}$.
 For any morphism $f\in\Hom^n_\bE(A,B)$ in $\bE$, \,$n\in\boZ$,
we put $f[-1]=s_Bft_A\in\Hom^n_\bE(A[-1],B([-1])$.

\subsection{The functors $\Phi$ and $\Psi^{\pm}$}
\label{Phi-and-Psi-subsecn}
 There is a doubly infinite ladder of adjoint functors acting between
the additive categories $\sZ^0(\bE)$ and $\sZ^0(\bE^\bec)$ of closed
morphisms of degree~$0$ in the DG\+categories $\bE$ and~$\bE^\bec$.

 The functor $\Phi_\bE\:\sZ^0(\bE)\rarrow\sZ^0(\bE^\bec)$ is
\emph{interpreted} as the forgetful functor assigning to a complex,
CDG\+module, or factorization its underlying graded object/module,
while the functors $\Psi^+_\bE$ and $\Psi^-_\bE\:\sZ^0(\bE^\bec)
\rarrow\sZ^0(\bE)$ left and right adjoint to $\Phi_\bE$ are interpreted
as assigning to a graded module the CDG\+module (co)freely (co)generated
by it (see the discussion of examples in
Sections~\ref{almost-involution-complexes-subsecn}--%
\ref{almost-involution-factorizations-subsecn} below).
 The constructions of the three functors do not immediately suggest
this somewhat counterintuitive interpretation, as it is the functor
$\Psi^+_\bE$ that is \emph{constructed} as a forgetful functor,
while the construction of the functor $\Phi_\bE$ is more complicated.

 In fact, up to adjoining twists and direct summands to
the DG\+categories, the roles of the three functors are completely
symmetric, as we will see in Section~\ref{DG-functor-becbec-subsecn}.

 Let $\bE$ be a DG\+category with shifts and cones.
 Let $A\in\bE$ be an object, and let $L$ be the cone of the identity
endomorphism of the object $A[-1]\in\bE$.
 The object $L$ is defined in terms of its structure morphisms
$\iota\in\Hom_\bE^1(A,L)$, \, $\pi\in\Hom^0_\bE(L,A)$, \
$\pi'\in\Hom_\bE^{-1}(L,A)$, and $\iota'\in\Hom_\bE^0(A,L)$
satisfying the equations
\begin{gather*}
 \pi'\iota'=0=\pi\iota, \quad
 \pi'\iota=\id_A=\pi\iota', \quad
 \iota\pi'+\iota'\pi=\id_L, \\
 d(\iota)=0=d(\pi), \quad d(\pi')=\pi, \quad d(\iota')=\iota.
\end{gather*}

 Put $\sigma_L=\iota'\pi'\in\Hom_\bE^{-1}(L,L)$.
 Then $L^\bec=(L,\sigma_L)$ is an object of
the DG\+cat\-e\-gory~$\bE^\bec$.
 We put $\Phi_\bE(A)=L^\bec$.
 The action of the functor $\Phi_\bE$ on morphisms in $\sZ^0(\bE)$
(i.~e., on the closed morphisms of degree~$0$ in~$\bE$) is defined
in the obvious way.

 The functors $\Psi^+_\bE$ and $\Psi^-_\bE$ are defined by the rules
$\Psi^+(X^\bec)=X$ and $\Psi^-(X^\bec)=X[1]$ for all objects
$X^\bec=(X,\sigma_X)\in\bE^\bec$.
 It is explained in~\cite[proof of Lemma~3.4]{Pedg} that the functor
$\Psi^+$ is left adjoint to $\Phi$, while the functor $\Psi^-$ is
right adjoint to~$\Phi$.

 All the three functors $\Psi^+$, $\Psi^-$, and $\Phi$ are faithful.
 They are also conservative~\cite[Lemma~3.12]{Pedg}.
 The functors $\Phi$ and $\Psi^\pm$ transform the shift functors~$[n]$,
\,$n\in\boZ$ on the DG\+categories $\bE$ and $\bE^\bec$ into
the inverse shift functors~\cite[Lemma~3.11]{Pedg}:
$$
 \Phi\circ[n]=[-n]\circ\Phi, \quad
 \Psi^+\circ[n]=[-n]\circ\Psi^+, \quad
 \Psi^-\circ[n]=[-n]\circ\Psi^-.
$$

 The compositions of the functors $\Phi$ and $\Psi^\pm$ are computable
as follows.
 For any DG\+category $\bE$ with shifts and cones, denote by $\Xi=
\Xi_\bE\:\sZ^0(\bE)\rarrow\sZ^0(\bE)$ the additive functor taking
an object $A\in\bE$ to the object $\Xi(A)=\cone(\id_A[-1])$ (and
acting on the morphisms in the obvious way).
 Then there are natural isomorphisms of
additive functors~\cite[Lemma~3.8]{Pedg}
$$
 \Psi^+_\bE\circ\Phi_\bE=\Xi_\bE
$$
and
$$
 \Phi_\bE\circ\Psi^-_\bE=\Xi_{\bE^\bec}.
$$

\subsection{The functors $\widetilde\Phi$ and $\widetilde\Psi^{\pm}$}
\label{tilde-Phi-and-Psi-subsecn}
 The additive functor $\Psi^+\:\sZ^0(\bE^\bec)\rarrow\sZ^0(\bE)$ can be
naturally extended to a fully faithful additive functor
$$
 \widetilde\Psi^+\:(\bE^\bec)^0\lrarrow\sZ^0(\bE).
$$
 The action of the functor $\widetilde\Psi^+$ on the objects is given
by the obvious rule $\widetilde\Psi^+(X^\bec)=X$ for any $X^\bec=
(X,\sigma_X)\in\bE^\bec$.
 The functor $\widetilde\Psi^+$ acts on the morphisms by the natural
isomorphism
$$
 \Hom_{(\bE^\bec)^0}(X^\bec,Y^\bec)=\Hom_{\sZ^0(\bE)}(X,Y)
 \quad\text{for all $X^\bec=(X,\sigma_X)$ and
 $Y^\bec=(Y,\sigma_Y)\in\bE^\bec$},
$$
which is a part of the definition of the DG\+category~$\bE^\bec$.

 Similarly, the additive functor $\Psi^-\:\sZ^0(\bE^\bec)\rarrow
\sZ^0(\bE)$ can be naturally extended to a fully faithful additive
functor
$$
 \widetilde\Psi^-\:(\bE^\bec)^0\lrarrow\sZ^0(\bE).
$$
 The functor $\widetilde\Psi^-$ is constructed as $\widetilde\Psi^-=
\widetilde\Psi^+[1]$.
 
 Moreover, the additive functor $\Phi\:\sZ^0(\bE)\rarrow\sZ^0(\bE^\bec)$
can be naturally extended to a fully faithful additive functor
$$
 \widetilde\Phi\:\bE^0\lrarrow\sZ^0(\bE^\bec).
$$

 On objects, the functor $\widetilde\Phi$ is defined by the obvious
rule $\widetilde\Phi(A)=\Phi(A)=\cone(\id_A[-1])$ for all
$A\in\bE$.
 To construct the action of the functor $\widetilde\Phi$ on morphisms,
suppose given a morphism $f\in\Hom^0_\bE(A,B)$ for some objects
$A$, $B\in\bE$.
 Put $L=\cone(\id_A[-1])$ and $M=\cone(\id_B[-1])$.
 Let $\iota_A$, $\pi_A$, $\iota'_A$, $\pi'_A$ and $\iota_B$, $\pi_B$,
$\iota'_B$, $\pi'_B$ be the related morphisms
(as in Section~\ref{Phi-and-Psi-subsecn}).
 Then we put
$$
 \widetilde\Phi(f)=g=\iota'_Bf\pi_A+\iota_Bf\pi'_A+\iota'_Bd(f)\pi'_A
 \,\in\,\Hom_{\sZ^0(\bE^\bec)}(L^\bec,M^\bec)\subset\Hom_\bE^0(L,M),
$$
where $L^\bec=(L,\sigma_L)=\Phi(A)$ and $M^\bec=(M,\sigma_M)=\Phi(B)$.
 One has to check that $d(g)=0$, \,$d^\bec(g)=0$, \
$\widetilde\Phi(\id_A)=\id_{\Phi(A)}$, and
$\widetilde\Phi(f_1\circ f_2)=
\widetilde\Phi(f_1)\circ\widetilde\Phi(f_2)$ for any pair of composable
morphisms $f_1$, $f_2$ in~$\bE^0$.
 Then one also has to check that the assignment $f\longmapsto g$ is
an isomorphism of the Hom groups
$$
 \Hom_\bE^0(A,B)\simeq\Hom_{\sZ^0(\bE)}(L^\bec,M^\bec).
$$
 This is the result of~\cite[Lemma~3.9]{Pedg}.

 It follows from the existence of the functors $\widetilde\Phi$ and
$\widetilde\Psi^\pm$ that the functors $\Phi$ and $\Psi^\pm$ transform
twists into isomorphisms~\cite[Lemma~3.11]{Pedg}.

 Notice that the fully faithful functors $\widetilde\Phi$ and
$\widetilde\Psi^\pm$ usually do \emph{not} have either left or right
adjoints.
 Indeed, these functors are embeddings of pretty badly behaved full
subcategories, in general~\cite[Examples~3.2 and~3.3]{Pedg}.

\subsection{The DG-functor $\bec\bec$}
\label{DG-functor-becbec-subsecn}
 For any DG\+category $\bE$ with shifts and cones, there is a naturally
defined fully faithful DG\+functor $\bec\bec\:\bE\rarrow\bE^{\bec\bec}$.
 In order to construct the DG\+functor~$\bec\bec$, let us first
describe the DG\+category $\bE^{\bec\bec}$ more explicitly.

 The objects of $\bE^{\bec\bec}$ are triples $W^{\bec\bec}=
(W,\sigma,\tau)$, where $W$ is an object of $\bE$ endowed with
two endomorphisms $\sigma\in\Hom_\bE^{-1}(W,W)$ and
$\tau\in\Hom_\bE^1(W,W)$.
 The morphisms $\sigma$ and~$\tau$ must satisfy the equations
$$
 \sigma^2=0=\tau^2, \quad \sigma\tau+\tau\sigma=\id_W, \quad
 d(\sigma)=\id_W, \quad d(\tau)=0.
$$
 Here $W^\bec=(W,\sigma)$ is an object of $\bE^\bec$, and
$\tau\in\Hom^{-1}_{\bE^\bec}(W^\bec,W^\bec)$ is an endomorphism with
$d^\bec(\tau)=\sigma\tau+\tau\sigma=\id_W$ and $\tau^2=0$.

 Given two objects $U^{\bec\bec}=(U,\sigma_U,\tau_U)$ and
$V^{\bec\bec}=(V,\sigma_V,\tau_U)\in\bE^{\bec\bec}$, the complex
of morphisms $\Hom_{\bE^{\bec\bec}}^\bu(U^{\bec\bec},V^{\bec\bec})$
is described as follows.
 For every $n\in\boZ$, the group $\Hom_{\bE^{\bec\bec}}^n
(U^{\bec\bec},V^{\bec\bec})$ is a subgroup in $\Hom_\bE^n(U,V)$
consisting of all the morphisms $f\:U\rarrow V$ of degree~$n$ in $\bE$
such that $d(f)=0$ and $d^\bec(f)=\sigma_V f-(-1)^nf\sigma_U=0$.
 The differential~$d^{\bec\bec}$ on $\Hom_{\bE^{\bec\bec}}^\bu
(U^{\bec\bec},V^{\bec\bec})$ is given by the rule
$d^{\bec\bec}(f)=\tau_V f-(-1)^nf\tau_U$.
 The composition of morphisms in $\bE^{\bec\bec}$ is induced by
the composition of morphisms in $\bE$ in the obvious way.

 The DG\+functor~$\bec\bec$ assigns to an object $A\in\bE$
the object $L=\cone(\id_A[-1])$ endowed with the endomorphisms
$\sigma_L=\iota'\pi'\in\Hom_\bE^{-1}(L,L)$ and
$\tau_L=\iota\pi\in\Hom_\bE^1(L,L)$, in the notation from
Section~\ref{Phi-and-Psi-subsecn}.
 So $\bec\bec(A)=(L,\sigma_L,\tau_L)$.

 To define the action of the DG\+functor~$\bec\bec$ on morphisms,
consider two objects $A$ and $B\in\bE$.
 Put $L=\cone(\id_A[-1])$ and $M=\cone(\id_B[-1])$, and denote
by $\iota_A$, $\pi_A$, $\iota'_A$, $\pi'_A$ and $\iota_B$, $\pi_B$,
$\iota'_B$, $\pi'_B$ the related morphisms as in
Sections~\ref{Phi-and-Psi-subsecn}--\ref{tilde-Phi-and-Psi-subsecn}.
 Then the DG\+functor~$\bec\bec$ assigns to a morphism
$f\in\Hom_\bE^n(A,B)$ the morphism $g=\bec\bec(f)\in
\Hom_{\bE^{\bec\bec}}^n(\bec\bec(A),\bec\bec(B))\subset
\Hom_\bE^n(L,M)$ given by the formula
$$
 \bec\bec(f)=g=(-1)^n\iota'_Bf\pi_A+\iota_Bf\pi'_A+\iota'_Bd(f)\pi'_A.
$$
 One can check that $d(g)=0$, \,$d^\bec(g)=0$, \, $d^{\bec\bec}(g)
=\bec\bec(df)$, and $\bec\bec(f_1\circ f_2)=\bec\bec(f_1)\circ
\bec\bec(f_2)$ for any composable pair of morphisms $f_1$, $f_2$
in~$\bE$.
 It is a result of~\cite[Proposition~3.5]{Pedg} that
the DG\+functor~$\bec\bec$ is fully faithful.

 An additive DG\+category $\bE$ is said to be \emph{idempotent-complete}
if the additive category $\sZ^0(\bE)$ is idempotent-complete (i.~e.,
any idempotent endomorphism in $\sZ^0(\bE)$ arises from a direct sum
decomposition).
 Given an additive DG\+category $\bE$, the additive DG\+category
$\bE^\bec$ is idempotent-complete whenever the additive DG\+category
$\bE$~is.
 If $\bE$ is an idempotent-complete additive DG\+category with twists,
then the DG\+functor~$\bec\bec$ is an equivalence of DG\+categories.
 Generally speaking, the DG\+cat\-e\-gory $\bE^{\bec\bec}$ is obtained
from the DG\+category $\bE$ by adjoining all twists and some of their
direct summands~\cite[Proposition~3.14]{Pedg}.

 For any DG\+category $\bE$ with shifts and cones, there are natural
isomorphisms of additive functors
$$
 \Psi^+_{\bE^\bec}\circ\sZ^0(\bec\bec)\simeq\Phi_\bE
$$
and
$$
 \sZ^0(\bec\bec)\circ\Psi^-_\bE\simeq\Phi_{\bE^\bec}
$$
showing that the ``almost involution'' $\bE\longmapsto\bE^{\bec\bec}$
interchanges the roles of the functors $\Phi$ and~$\Psi^\pm$
\cite[Lemmas~3.6 and~3.7]{Pedg}.
 Moreover, one has~\cite[Remark~3.10]{Pedg}
$$
 \widetilde\Psi^+_{\bE^\bec}\circ(\bec\bec)^0\simeq
 \widetilde\Phi_\bE
$$
and
$$
 \sZ^0(\bec\bec)\circ\widetilde\Psi_\bE^-\simeq
 \widetilde\Phi_{\bE^\bec}.
$$

\subsection{Example: DG-category of complexes}
\label{almost-involution-complexes-subsecn}
 Let $\sE$ be an additive category.
 Then the DG\+category $\bC(\sE)$ of complexes in~$\sE$
(see Section~\ref{DG-category-of-complexes-defined-subsecn})
is an additive DG\+category with shifts, twists, and cones.
 Recall the notation $\sG(\sE)=\sE^\boZ$ for the additive category
of graded objects in~$\sE$ and $\sC(\sE)=\sZ^0(\bC(\sE))$ for
the additive category of complexes in~$\sE$.

 The forgetful functor $X^\bu\longmapsto X^\bu{}^\#\:\sC(\sE)\rarrow
\sG(\sE)$, assigning to a complex $X^\bu$ in $\sE$ its underlying
graded object $X^*=X^\bu{}^\#$, has adjoints on both sides.
 The functor $G^+\:\sG(\sE)\rarrow\sC(\sE)$ left adjoint to
$X^\bu\longmapsto X^\bu{}^\#$ assigns to a graded object
$E^*\in\sG(\sE)$ the complex $G^+(E^*)$ freely generated by~$E^*$.
 Explicitly, one has $G^+(E^*)^n=E^n\oplus E^{n-1}$ for all $n\in\boZ$,
and the differential $d_{G^+,n}\:E^n\oplus E^{n-1}\rarrow E^{n+1}
\oplus E^n$ is given by the $2\times2$ matrix of morphisms whose only
nonzero entry is the identity map $E^n\rarrow E^n$.

 The functor $G^-\:\sG(\sE)\rarrow\sC(\sE)$ left adjoint to
$X^\bu\longmapsto X^\bu{}^\#$ assigns to a graded object
$E^*\in\sG(\sE)$ the complex $G^-(E^*)$ cofreely cogenerated by~$E^*$.
 Explicitly, $G^-(E^*)^n=E^n\oplus E^{n+1}$ for all $n\in\boZ$,
and the differential $d_{G^-,n}\:E^n\oplus E^{n+1}\rarrow E^{n+1}
\oplus E^{n+2}$ is given by the $2\times2$ matrix of morphisms whose
only nonzero entry is the identity map $E^{n+1}\rarrow E^{n+1}$.
 So one has $G^-(E^*)\simeq G^+(E^*)[1]$.

 Let us assume that the additive category $\sE$ is idempotent-complete;
then so is the DG\+category $\bC(\sE)$.
 In this case, there is a natural equivalence of additive categories
$$
 \Upsilon=\Upsilon_\sE\:\sG(\sE)\simeq\sZ^0(\bC(\sE)^\bec)
$$
\cite[Example~3.16]{Pedg}.
 The equivalence of categories $\Upsilon$ forms the following
commutative diagrams of additive functors with the functors $\#$,
$\Phi$, $\widetilde\Phi$, $G^\pm$, and~$\Psi^\pm$:
\begin{equation} \label{complexes-Phi-diagram}
\begin{gathered}
 \xymatrix{
  \sC(\sE) \ar@{=}[rr] \ar[d]_{\#}
  && \sZ^0(\bC(\sE)) \ar[d]^{\Phi_{\bC(\sE)}} \\
  \sG(\sE) \ar@<-0.4ex>[rr]_-{\Upsilon_\sE}
  && \sZ^0(\bC(\sE)^\bec) \ar@<-0.4ex>@{-}[ll]
 }
\end{gathered}
\end{equation}
\begin{equation} \label{complexes-tilde-Phi-diagram}
\begin{gathered}
 \xymatrix{
  & \bC(\sE)^0 \ar@<-0.4ex>[ld]
  \ar@<0.4ex>[rd]^-{\widetilde\Phi_{\bC(\sE)}} \\
  \sG(\sE) \ar@<-0.4ex>[rr]_-{\Upsilon_\sE} \ar@<-0.4ex>@{-}[ru]
  && \sZ^0(\bC(\sE)^\bec) \ar@<-0.4ex>@{-}[ll] \ar@<0.4ex>@{-}[lu]
 }
\end{gathered}
\end{equation}
and
\begin{equation} \label{complexes-Psi-diagram}
\begin{gathered}
 \xymatrix{
  \sG(\sE) \ar@<0.4ex>[rr]^-{\Upsilon_\sE} \ar[d]_{G^\pm}
  && \sZ^0(\bC(\sE)^\bec) \ar[d]^{\Psi_{\bC(\sE)}^\pm}
  \ar@<0.4ex>@{-}[ll]
  \\ \sC(\sE)\ar@{=}[rr] && \sZ^0(\bC(\sE))
 }
\end{gathered}
\end{equation}
 Here the upper horizontal double line
in~\eqref{complexes-Phi-diagram} and the lower horizontal double
line in~\eqref{complexes-Psi-diagram} is essentially the definition
of the additive category of complexes~$\sC(\sE)$.
 The leftmost diagonal double arrow
in~\eqref{complexes-tilde-Phi-diagram} is the obvious equivalence
of additive categories mentioned in
Section~\ref{DG-category-of-complexes-defined-subsecn}.
 The leftmost vertical arrow in~\eqref{complexes-Phi-diagram}
is the forgetful functor $X^\bu\longmapsto X^\bu{}^\#=X^*$.
 There are, actually, two commutative diagrams depicted
in~\eqref{complexes-Psi-diagram}: one has to choose either
$G^+$ for the leftmost vertical arrow and $\Psi^+$ for the rightmost
one, or $G^-$ for the leftmost vertical arrow and $\Psi^-$ for
the rightmost one.
 All the double lines and double arrows are category equivalences;
all the ordinary arrows are faithful functors~\cite[Example~3.16]{Pedg}.

 All objects of the DG\+category $\bC(\sE)^\bec$ are
contractible~\cite[Example~3.16]{Pedg}; so the DG\+category
$\bC(\sE)^\bec$ is quasi-equivalent to the zero DG\+category.
 Still the DG\+category $\bC(\sE)^{\bec\bec}$ is equivalent to
the DG\+category $\bC(\sE)$.
 So the passage from a DG\+category $\bE$ to the DG\+category
$\bE^\bec$ does \emph{not} preserve quasi-equivalences.

\subsection{Example: DG-category of CDG-modules}
\label{almost-involution-CDG-modules-subsecn}
 Let $\biR^\cu=(R^*,d,h)$ be a CDG\+ring.
 Then the DG\+category $\biR^\cu\bModl$ of left CDG\+modules
over $\biR^\cu$
(see Section~\ref{DG-category-of-CDG-modules-defined-subsecn})
is an additive DG\+category with shifts, twists, and cones
(as well as infinite products and coproducts).
 Recall the notation $R^*\sModl$ for the abelian category of graded
left modules over the graded ring~$R^*$.

 The forgetful functor $\biM^\cu\longmapsto\biM^\cu{}^\#\:
\sZ^0(\biR^\cu\bModl)\rarrow R^*\sModl$, assigning to a CDG\+module
$\biM^\cu=(M^*,d_M)$ over $\biR^\cu$ its underlying graded $R^*$\+module
$M^*=\biM^\cu{}^\#$, has adjoints on both sides.
 The functor $G^+\:R^*\sModl\rarrow\sZ^0(\biR^\cu\bModl)$ left adjoint
to $\biM^\cu\longmapsto\biM^\cu{}^\#$ assigns to a graded $R^*$\+module
$M^*$ the CDG\+module $G^+(M^*)$ freely generated by~$M^*$.
 The functor $G^-\:R^*\sModl\rarrow\sZ^0(\biR^\cu\bModl)$ right adjoint
to $\biM^\cu\longmapsto\biM^\cu{}^\#$ assigns to a graded $R^*$\+module
$M^*$ the CDG\+module $G^-(M^*)$ cofreely cogenerated by~$M^*$.

 Explicit constructions of the functors $G^+$ and $G^-$ for
CDG\+modules over a CDG\+ring can be found in~\cite[proof of
Theorem~3.6]{Pkoszul} or~\cite[Proposition~3.1]{Pedg}
(see also~\cite[Proposition~1.3.2]{Bec}).
 For any graded left $R^*$\+module $M^*$, there is a natural
isomorphism of CDG\+modules $G^-(M)\simeq G^+(M)[1]$
\,\cite[Proposition~3.1(c)]{Pedg}.

 The abelian category $\sZ^0(\biR^\cu\bModl)$ of left CDG\+modules over
a CDG\+ring $\biR^\cu=(R^*,d,h)$ can be described as the category of graded
modules over the following graded ring~$R^*[\delta]$.
 The ring $R^*[\delta]$ is obtained by adjoining to the graded ring
$R^*$ a new element~$\delta$, which is homogeneous of degree~$1$ and
subject to the relations
\begin{itemize}
 \item $\delta r-(-1)^{|r|}r\delta=d(r)$ for all $r\in R^{|r|}$,
\,$|r|\in\boZ$;
 \item $\delta^2=h$.
\end{itemize}

 The ring $R^*$ is a graded subring in~$R^*[\delta]$.
 Viewed as either a graded left $R^*$\+module or a graded right
$R^*$\+module, the graded ring $R^*[\delta]$ is a free graded $R^*$\+module
with two generators~$1$ and~$\delta$.
 We refer to~\cite[Section~3.1]{Pedg} or~\cite[Section~4.2]{Prel} for
a further discussion of this construction.
 Given a left CDG\+module $\biM^\cu=(M^*,d_M)$ over $\biR^\cu$, the action
of the graded ring $R^*$ in $M^*$ is extended to an action of
the graded ring $R^*[\delta]$ by the obvious rule $\delta\cdot x=d_M(x)$
for all $x\in M^*$.

 The graded ring $R^*[\delta]$ is endowed with an odd derivation
$\dd=\dd/\dd\delta$ of degree~$-1$ defined by the rules $\dd(r)=0$
for all $r\in R^*$ and $\dd(\delta)=1$.
 In order to make~$\dd$ a cohomological differential, we follow
the notation in~\cite{Pedg} and denote by $\widehat\biR^*$
the graded ring $R^*[\delta]$ with the sign of the grading changed:
$\widehat\biR^n=R^*[\delta]^{-n}$ for all $n\in\boZ$.
 Then the graded ring $\widehat\biR^*$ with the differential~$\dd$
becomes a DG\+ring, which we will denote by $\widehat\biR^\bu
=(\widehat\biR^*,\dd)$.
 Notice that $\widehat\biR^\bu$ is an \emph{acyclic} DG\+ring:
one has $H_\dd(\widehat\biR^\bu)=0$, since the unit element vanishes
in the cohomology ring $H_\dd(\widehat\biR^\bu)$.

 The DG\+category $(\biR^\cu\bModl)^\bec$ is naturally equivalent to
the DG\+category $\widehat\biR^\bu\bModl$ of DG\+modules over
the DG\+ring $\widehat\biR^\bu$.
 Furthermore, there is a natural equivalence of abelian categories
$$
 \Upsilon=\Upsilon_{\biR^\cu}\:R^*\sModl\simeq\sZ^0((\biR^\cu\bModl)^\bec)
$$
\cite[Example~3.17]{Pedg}.
 The equivalence of categories $\Upsilon_{\biR^\cu}$ transforms
the functor~$\#$ into the functor $\Phi$, the functors $G^\pm$ into
the functors $\Psi^\pm$, and the natural inclusion
$(\biR^\cu\bModl)^0\rarrow R^*\sModl$ into the functor~$\widetilde\Phi$.
 In other words, there are the following commutative diagrams of
additive functors:
\begin{equation} \label{cdg-modules-Phi-diagram}
\begin{gathered}
 \xymatrix{
  & \sZ^0(\biR^\cu\bModl) \ar[ld]_-{\#}
  \ar[rd]^-{\Phi_{\biR^\cu\bModl}} \\
  R^*\sModl \ar@<-0.4ex>[rr]_-{\Upsilon_{\biR^\cu}}
  && \sZ^0((\biR^\cu\bModl)^\bec) \ar@<-0.4ex>@{-}[ll]
 }
\end{gathered}
\end{equation}
\begin{equation} \label{cdg-modules-tilde-Phi-diagram}
\begin{gathered}
 \xymatrix{
  & (\biR^\cu\bModl)^0 \ar@{>->}[ld]
  \ar@{>->}[rd]^-{\widetilde\Phi_{\biR^\cu\bModl}} \\
  R^*\sModl \ar@<-0.4ex>[rr]_-{\Upsilon_{\biR^\cu}}
  && \sZ^0((\biR^\cu\bModl)^\bec) \ar@<-0.4ex>@{-}[ll]
 }
\end{gathered}
\end{equation}
and
\begin{equation} \label{cdg-modules-Psi-diagram}
\begin{gathered}
 \xymatrix{
  R^*\sModl \ar@<0.4ex>[rr]^-{\Upsilon_{\biR^\cu}} \ar[rd]_-{G^\pm}
  && \sZ^0((\biR^\cu\bModl)^\bec) \ar@<0.4ex>@{-}[ll]
  \ar[ld]^-{\Psi_{\biR^\cu\bModl}^\pm} \\ & \sZ^0(\biR^\cu\bModl)
 }
\end{gathered}
\end{equation}
 Here the leftmost diagonal arrow in~\eqref{cdg-modules-Phi-diagram}
is the forgetful functor $\biM^\cu\longmapsto\biM^\cu{}^\#=M^*$.
 The leftmost diagonal arrow in~\eqref{cdg-modules-tilde-Phi-diagram}
is the obvious fully faithful inclusion of additive categories
mentioned in Section~\ref{DG-category-of-CDG-modules-defined-subsecn}.
 There are, actually, two commutative diagrams depicted
in~\eqref{cdg-modules-Psi-diagram}: one has to choose either $G^+$
for the leftmost diagonal arrow and $\Psi^+$ for the rightmost one,
or $G^-$ for the leftmost diagonal arrow and $\Psi^-$ for
the rightmost one.
 The horizontal double arrow (which is the same on all the three
diagrams) is an abelian category equivalence.
 The arrows with tails are fully faithful functors; the ordinary
arrows are faithful exact functors~\cite[Example~3.17]{Pedg}.

 Iterating the passage from $\biR^\cu$ to $\widehat\biR^\bu$, we obtain
an acyclic DG\+ring~$\hathatRbu$.
 Iterating the assertion above, we conclude that the DG\+category
$(\biR^\cu\bModl)^{\bec\bec}$ is naturally equivalent to
$\hathatRbu\bModl$.
 On the other hand, $\biR^\cu\bModl$ is an idempotent-complete additive
DG\+category with twists (indeed, the additive category
$\sZ^0(\biR^\cu\bModl)$ is abelian, hence idempotent-complete).
 So Section~\ref{DG-functor-becbec-subsecn} tells that the DG\+functor
$\bec\bec\:\biR^\cu\bModl\rarrow(\biR^\cu\bModl)^{\bec\bec}$ is an equivalence
of DG\+categories.
 Thus we obtain an equivalence of DG\+categories
$$
 \biR^\cu\bModl\,\simeq\,\hathatRbu\bModl
$$
showing that the DG\+category of CDG\+modules over any CDG\+ring
$\biR^\cu=(R^*,d,h)$ is equivalent to the DG\+category of DG\+modules
over an acyclic DG\+ring~$\hathatRbu$.

\subsection{Example: DG-category of factorizations}
\label{almost-involution-factorizations-subsecn}
 Let $\sE$ be an additive category, $\Delta\:\sE\rarrow\sE$ be
an auto-equivalence, and $v\:\Id_\sE\rarrow\Delta$ be a potential
(see Section~\ref{DG-category-of-factorizations-defined-subsecn}).
 Then the DG\+category $\bF(\sE,\Delta,v)$ of factorizations of~$v$
in $\sE$ is an additive DG\+category with shifts, twists, and cones.
 Recall the notation $\sP(\sE,\Delta)\simeq\sE\times\sE$ for
the additive category of $2$\+$\Delta$-periodic objects in $\sE$
and $\sF(\sE,\Delta,v)=\sZ^0(\bF(\sE,\Delta,v))$ for the additive
category of factorizations of~$v$.

 The forgetful functor $X^\cu\longmapsto X^\cu{}^\#\:\sF(\sE,\Delta,v)
\rarrow\sP(\sE,\Delta)$, assigning to a factorization $X^\cu$ its
underlying $2$\+$\Delta$-periodic object $X^\circ=X^\cu{}^\#$, has
adjoints on both sides.
 The functor $G^+\:\sP(\sE,\Delta)\rarrow\sF(\sE,\Delta,v)$ left adjoint
to $X^\cu\longmapsto X^\cu{}^\#$ assigns to a $2$\+$\Delta$-periodic 
object $E^\circ\in\sP(\sE,\Delta)$ the factorization $G^+(E^\circ)$
freely generated by~$E^\circ$.
 Explicitly, one has $G^+(E^\circ)^n=E^n\oplus E^{n-1}$ for all
$n\in\boZ$, and the differential $d_{G^+,n}\:E^n\oplus E^{n-1}\rarrow
E^{n+1}\oplus E^n$ is given by the $2\times2$ matrix of morphisms whose
only nonzero entries are the identity morphism $E^n\rarrow E^n$ and
the composition $\delta^{n+1,n-1}_E\circ v_{E^{n-1}}\:E^{n-1}\rarrow
\Delta(E^{n-1})\rarrow E^{n+1}$.
 The functor $G^-\:\sP(\sE,\Delta)\rarrow\sF(\sE,\Delta,v)$ right
adjoint to $X^\cu\longmapsto X^\cu{}^\#$ assigns to
a $2$\+$\Delta$-periodic  object $E^\circ\in\sP(\sE,\Delta)$
the factorization $G^-(E^\circ)$ cofreely cogenerated by~$E^\circ$.
 Explicitly, $G^-(E^\circ)^n=E^n\oplus E^{n+1}$ for all $n\in\boZ$,
and $G^-(E^\circ)\simeq G^+(E^\circ)[1]$.

 Assume that the additive category $\sE$ is idempotent-complete; then
so is the DG\+category $\bF(\sE,\Delta,v)$.
 In this case, there is a natural equivalence of additive categories
$$
 \Upsilon=\Upsilon_{\sE,\Delta,v}\:\sP(\sE,\Delta)\simeq
 \sZ^0(\bF(\sE,\Delta,v)^\bec)
$$
(cf.~\cite[Example~3.19 and Remark~2.7]{Pedg}).
 The equivalence of categories $\Upsilon$ transforms
the functor~$\#$ into the functor $\Phi$, the functors $G^\pm$ into
the functors $\Psi^\pm$, and the natural inclusion
$\bF(\sE,\Delta,v)^0\rarrow\sP(\sE,\Delta)$ into the
functor~$\widetilde\Phi$.
 In other words, there are the following commutative diagrams of
additive functors:
\begin{equation} \label{factorizations-Phi-diagram}
\begin{gathered}
 \xymatrix{
  \sF(\sE,\Delta,v) \ar@{=}[rr] \ar[d]_{\#}
  && \sZ^0(\bF(\sE,\Delta,v)) \ar[d]^{\Phi_{\bF(\sE,\Delta,v)}} \\
  \sP(\sE,\Delta) \ar@<-0.4ex>[rr]_-{\Upsilon_{\sE,\Delta,v}}
  && \sZ^0(\bF(\sE,\Delta,v)^\bec) \ar@<-0.4ex>@{-}[ll]
 }
\end{gathered}
\end{equation}
\begin{equation} \label{factorizations-tilde-Phi-diagram}
\begin{gathered}
 \xymatrix{
  & \bF(\sE,\Delta,v)^0 \ar@{>->}[ld]
  \ar@{>->}[rd]^-{\widetilde\Phi_{\bF(\sE,\Delta,v)}} \\
  \sP(\sE,\Delta) \ar@<-0.4ex>[rr]_-{\Upsilon_{\sE,\Delta,v}}
  && \sZ^0(\bF(\sE,\Delta,v)^\bec) \ar@<-0.4ex>@{-}[ll]
 }
\end{gathered}
\end{equation}
and
\begin{equation} \label{factorizations-Psi-diagram}
\begin{gathered}
 \xymatrix{
  \sP(\sE,\Delta) \ar@<0.4ex>[rr]^-{\Upsilon_{\sE,\Delta,v}}
  \ar[d]_{G^\pm}
  && \sZ^0(\bF(\sE,\Delta,v)^\bec) \ar[d]^{\Psi_{\bF(\sE,\Delta,v)}^\pm}
  \ar@<0.4ex>@{-}[ll]
  \\ \sF(\sE,\Delta,v)\ar@{=}[rr] && \sZ^0(\bF(\sE,\Delta,v))
 }
\end{gathered}
\end{equation}
 Here the upper horizontal double line
in~\eqref{factorizations-Phi-diagram} and the lower horizontal double
line in~\eqref{factorizations-Psi-diagram} is the definition
of the additive category of factorizations $\sF(\sE,\Delta,v)$.
 The leftmost diagonal double arrow
in~\eqref{factorizations-tilde-Phi-diagram} is the obvious fully faithful
inclusion of additive categories mentioned in
Section~\ref{DG-category-of-factorizations-defined-subsecn}.
 The leftmost vertical arrow in~\eqref{factorizations-Phi-diagram}
is the forgetful functor $X^\cu\longmapsto X^\cu{}^\#=X^\circ$.
 There are, actually, two commutative diagrams depicted
in~\eqref{factorizations-Psi-diagram}: one has to choose either
$G^+$ for the leftmost vertical arrow and $\Psi^+$ for the rightmost
one, or $G^-$ for the leftmost vertical arrow and $\Psi^-$ for
the rightmost one.
 All the double lines and double arrows are category equivalences;
the arrows with tails are fully faithful functors; the ordinary arrows
are faithful functors.

\Section{Abelian DG-Categories}  \label{abelian-DG-secn}

 This section is mostly an extraction from~\cite[Sections~4.1, 4.3,
4.5, 4.6, and~9.3]{Pedg}.

\subsection{Exactness properties of the functors $\Xi$ and~$\Psi$}
\label{exactness-properties-of-Xi-and-Psi-subsecn}
 Let $\bE$ be a DG\+category and $f\:A\rarrow B$ be a closed morphism
of degree~$0$ in~$\bE$ such that the shift $A[1]$ and the cone
$\cone(f)$ exist in~$\bE$.
 Consider the pair of natural closed morphisms
$B\rarrow\cone(f)\rarrow A[1]$ of degree~$0$ in~$\bE$.

\begin{lem} \label{cone-kernel-cokernel}
 In the pair of natural morphisms $B\rarrow\cone(f)\rarrow A[1]$
in the preadditive category\/ $\sZ^0(\bE)$, the former morphism is
a kernel of the latter one and the latter morphism is a cokernel of
the former one.
\end{lem}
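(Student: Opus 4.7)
The plan is to apply the two equivalent forms of the defining universal property of the cone $C=\cone(f)$ to translate the kernel and cokernel assertions in $\sZ^0(\bE)$ into routine computations with mapping cones of morphisms of complexes of abelian groups. Write $\beta\:B\rarrow C$ and $\alpha\:C\rarrow A[1]$ for the two structural closed morphisms of degree~$0$.

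For the kernel assertion, I would first invoke the form $\Hom_\bE^\bu(Z,C)\simeq\cone(\Hom_\bE^\bu(Z,f))$ of the universal property. In degree~$n$ this identifies $\Hom_\bE^n(Z,C)$ with $\Hom_\bE^n(Z,B)\oplus\Hom_\bE^{n+1}(Z,A)$, and under this identification the map $\beta_*$ becomes the inclusion of the first summand while $\alpha_*$ becomes the projection onto the second (composed with the shift identification $\Hom_\bE^{n+1}(Z,A)=\Hom_\bE^n(Z,A[1])$). A closed degree-zero element of the cone complex is a pair $(b,a)\in\Hom_\bE^0(Z,B)\oplus\Hom_\bE^1(Z,A)$ subject to $d(b)+f\circ a=0$ and $d(a)=0$; the condition $\alpha_*(b,a)=0$ forces $a=0$, which in turn forces $d(b)=0$, so any such element is precisely $\beta_*(b)$ for a uniquely determined closed $b$. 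This produces, naturally in $Z\in\bE$, a short exact sequence
\[
0\rarrow\Hom_{\sZ^0(\bE)}(Z,B)\rarrow\Hom_{\sZ^0(\bE)}(Z,C)\rarrow\Hom_{\sZ^0(\bE)}(Z,A[1]),
\]
and the Yoneda lemma then identifies $\beta$ as a kernel of $\alpha$ in $\sZ^0(\bE)$.

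For the cokernel assertion, I would run the Yoneda-dual argument using the contravariant form of the universal property, $\Hom_\bE^\bu(C[-1],W)\simeq\cone(\Hom_\bE^\bu(f,W))$. After the degree shift, this identifies $\Hom_\bE^n(C,W)$ with $\Hom_\bE^n(A[1],W)\oplus\Hom_\bE^n(B,W)$, with $\alpha^*$ realized as inclusion of the first summand and $\beta^*$ as projection onto the second. The analogous closedness bookkeeping---now driven by the cone differential for $\Hom_\bE^\bu(f,W)$ in place of $\Hom_\bE^\bu(Z,f)$---produces, naturally in $W\in\bE$, the exact sequence
\[
0\rarrow\Hom_{\sZ^0(\bE)}(A[1],W)\rarrow\Hom_{\sZ^0(\bE)}(C,W)\rarrow\Hom_{\sZ^0(\bE)}(B,W),
\]
and Yoneda then identifies $\alpha$ as a cokernel of $\beta$.

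The only genuine obstacle is the sign and shift bookkeeping required to confirm that composition with $\beta$ and $\alpha$ really does correspond to the inclusion and projection of the mapping-cone decomposition under the chosen sign conventions. Once these identifications are in place, both halves of the lemma reduce to the short closedness computations above, with no further input needed beyond the defining universal property of the cone.
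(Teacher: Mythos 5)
Your proof is correct and follows essentially the same route as the paper's (the paper defers to~\cite[proof of Lemma~4.2]{Pedg}, which likewise deduces both assertions from the defining universal property of the cone: identifying closed degree-zero elements of $\cone(\Hom_\bE^\bu(Z,f))$ and of $\cone(\Hom_\bE^\bu(f,W))$ and applying the Yoneda lemma in $\sZ^0(\bE)$). The closedness computations you record---that $\alpha_*(b,a)=0$ forces $a=0$ and hence $d(b)=0$, and dually that $\beta^*(a',b')=0$ forces $b'=0$ and hence $d(a')=0$---are exactly the needed steps, and the remaining sign/shift bookkeeping you flag is routine under any consistent mapping-cone convention.
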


\begin{proof}
 See~\cite[proof of Lemma~4.2]{Pedg}.
\end{proof}

 Let $\bE$ be an additive DG\+category with shifts and cones.
 As in Section~\ref{Phi-and-Psi-subsecn}, we denote by $\Xi=\Xi_\bE\:
\sZ^0(\bE)\rarrow\sZ^0(\bE)$ the additive functor
$\Xi(A)=\cone(\id_A[-1])$.
 The endofunctor $\Xi\:\sZ^0(\bE)\rarrow\sZ^0(\bE)$ is left and right
adjoint to its own shift $\Xi[1]\:A\longmapsto\cone(\id_A)$; hence
the functor $\Xi$ preserves all limits and colimits (in particular,
all kernels and cokernels) that exist in the additive category
$\sZ^0(\bE)$ \,\cite[Lemma~4.1]{Pedg}.
 The endofunctor $\Xi\:\sZ^0(\bE)\rarrow\sZ^0(\bE)$ is faithful and
conservative~\cite[Lemma~4.3]{Pedg}.
 The endofunctor $\Xi$ can be naturally extended to a faithful
additive functor $\widetilde\Xi=\widetilde\Xi_\bE\:\bE^0\rarrow
\sZ^0(\bE)$ \,\cite[Lemma~4.5]{Pedg}.

\begin{cor} \label{Xi-extension-cokernel}
 Let\/ $\bE$ be an additive DG\+category with shifts and cones.
 Then \par
\textup{(a)} for any object $A\in\bE$, there is a pair of natural
morphisms $A[-1]\rightarrowtail\Xi(A)\twoheadrightarrow A$ in\/
$\sZ^0(\bE)$ in which the former morphism is a kernel of the latter one
and the latter morphism is a cokernel of the former one; \par
\textup{(b)} any object $A\in\bE$ is the cokernel of a natural
morphism\/ $\Xi(A)[-1]\rarrow\Xi(A)$ in\/~$\sZ^0(\bE)$.
\end{cor}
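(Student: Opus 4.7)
\smallskip

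\textbf{Proof plan.}

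Part~(a) is an immediate application of Lemma~\ref{cone-kernel-cokernel}. The plan is to apply the lemma to the closed degree~$0$ morphism $f=\id_{A[-1]}\:A[-1]\rarrow A[-1]$, whose cone is $\Xi(A)$ by the very definition of the functor~$\Xi$. The lemma then yields the natural pair
\[
A[-1]\lrarrow\Xi(A)\lrarrow (A[-1])[1]=A
\]
in $\sZ^0(\bE)$ in which the former morphism is a kernel of the latter and the latter is a cokernel of the former. Naturality in $A$ follows from the functoriality of the cone construction in its argument.

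For part~(b), denote by $j_A\:A[-1]\rightarrowtail\Xi(A)$ and $p_A\:\Xi(A)\twoheadrightarrow A$ the two morphisms produced in part~(a). The plan is to exhibit the desired natural morphism as the composition
\[
\Xi(A)[-1]\xrightarrow{\;p_A[-1]\;}A[-1]\xrightarrow{\;j_A\;}\Xi(A).
\]
Shifting the sequence of part~(a) by $[-1]$ (the shift $[-1]$ is an additive equivalence of $\sZ^0(\bE)$, hence it preserves kernels and cokernels), the morphism $p_A[-1]$ is still a cokernel morphism; in particular, it is an epimorphism in the preadditive category $\sZ^0(\bE)$.

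Now the key observation is a general fact in any preadditive category: if $f$ is an epimorphism and $g$ is an arbitrary morphism for which the composition $g\circ f$ is defined, then $\coker(g\circ f)=\coker(g)$ (one checks directly that both universal properties agree because $f$~can be cancelled on the right). Applying this with $f=p_A[-1]$ and $g=j_A$, I conclude that the cokernel of the composition $j_A\circ p_A[-1]\:\Xi(A)[-1]\rarrow\Xi(A)$ equals $\coker(j_A)$, which is $A$ by part~(a). Naturality in $A$ of the composite morphism is inherited from the naturality of $j_A$ and $p_A$ established in part~(a).

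There is no real obstacle here; the only substantive ingredient beyond part~(a) is the elementary cancellation fact $\coker(g\circ f)=\coker(g)$ for $f$ epi, and the observation that $[-1]$ preserves cokernels. Both are routine.
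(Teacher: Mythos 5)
Your proposal is correct and follows essentially the same route as the paper: part~(a) is obtained by specializing Lemma~\ref{cone-kernel-cokernel} to $f=\id_{A[-1]}$, and part~(b) by taking the composition $\Xi(A)[-1]\twoheadrightarrow A[-1]\rightarrowtail\Xi(A)$ and using that precomposition with an epimorphism does not change the cokernel. The only difference is that you spell out the elementary cancellation fact and the shift-invariance explicitly, which the paper leaves implicit.
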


\begin{proof}
 Part~(a) is a particular case of Lemma~\ref{cone-kernel-cokernel}.
 To deduce part~(b) from part~(a), consider the composition
$\Xi(A)[-1]\twoheadrightarrow A[-1]\rightarrowtail\Xi(A)$ and
observe that its cokernel agrees with the cokernel of the morphism
$A[-1]\rightarrowtail\Xi(A)$, since the morphism
$\Xi(A)[-1]\twoheadrightarrow A[-1]$ is an epimorphism.
\end{proof}

 The following lemma is trivial but useful.

\begin{lem} \label{contractibles-described-lemma}
 Let\/ $\bE$ be a DG\+category with shifts and cones.
 Then any contractible object in\/ $\bE$ is a direct summand of
an object\/ $\Xi(A)$, for a suitable object $A\in\bE$.
\end{lem}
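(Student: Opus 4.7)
The plan is to show the stronger statement that any contractible object $X\in\bE$ is a direct summand of $\Xi(X)$ itself, so no nontrivial choice of $A$ is needed. Recall that $X$ being contractible means there exists $\sigma_X\in\Hom^{-1}_\bE(X,X)$ with $d(\sigma_X)=\id_X$ (note that, in contrast to the contracting homotopies appearing in the $\bec$\+construction, we do not assume $\sigma_X^2=0$). Since $\bE$ has shifts and cones, the object $\Xi(X)=\cone(\id_{X[-1]})$ exists, and it comes equipped with the structural morphisms $\iota\in\Hom^1_\bE(X,\Xi(X))$, $\pi\in\Hom^0_\bE(\Xi(X),X)$, $\iota'\in\Hom^0_\bE(X,\Xi(X))$, $\pi'\in\Hom^{-1}_\bE(\Xi(X),X)$ from Section~\ref{Phi-and-Psi-subsecn}, satisfying in particular $d(\iota)=0=d(\pi)$, $d(\iota')=\iota$, and $\pi\iota'=\id_X$.

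Next, I would define the candidate section as
$$
 s \;=\; \iota' + \iota\,\sigma_X \,\in\, \Hom^0_\bE(X,\Xi(X)).
$$
The morphism $s$ is closed of degree~$0$: indeed, using the Leibniz rule and the equations above,
$$
 d(s) \;=\; d(\iota') + d(\iota)\,\sigma_X - \iota\,d(\sigma_X) \;=\; \iota + 0 - \iota\,\id_X \;=\; 0,
$$
so $s$ lies in $\sZ^0(\bE)$. The sign comes from $|\iota|=1$. Similarly, since $\pi\iota=0$ and $\pi\iota'=\id_X$, one computes $\pi\circ s=\id_X+0=\id_X$ in $\sZ^0(\bE)$. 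Thus $s$ is a section of the closed epimorphism~$\pi$, exhibiting $X$ as a direct summand of $\Xi(X)$ in $\sZ^0(\bE)$, and the lemma follows with $A=X$.

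There is essentially no obstacle here: the identities needed are already part of the definition of $\Xi$ recalled in Section~\ref{Phi-and-Psi-subsecn}, and the only creative step is the correction term $\iota\sigma_X$ that makes $\iota'$ closed. The mild subtlety worth flagging is the distinction between the notion of contractibility used here (existence of any homotopy $\sigma_X$ with $d(\sigma_X)=\id_X$) and the stricter notion of contracting homotopy with $\sigma_X^2=0$ used to define objects of $\bE^\bec$; the argument above does not require the stricter condition, which is what makes it applicable to arbitrary contractible objects.
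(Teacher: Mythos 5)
Your proposal is correct and establishes the same strengthened statement as the paper (namely that $A=X$ works, i.e., a contractible $X$ is a direct summand of $\Xi(X)$ itself), but by a more hands-on route. The paper's proof invokes the general fact that cones of homotopic closed morphisms of degree~$0$ are isomorphic in $\sZ^0(\bE)$: since $\id_{X[-1]}$ is homotopic to zero when $X$ is contractible, one gets $\Xi(X)=\cone(\id_{X[-1]})\simeq\cone(0\:X[-1]\to X[-1])\simeq X\oplus X[-1]$ in $\sZ^0(\bE)$, which produces the decomposition together with an explicit complement $X[-1]$. You instead construct an explicit closed section $s=\iota'+\iota\sigma_X$ of the structural projection $\pi\:\Xi(X)\rarrow X$; your sign bookkeeping ($d(\iota\sigma_X)=-\iota$ because $|\iota|=1$) is right, and your remark that only $d(\sigma_X)=\id_X$ is needed, not the stricter condition $\sigma_X^2=0$ from the $\bec$\+construction, is a worthwhile clarification. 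In effect your $s$ is one half of the paper's isomorphism made explicit, so your argument is self-contained where the paper's leans on a standard homotopy-invariance lemma. One small point to complete your write-up: a section of $\pi$ a priori only exhibits $X$ as a \emph{retract} of $\Xi(X)$ in $\sZ^0(\bE)$, and in a general preadditive category idempotents need not split; here, however, Corollary~\ref{Xi-extension-cokernel}(a) says that $\pi$ is a cokernel whose kernel $X[-1]\rightarrowtail\Xi(X)$ exists, so the split epimorphism yields an honest biproduct decomposition $\Xi(X)\simeq X\oplus X[-1]$ in $\sZ^0(\bE)$ --- the same one the paper obtains directly.
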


\begin{proof}
 For any two homotopic closed morphisms $f'$, $f''\:A\rarrow B$ of
degree~$0$ in $\bE$, the cones of $f'$ and $f''$ are isomorphic
in~$\sZ^0(\bE)$.
 In particular, if an object $A\in\bE$ is contractible, then
the object $\Xi(A)=\cone(\id_A[-1])$ is isomorphic to the cone of
zero morphism $\cone(0\:A[-1]\to A[-1])\simeq A\oplus A[-1]$ as
objects of~$\sZ^0(\bE)$.
 Consequently, $A$ is a direct summand of~$\Xi(A)$.
\end{proof}

\begin{lem}
 Let\/ $\bE$ be an additive DG\+category with shifts and cones, and
let $A\overset f\rarrow B\overset g\rarrow C$ be a composable pair
of morphisms in the additive category\/ $\sZ^0(\bE)$.
 In this context: \par
\textup{(a)} if the morphism\/ $\Xi(f)$ is a kernel of the morphism\/
$\Xi(g)$, then the morphism~$f$ is a kernel of the morphism~$g$
in\/~$\sZ^0(\bE)$; \par
\textup{(b)} if the morphism\/ $\Xi(g)$ is a cokernel of the morphism\/
$\Xi(f)$, then the morphism~$g$ is a cokernel of the morphism~$f$
in\/~$\sZ^0(\bE)$.
\end{lem}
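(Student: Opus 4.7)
The plan for both parts is to combine the natural short exact sequence $A[-1]\rightarrowtail\Xi(A)\twoheadrightarrow A$ from Corollary~\ref{Xi-extension-cokernel}(a) with the faithfulness of $\Xi$ and the hypothesis that $\Xi(f)$ is a kernel of $\Xi(g)$ (resp.\ that $\Xi(g)$ is a cokernel of $\Xi(f)$). Write $i_Z\:Z[-1]\rarrow\Xi(Z)$ and $\pi_Z\:\Xi(Z)\rarrow Z$ for the monic and the epic of this short exact sequence; both are natural in $Z\in\sZ^0(\bE)$ by functoriality of $\Xi$ and of the cone construction.

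First I would dispatch two preliminaries common to both parts. The equality $gf=0$ is immediate: $\Xi(gf)=\Xi(g)\Xi(f)=0$ by the kernel/cokernel hypothesis, and $\Xi$ is faithful. Moreover, in part~(a) the morphism $f$ is monic, since $\Xi(f)$ is monic as a kernel and any relation $fh_1=fh_2$ yields $\Xi(f)\Xi(h_1)=\Xi(f)\Xi(h_2)$, hence $\Xi(h_1)=\Xi(h_2)$ by monicity, hence $h_1=h_2$ by faithfulness of $\Xi$. Dually, in part~(b) the morphism $g$ is epic.

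For the universal property in part~(a), given $h\:X\rarrow B$ with $gh=0$, the kernel property of $\Xi(f)$ supplies a unique $k\:\Xi(X)\rarrow\Xi(A)$ with $\Xi(f)k=\Xi(h)$. The key computation, using naturality of $\pi$ and $\pi_Xi_X=0$, is
\[
 f\,\pi_A\,k\,i_X \;=\; \pi_B\,\Xi(f)\,k\,i_X \;=\; \pi_B\,\Xi(h)\,i_X
 \;=\; h\,\pi_X\,i_X \;=\; 0.
\]
Since $f$ is monic, $\pi_A k\,i_X=0$; since $\pi_X=\coker(i_X)$, this descends to a unique $h'\:X\rarrow A$ with $h'\pi_X=\pi_A k$. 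One then checks $fh'=h$ after composing with the epic $\pi_X$ by the chain $fh'\pi_X=f\,\pi_A k=\pi_B\,\Xi(f)k=\pi_B\,\Xi(h)=h\,\pi_X$, and uniqueness of $h'$ follows from $f$ being monic.

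Part~(b) proceeds dually: the cokernel hypothesis on $\Xi(g)$ yields $k\:\Xi(C)\rarrow\Xi(Y)$ with $k\,\Xi(g)=\Xi(h)$, and the analogous computation $\pi_Y k\,i_C\,g[-1]=\pi_Y\,\Xi(h)\,i_B=h\,\pi_B\,i_B=0$ together with the fact that $g[-1]$ is epic (since $g$ is, and the shift is an equivalence) gives $\pi_Y k\,i_C=0$; factoring $\pi_Y k$ through $\pi_C=\coker(i_C)$ produces the required $h'\:C\rarrow Y$, and $h'g=h$ is verified after precomposition with the epic $\pi_B$. The only real friction is bookkeeping the naturality of $i_Z$ and $\pi_Z$ with respect to morphisms in~$\sZ^0(\bE)$, but this is immediate from the functoriality of the cone underlying the definition of $\Xi$, so no extra idea is needed.
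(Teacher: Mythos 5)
Your argument is correct, and it is complete modulo routine bookkeeping. Note that the paper itself gives no inline proof here --- it only cites \cite[Lemma~4.4]{Pedg} --- so there is no textual argument to diverge from; your chase is the natural self-contained one, built from exactly the ingredients this paper records in the surrounding text: the natural pair $A[-1]\rightarrowtail\Xi(A)\twoheadrightarrow A$ of Corollary~\ref{Xi-extension-cokernel}(a), the faithfulness of $\Xi$, and the functoriality of the cone. All the key steps check out: $gf=0$ and monicity/epicity of $f$, resp.~$g$, follow from faithfulness; in~(a) the relation $f\pi_Ak\,i_X=h\pi_Xi_X=0$ plus monicity of~$f$ lets you factor $\pi_Ak$ through $\pi_X=\coker(i_X)$, and $fh'=h$ is verified against the epic~$\pi_X$; the dual computation in~(b), including the use of the epic $g[-1]$ (the shift being an auto-equivalence of $\sZ^0(\bE)$), is equally sound. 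Two points you treat lightly but which do hold: the naturality of $i_Z$ and $\pi_Z$ follows from the uniqueness clause in the universal property defining cones (so the squares with $\Xi(u)$, $u[-1]$, and~$u$ commute automatically), and in~(b) you should state explicitly that uniqueness of the factorization $h'g=h$ follows from $g$ being epic, exactly as uniqueness in~(a) follows from $f$ being monic.
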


\begin{proof}
 This is~\cite[Lemma~4.4]{Pedg}.
\end{proof}

 The definition of an \emph{idempotent-complete} additive DG\+category
was given in Section~\ref{DG-functor-becbec-subsecn}.

\begin{lem} \label{Xi-reflects-existence-of-co-kernels-given-twists}
 Let\/ $\bE$ be an idempotent-complete additive DG\+category with
shifts and twists.  In this setting: \par
\textup{(a)} if $g$~is a morphism in\/ $\sZ^0(\bE)$ and the morphism\/
$\Xi(g)$ has a kernel in\/ $\sZ^0(\bE)$, then the morphism~$g$ also
has a kernel in\/~$\sZ^0(\bE)$; \par
\textup{(b)} if $f$~is a morphism in $\sZ^0(\bE)$ and the morphism\/
$\Xi(f)$ has a cokernel in\/ $\sZ^0(\bE)$, then the morphism~$f$ also
has a cokernel in\/~$\sZ^0(\bE)$.
\end{lem}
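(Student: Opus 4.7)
The plan is to reduce both parts to the immediately preceding lemma, which states that if $\Xi(f)$ is a kernel of $\Xi(g)$ then $f$ is a kernel of $g$, and dually for cokernels. Focusing on part~(a) (part~(b) is entirely dual, by reversing arrows or passing to an opposite DG-category), it therefore suffices to produce, from a given kernel $k\:K\to\Xi(B)$ of $\Xi(g)$ in $\sZ^0(\bE)$, a closed morphism $f\:A\to B$ of degree~$0$ in~$\bE$ such that $\Xi(f)$ is (isomorphic to) the given kernel~$k$.

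Under the hypotheses of idempotent-completeness together with shifts and twists, Section~\ref{DG-functor-becbec-subsecn} provides an equivalence of DG-categories $\bec\bec\:\bE\to\bE^{\bec\bec}$. My approach is to equip $K$ with the structure of an object of $\bE^{\bec\bec}$, namely with endomorphisms $\sigma_K\in\Hom_\bE^{-1}(K,K)$ and $\tau_K\in\Hom_\bE^1(K,K)$ satisfying $\sigma_K^2=\tau_K^2=0$, $\sigma_K\tau_K+\tau_K\sigma_K=\id_K$, $d(\sigma_K)=\id_K$, $d(\tau_K)=0$, and compatible with $k$ in the sense that $\sigma_{\Phi(B)}\circ k=k\circ\sigma_K$ and $\tau_{\Phi(B)}\circ k=k\circ\tau_K$, where $\bec\bec(B)=(\Xi(B),\sigma_{\Phi(B)},\tau_{\Phi(B)})$ is the canonical lift. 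The candidate $\sigma_K$ is produced as follows: since $\Phi(g)\:\Phi(B)\to\Phi(C)$ is a closed morphism in $\bE^\bec$, we have $\Xi(g)\circ\sigma_{\Phi(B)}=\sigma_{\Phi(C)}\circ\Xi(g)$, whence $\Xi(g)\circ\sigma_{\Phi(B)}\circ k=\sigma_{\Phi(C)}\circ\Xi(g)\circ k=0$. One then factors $\sigma_{\Phi(B)}\circ k$ through~$k$ as $k\circ\sigma_K$ for a unique~$\sigma_K$, using the kernel property suitably enhanced via the twist structure. The relations $d(\sigma_K)=\id_K$ and $\sigma_K^2=0$ are then deduced by composing with~$k$ and invoking $d(\sigma_{\Phi(B)})=\id$, $\sigma_{\Phi(B)}^2=0$, together with faithfulness of composition with~$k$; the construction of $\tau_K$ is analogous. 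Once $(K,\sigma_K,\tau_K)$ is recognized as an object of $\bE^{\bec\bec}$, the equivalence $\bec\bec$ yields $A\in\bE$ with $\bec\bec(A)\simeq(K,\sigma_K,\tau_K)$, hence $\Xi(A)\simeq K$; the desired morphism $f\:A\to B$ is obtained by transferring the natural composition $K\xrightarrow{k}\Xi(B)\to B$ through the equivalence.

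The principal obstacle is precisely the factorization of $\sigma_{\Phi(B)}\circ k$ through~$k$: the kernel property of $k$ in $\sZ^0(\bE)$ applies directly only to closed morphisms of degree~$0$, whereas $\sigma_{\Phi(B)}\circ k$ is of degree~$-1$ and not closed (indeed $d(\sigma_{\Phi(B)}\circ k)=k$). The twist hypothesis is what enables this factorization: in the presence of twists one can convert degree shifts and non-closed morphisms into closed degree-$0$ data to which the kernel universal property applies. Idempotent-completeness is then needed in the final descent step, ensuring that the triple $(K,\sigma_K,\tau_K)$, a priori an object of $\bE^{\bec\bec}$, actually comes from a genuine object of~$\bE$ under the equivalence~$\bec\bec$.
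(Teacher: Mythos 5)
The paper supplies no inline argument for this lemma---its ``proof'' is the citation \cite[Lemma~4.6]{Pedg}---so your attempt can only be measured against the method of that reference and on its own merits. Your global architecture is viable: reduce to the preceding lemma (\cite[Lemma~4.4]{Pedg}) by showing that the given kernel $k\:K\rarrow\Xi(B)$ of $\Xi(g)$ underlies an object $(K,\sigma_K,\tau_K)$ of $\bE^{\bec\bec}$, then descend along the equivalence $\bec\bec\:\bE\rarrow\bE^{\bec\bec}$ of Section~\ref{DG-functor-becbec-subsecn}; and you correctly locate where idempotent-completeness and twists enter (they make $\bec\bec$ an equivalence). But there is a genuine gap at exactly the step you yourself call the principal obstacle: the factorization $\sigma_{\Phi(B)}\circ k=k\circ\sigma_K$. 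You assert that ``the twist hypothesis is what enables this factorization'' without exhibiting any mechanism, and the assertion is in fact a misattribution---twists play no role in this step, and no amount of twisting turns the morphism $\sigma_{\Phi(B)}k$ (of degree~$-1$, with $d(\sigma_{\Phi(B)}k)=k\neq0$) into closed degree-$0$ data to which the universal property of $k$ applies. As written, the central construction of the proof is an unproved claim.

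The step can, however, be filled, by two facts available in any additive DG\+category with shifts and cones. First, the functor $\Xi$ preserves all limits existing in $\sZ^0(\bE)$, being right adjoint to $\Xi[1]$ (Section~\ref{exactness-properties-of-Xi-and-Psi-subsecn}); hence $\Xi(k)\:\Xi(K)\rarrow\Xi(\Xi(B))$ is again a kernel, namely of $\Xi(\Xi(g))$. Second, one has the natural isomorphism $\Hom^n_\bE(Z,W)\simeq\Hom_{\sZ^0(\bE)}(Z,\cone(\id_W)[n])$ (cf.\ the proof of Lemma~\ref{coproducts-in-DG-categories}); for $n=-1$ this identifies arbitrary degree-$(-1)$ morphisms $Z\rarrow W$ with closed degree-$0$ morphisms $Z\rarrow\Xi(W)$, naturally in $W$. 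Under this identification, $\sigma_{\Phi(B)}k$ corresponds to a morphism $s\:K\rarrow\Xi(\Xi(B))$ in $\sZ^0(\bE)$ annihilated by $\Xi(\Xi(g))$, since $\Xi(g)\sigma_{\Phi(B)}k=\sigma_{\Phi(C)}\Xi(g)k=0$; so $s$ factors uniquely through the kernel $\Xi(k)$, and translating back yields the desired $\sigma_K\in\Hom^{-1}_\bE(K,K)$ with $k\sigma_K=\sigma_{\Phi(B)}k$. Your verification plan then does go through: the differences $d(\sigma_K)-\id_K$, \ $\sigma_K^2$, and $\sigma_K\tau_K+\tau_K\sigma_K-\id_K$ are all closed (check the Leibniz signs), and composing with the monomorphism $k$ (or its shifts) kills them. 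Two smaller remarks: $\tau_K$ is easier than ``analogous,'' since $\tau_{\Phi(B)}k$ is already closed of degree~$1$ and the plain kernel property applied to $K[-1]\rarrow\Xi(B)$ suffices; and at the end it is cleaner not to ``transfer the composition $K\rarrow\Xi(B)\rarrow B$'' but to observe that $k$ itself satisfies $d(k)=0=d^\bec(k)=d^{\bec\bec}(k)$, hence is a closed degree-$0$ morphism $(K,\sigma_K,\tau_K)\rarrow\bec\bec(B)$ in $\bE^{\bec\bec}$, so full faithfulness of~$\bec\bec$ produces the unique closed $f\:A\rarrow B$ with $\Xi(f)\simeq k$, after which \cite[Lemma~4.4]{Pedg} finishes the argument.
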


\begin{proof}
 This is~\cite[Lemma~4.6]{Pedg}.
\end{proof}

 The DG\+category $\bE^\bec$ was constructed in
Section~\ref{bec-construction-subsecn} and the additive functor
$\Psi^+\:\sZ^0(\bE^\bec)\rarrow\sZ^0(\bE)$ was defined in
Section~\ref{Phi-and-Psi-subsecn}.
 The functor $\Psi^+$ has adjoints on both sides, so it preserves all
limits and colimits (in particular, all kernels and cokernels) that
exist in $\sZ^0(\bE^\bec)$.

\begin{lem} \label{Psi-reflects-existence-of-co-kernels}
 Let\/ $\bE$ be an additive DG\+category with shifts and cones.
 In this setting: \par
\textup{(a)} if $g$~is a morphism in\/ $\sZ^0(\bE^\bec)$ and
the morphism\/ $\Psi^+(g)$ has a kernel in\/ $\sZ^0(\bE)$, then
the morphism~$g$ has a kernel in\/ $\sZ^0(\bE^\bec)$; \par
\textup{(b)} if $f$~is a morphism in\/ $\sZ^0(\bE^\bec)$ and
the morphism $\Psi^+(f)$ has a cokernel in\/ $\sZ^0(\bE)$, then
the morphism~$f$ has a cokernel in\/ $\sZ^0(\bE^\bec)$.
\end{lem}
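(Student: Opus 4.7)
The plan is first to upgrade kernels in $\sZ^0(\bE)$ to DG-enriched kernels (and dually for cokernels), and then to use this enhancement to lift the $\bec$-structure along $k$ (respectively along the cokernel $c$). The only substantive ingredient is the enriched kernel/cokernel property itself, which is where the hypothesis on shifts and cones is really used; everything else reduces to DG-level bookkeeping.

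For the enriched kernel, let $k\colon K\to X$ be a kernel of $g\colon X\to Y$ in $\sZ^0(\bE)$. I would prove that for every $Z\in\bE$ the map of complexes $\Hom_\bE^\bu(Z,k)$ identifies $\Hom_\bE^\bu(Z,K)$ with the termwise kernel of $\Hom_\bE^\bu(Z,g)$. Replacing $Z$ by $Z[-n]$ reduces the task to showing that every (not necessarily closed) $h\in\Hom_\bE^0(Z,X)$ with $gh=0$ factors uniquely as $kh'$ for some $h'\in\Hom_\bE^0(Z,K)$. To produce such an $h'$, I would form $L=\cone(\id_{Z[-1]})$ with its structural morphisms $\iota,\pi,\iota',\pi'$ from Section~\ref{Phi-and-Psi-subsecn} and consider $\phi:=h\pi+d(h)\pi'\colon L\to X$. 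Using $d(\pi)=0$, $d(\pi')=\pi$ and the observation that $gh=0$ forces $gd(h)=d(gh)=0$, a direct computation shows $\phi$ is closed of degree~$0$ with $g\phi=0$. The kernel property of $k$ in $\sZ^0(\bE)$ then yields a unique closed $\phi'\colon L\to K$ with $k\phi'=\phi$, and $h':=\phi'\iota'$ satisfies $kh'=\phi\iota'=h$ because $\pi\iota'=\id_Z$ and $\pi'\iota'=0$. Uniqueness of $h'$ follows by applying the same cone trick to a difference of two lifts; in particular, post-composition with $k$ is injective on every $\Hom_\bE^\bu(Z,-)$.

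To prove (a), let $k\colon K\to X$ be a kernel of $\Psi^+(g)=g$ in $\sZ^0(\bE)$. Because $g$ is a morphism in $\sZ^0(\bE^\bec)$ one has $\sigma_Yg=g\sigma_X$, so $g(\sigma_Xk)=\sigma_Ygk=0$. By the enriched property there is a unique $\sigma_K\in\Hom_\bE^{-1}(K,K)$ with $k\sigma_K=\sigma_Xk$, and the identities
\[
k\,d(\sigma_K)=d(k\sigma_K)=d(\sigma_Xk)=k
\quad\text{and}\quad
k\sigma_K^2=\sigma_Xk\sigma_K=\sigma_X^2k=0,
\]
combined with the injectivity of $k_*$, force $d(\sigma_K)=\id_K$ and $\sigma_K^2=0$. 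Thus $(K,\sigma_K)\in\bE^\bec$ and $k$ is a morphism in $\sZ^0(\bE^\bec)$. For the universal property, any $h\colon Z^\bec\to X^\bec$ in $\sZ^0(\bE^\bec)$ with $gh=0$ lifts uniquely through~$k$ in $\sZ^0(\bE)$ to a closed $h'\colon Z\to K$; the relation $k\sigma_Kh'=\sigma_Xh=h\sigma_Z=kh'\sigma_Z$ together with injectivity of $k_*$ yields $\sigma_Kh'=h'\sigma_Z$, so $h'$ lives in $\sZ^0(\bE^\bec)$.

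Part~(b) is the formal dual: one shows by the dual cone construction that a cokernel $c\colon Y\to C$ in $\sZ^0(\bE)$ is DG-enriched (pre-composition with~$c$ is injective on $\Hom$-complexes), observes that $c\sigma_Y\colon Y\to C$ annihilates~$f$ because $c\sigma_Yf=cf\sigma_X=0$, and factors $c\sigma_Y=\sigma_Cc$ for a unique $\sigma_C\in\Hom_\bE^{-1}(C,C)$; the same cancellative manipulations as above yield $d(\sigma_C)=\id_C$, $\sigma_C^2=0$, and the cokernel universal property for $(C,\sigma_C)$ in $\sZ^0(\bE^\bec)$. The main obstacle throughout is the enriched kernel assertion itself, which is the only step that genuinely uses the existence of cones in~$\bE$; once that DG-level uniqueness is in hand, all subsequent manipulations are purely formal.
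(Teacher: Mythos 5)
Your proposal is correct, and it supplies a complete argument for a statement whose proof the paper omits: the paper's ``proof'' of this lemma is just a citation to~\cite[Lemma~4.16]{Pedg}. Your route is in fact the same device that the referenced paper (and this paper's own toolbox) relies on. Your enriched-kernel step is exactly the standard trick of representing a not-necessarily-closed degree\+$0$ morphism $h\:Z\rarrow X$ by the closed morphism $\phi=h\pi+d(h)\pi'\:\cone(\id_{Z[-1]})\rarrow X$; compare the observation quoted in the proof of Lemma~\ref{coproducts-in-DG-categories} that $\Hom^n_\bE(X,Y)\simeq\sZ^0\Hom^\bu_\bE(X,\cone(\id_Y)[n])$, which is this same identification in contravariant form. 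I checked your computations: $d(\phi)=d(h)\pi-d(h)\pi=0$ by $d(\pi)=0$, $d(\pi')=\pi$; the factorization $h'=\phi'\iota'$ recovers $h$ via $\pi\iota'=\id$, $\pi'\iota'=0$; the injectivity of $k_*$ via the same trick applied to a difference of lifts is sound; and the transport of the contracting homotopy ($k\sigma_K=\sigma_Xk$, whence $d(\sigma_K)=\id_K$ and $\sigma_K^2=0$ by cancellation) together with the verification $\sigma_Kh'=h'\sigma_Z$ for the universal property is exactly what is needed, since $\sZ^0(\bE^\bec)$\+morphisms are the closed degree\+$0$ morphisms $g$ in $\bE$ with $\sigma_Yg=g\sigma_X$. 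You correctly identify the one genuinely nontrivial point: the universal property of the kernel in $\sZ^0(\bE)$ only factors \emph{closed} degree\+$0$ morphisms, whereas $\sigma_Xk$ has degree~$-1$ and is not closed, so the enriched upgrade is indispensable, and it is where shifts and cones enter. Two small remarks: in part~(b) the dual cone construction needs $\cone(\id_Z)$ rather than $\cone(\id_{Z[-1]})$ (the closed morphism is $\phi=\iota h-\iota'd(h)$ up to sign, using the structure maps of $\cone(\id_Z)$, since one needs an incoming morphism of degree~$-1$); your sketch glosses this but it works. Also note that the printed statement of part~(b) contains a typo (``the morphism $\Psi^+(f)$ has a cokernel in $\sZ^0(\bE^\bec)$'' should read ``the morphism~$f$ has a cokernel in $\sZ^0(\bE^\bec)$''), and you have proved the intended statement.
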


\begin{proof}
 This is~\cite[Lemma~4.16]{Pedg}.
\end{proof}

\subsection{Equivalent characterizations of abelian DG-categories}
\label{abelian-DG-categories-subsecn}
 An additive DG\+category $\bE$ with shifts and cones is called
\emph{abelian} if both the additive categories $\sZ^0(\bE)$ and
$\sZ^0(\bE^\bec)$ are abelian~\cite[Section~4.5]{Pedg}.
 The following lemma is helpful.

\begin{lem} \label{conservative-to-abelian}
 Let\/ $\sA$ be an additive category with kernels and cokernels,
let\/ $\sB$ be an abelian category, and let\/ $\Theta\:\sA
\rarrow\sB$ be a conservative additive functor preserving kernels
and cokernels.
 Then\/ $\sA$ is also an abelian category.
\end{lem}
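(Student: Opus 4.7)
The category $\sA$ is additive and has kernels and cokernels by hypothesis, so in order to conclude that $\sA$ is abelian it suffices to show that for every morphism $f\:A\to B$ in $\sA$ the canonical comparison morphism from the coimage to the image is an isomorphism. More precisely, writing $k\:K\to A$ for a kernel of $f$ and $c\:B\to C$ for a cokernel of $f$, the identity $fk=0$ produces a factorization $A\twoheadrightarrow\coker(k)\rarrow B$ of~$f$, and post-composing with~$c$ gives zero, so this arrow factors through $\ker(c)\rightarrowtail B$; this yields a canonical morphism
\[
 \phi_f\:\coker(k)\lrarrow\ker(c)
\]
in~$\sA$, and $\sA$ is abelian precisely when $\phi_f$ is an isomorphism for every~$f$.

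The plan is to verify this by transporting the question to the abelian category $\sB$ via the functor~$\Theta$. Since $\Theta$ is additive and preserves kernels and cokernels, applying $\Theta$ to the construction of $\phi_f$ above yields the analogous canonical morphism $\phi_{\Theta(f)}\:\coker(\Theta k)\rarrow\ker(\Theta c)$ associated to the morphism $\Theta(f)\:\Theta A\rarrow\Theta B$ in~$\sB$; that is, there is a natural identification $\Theta(\phi_f)=\phi_{\Theta(f)}$. Because $\sB$ is abelian, the coimage-to-image comparison $\phi_{\Theta(f)}$ is an isomorphism in~$\sB$. Hence $\Theta(\phi_f)$ is an isomorphism in~$\sB$, and the hypothesis that $\Theta$ is conservative forces $\phi_f$ itself to be an isomorphism in~$\sA$.

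I do not anticipate any serious obstacle: the only thing one really has to check with some care is the naturality statement $\Theta(\phi_f)=\phi_{\Theta(f)}$, which is a diagram-chase using only that $\Theta$ is additive and takes the chosen kernel of $f$ (resp.\ cokernel of~$f$) to a kernel of $\Theta(f)$ (resp.\ cokernel of~$\Theta(f)$). The conservativity of $\Theta$ is used only at the very last step, to reflect the isomorphism back from $\sB$ to~$\sA$.
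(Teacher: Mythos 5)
Your proposal is correct and is essentially identical to the paper's own argument: both construct the canonical coimage-to-image comparison morphism, observe that $\Theta$ preserves kernels and cokernels (hence sends it to the corresponding comparison morphism in $\sB$, which is an isomorphism since $\sB$ is abelian), and then use conservativity of $\Theta$ to reflect the isomorphism back to $\sA$. The only difference is that you spell out the compatibility $\Theta(\phi_f)=\phi_{\Theta(f)}$, which the paper leaves implicit.
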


\begin{proof}
 This is~\cite[Lemma~4.31(a)]{Pedg}.
 For any morphism~$f$ in $\sA$, consider the natural morphism~$g$
from the coimage to the image of~$f$.
 Then the morphism $\Theta(g)$ is an isomorphism, since the functor
$\Theta$ preserves kernels and cokernels (hence images and coimages),
and the category $\sB$ is abelian.
 Since the functor $\Theta$ is conservative by assumption, we can
conclude that $g$~is an isomorphism.
\end{proof}

 The following proposition, which is a simplified restatement
of~\cite[Corollary~4.37]{Pedg}, lists equivalent characterizations of
abelian DG\+categories.

\begin{prop}
 Let\/ $\bE$ be an additive DG\+category with shifts and cones.
 Then the following conditions are equivalent:
\begin{enumerate}
\item the DG\+category\/ $\bE$ is abelian;
\item the additive category\/ $\sZ^0(\bE)$ is abelian;
\item all kernels and cokernels exist in the additive category\/
$\sZ^0(\bE)$, and the additive category\/ $\sZ^0(\bE^\bec)$ is
abelian;
\item the DG\+category\/ $\bE$ is idempotent-complete and has all
twists, and the additive category\/ $\sZ^0(\bE^\bec)$ is abelian.
\end{enumerate}
\end{prop}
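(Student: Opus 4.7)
The plan is to verify the cycle $(1)\Rightarrow(2)\Rightarrow(3)\Rightarrow(1)$ and then establish the separate equivalence $(1)\Leftrightarrow(4)$. Implication $(1)\Rightarrow(2)$ is immediate from the definition of an abelian DG\+category recalled at the start of Section~\ref{abelian-DG-categories-subsecn}.

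For $(2)\Rightarrow(3)$, the existence of kernels and cokernels in $\sZ^0(\bE)$ is part of the hypothesis. To deduce that $\sZ^0(\bE^\bec)$ is also abelian, take a morphism $g$ in $\sZ^0(\bE^\bec)$; its image $\Psi^+(g)$ admits a kernel and a cokernel in $\sZ^0(\bE)$, and Lemma~\ref{Psi-reflects-existence-of-co-kernels} lifts these back to $g$ itself. Since the functor $\Psi^+\:\sZ^0(\bE^\bec)\to\sZ^0(\bE)$ is faithful and conservative and sits inside a doubly infinite ladder of adjoints (hence preserves all kernels and cokernels), Lemma~\ref{conservative-to-abelian} applied with $\Theta=\Psi^+$ yields the claim. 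The implication $(3)\Rightarrow(1)$ runs in parallel, with the functor $\Phi\:\sZ^0(\bE)\to\sZ^0(\bE^\bec)$ playing the role of $\Psi^+$: the functor $\Phi$ is conservative, admits $\Psi^+$ and $\Psi^-$ as left and right adjoints respectively, and hence preserves kernels and cokernels; since these already exist in $\sZ^0(\bE)$ by hypothesis and $\sZ^0(\bE^\bec)$ is abelian, Lemma~\ref{conservative-to-abelian} again gives the conclusion.

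For $(4)\Rightarrow(1)$, given any morphism $g$ in $\sZ^0(\bE)$, the composite $\Xi(g)=\Psi^+(\Phi(g))$ has a kernel and a cokernel: indeed, $\Phi(g)$ has them in the abelian category $\sZ^0(\bE^\bec)$, and $\Psi^+$ preserves them. Lemma~\ref{Xi-reflects-existence-of-co-kernels-given-twists}, available because $\bE$ is idempotent-complete with twists by hypothesis, then lifts kernel and cokernel back to $g$; applying Lemma~\ref{conservative-to-abelian} to $\Phi$ once more shows that $\sZ^0(\bE)$ is abelian. The main obstacle is the remaining implication $(1)\Rightarrow(4)$: idempotent-completeness of $\sZ^0(\bE)$ is automatic from its abelianness, but the existence of twists in $\bE$ is more delicate, since given $X\in\bE$ and a Maurer--Cartan cochain $a\in\Hom_\bE^1(X,X)$ one must realize $X(a)$ as a genuine object of $\bE$ rather than only in $\bE^\bec$, where all twists are available for free. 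For this step I would follow the construction in~\cite[Sections~4.5--4.6]{Pedg}, which builds twists in $\bE$ out of the abelian structure of $\sZ^0(\bE)$ by exploiting the fully faithful extensions $\widetilde\Phi$ and $\widetilde\Psi^\pm$ together with the availability of shifts and cones.
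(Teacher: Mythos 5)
Your proof is correct and follows essentially the same route as the paper's: the same three key lemmas (Lemma~\ref{conservative-to-abelian} applied to $\Psi^+$ resp.\ $\Phi$, Lemma~\ref{Psi-reflects-existence-of-co-kernels}, and Lemma~\ref{Xi-reflects-existence-of-co-kernels-given-twists} via the identification $\Xi\simeq\Psi^+\circ\Phi$), with the existence of twists in~(1)$\Rightarrow$(4) deferred, exactly as in the paper, to the external result \cite[Proposition~4.35]{Pedg}. Your reorganization into the cycle $(1)\Rightarrow(2)\Rightarrow(3)\Rightarrow(1)$ plus $(1)\Leftrightarrow(4)$ is only a cosmetic repackaging of the paper's implications, so there is nothing substantive to distinguish the two arguments.
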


\begin{proof}
 The implications (1)\,$\Longrightarrow$\,(2) and
(1)\,$\Longrightarrow$\,(3) are obvious.

 (2)\,$\Longrightarrow$\,(1) To prove that the additive category
$\sZ^0(\bE^\bec)$ is abelian, apply Lemma~\ref{conservative-to-abelian}
to the additive functor $\Psi^+\:\sZ^0(\bE^\bec)\rarrow\sZ^0(\bE)$ and
use Lemma~\ref{Psi-reflects-existence-of-co-kernels}.

 (3)\,$\Longrightarrow$\,(1) To prove that the additive category
$\sZ^0(\bE)$ is abelian, apply Lemma~\ref{conservative-to-abelian}
to the additive functor $\Phi\:\sZ^0(\bE)\rarrow\sZ^0(\bE^\bec)$,
which preserves all limits and colimits (hence all kernels and
cokernels) because it has adjoints on both sides.
 The functor $\Phi$ is conservative by~\cite[Lemma~3.12]{Pedg}, as
mentioned in Section~\ref{Phi-and-Psi-subsecn}.

 (4)\,$\Longrightarrow$\,(3) We need to prove the existence of kernels
and cokernels in the additive category $\sZ^0(\bE)$.
 Let $f$~be a morphism in $\sZ^0(\bE)$.
 Then the morphism $\Phi(f)$ has a kernel in $\sZ^0(\bE^\bec)$, since
the category $\sZ^0(\bE^\bec)$ is abelian by assumption.
 Hence the morphism $\Psi^+\Phi(f)$ has a kernel in $\sZ^0(\bE)$,
since the functor $\Psi^+$ preserves kernels.
 By~\cite[Lemma~3.8]{Pedg}, the morphism $\Psi^+\Phi(f)$ is isomorphic
to the morphism $\Xi(f)$ (see the end of
Section~\ref{Phi-and-Psi-subsecn}).
 So the morphism $\Xi(f)$ has a kernel in $\sZ^0(\bE)$, and it remains
to apply Lemma~\ref{Xi-reflects-existence-of-co-kernels-given-twists}
in order to conclude that the morphism~$f$ has a kernel in $\sZ^0(\bE)$.
 The existence of cokernels is provable similarly.

 (1)\,$\Longrightarrow$\,(4) The additive category $\sZ^0(\bE)$ is
abelian, hence idempotent-complete, which means that the DG\+category
$\bE$ is idempotent-complete.
 The assertion about existence of twists is so important that
 we formulate it as a separate proposition.
\end{proof}

\begin{prop} \label{abelian-DG-categories-have-twists}
 All twists exist in any abelian DG\+category.
\end{prop}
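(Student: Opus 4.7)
My approach is to transport the twist data to $\bE^\bec$, where all twists exist unconditionally (Section~\ref{bec-construction-subsecn}), and then descend to $\bE$ using the hypothesis that $\sZ^0(\bE)$ (equivalently, $\sZ^0(\bE^\bec)$) is abelian.

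Fix $X\in\bE$ and a Maurer--Cartan cochain $a\in\Hom_\bE^1(X,X)$. First I would construct an associated Maurer--Cartan element $a'\in\Hom_{\bE^\bec}^1(\Phi(X),\Phi(X))$, that is, a closed element of $\Hom_\bE^{-1}(L,L)$ (with $L=\cone(\id_{X[-1]})$) satisfying $\sigma_L a'+a'\sigma_L+(a')^2=0$, built explicitly from $a$ and the structural morphisms $\iota,\iota',\pi,\pi'$ of $L$. Granting this, form the twisted object $T^\bec=\Phi(X)(a')=(L,\sigma_L+a')\in\bE^\bec$, which exists because $\bE^\bec$ always has all twists. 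By construction, $T^\bec$ and $\Phi(X)=\widetilde\Phi(X)$ share the same underlying object $L\in\bE$, so they are tautologically isomorphic in the preadditive category $(\bE^\bec)^0$, though not in $\sZ^0(\bE^\bec)$.

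The main step, and the main obstacle, is to show that $T^\bec$ lies in the essential image of $\widetilde\Phi_\bE\colon\bE^0\hookrightarrow\sZ^0(\bE^\bec)$: one must produce $Y\in\bE$ and a closed isomorphism $\widetilde\Phi(Y)\simeq T^\bec$ in $\sZ^0(\bE^\bec)$. Here I would crucially use the abelian hypothesis on $\sZ^0(\bE)$ together with Lemmas~\ref{Psi-reflects-existence-of-co-kernels} and~\ref{Xi-reflects-existence-of-co-kernels-given-twists}: applying $\Psi^+$ (or $\Psi^-$) to a suitable morphism in $\sZ^0(\bE^\bec)$ built from $T^\bec$, the canonical extension $X[-1]\rightarrowtail\Xi(X)\twoheadrightarrow X$ of Corollary~\ref{Xi-extension-cokernel}(a), and the unit/counit of the adjunctions $\Psi^+\dashv\Phi\dashv\Psi^-$, one realizes $Y$ as the kernel (or cokernel) of a morphism in the abelian category $\sZ^0(\bE)$. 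The natural comparison maps intertwine $\Phi$, $\Psi^\pm$, and $\Xi$, so that applying $\Phi$ back to $Y$ recovers $T^\bec$ up to closed isomorphism.

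Finally, given such $Y$, full faithfulness of $\widetilde\Phi_\bE$ transports the closed isomorphism $\widetilde\Phi(Y)\simeq T^\bec\simeq \Phi(X)$ in $\sZ^0(\bE^\bec)$ to a (non-closed) isomorphism $X\simeq Y$ in $\bE^0$. Tracking how $a'$ was produced from $a$ in Step~1, the Maurer--Cartan cochain associated with this isomorphism is exactly $a$, exhibiting $Y$ as the twist $X(a)$. I expect the construction of $a'$ in Step~1 and the verification of the Maurer--Cartan equation to be a direct, if tedious, computation in the homogeneous components of $\Hom_\bE^\bu(L,L)$; the conceptual content is concentrated in the descent step, where the abelian hypothesis is needed to guarantee that the essential image of $\widetilde\Phi_\bE$ absorbs the twist operation coming from $\bE^\bec$.
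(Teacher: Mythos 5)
Your Step~1 is where the argument breaks down, and it cannot be repaired within this architecture. A Maurer--Cartan element $a'\in\Hom^1_{\bE^\bec}(\Phi(X),\Phi(X))$ is, by the construction in Section~\ref{bec-construction-subsecn}, a $d$\+closed element of $\Hom^{-1}_\bE(L,L)$. Decomposing an element $u\in\Hom^{-1}_\bE(L,L)$ via $\id_L=\iota\pi'+\iota'\pi$, its four components are $\pi'u\iota\in\Hom^{-1}_\bE(X,X)$, \ $\pi'u\iota'\in\Hom^{-2}_\bE(X,X)$, \ $\pi u\iota\in\Hom^{0}_\bE(X,X)$, and $\pi u\iota'\in\Hom^{-1}_\bE(X,X)$. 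Since $a$ has degree~$+1$, every word in $a$, $d(a)$, and $\id_X$ has degree~$\ge0$, with degree~$0$ attained only by multiples of $\id_X$; using the relations $\pi'\iota'=0=\pi\iota$ and $\pi'\iota=\id=\pi\iota'$, one checks that the only elements of $\Hom^{-1}_\bE(L,L)$ manufacturable from $a$ and the structure morphisms are multiples of $\sigma_L=\iota'\pi'$ itself. So no $a'$ remembering $a$ exists, and the ``direct, if tedious, computation'' you defer would come back empty. This is not an accident of bookkeeping but reflects the structural fact recalled in Section~\ref{tilde-Phi-and-Psi-subsecn}: the functor $\Phi$ transforms twists into \emph{isomorphisms}, i.~e., $\Phi(X(a))\simeq\Phi(X)$ in $\sZ^0(\bE^\bec)$ whenever the twist exists. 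The object you are trying to produce corresponds downstairs to the \emph{untwisted} object $\Phi(X)$, not to a twist of it; the datum~$a$ simply cannot be transported to $\bE^\bec$ as a Maurer--Cartan element, so the plan ``twist in $\bE^\bec$, then descend'' fails before the descent begins.

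There is a compounding error at the end. The tautological comparison $T^\bec\simeq\Phi(X)$ given by $\id_L$ is an isomorphism in $(\bE^\bec)^0$ but is not $d^\bec$\+closed: computed between $(L,\sigma_L+a')$ and $(L,\sigma_L)$, one has $d^\bec(\id_L)=\sigma_L-(\sigma_L+a')=-a'\ne0$. Hence the composite $\widetilde\Phi(Y)\simeq T^\bec\simeq\Phi(X)$ is not a morphism of $\sZ^0(\bE^\bec)$ at all, and full faithfulness of $\widetilde\Phi$ does not apply to it; the functor that absorbs morphisms of $(\bE^\bec)^0$ is $\widetilde\Psi^+$, and applying it would only yield $\Xi(Y)\simeq\Xi(X)$ in $\sZ^0(\bE)$, which identifies $Y$ as \emph{some} twist of $X$ but cannot single out the twist by the given~$a$ (the functor $\Xi$ takes \emph{all} twists to isomorphisms). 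The paper's own proof avoids the detour through $\bE^\bec$ entirely: following \cite[Proposition~4.35]{Pedg}, the desired twist $X(a)$ is realized directly as the cokernel, in the abelian category $\sZ^0(\bE)$, of an explicit morphism $\Xi(X)[-1]\rarrow\Xi(X)$ built from~$a$, on the pattern of Corollary~\ref{Xi-extension-cokernel}(b). That cokernel construction, carried out upstairs in $\sZ^0(\bE)$, is precisely the content your ``descent step'' gestures at but never supplies, and it never passes through twists in~$\bE^\bec$.
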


\begin{proof}
 This is~\cite[Proposition~4.35]{Pedg}.
 The point is that, for any object $A$ in an abelian DG\+category $\bE$,
all twists of $A$ can be produced as the cokernels of suitable
morphisms $\Xi(A)[-1]\rarrow\Xi(A)$
(cf.\ Corollary~\ref{Xi-extension-cokernel}(b)).
 Let us empasize that the functor $\Xi$ takes twists to
isomorphisms~\cite[Lemma~3.11 or~4.5]{Pedg}.
 The discussion in~\cite[Section~7]{NST} sheds some additional light
on this argument.
\end{proof}

\begin{cor} \label{becbec-equivalence-for-abelian-DG}
 For any abelian DG\+category\/ $\bE$, the DG\+functor\/
$\bec\bec\:\bE\rarrow\bE^{\bec\bec}$ is an equivalence of
DG\+categories.
\end{cor}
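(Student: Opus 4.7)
The plan is to combine the preceding Proposition~\ref{abelian-DG-categories-have-twists} with the criterion recalled at the end of Section~\ref{DG-functor-becbec-subsecn}. That criterion asserts that for any idempotent-complete additive DG-category $\bE$ with shifts and twists, the canonical DG-functor $\bec\bec\:\bE\rarrow\bE^{\bec\bec}$ is an equivalence of DG-categories. So it suffices to verify that an abelian DG-category $\bE$ meets these three hypotheses.

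First I would note that an abelian DG-category is by definition additive and has shifts and cones, and that the additive category $\sZ^0(\bE)$ is abelian. In particular $\sZ^0(\bE)$ is idempotent-complete, which is precisely the condition defining an idempotent-complete DG-category in Section~\ref{DG-functor-becbec-subsecn}. Second, Proposition~\ref{abelian-DG-categories-have-twists} directly supplies all twists in~$\bE$. With shifts, twists, and idempotent-completeness now established, the general criterion applies and yields the desired equivalence $\bE\simeq\bE^{\bec\bec}$.

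There is no real obstacle: the corollary is a bookkeeping consequence of Proposition~\ref{abelian-DG-categories-have-twists} and the fact that any abelian additive category is automatically idempotent-complete. The only conceptual point worth emphasizing is that, although the definition of an abelian DG-category does not mention twists, their existence is forced by the abelian axiom on $\sZ^0(\bE)$ (via the construction recalled in Corollary~\ref{Xi-extension-cokernel}(b), which realizes each twist as a cokernel of a morphism $\Xi(A)[-1]\rarrow\Xi(A)$ in~$\sZ^0(\bE)$). This is exactly what makes the ``almost involution'' $\bE\longmapsto\bE^{\bec\bec}$ a genuine involution once we restrict to abelian DG-categories.
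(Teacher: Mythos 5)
Your proposal is correct and follows essentially the same route as the paper: the paper's proof likewise observes that an abelian DG\+category is automatically idempotent-complete (since $\sZ^0(\bE)$ is abelian), invokes Proposition~\ref{abelian-DG-categories-have-twists} for the existence of all twists, and then applies the criterion from Section~\ref{DG-functor-becbec-subsecn} stating that $\bec\bec$ is an equivalence for any idempotent-complete additive DG\+category with shifts and twists. Your closing remark about twists being realized as cokernels of morphisms $\Xi(A)[-1]\rarrow\Xi(A)$ is exactly the mechanism behind Proposition~\ref{abelian-DG-categories-have-twists} as sketched in its proof, so nothing is missing.
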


\begin{proof}
 This is~\cite[Corollary~4.36 or proof of Proposition~4.35]{Pedg}.
 Indeed, any abelian DG\+category is obviously idempotent-complete,
so the assertion follows from
Proposition~\ref{abelian-DG-categories-have-twists} and
the discussion in Section~\ref{DG-functor-becbec-subsecn}.
\end{proof}

\subsection{Exactly embedded full abelian DG-subcategories}
\label{exactly-embedded-subsecn}
 Let $\sA$ be an abelian category and $\sB\subset\sA$ be a full
subcategory.
 We will say that $\sB$ is an \emph{exactly embedded full abelian
subcategory} in $\sA$ if $\sB$ is an abelian category and
the inclusion $\sB\rightarrowtail\sA$ is an exact functor of
abelian categories.
 Equivalently, a strictly full subcategory $\sB\subset\sA$ is
an exactly embedded full abelian subcategory if and only if it is
closed under finite direct sums, kernels, and cokernels in~$\sA$.

 Let $\bA$ be a DG\+category and $\bB\subset\bA$ be a (strictly) full
DG\+subcategory.
 Then the DG\+category $\bB^\bec$ can be naturally viewed as
a (strictly) full DG\+subcategory in~$\bA^\bec$.
 Assuming that the DG\+category $\bA$ has shifts and cones and
the full subcategory $\bB$ is closed under shifts and cones in $\bA$,
the functors $\Phi_\bB$ and $\Psi^\pm_\bB$ agree with the functors
$\Phi_\bA$ and $\Psi^\pm_\bA$, respectively.
 Similarly, the DG\+functor $\bec\bec_\bB\:\bB\rarrow\bB^{\bec\bec}$
agrees with the DG\+functor $\bec\bec_\bA\:\bA\rarrow\bA^{\bec\bec}$.

 Let $\bA$ be an abelian DG\+category and $\bB\subset\bA$ be a full
DG\+subcategory closed under finite direct sums, shifts, and cones.
 We will say that $\bB$ is an \emph{exactly embedded full abelian
DG\+subcategory} in $\bA$ if $\bB$ is an abelian DG\+category and
both the fully faithful inclusions $\sZ^0(\bB)\rightarrowtail
\sZ^0(\bA)$ and $\sZ^0(\bB^\bec)\rarrow\sZ^0(\bA^\bec)$ are exact
functors of abelian categories~\cite[Section~9.3]{Pedg}.

\begin{prop}
 Let\/ $\bA$ be an abelian DG\+category and\/ $\bB\subset\bA$ be
a (strictly) full DG\+subcategory closed under finite direct sums,
shifts, and cones.
 Then the following conditions are equivalent:
\begin{enumerate}
\item $\bB$ is an exactly embedded full abelian DG\+subcategory
in\/~$\bA$;
\item $\sZ^0(\bB)$ is an exactly embedded full abelian subcategory
in\/~$\sZ^0(\bA)$;
\item the full DG\+subcategory\/ $\bB\subset\bA$ is closed under
twists and direct summands, and\/ $\sZ^0(\bB^\bec)$ is an exactly
embedded full abelian subcategory in\/ $\sZ^0(\bA^\bec)$.
\end{enumerate}
\end{prop}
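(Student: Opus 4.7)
The argument will proceed around the cycle (1)$\Rightarrow$(2)$\Rightarrow$(1) and (1)$\Rightarrow$(3)$\Rightarrow$(1), invoking the preceding proposition characterizing abelianness of $\bE$ via $\sZ^0(\bE)$ or $\sZ^0(\bE^\bec)$, together with the explicit recipe for kernels and cokernels in $\sZ^0(\bE^\bec)$ supplied by Lemma~\ref{Psi-reflects-existence-of-co-kernels} and the conservativity of $\Phi$.

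The implication (1)$\Rightarrow$(2) is immediate from the definition. For (2)$\Rightarrow$(1), the assumption that $\sZ^0(\bB)$ is abelian gives, by implication (2)$\Rightarrow$(1) of the preceding proposition, that $\bB$ is an abelian DG\+category; in particular $\sZ^0(\bB^\bec)$ is abelian. To see that the inclusion $\sZ^0(\bB^\bec)\hookrightarrow\sZ^0(\bA^\bec)$ is exact, let $g$ be a morphism in $\sZ^0(\bB^\bec)$. By Lemma~\ref{Psi-reflects-existence-of-co-kernels} and the construction used in its proof, the kernel of $g$ in $\sZ^0(\bA^\bec)$ has underlying object $\ker_{\sZ^0(\bA)}\Psi^+_\bA(g)$ equipped with the induced~$\sigma$. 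Since $\Psi^+_\bA(g)=\Psi^+_\bB(g)$ is a morphism in $\sZ^0(\bB)$ and the inclusion $\sZ^0(\bB)\hookrightarrow\sZ^0(\bA)$ is exact, this kernel lies in $\sZ^0(\bB)$; hence the kernel in $\sZ^0(\bA^\bec)$ lies in $\sZ^0(\bB^\bec)$ and coincides with the kernel computed there. Cokernels are treated symmetrically.

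For (1)$\Rightarrow$(3), Proposition~\ref{abelian-DG-categories-have-twists} applied to $\bB$ produces all twists inside $\bB$, and by uniqueness these agree with the twists of the same objects computed in $\bA$; since $\bB$ is strictly full, $\bB$ is closed under twists in $\bA$. Similarly, $\sZ^0(\bB)$ is idempotent-complete (being abelian); any idempotent endomorphism in $\sZ^0(\bA)$ of an object of $\bB$ lies in $\sZ^0(\bB)$ by fullness, splits there, and the splitting matches (up to unique isomorphism) the direct sum decomposition in $\sZ^0(\bA)$. For (3)$\Rightarrow$(1), the closure of $\bB$ under direct summands and under twists gives, in combination with the idempotent-completeness of $\sZ^0(\bA)$, that $\bB$ is itself idempotent-complete and has all twists. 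With the additional assumption that $\sZ^0(\bB^\bec)$ is abelian, the implication (4)$\Rightarrow$(1) of the preceding proposition yields that $\bB$ is an abelian DG\+category. Finally, exactness of $\sZ^0(\bB)\hookrightarrow\sZ^0(\bA)$ is checked as follows: for a morphism $f$ in $\sZ^0(\bB)$, let $K_\bB$ and $K_\bA$ be its kernels computed in $\sZ^0(\bB)$ and $\sZ^0(\bA)$; because $\Phi_\bA$ and $\Phi_\bB$ each have adjoints on both sides and agree on morphisms of $\bB$ (see Section~\ref{exactly-embedded-subsecn}), and by the exactness hypothesis in~(3), one obtains
\[
 \Phi_\bA(K_\bB)=\ker_{\sZ^0(\bB^\bec)}\Phi_\bB(f)
 =\ker_{\sZ^0(\bA^\bec)}\Phi_\bA(f)=\Phi_\bA(K_\bA),
\]
with the composite isomorphism induced from the universal map $K_\bB\to K_\bA$. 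Since $\Phi_\bA$ is conservative, $K_\bB\to K_\bA$ is itself an isomorphism; cokernels are handled in the same way.

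The main delicate step is the passage (3)$\Rightarrow$(1), where one must recover exactness of the inclusion at the level of the original DG\+categories from exactness at the level of the $\bec$\+categories. The point where this could go wrong would be if the natural maps between kernels (or cokernels) computed in $\bB$ and in $\bA$ failed to be detected as isomorphisms, and the argument rests precisely on the conservativity of $\Phi$ together with the fact that $\Phi$, $\Psi^\pm$, and the $\bec$\+construction are computed identically on the subcategory inclusion $\bB\subset\bA$.
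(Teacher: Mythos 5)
Your proof is correct, and on three of the four arrows it coincides with the paper's own argument: (1)$\Rightarrow$(2) is definitional, (1)$\Rightarrow$(3) is the same idempotent-completeness/twist observation, and your (2)$\Rightarrow$(1) is the paper's argument in only slightly different clothing (the paper phrases it as: an object $K\in\sZ^0(\bA^\bec)$ lies in $\sZ^0(\bB^\bec)$ whenever $\Psi^+(K)\in\sZ^0(\bB)$, and $\Psi^+$ takes the kernel in $\sZ^0(\bA^\bec)$ to the kernel of $\Psi^+(g)$ --- which is exactly your ``underlying object $\ker\Psi^+(g)$ with the induced~$\sigma$'' description). Where you genuinely diverge is (3)$\Rightarrow$(1). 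The paper never first establishes that $\bB$ is an abelian DG\+category: for a morphism $f$ in $\sZ^0(\bB)$ with kernel $L$ in $\sZ^0(\bA)$, it shows $\Xi(L)\simeq\Psi^+(K)\in\sZ^0(\bB)$, where $K$ is the kernel of $\Phi(f)$, and then concludes $L\in\bB$ directly by observing that $L$ is a direct summand of $L\oplus L[-1]$, which is a twist of $\Xi(L)$ --- so the closure hypotheses of~(3) are used at the very last step, and abelianness of $\bB$ together with exactness of the embedding drop out simultaneously. You instead first upgrade $\bB$ to an abelian DG\+category via implication (4)$\Rightarrow$(1) of the preceding proposition, and then compare the kernel $K_\bB$ computed in $\sZ^0(\bB)$ with the kernel $K_\bA$ computed in $\sZ^0(\bA)$ by applying $\Phi$ and invoking its conservativity. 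This effectively outsources the paper's twist-and-summand maneuver to Lemma~\ref{Xi-reflects-existence-of-co-kernels-given-twists} (hidden inside (4)$\Rightarrow$(1)), at the price of an extra appeal to the characterization proposition; what it buys is a cleaner final step, since the conservativity argument replaces the slightly fiddly identification of $L\oplus L[-1]$ as a twist of $\Xi(L)$. The one point in your write-up deserving an explicit sentence is the claim that the composite isomorphism $\Phi_\bA(K_\bB)\simeq\Phi_\bA(K_\bA)$ is induced by the canonical comparison map $K_\bB\rarrow K_\bA$: this does hold, because both sides are kernels of $\Phi_\bA(f)$ equipped with their maps to $\Phi_\bA(M)$, and $\Phi_\bA$ of the comparison map is compatible with these, so the uniqueness clause of the kernel's universal property identifies it with the canonical isomorphism --- you flag this correctly, but it is the only step where a referee would ask for the verification.
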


\begin{proof}
 The implication (1)\,$\Longrightarrow$\,(2) is obvious.

 (1)\,$\Longrightarrow$\,(3) For any abelian DG\+category $\bB$,
the additive category $\sZ^0(\bB)$ is abelian, hence
idempotent-complete, hence closed under direct summands in
any ambient additive category $\sZ^0(\bA)$.
 Similarly, any abelian DG\+category $\bB$ has all twists by
Proposition~\ref{abelian-DG-categories-have-twists}, hence $\bB$
is closed under twists in any ambient DG\+category~$\bA$.

 (2)\,$\Longrightarrow$\,(1) We need to prove that the full
subcategory $\sZ^0(\bB^\bec)$ is closed under kernels and cokernels
in $\sZ^0(\bA^\bec)$.
 For this purpose we observe that, given an object
$K\in\sZ^0(\bA^\bec)$, one has $K\in\sZ^0(\bB^\bec)$ whenever
$\Psi^+(K)\in\sZ^0(\bB)$.
 Let $f\:X\rarrow Y$ be a morphism in $\sZ^0(\bB^\bec)$ and $K$ be
the kernel of~$f$ computed in $\sZ^0(\bA^\bec)$.
 Then $\Psi^+(K)$ is the kernel of $\Psi^+(f)$ in $\sZ^0(\bA)$.
 Since the full subcategory $\sZ^0(\bB)$ is closed under kernels in
$\sZ^0(\bA)$, it follows that $\Psi^+(K)\in\sZ^0(\bB)$, hence
$K\in\sZ^0(\bB^\bec)$.
 The proof of the closure under cokernels is similar.

 (3)\,$\Longrightarrow$\,(1) We need to prove that the full
subcategory $\sZ^0(\bB)$ is closed under kernels and cokernels
in $\sZ^0(\bA)$.
 Let $f\:M\rarrow N$ be a morphism in $\sZ^0(\bB)$.
 Then $\Phi(f)\:\Phi(M)\rarrow\Phi(N)$ is a morphism
in $\sZ^0(\bB^\bec)$.
 Since the full subcategory $\sZ^0(\bB^\bec)$ is closed under
kernels and cokernels in the abelian category $\sZ^0(\bA^\bec)$,
the morphism $\Phi(f)$ has the same kernel $K$ in both
$\sZ^0(\bA^\bec)$ and $\sZ^0(\bB^\bec)$.
 Hence the object $\Psi^+(K)\in\sZ^0(\bB)$ is the kernel of
the morphism $\Psi^+\Phi(f)$ in both $\sZ^0(\bA)$ and~$\sZ^0(\bB)$.

 Let $L$ be a kernel of the morphism~$f$ in the abelian category
$\sZ^0(\bA)$.
 Then the object $\Xi(L)$ is a kernel of the morphism $\Xi(f)$
in $\sZ^0(\bA)$.
 As the morphism $\Xi(f)$ is isomorphic to $\Psi^+\Phi(f)$,
we see that $\Xi(L)\simeq\Psi^+(K)\in\sZ^0(\bB)$.
 It remains to observe that the object $L$ is a direct summand of
$L\oplus L[-1]$, and the latter is a twist of $\Xi(L)$ in $\bA$, in
order to conclude that $L\in\bB$ (since the full DG\+subcategory
$\bB$ is closed under twists and direct summands in~$\bA$).
 The argument for cokernels is similar.
\end{proof}

\begin{ex}
 The following counterexample~\cite[Example~4.30]{Pedg} shows that
a full DG\+subcategory $\bB$ closed under finite direct sums,
shifts, and cones in an abelian DG\+category $\bA$ need \emph{not} be
an abelian DG\+category even when the additive category
$\sZ^0(\bB^\bec)$ is abelian.
 In particular, an additive DG\+category $\bE$ with shifts and cones
need \emph{not} be abelian when the additive category $\sZ^0(\bE^\bec)$
is abelian.
 
 Let $\bA$ be the DG\+category of complexes of vector spaces over
a field~$k$ and $\bB\subset\bA$ be the full DG\+subcategory of
acyclic complexes.
 Then the full DG\+subcategory $\bB^\bec$ coincides with the whole
ambient DG\+category $\bA^\bec$, and $\sZ^0(\bB^\bec)=\sZ^0(\bA^\bec)$
is an abelian category equivalent to the category of graded
$k$\+vector spaces.
 But the additive category of acyclic complexes of $k$\+vector
spaces $\sZ^0(\bB)$ is not abelian.
\end{ex}

\subsection{Examples}
 The following three classes of abelian DG\+categories are the main
intended examples in this paper.

\begin{ex} \label{abelian-DG-category-of-complexes-example}
 Let $\sE$ be an abelian category.
 Then the DG\+category $\bC(\sE)$ of complexes in $\sE$ has finite
direct sums, shifts, twists, and cones
(see Section~\ref{DG-category-of-complexes-defined-subsecn}).
 The additive category $\sZ^0(\bC(\sE))$ of complexes in $\sE$
and closed morphisms of degree~$0$ between them is well-known to be
abelian.
 According to Section~\ref{almost-involution-complexes-subsecn},
the additive category $\sZ^0(\bC(\sE)^\bec)$ is equivalent to
the category $\sG(\sE)$ of graded objects in $\sE$, which is also
abelian.
 Thus $\bC(\sE)$ is an abelian DG\+category.
\end{ex}

\begin{ex} \label{abelian-DG-category-of-CDG-modules-example}
 Let $\biR^\cu=(R^*,d,h)$ be a CDG\+ring.
 Then the DG\+category $\biR^\cu\bModl$ of left CDG\+modules over
$\biR^\cu$ has finite direct sums (as well as infinite products and
coproducts), shifts, twists, and cones (see
Section~\ref{DG-category-of-CDG-modules-defined-subsecn}).
 According to Section~\ref{almost-involution-CDG-modules-subsecn},
the additive category $\sZ^0(\biR^\cu\bModl)$ of left CDG\+modules over
$\biR^\cu$ and closed morphisms of degree~$0$ between them is abelian
(in fact, it is the category of graded modules over a graded
ring~$\widehat\biR^*$).
 Furthermore, the additive category $\sZ^0((\biR^\cu\bModl)^\bec)$ is
equivalent to the category $R^*\bModl$ of graded modules over
the graded ring $R^*$, so it is also abelian.
 Therefore, $\biR^\cu\bModl$ is an abelian DG\+category.
\end{ex}

\begin{ex} \label{abelian-DG-category-of-factorizations-example}
 Let $\sE$ be an abelian category, $\Delta\:\sE\rarrow\sE$ be
an autoequivalence, and $v\:\Id_\sE\rarrow\Delta$ be a potential
(see Section~\ref{DG-category-of-factorizations-defined-subsecn}).
 Then the DG\+category of factorizations $\bF(\sE,\Delta,v)$ has
finite direct sums, shifts, twists, and cones.
 One can easily check that the additive category
$\sZ^0(\bF(\sE,\Delta,v))$ is abelian.
 According to Section~\ref{almost-involution-factorizations-subsecn},
the additive category $\sZ^0(\bF(\sE,\Delta,v)^\bec)$ is equivalent
to the category $\sP(\sE,\Delta)$ of $2$\+$\Delta$-periodic objects
in $\sE$, which is also abelian (being, in turn, equivalent to
$\sE\times\sE$).
 Thus $\bF(\sE,\Delta,v)$ is an abelian DG\+category.
\end{ex}

 For further examples of abelian DG\+categories we refer the reader
to~\cite[Example~4.41]{Pedg}.
 Notice that Example~\ref{abelian-DG-category-of-factorizations-example}
is a particular case of~\cite[Example~4.42]{Pedg},
as per~\cite[Remark~2.7]{Pedg}.

\Section{Cotorsion Pairs} \label{cotorsion-pairs-secn}

 The concept of a complete cotorsion pair goes back to Salce~\cite{Sal}.
 The material of this section is an accumulated result of the work
of many people over the decades.
 The paper by Eklof and Trlifaj~\cite{ET} was the main development.
 The exposition below is an extraction from~\cite[Sections~1
and~3]{PS4}, which in turn is based on~\cite[Sections~3--4]{PR}.
 All proofs are omitted and replaced with references.

\subsection{Complete cotorsion pairs}
 Let\/ $\sE$ be an abelian category and $\sL$, $\sR\subset\sE$ be two
classes of objects in~$\sE$.
 We will denote by $\sL^{\perp_1}\subset\sE$ the class of all objects
$X\in\sE$ such that $\Ext_\sE^1(L,X)=0$ for all $L\in\sL$.
 Similarly, ${}^{\perp_1}\sR\subset\sE$ denotes the class of all objects
$Y\in\sE$ such that $\Ext_\sE^1(Y,R)=0$ for all $R\in\sR$.

 A pair of classes of objects $(\sL,\sR)$ in $\sE$ is called
a \emph{cotorsion pair} if $\sR=\sL^{\perp_1}$ and
$\sL={}^{\perp_1}\sR\subset\sE$.
 A cotorsion pair $(\sL,\sR)$ in $\sE$ is said to be \emph{complete}
if for every object $E\in\sE$ there exist short exact sequences
\begin{align}
 &0\lrarrow R'\lrarrow L\lrarrow E \lrarrow0,
 \label{special-precover-sequence} \\
 &0\lrarrow E\lrarrow  R\lrarrow L'\lrarrow0
 \label{special-preenvelope-sequence}
\end{align}
in $\sE$ with $L$, $L'\in\sL$ and $R$, $R'\in\sR$.

 Given a cotorsion pair $(\sL,\sR)$ in $\sE$, the short exact
sequence~\eqref{special-precover-sequence} is called
a \emph{special precover sequence}, and an epimorphism $L\rarrow E$
in $\sE$ with $L\in\sL$ and a kernel in $\sR$ is called
a \emph{special\/ $\sL$\+precover} of an object $E\in\sE$.
 The short exact sequence~\eqref{special-preenvelope-sequence} is called
a \emph{special preenvelope sequence}, and a monomorphism $E\rarrow R$
in $\sE$ with $R\in\sR$ and a cokernel in $\sL$ is called
a \emph{special\/ $\sR$\+preenvelope} of an object $E\in\sE$.
 The exact sequences~(\ref{special-precover-sequence}--%
\ref{special-preenvelope-sequence}) are collectively referred to
as the \emph{approximation sequences}.
 
 A class of objects $\sL\subset\sE$ is said to be \emph{generating}
if for every object $E\in\sE$ there exists an epimorphism
$L\twoheadrightarrow E$ in $\sE$ with $L\in\sL$.
 A class of objects $\sR\subset\sE$ is said to be \emph{cogenerating}
if for every object $E\in\sE$ there exists a monomorphism
$E\rightarrowtail R$ in $\sE$ with $R\in\sR$.
 Clearly, in a complete cotorsion pair $(\sL,\sR)$, the class $\sL$
must be generating and the class $\sR$ must be cogenerating.
 But in a noncomplete cotorsion pair this need not be the case,
unless it is assumed additionally that the abelian category $\sE$ has
enough projective and/or injective objects~\cite[Remark~3.5 and
Example~3.6]{PS4}.

\begin{lem} \label{salce-lemma}
 Let $(\sL,\sR)$ be a cotorsion pair in an abelian category\/~$\sE$.
\par
\textup{(a)} If the class\/ $\sL$ is generating in\/ $\sE$ and every
object of\/ $\sE$ admits a special preenvelope
sequence~\eqref{special-preenvelope-sequence}, then every object of\/
$\sE$ also admits a special precover
sequence~\eqref{special-precover-sequence}.
 So the cotorsion pair $(\sL,\sR)$ is complete in this case. \par
\textup{(b)} If the class\/ $\sR$ is cogenerating in\/ $\sE$ and every
object of\/ $\sE$ admits a special precover
sequence~\eqref{special-precover-sequence}, then every object of\/
$\sE$ also admits a special preenvelope
sequence~\eqref{special-preenvelope-sequence}.
 So the cotorsion pair $(\sL,\sR)$ is complete in this case. 
\end{lem}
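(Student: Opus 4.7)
The plan is to prove only part~(a), since part~(b) follows by the same argument dualized in the abelian category~$\sE$. My strategy is the classical Salce argument: start with an arbitrary epimorphism onto $E$ coming from the generating class~$\sL$, take a special preenvelope of its kernel, and combine the two via a pushout to manufacture the desired special precover.

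In detail, fix $E\in\sE$. Since $\sL$ is generating, I can pick an epimorphism $L_0\twoheadrightarrow E$ with $L_0\in\sL$; let $K$ be its kernel, so that I have a short exact sequence
\[
0\lrarrow K\lrarrow L_0\lrarrow E\lrarrow 0.
\]
By hypothesis, $K$ admits a special preenvelope sequence
\[
0\lrarrow K\lrarrow R\lrarrow L_1\lrarrow 0
\]
with $R\in\sR$ and $L_1\in\sL$. I now form the pushout $P$ of the pair of monomorphisms $K\rightarrowtail L_0$ and $K\rightarrowtail R$. Standard pushout calculus in an abelian category gives two short exact sequences sharing $P$:
\[
0\lrarrow R\lrarrow P\lrarrow E\lrarrow 0,\qquad
0\lrarrow L_0\lrarrow P\lrarrow L_1\lrarrow 0.
\]
The first sequence is a candidate special precover sequence for $E$, so the only thing left to verify is that $P\in\sL$.

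For this I use that $\sL={}^{\perp_1}\sR$ is automatically closed under extensions: applying $\Hom_\sE(-,R')$ for an arbitrary $R'\in\sR$ to the second short exact sequence above yields a piece of the long exact sequence
\[
\Ext^1_\sE(L_1,R')\lrarrow \Ext^1_\sE(P,R')\lrarrow \Ext^1_\sE(L_0,R'),
\]
whose outer terms vanish because $L_0,L_1\in\sL$; hence $\Ext^1_\sE(P,R')=0$ and $P\in\sL$. This produces the required special precover sequence for~$E$ and completes part~(a). Part~(b) is proved by the dual construction: cogenerate $E\rightarrowtail R_0$, take a special precover of the cokernel, and form the pullback; closedness of $\sR=\sL^{\perp_1}$ under extensions is checked by the analogous Ext-argument. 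I do not anticipate any real obstacle here; the only mildly delicate point is making sure the two sequences produced by the pushout really are short exact (which is a standard diagram-chase in any abelian category), and correctly identifying which of~$\sL$, $\sR$ inherits extension closure in the respective direction.
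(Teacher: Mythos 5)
Your proposal is correct and is essentially the same argument as the paper's proof, which is delegated to~\cite[Lemma~1.1]{PS4}: there, exactly this classical Salce construction appears, namely an epimorphism from the generating class, a special preenvelope of its kernel, the pushout producing the two short exact sequences, and extension-closure of $\sL={}^{\perp_1}\sR$ (valid for Yoneda $\Ext^1$ in any abelian category), with part~(b) handled by the dual pullback argument. No gaps.
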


\begin{proof}
 This a generalization of the classical Salce lemmas~\cite{Sal}.
 A proof in the stated generality can be found in~\cite[Lemma~1.1]{PS4}.
\end{proof}

\begin{lem} \label{hereditary-cotorsion}
 Let $(\sL,\sR)$ be a cotorsion pair in an abelian category\/~$\sE$.
 Assume that the class\/ $\sL$ is generating and the class\/ $\sR$ is
cogenerating in\/~$\sE$.
 Then the following conditions are equivalent:
\begin{enumerate}
\item the class\/ $\sL$ is closed under kernels of epimorphisms
in\/~$\sE$;
\item the class\/ $\sR$ is closed under cokernels of monomorphisms
in\/~$\sE$;
\item $\Ext^2_\sE(L,R)=0$ for all $L\in\sL$ and $R\in\sR$;
\item $\Ext^n_\sE(L,R)=0$ for all $L\in\sL$, \,$R\in\sR$,
and $n\ge1$.
\end{enumerate}
\end{lem}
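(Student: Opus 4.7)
The implication (4)\,$\Longrightarrow$\,(3) is trivial, and conditions (1) and (2) are interchanged by a completely symmetric argument using the cogenerating class~$\sR$ in place of the generating class~$\sL$, so I concentrate on the circle (1)\,$\Longleftrightarrow$\,(3)\,$\Longrightarrow$\,(4).

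\smallskip

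For (3)\,$\Longrightarrow$\,(1), suppose $0\to L'\to L\to L''\to0$ is a short exact sequence with $L$, $L''\in\sL$, and let $R\in\sR$.
 The long exact sequence of $\Ext_\sE^*(-,R)$ yields
\[
 \Ext_\sE^1(L,R)\lrarrow\Ext_\sE^1(L',R)\lrarrow\Ext_\sE^2(L'',R).
\]
 The outer terms vanish by hypothesis and by~(3), respectively, so $\Ext_\sE^1(L',R)=0$ for every $R\in\sR$; since $(\sL,\sR)$ is a cotorsion pair, $L'\in{}^{\perp_1}\sR=\sL$.

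\smallskip

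For (1)\,$\Longrightarrow$\,(3), fix $L\in\sL$, $R\in\sR$, and represent an arbitrary class $\xi\in\Ext_\sE^2(L,R)$ by a Yoneda two-fold extension $0\to R\to E_1\to E_0\to L\to0$, factored as $0\to R\to E_1\to M\to0$ and $0\to M\to E_0\to L\to0$, where $M=\im(E_1\to E_0)$.
 Since $\sL$ is generating, choose an epimorphism $p\:L_0\twoheadrightarrow E_0$ with $L_0\in\sL$, and form the pullback along $M\hookrightarrow E_0$ to obtain a short exact sequence $0\to M'\to L_0\to L\to0$, where $M'=p^{-1}(M)$.
 By~(1), $M'\in\sL$, hence $\Ext_\sE^1(M',R)=0$; consequently the induced map $M'\to M$ lifts through $E_1\to M$, which means that the pullback of $\xi$ along $L_0\to L$ vanishes in $\Ext_\sE^2(L_0,R)$.
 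Finally, the long exact sequence for $0\to M'\to L_0\to L\to0$ gives
\[
 \Ext_\sE^1(M',R)\lrarrow\Ext_\sE^2(L,R)\lrarrow\Ext_\sE^2(L_0,R),
\]
in which the first term vanishes because $M'\in\sL$; thus $\Ext_\sE^2(L,R)$ injects into $\Ext_\sE^2(L_0,R)$ and $\xi=0$.

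\smallskip

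For (3)\,$\Longrightarrow$\,(4) I argue by induction on $n\ge2$, the case $n=2$ being hypothesis~(3).
 Using (1), which we have just deduced from (3), the generating hypothesis on $\sL$ permits the construction of a left resolution $\cdots\to L_1\to L_0\to L\to0$ with every $L_i\in\sL$ and every syzygy $K_i=\ker(L_{i-1}\to L_{i-2})\in\sL$.
 Standard dimension shifting along the short exact sequences $0\to K_{i+1}\to L_i\to K_i\to0$ gives natural isomorphisms
\[
 \Ext_\sE^n(L,R)\;\simeq\;\Ext_\sE^{n-1}(K_1,R)\;\simeq\;\cdots\;\simeq\;\Ext_\sE^1(K_{n-1},R),
\]
and the right-hand side vanishes because $K_{n-1}\in\sL$.

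\smallskip

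The main obstacle is the direction (1)\,$\Longrightarrow$\,(3): it is the only step that genuinely uses the generating hypothesis on $\sL$ (via a pullback along an $\sL$-cover of $E_0$) rather than just the internal exactness properties of $\sL$ and~$\sR$.
 All other implications reduce to purely formal manipulations with the long exact sequence of $\Ext$.
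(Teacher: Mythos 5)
Your implications (3)\,$\Longrightarrow$\,(1), (1)\,$\Longrightarrow$\,(3), and the dual treatment of~(2) are correct, and the pullback trick in (1)\,$\Longrightarrow$\,(3) is exactly the right device for an abelian category with no projective objects; note that the paper itself gives no proof here, only a pointer to~\cite[Lemma~1.4]{PS4}, so yours is a genuine reconstruction. One small remark on (1)\,$\Longrightarrow$\,(3): the lifting step is superfluous, since the pullback of the extension $0\to M\to E_0\to L\to0$ along $f\:L_0\to L$ is split by the diagonal morphism $(p,\id)\:L_0\to E_0\times_L L_0$, so $f^*\xi=0$ holds automatically; the real content of your argument is the injectivity of $\Ext^2_\sE(L,R)\to\Ext^2_\sE(L_0,R)$, which is where $M'\in\sL$ enters.

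The genuine gap is in (3)\,$\Longrightarrow$\,(4). The ``natural isomorphisms'' $\Ext^n_\sE(L,R)\simeq\Ext^{n-1}_\sE(K_1,R)\simeq\dotsb\simeq\Ext^1_\sE(K_{n-1},R)$ are not available: the long exact sequence for $0\to K_1\to L_0\to L\to0$ yields $\Ext^{n-1}_\sE(L_0,R)\to\Ext^{n-1}_\sE(K_1,R)\to\Ext^n_\sE(L,R)\to\Ext^n_\sE(L_0,R)$, and the isomorphism you want requires $\Ext^n_\sE(L_0,R)=0$ --- an instance of the very statement being proved, at the same level~$n$, for another object $L_0\in\sL$. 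Classical dimension shifting works when $L_0$ can be taken projective; here $\sE$ has no projectives in general (which is precisely why the hypotheses are ``$\sL$ generating, $\sR$ cogenerating''), and all the induction hypothesis gives you is an injection $\Ext^n_\sE(L,R)\rightarrowtail\Ext^n_\sE(L_0,R)$, which leads nowhere, since $L_0$ is just another member of~$\sL$. The fix is to iterate your own (1)\,$\Longrightarrow$\,(3) argument \emph{per class}: given $\xi\in\Ext^n_\sE(L,R)$ with $n\ge3$, represent it by an $n$\+extension, splice off the rightmost short exact sequence $0\to M\to E_0\to L\to0$, so that $\xi=\xi'\circ\xi_0$ with $\xi'\in\Ext^{n-1}_\sE(M,R)$; choose an epimorphism $L_0\twoheadrightarrow E_0$ with $L_0\in\sL$ and pull back to get $0\to M'\to L_0\to L\to0$ with $M'\in\sL$ by~(1), and let $g\:M'\to M$ be the induced morphism. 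Since $\xi_0$ is the pushout along~$g$ of the class~$\eta$ of this sequence, one has $\xi=(g^*\xi')\circ\eta$, and $g^*\xi'\in\Ext^{n-1}_\sE(M',R)=0$ by the induction hypothesis applied to $M'\in\sL$; hence $\xi=0$. The moral is that the connecting short exact sequence must be manufactured from the class~$\xi$ itself (so that $\xi$ tautologically lies in the image of the connecting map), rather than fixed in advance as a resolution of~$L$.
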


\begin{proof}
 See~\cite[Lemma~1.4]{PS4} for a brief discussion with references.
\end{proof}

 A cotorsion pair satisfying the equivalent conditions of
Lemma~\ref{hereditary-cotorsion} is said to be \emph{hereditary}.

 Given any class of objects $\sS$ in an abelian category $\sE$, one
can construct a cotorsion pair $(\sL,\sR)$ in $\sE$ by setting
$\sR=\sS^{\perp_1}$ and $\sL={}^{\perp_1}\sR$.
 The resulting cotorsion pair $(\sL,\sR)$ is said to be \emph{generated}
by the class~$\sS$.
 Obviously, one has $\sS\subset\sL$.

\subsection{Transfinitely iterated extensions}
\label{transfinite-extensions-subsecn}
 Let $\sE$ be an abelian category and $\alpha$~be an ordinal.
 Let $(F_i\to F_j)_{i<j\le\alpha}$ be a direct system of objects
in $\sE$ indexed by the ordinal~$\alpha+1$.
 One says that the direct system $(F_i)_{i\le\alpha}$ is a \emph{smooth}
(or \emph{continuous}) chain if $F_j=\varinjlim_{i<j}F_i$ for all
limit ordinals $j\le\alpha$.

 Let $(F_i)_{i\le\alpha}$ be a direct system in $\sE$ which is a smooth
chain.
 Assume that $F_0=0$ and the morphism $F_i\rarrow F_{i+1}$ is
a monomorphism for every ordinal $i<\alpha$, and let $S_i$ be
a cokernel of $F_i\rarrow F_{i+1}$ in~$\sE$.
 Then the object $F=F_\alpha$ is said to be \emph{filtered by}
the objects $S_i$, \,$0\le i<\alpha$.
 In this case, the object $F\in\sE$ is also said to be
a \emph{transfinitely iterated extension} (\emph{in the sense of
the directed colimit}) of the objects~$S_i$.

 Let $\sS$ be a class of objects in~$\sE$.
 Then the class of all objects in $\sE$ filtered by (objects isomorphic
to) objects from $\sS$ is denoted by $\Fil(\sS)$.
 Given a class of objects $\sF\subset\sE$, we will denote by
$\sF^\oplus\subset\sE$ the class of all direct summands of objects
from~$\sF$.

\begin{lem} \label{eklof-lemma}
 Let\/ $\sE$ be an abelian category.
 Then, for any class of objects\/ $\sR\subset\sE$, the class of
objects\/ ${}^{\perp_1}\sR$ is closed under transfinitely iterated
extensions (in the sense of the directed colimit) and direct summands
in\/~$\sE$.
 In other words, ${}^{\perp_1}\sR=\Fil({}^{\perp_1}\sR)^\oplus$.
\end{lem}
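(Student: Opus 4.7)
The plan is to establish the two separate closure properties: under direct summands, and under transfinitely iterated extensions. Direct summand closure is immediate: if $F = F_1 \oplus F_2$ lies in ${}^{\perp_1}\sR$, then additivity of $\Ext^1_\sE(-,R)$ in the first argument makes each $\Ext^1_\sE(F_i, R)$ a direct summand of $\Ext^1_\sE(F, R) = 0$, so $F_1$, $F_2 \in {}^{\perp_1}\sR$.

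For closure under transfinitely iterated extensions, fix $R \in \sR$ together with an arbitrary short exact sequence $0 \rarrow R \rarrow E \rarrow F \rarrow 0$ in $\sE$, where $F = F_\alpha$. My goal is to produce a splitting, which will show that $\Ext^1_\sE(F,R)=0$. I construct by transfinite induction on $i \leq \alpha$ a compatible family of morphisms $s_i\:F_i \rarrow E$ such that the composition $F_i \overset{s_i}{\rarrow} E \rarrow F$ coincides with the structure map $F_i \rarrow F$, and such that $s_j$ restricts to $s_i$ along $F_i \rarrow F_j$ whenever $i \leq j$. The base case $s_0 = 0$ is trivial, and the morphism $s_\alpha$ produced at the end of the induction is the desired splitting.

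At a limit ordinal $j$, smoothness of the chain gives $F_j = \varinjlim_{i<j} F_i$, so the universal property of the directed colimit yields a unique $s_j\:F_j \rarrow E$ restricting to all the $s_i$ for $i < j$, and the required identity $p \circ s_j = \iota_j$ follows by uniqueness after precomposition with each $F_i \rarrow F_j$. The substantive content is the successor step $i \rarrow i+1$, which is where the hypothesis $S_i \in {}^{\perp_1}\sR$ enters. Applying $\Hom_\sE(-,R)$ to $0 \rarrow F_i \rarrow F_{i+1} \rarrow S_i \rarrow 0$ produces a long exact sequence in which the vanishing $\Ext^1_\sE(S_i,R) = 0$ yields simultaneously that $\Hom_\sE(F_{i+1}, R) \twoheadrightarrow \Hom_\sE(F_i, R)$ is surjective and that $\Ext^1_\sE(F_{i+1}, R) \hookrightarrow \Ext^1_\sE(F_i, R)$ is injective. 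The pullback $E_{i+1} = E \times_F F_{i+1}$ determines an extension class in $\Ext^1_\sE(F_{i+1}, R)$ whose image in $\Ext^1_\sE(F_i, R)$ is trivialized by $s_i$; by injectivity, this class vanishes, so some splitting $\sigma\:F_{i+1} \rarrow E_{i+1}$ exists. The restriction $\sigma|_{F_i}$ differs from the corestriction of $s_i$ to $E_{i+1}$ by a morphism $F_i \rarrow R$, and surjectivity of the $\Hom$ map lifts this difference to a morphism $F_{i+1} \rarrow R$, which I subtract from $\sigma$ to obtain the required $s_{i+1}$.

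The main obstacle is the successor step: while showing that the pullback extension $E_{i+1} \rarrow F_{i+1}$ splits in some way is straightforward from $\Ext^1_\sE(S_i, R) = 0$, the compatibility requirement forces one to also extract from the same long exact sequence the $\Hom$-level surjectivity and to exploit the torsor structure on the set of splittings of a split short exact sequence. With those two inputs the induction runs in any abelian category admitting the required directed colimits, and in particular exactness of directed colimits is not needed for this argument.
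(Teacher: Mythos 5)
Your proof is correct and follows essentially the same route as the classical Eklof-lemma argument to which the paper delegates (\cite[Lemma~1]{ET}, \cite[Lemma~4.5]{PR}): transfinite induction constructing compatible sections $s_i\:F_i\rarrow E$ via pullbacks, with the six-term exact sequence of $\Hom_\sE({-},R)$ and $\Ext^1_\sE({-},R)$ applied to $0\rarrow F_i\rarrow F_{i+1}\rarrow S_i\rarrow0$ supplying both the vanishing of the pulled-back extension class and the lifting of the difference morphism $F_i\rarrow R$ needed for compatibility. Your limit step correctly invokes only the universal property of the directed colimit (no exactness), which is exactly the point of the ``in the sense of the directed colimit'' formulation, so the argument is valid in the stated generality.
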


\begin{proof}
 This result is known as the \emph{Eklof lemma}~\cite[Lemma~1]{ET}.
 In the stated generality, a proof can be found in~\cite[Lemma~4.5]{PR}.
 We refer to~\cite[Proposition~1.3 and the preceding paragraph]{PS4}
for a further discussion and historical references.
\end{proof}

\subsection{Small object argument} \label{small-object-subsecn}
 We start with a brief discussion of presentable objects and locally
presentable categories.

 Let $\lambda$~be an infinite regular cardinal.
 A poset $I$ is said to be \emph{$\lambda$\+directed} if any subset
of $I$ of the cardinality~$<\lambda$ has an upper bound in~$I$.
 A \emph{$\lambda$\+directed colimit} is a colimit of a diagram
indexed by a $\lambda$\+directed poset.

 Let $\sK$ be a category where $\lambda$\+directed colimits exist.
 An object $K\in\sK$ is said to be \emph{$\lambda$\+presentable} if
the functor $\Hom_\sK(K,{-})\:\sK\rarrow\Sets$ (where $\Sets$ denotes 
the category of sets) preserves $\lambda$\+directed colimits.
 The category $\sK$ is said to be \emph{$\lambda$\+accessible} if it has
a set of $\lambda$\+presentable objects such that every object of $\sK$
is a $\lambda$\+directed colimit of objects from this set.
 The category $\sK$ is said to be \emph{locally $\lambda$\+presentable}
if it is cocomplete and
$\lambda$\+accessible~\cite[Definition~1.17]{AR}.

 Equivalently, a category is locally $\lambda$\+presentable if and only
if it is cocomplete and has a strong generating set consisting of
$\lambda$\+presentable objects~\cite[Theorem~1.20]{AR}.
 In this paper, we are interested in abelian categories, and any
generating set of objects in an abelian category is a strong generating
set; so we do not recall the details of the definition of a strong
generating set here, referring the reader to~\cite[Section~0.6]{AR}
instead.
 To summarize it briefly, one can say that a set $(K_i)_{i\in I}$ of
objects in a category $\sK$ is generating if and only if the related
functor $(\Hom_\sK(K_i,{-}))_{i\in I}$ from $\sK$ to the category of
$I$\+sorted sets $\Sets^I$ is faithful; a generating set is strong if
and only if this functor is also conservative.

 A category is called \emph{locally presentable} if it is locally
$\lambda$\+presentable for some infinite regular cardinal~$\lambda$.
 In particular, locally $\omega$\+presentable categories (where
$\omega$~is the minimal infinite cardinal) are referred to as
\emph{locally finitely presentable}, and $\omega$\+presentable objects
are called \emph{finitely presentable}~\cite[Definitions~1.1 and~1.9,
Theorem~1.11]{AR}.

 The following theorem summarizes the application of Quillen's
classical ``small object argument'' to cotorsion pairs.
 It forms a generalization of the Eklof--Trlifaj
theorem~\cite[Theorems~2 and~10]{ET} from module categories to locally
presentable abelian categories.

\begin{thm} \label{eklof-trlifaj}
 Let $\sE$ be a locally presentable abelian category, $\sS\subset\sE$
be a set of objects, and $(\sL,\sR)$ be the cotorsion pair generated
by\/ $\sS$ in\/~$\sE$. \par
\textup{(a)} If the class\/ $\sL$ is generating and
the class\/ $\sR$ is cogenerating in\/ $\sE$, then the cotorsion pair
$(\sL,\sR)$ is complete. \par
\textup{(b)} If the class of objects\/ $\Fil(\sS)$ is generating
in\/ $\sE$, then one has\/ $\sL=\Fil(\sS)^\oplus$.
\end{thm}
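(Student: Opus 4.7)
The approach is the small object argument, generalizing Eklof--Trlifaj from module categories to locally presentable abelian categories (cf.~\cite[Section~4]{PR}, \cite[Section~3]{PS4}). Since $\sE$ is locally presentable and $\sS$ is a set, fix an infinite regular cardinal $\lambda$ such that $\sE$ is locally $\lambda$\+presentable and every $S\in\sS$ is $\lambda$\+presentable.

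For part~(a), by Lemma~\ref{salce-lemma}(a) it suffices to construct, for every $E\in\sE$, a special $\sR$\+preenvelope sequence~\eqref{special-preenvelope-sequence}. I build one by transfinite recursion of length~$\lambda$: set $E_0=E$, take $E_\alpha=\varinjlim_{\beta<\alpha}E_\beta$ at limit ordinals, and at successor steps form the pushout
\begin{equation*}
\xymatrix@C=2em@R=2em{
 \bigoplus_{(S,\xi)\in I_\alpha}E_\alpha \ar[r] \ar[d]_{\nabla} &
 \bigoplus_{(S,\xi)\in I_\alpha}M_{S,\xi} \ar[d] \\
 E_\alpha \ar[r] & E_{\alpha+1}
}
\end{equation*}
indexed by the set $I_\alpha$ of pairs $(S,\xi)$ with $S\in\sS$ and $\xi\in\Ext^1_\sE(S,E_\alpha)$, where each $\xi$ is represented by a short exact sequence $0\to E_\alpha\to M_{S,\xi}\to S\to 0$ whose left-hand monomorphism supplies the corresponding component of the top arrow, and $\nabla$ is the codiagonal. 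Then $E_\alpha\to E_{\alpha+1}$ is a monomorphism with cokernel $\bigoplus_{(S,\xi)\in I_\alpha}S$, and by functoriality of $\Ext$ each $\xi$ becomes trivial in $\Ext^1_\sE(S,E_{\alpha+1})$. Setting $R:=E_\lambda$ and $L:=R/E$, the chain $(E_\alpha/E)_{\alpha\le\lambda}$ exhibits $L$ as filtered by direct sums of objects of $\sS$, so $L\in\Fil(\sS)$; combining Lemma~\ref{eklof-lemma} with the inclusion $\sS\subseteq{}^{\perp_1}\sR=\sL$ gives $L\in\sL$.

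The main technical step is to verify that $R\in\sR$, i.e., $\Ext^1_\sE(S,R)=0$ for every $S\in\sS$. The plan is to show that any class $\eta\in\Ext^1_\sE(S,R)$ is the image, under the map induced by $E_\alpha\to R$, of a class $\eta'\in\Ext^1_\sE(S,E_\alpha)$ for some $\alpha<\lambda$; the construction then forces $\eta'$ to vanish already in $\Ext^1_\sE(S,E_{\alpha+1})$, whence $\eta=0$. This lifting combines the $\lambda$\+presentability of $S$ with the exactness of the $\lambda$\+indexed colimit of monomorphisms $(E_\alpha)_{\alpha<\lambda}$, and I expect it to be the main obstacle: controlling the relevant colimit in a locally presentable (but not necessarily Grothendieck) abelian category is where the generating/cogenerating hypotheses from the theorem really enter, and must be handled with care along the lines of~\cite{PR}.

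For part~(b), the inclusion $\Fil(\sS)^\oplus\subseteq\sL$ is immediate from Lemma~\ref{eklof-lemma} together with the closure of $\sL={}^{\perp_1}\sR$ under direct summands (also part of the lemma). For the reverse inclusion, let $L\in\sL$. Using that $\Fil(\sS)$ is generating, choose an epimorphism $F_0\twoheadrightarrow L$ with $F_0\in\Fil(\sS)$, and let $K$ be its kernel. Apply the construction from part~(a) to $K$ to obtain a short exact sequence $0\to K\to R\to L''\to 0$ with $R\in\sR$ and $L''\in\Fil(\sS)$, and form the pushout $P$ of $F_0\leftarrow K\to R$. This pushout square yields two short exact sequences
\begin{equation*}
0\lrarrow F_0\lrarrow P\lrarrow L''\lrarrow 0
\qquad\text{and}\qquad
0\lrarrow R\lrarrow P\lrarrow L\lrarrow 0.
\end{equation*}
The first exhibits $P$ as an extension of objects of $\Fil(\sS)$, so $P\in\Fil(\sS)$; the second splits because $\Ext^1_\sE(L,R)=0$, so $L$ is a direct summand of $P$ and hence lies in $\Fil(\sS)^\oplus$.
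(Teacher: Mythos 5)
Your outline is the classical Eklof--Trlifaj transfinite construction, and the step you flag as ``the main obstacle'' is indeed where the whole proof lives in this generality --- but you do not close it, and the tools you propose for closing it are not available. The lifting step requires, in effect, surjectivity of the natural map $\varinjlim_{\alpha<\lambda}\Ext^1_\sE(S,E_\alpha)\rarrow\Ext^1_\sE(S,E_\lambda)$ for $\lambda$\+presentable~$S$. You justify this by ``$\lambda$\+presentability of $S$ combined with the exactness of the $\lambda$\+indexed colimit of monomorphisms''; however, exactness of directed colimits is precisely the condition that distinguishes Grothendieck categories among locally presentable abelian ones, and the point of Theorem~\ref{eklof-trlifaj} is to dispense with it (the contramodule categories motivating~\cite{PR} fail it). Moreover, $\lambda$\+presentability gives continuity of $\Hom_\sE(S,{-})$ by definition, and says nothing about $\Ext^1_\sE(S,{-})$; the element-wise argument that salvages this for modules has no analogue here. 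A second failure of the same kind goes unflagged: in a general locally presentable abelian category, a transfinite composition of monomorphisms need not be a monomorphism (this is why the notion of a \emph{transmonic} class appears in the first proof of Proposition~\ref{graded-projective-deconstructible}), so even the exactness of your sequence $0\rarrow E\rarrow R\rarrow L\rarrow0$ at~$E$ is unproven. Since your part~(b) consumes the special preenvelope with $\Fil(\sS)$\+cokernel built in part~(a) (the pushout argument in~(b) itself is the standard one and is fine, as is the computation that $L\in\Fil(\sS)$ and that each~$\xi$ dies at the next stage), both parts rest on these gaps; deferring them ``along the lines of~\cite{PR}'' defers the entire content of the theorem beyond the module/Grothendieck case.

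For comparison, the paper does not run the Ext-killing iteration at all: it quotes the theorem from \cite[Corollary~3.6 and Theorem~4.8]{PR} and \cite[Theorems~3.3 and~3.4]{PS4}, where the argument is carried out at the level of weak factorization systems --- one applies the small object argument (Proposition~\ref{small-object-argument}) to a set of morphisms and translates back via Proposition~\ref{abelian-wfs-and-cotorsion-pairs}. The decisive advantage is that lifting properties are purely $\Hom$\+level conditions, for which $\lambda$\+presentability supplies exactly the continuity needed, so continuity of $\Ext^1$ is never invoked. This also corrects your guess about where the hypotheses of part~(a) enter: they do not ``control the colimit'' (nothing restores exactness); they make the resulting weak factorization system abelian. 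For instance, cogeneration by $\sR$ repairs your monicity gap: each attaching map of your tower is built by coproducts and pushouts from monomorphisms with cokernel in $\sL={}^{\perp_1}\sR$, and all such maps, together with their transfinite compositions, have the left lifting property with respect to $R'\rarrow0$ for every $R'\in\sR$; lifting a monomorphism $E\rightarrowtail R'$ with $R'\in\sR$ through the composite $E\rarrow E_\lambda$ then exhibits the latter as monic. Dually, generation by $\sL$ makes the right-hand factors epimorphisms and enters via Lemma~\ref{salce-lemma}. The counterexamples in \cite[Remark~3.5 and Examples~3.6]{PS4} show these hypotheses cannot be dropped, so any correct proof must use them at a specific point --- your sketch never actually does.
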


\begin{proof}
 A proof in the stated generality can be found in~\cite[Corollary~3.6
and Theorem~4.8]{PR} or~\cite[Theorems~3.3 and~3.4]{PS4}.
 We refer to~\cite[Section~3]{PS4} for a historical discussion
with references.
 For counterexamples showing that the additional assumptions in part~(a)
cannot be dropped, see~\cite[Remark~3.5 and Examples~3.6]{PS4}.

 The argument in~\cite{PR} and~\cite{PS4} is based on the notion of
a \emph{weak factorization system}, which we will recall below in
Sections~\ref{weak-factorization-subsecn}--%
\ref{abelian-weak-factorization-subsecn}.
 See Proposition~\ref{small-object-argument} for a formulation of
the small object argument in the relevant generality and
Proposition~\ref{abelian-wfs-and-cotorsion-pairs} for the connection
between cotorsion pairs and weak factorization systems.
\end{proof}

\Section{Abelian Model Structures} \label{abelian-model-structures-secn}

 The notion of an abelian model structure and the connection with
complete cotorsion pairs is due to Hovey~\cite{Hov}.
 The exposition in this section is an extraction
from~\cite[Sections~2 and~4]{PS4}, which in turn is based
on the papers~\cite{Hov} and~\cite{Bec}.
 All the proofs are omitted and replaced with references.

\subsection{Weak factorization systems}
\label{weak-factorization-subsecn}
 A weak factorization system is ``a half of a model structure''.
 We refer to~\cite[Section~2]{PS4} for a terminological and historical
discussion with references.

 Let $\sE$ be a category, and let $l\:A\rarrow B$ and $r\:C\rarrow D$
be two morphisms.
 One says that $r$~has the \emph{right lifting property} with respect
to~$l$, or equivalently, $l$~has the \emph{left lifting property}
with respect to~$r$ if for any commutative square as on the diagram
$$
 \xymatrix{
  A \ar[r] \ar[d]_l & C \ar[d]^r \\
  B \ar[r] \ar@{..>}[ru] & D
 }
$$
a diagonal arrow exists making both the triangles commutative.
 (The word ``weak'' in the expression ``weak factorization system'',
which is defined below, refers to the fact that the diagonal filling
need not be unique.)

 Let $\cL$ and $\cR$ be two classes of morphisms in~$\sE$.
 We will denote by $\cL^\square$ the class of all morphisms~$x$ in
$\sE$ having the right lifting property with respect to all
morphisms $l\in\cL$.
 Similarly, ${}^\square\cR$ is the class of all morphisms~$y$ in
$\sE$ having the left lifting property with respect to all
morphisms $r\in\cR$.

 A pair of classes of morphisms $(\cL,\cR)$ in a category $\sE$ is
said to be a \emph{weak factorization system} if $\cL={}^\square\cR$,
\ $\cR=\cL^\square$, and every morphism~$f$ in $\sE$ can be
factorized as $f=rl$ with $r\in\cR$ and $l\in\cL$.

\subsection{Transfinite compositions}
 Let $\alpha$~be an ordinal, and let $(E_i\to E_j)_{i<j\le\alpha}$ be
a direct system of objects in $\sE$ indexed by the ordinal~$\alpha+1$.
 Assume that the direct system $(E_i)_{i\le\alpha}$ is a smooth
chain (as defined in Section~\ref{transfinite-extensions-subsecn}).
 Then we will say that the morphism $E_0\rarrow E_\alpha$ in
the direct system is a \emph{transfinite composition} of the morphisms
$E_i\rarrow E_{i+1}$, \ $0\le i<\alpha$ (\emph{in the sense of
the directed colimit}).

 Let $A\rarrow B$ be a morphism in~$\sE$.
 Then a morphism $X\rarrow Y$ in $\sE$ is said to be a \emph{pushout}
of the morphism $A\rarrow B$ if there exists a pair of morphisms
$A\rarrow X$ and $B\rarrow Y$ such that $A\rarrow B\rarrow Y$, \
$A\rarrow X\rarrow Y$ is a cocartesian square (otherwise known as
a pushout square) in~$\sE$.

 One says that an object $X$ is a \emph{retract} of an object $A\in\sE$
if there exist morphisms $i\:X\rarrow A$ and $p\:A\rarrow X$ such that
$pi=\id_X$ in~$\sE$.
 A morphism~$z$ is said to be a retract of a morphism~$f$ in $\sE$ if
$z$ is a retract of~$f$ in the category $\sE^\to$ of morphisms in $\sE$
(with commutative squares in $\sE$ being morphisms in~$\sE^\to$).

 Given a class of morphisms $\cS$ in $\sE$, one denotes by $\Cell(\cS)$
the closure of $\cS$ under pushouts and transfinite compositions, and
by $\Cof(\cS)$ the closure of $\cS$ under pushouts, transfinite
compositions, and retracts.
 (See~\cite[Section~2]{PS4} for a discussion of the basic properties of
these constructions.)

 The following very basic lemma is a ``morphism version'' of
Lemma~\ref{eklof-lemma}.

\begin{lem}
 For any class of morphisms $\cR$ in\/ $\sE$, the class of morphisms\/
${}^\square\cR$ is closed under transfinite compositions (in the sense
of the directed colimit), pushouts, and retracts.
 In other words, ${}^\square\cR=\Cof({}^\square\cR)$. \qed
\end{lem}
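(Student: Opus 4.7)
The plan is to prove each of the three closure properties separately by direct diagram-chasing arguments, since the result is essentially the standard \emph{retract-pushout-transfinite-composition} stability of a left lifting class. Fix a class $\cR$ of morphisms in $\sE$ and a morphism $r\:C\rarrow D$ in $\cR$; for each of the three constructions we must produce a diagonal filler in any commutative square with $r$ on the right and our chosen morphism on the left.

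For the \emph{pushout} case, suppose $l'\:A'\rarrow B'$ is a pushout of $l\:A\rarrow B$ along some map $A\rarrow A'$, and consider a commutative square $(u,v)\:l'\rarrow r$ with $u\:A'\rarrow C$ and $v\:B'\rarrow D$. Precomposing $u$ and~$v$ with the structure maps of the pushout gives a square $l\rarrow r$, which admits a diagonal $w\:B\rarrow C$ by hypothesis on~$l$. The maps $u\:A'\rarrow C$ and $w\:B\rarrow C$ then agree on $A$, so by the universal property of the pushout they induce a map $B'\rarrow C$, which one checks is the required filler.

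For the \emph{transfinite composition} case, let $(E_i)_{i\le\alpha}$ be a smooth chain with $E_i\rarrow E_{i+1}$ in ${}^\square\cR$ for each $i<\alpha$, and consider a square $(u_0,v)\:(E_0\to E_\alpha)\rarrow r$. I would construct diagonal fillers $w_i\:E_i\rarrow C$ by transfinite induction on~$i\le\alpha$, compatible with the structure maps of the chain and satisfying $r w_i = v \circ (E_i\to E_\alpha)$: set $w_0 = u_0$; at a successor step $i+1$, apply the left lifting property of $E_i\rarrow E_{i+1}$ with respect to $r$ to the square formed by $w_i$ and $v\circ(E_{i+1}\to E_\alpha)$; at a limit step $j$, use smoothness $E_j=\varinjlim_{i<j}E_i$ together with the universal property of the colimit to assemble the $w_i$ (for $i<j$) into $w_j$. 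The map $w_\alpha\:E_\alpha\rarrow C$ is the desired diagonal.

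For the \emph{retract} case, suppose $z\:X\rarrow Y$ is a retract of $f\:A\rarrow B$ in~$\sE^\to$, witnessed by morphisms $(i_A,i_B)\:z\rarrow f$ and $(p_A,p_B)\:f\rarrow z$ with $p_A i_A=\id_X$ and $p_B i_B=\id_Y$. Given a square $(u,v)\:z\rarrow r$, precomposing with $(i_A,i_B)$ yields a square $f\rarrow r$; by assumption $f\in{}^\square\cR$, so there is a filler $B\rarrow C$, and postcomposing it with $p_B\:B\rarrow Y$ gives the required diagonal $Y\rarrow C$, as direct verification using $p_A i_A=\id_X$ and $p_B i_B=\id_Y$ shows. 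The equality ${}^\square\cR=\Cof({}^\square\cR)$ then follows because the three closure operations defining $\Cof$ (pushout, transfinite composition, retract) all preserve membership in ${}^\square\cR$, while the reverse inclusion $\Cof({}^\square\cR)\supset{}^\square\cR$ is trivial. I expect no real obstacles; the only delicate point is the limit-ordinal step in the transfinite induction, which is immediate from smoothness.
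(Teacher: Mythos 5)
The paper gives no proof of this lemma at all: it is stated with the proof-box and described only as the ``morphism version'' of the Eklof lemma, i.e.\ as the standard stability of a left lifting class. Your argument is exactly the folklore proof being alluded to, and your pushout and transfinite-composition cases are correct and complete in the essentials: in the pushout case you correctly use uniqueness in the universal property to verify the lower triangle $rh=v$, and in the transfinite case you carry the right inductive invariant ($w_i$ compatible with the chain and $r w_i = v\circ(E_i\rarrow E_\alpha)$), with the limit stage handled by smoothness and uniqueness of maps out of the colimit.

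In the retract case, however, you have swapped the section and the retraction, and as written the compositions do not typecheck. Since $(i_A,i_B)\:z\rarrow f$ and $(p_A,p_B)\:f\rarrow z$, the induced square $f\rarrow r$ is obtained by precomposing $(u,v)$ with $(p_A,p_B)$, giving $(u p_A,\,v p_B)$; note that $u\circ i_A$ is undefined, as $i_A$ has codomain $A$ while $u$ has domain $X$. Dually, from the filler $h\:B\rarrow C$ the diagonal is obtained by \emph{precomposing} with $i_B\:Y\rarrow B$, i.e.\ it is $h i_B$; ``postcomposing with $p_B$'' makes no sense, $p_B$ having source $B$ and target $Y$. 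With the directions fixed, the verification is the one you gesture at: $h i_B z = h f i_A = u p_A i_A = u$ and $r h i_B = v p_B i_B = v$, using $z p_A = p_B f$ and $f i_A = i_B z$. This is a local slip in bookkeeping rather than a conceptual gap --- the strategy of transferring the lifting problem along the retract data, solving it for $f$, and transferring back is the right and standard one --- but in its present wording the retract step is not a proof.
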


 The following theorem is a formulation of Quillen's small object
argument suitable for deducing Theorem~\ref{eklof-trlifaj}.

\begin{prop} \label{small-object-argument}
 Let\/ $\sE$ be a locally presentable category and\/ $\cS$ be a set
of morphisms in\/~$\sE$.
 Then any morphism~$f$ in\/ $\sE$ can be factorized as $f=rl$ with
$r\in\cS^\square$ and $l\in\Cell(\cS)$.
 Moreover, this factorization can be chosen for all morphisms~$f$
in\/ $\sE$ in such a way that it depends functorially on~$f$.
 The pair of classes of morphisms $\cL=\Cof(\cS)$ and $\cR=\cS^\square$
is a weak factorization system in\/~$\sE$.
\end{prop}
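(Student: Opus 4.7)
The plan is to run Quillen's small object argument, presenting $f \: X \to Y$ as a transfinite pushout attacking all lifting problems against morphisms in $\cS$, and then to deduce the weak factorization system conclusion by a retract argument. Because $\sE$ is locally $\mu$-presentable for some regular $\mu$, and $\cS$ is a \emph{set}, we may choose a regular cardinal $\lambda \ge \mu$ such that the domain and codomain of every morphism in $\cS$ is $\lambda$-presentable. This choice of $\lambda$ is the technical heart of the construction.

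First I would construct, for each morphism $f \: X \to Y$, a smooth chain $(E_i)_{i \le \lambda}$ in the slice $\sE/Y$ by transfinite recursion, starting from $E_0 = X$. At a successor stage, one forms the set $T_i$ of all commutative squares
\[
\xymatrix{
 A \ar[r] \ar[d]_s & E_i \ar[d] \\
 B \ar[r] & Y
}
\]
with $s \: A \to B$ in $\cS$, takes the coproducts $\coprod_{(s,\square) \in T_i} A \to \coprod_{(s,\square) \in T_i} B$, and defines $E_{i+1}$ as the pushout of this coproduct along the canonical map $\coprod A \to E_i$. The map $E_{i+1} \to Y$ is induced by the universal property. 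At limit stages $j \le \lambda$, set $E_j = \varinjlim_{i<j} E_i$. The composition $X = E_0 \to E_\lambda$ is then by construction a transfinite composition of pushouts of morphisms in $\cS$ (the relevant coproducts of elements of $\cS$ are themselves pushouts of iterated pushouts, lying in $\Cell(\cS)$), hence lies in $\Cell(\cS)$. The factorization is functorial in $f$ because everything in the construction is functorial (take $T_i$ to be the \emph{set} of all squares, not a chosen subset).

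Next I would verify that $r \: E_\lambda \to Y$ lies in $\cS^\square$. Given a square with $s \: A \to B$ in $\cS$, the map $A \to E_\lambda = \varinjlim_{i<\lambda} E_i$ factors through some $E_i$ because $A$ is $\lambda$-presentable and the chain is smooth (here one uses cofinality $\lambda$ for a $\lambda$-directed colimit). That square at stage $i$ then belongs to $T_i$, so by construction of $E_{i+1}$ as a pushout there is a compatible map $B \to E_{i+1} \to E_\lambda$ providing the desired diagonal lift. This is the step where presentability genuinely bites.

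Finally I would establish the weak factorization system $(\Cof(\cS), \cS^\square)$. One direction, $\Cof(\cS) \subseteq {}^\square(\cS^\square)$, follows from the easy closure properties of ${}^\square(-)$ under pushouts, transfinite compositions, and retracts (the ``morphism Eklof lemma'' quoted just before the proposition), together with $\cS \subseteq {}^\square(\cS^\square)$. For the reverse inclusion, given $f \in {}^\square(\cS^\square)$, factor $f = rl$ with $l \in \Cell(\cS)$ and $r \in \cS^\square$ by the construction above; then the lifting property for $f$ against $r$ produces a diagonal exhibiting $f$ as a retract of $l$, so $f \in \Cof(\cS)$. The dual identity $(\Cof(\cS))^\square = \cS^\square$ is immediate from $\cS \subseteq \Cof(\cS)$ and the fact that solvability of lifting problems against $\cS$ propagates to $\Cof(\cS)$ by the same closure properties. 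Together with the factorization already established, this shows $(\Cof(\cS), \cS^\square)$ is a weak factorization system. The main obstacle, as indicated, is the presentability-driven termination of the small object argument at stage $\lambda$; everything else is formal manipulation of lifting properties.
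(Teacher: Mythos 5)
Your proof is correct and is precisely the classical Quillen small object argument that the paper invokes by citation rather than reproving (see \cite[Proposition~1.3]{Bek} and the discussion in \cite[Theorem~3.1]{PS4}): transfinite attachment of all lifting squares against $\cS$ up to a regular cardinal~$\lambda$ bounding the presentability of the domains of the morphisms in~$\cS$, with regularity of~$\lambda$ making the $\lambda$\+indexed chain $\lambda$\+directed so that the argument terminates, followed by the standard retract argument for the weak factorization system. The supporting facts you use in passing are exactly the standard ones (coproducts of morphisms from $\cS$ lie in $\Cell(\cS)$, functoriality from taking the full set of squares at each stage, and the closure properties of ${}^\square(\cS^\square)$ quoted just before the proposition), so there are no gaps.
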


\begin{proof}
 A proof can be found in~\cite[Proposition~1.3]{Bek}.
 We refer to~\cite[Theorem~3.1]{PS4} for a discussion with references
(see also~\cite[Lemma~2.1]{PS4}).
\end{proof}

\subsection{Abelian weak factorization systems}
\label{abelian-weak-factorization-subsecn}
 Let $\sE$ be an abelian category, and let $\sL$ and $\sR$ be two
classes of objects~$\sE$.
 By an \emph{$\sL$\+monomorphism} we mean a monomorphism in $\sE$
with a cokernel belonging to~$\sL$.
 Dually, an \emph{$\sR$\+epimorphism} is an epimorphism in $\sE$ with
a kernel belonging to~$\sR$.
 The class of all $\sL$\+monomorphisms in $\sE$ is denoted by
$\sL\Mono$, and the class of all $\sR$\+epimorphisms in $\sE$ is
denoted by $\sR\Epi$.

 A weak factorization system $(\cL,\cR)$ in $\sE$ is said to be
\emph{abelian} if there exists a pair of classes of objects $\sL$
and $\sR$ in $\sE$ such that $\cL=\sL\Mono$ and $\cR=\sR\Epi$.

\begin{prop} \label{abelian-wfs-and-cotorsion-pairs}
 Let\/ $\sL$ and\/ $\sR$ be two classes of objects in an abelian
category\/~$\sE$.
 Then the pair of classes of morphisms $\cL=\sL\Mono$ and $\cR=\sR\Epi$
forms a weak factorization system $(\cL,\cR)$ in\/ $\sE$ if and only if
$(\sL,\sR)$ is a complete cotorsion pair in\/~$\sE$.
 So there is a bijection between abelian weak factorization systems
and complete cotorsion pairs in any abelian category\/~$\sE$.
\end{prop}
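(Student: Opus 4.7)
The plan is to prove the equivalence in both directions by exploiting the basic translation between lifting problems and $\Ext^1$\+vanishing. Concretely, for any short exact sequence $0 \to R \to E \to L \to 0$ in $\sE$, the commutative square with zero map on top, identity on bottom, the $\sL$\+mono $0 \hookrightarrow L$ on the left, and the $\sR$\+epi $E \twoheadrightarrow L$ on the right admits a diagonal filler if and only if the sequence splits; more generally, an arbitrary lifting problem between an $\sL$\+mono $l$ and an $\sR$\+epi $r$ reduces, after pulling back $r$ along the bottom map of the square, to splitting an extension of $\coker l \in \sL$ by $\ker r \in \sR$, an element of $\Ext^1(\coker l, \ker r)$.

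For the implication ($\Rightarrow$), I would read off the vanishing $\Ext^1_\sE(\sL, \sR) = 0$ directly from the square above. To prove $\sR = \sL^{\perp_1}$, given $M \in \sL^{\perp_1}$, I factor the morphism $M \to 0$ as $M \xrightarrow{\,l\,} Z \xrightarrow{\,r\,} 0$ with $l \in \sL\Mono$ and $r \in \sR\Epi$, which forces $Z \in \sR$ and produces an SES $0 \to M \to Z \to L \to 0$ with $L \in \sL$. By hypothesis this sequence splits, so $M$ is a direct summand of $Z$; since $\cR = \sR\Epi$ is closed under retracts in any weak factorization system, the class $\sR$ is closed under direct summands in $\sE$, yielding $M \in \sR$. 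The dual argument gives $\sL = {}^{\perp_1}\sR$, and completeness of the cotorsion pair is then immediate: the factorizations of $0 \to M$ and $M \to 0$ are exactly the special precover and preenvelope sequences.

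For the implication ($\Leftarrow$), the inclusion $\sL\Mono \subseteq {}^\square(\sR\Epi)$ is immediate from the $\Ext^1$ translation. The essential technical step is the construction, for an arbitrary morphism $f : X \to Y$, of a factorization $f = r \circ l$ with $l \in \sL\Mono$ and $r \in \sR\Epi$; I would build this in two stages. First, form the trivial split factorization $X \xrightarrow{(\id,\, f)} X \oplus Y \xrightarrow{p_2} Y$ and apply the pushout of the special preenvelope sequence $0 \to X \to R \to L \to 0$ along the inclusion of $\ker p_2 \cong X$ into $X \oplus Y$, producing an object $M_2$ equipped with a short exact sequence $0 \to R \to M_2 \to Y \to 0$; this makes $M_2 \twoheadrightarrow Y$ an $\sR$\+epi, while the induced map $X \to M_2$ (via the graph embedding) remains monic. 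Second, since the cokernel $C = M_2/X$ need not lie in $\sL$, apply the special precover $0 \to R'' \to L'' \to C \to 0$ of $C$ and form the pullback $M_3 = M_2 \times_C L''$; the resulting SES $0 \to X \to M_3 \to L'' \to 0$ exhibits $X \to M_3$ as an $\sL$\+mono, while the composite $M_3 \twoheadrightarrow M_2 \twoheadrightarrow Y$ remains an $\sR$\+epi, its kernel being an extension of $R$ by $R''$ (and $\sR = \sL^{\perp_1}$ is closed under extensions via the long exact sequence of $\Ext$).

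With the factorization in hand, the equalities $\cL = {}^\square\cR$ and $\cR = \cL^\square$ follow from a standard retract argument: any $g \in \cL^\square$ factored as $g = r l$ satisfies the RLP against $l \in \cL$, which produces a retraction $h$ of $l$ with $g h = r$, so that $g$ is a retract of $r \in \cR$ and hence lies in $\cR$. The principal obstacle throughout is the two-step factorization construction, and the essential use of the completeness hypothesis occurs precisely in extracting both a preenvelope sequence (at the first step) and a precover sequence (at the second step), which is Salce's trick translated into the language of weak factorization systems.
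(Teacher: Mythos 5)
Your proof is correct and takes essentially the same route as the argument the paper relies on: the paper does not prove this proposition in the text but refers to Hovey's original result and to Theorem~2.4 of the authors' previous paper [PS4], and your argument -- translating lifting problems into splitting the extension $0\to\ker r\to C'/A\to\coker l\to 0$ in $\Ext^1_\sE(\coker l,\ker r)$, building the factorization in two stages (preenvelope pushout, then precover pullback, i.e.\ Salce's trick), and closing with the retract argument plus closure of $\sL$ and $\sR$ under direct summands -- is exactly that standard proof. The only mildly delicate points, which your sketch handles correctly, are that the section obtained from the splitting must be compatible with the given partial lift $A\to C'$, and that the kernel of the composite $M_3\twoheadrightarrow Y$ lies in $\sR$ because $\sR=\sL^{\perp_1}$ is closed under extensions.
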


\begin{proof}
 This result is essentially due to Hovey~\cite{Hov}; see
also~\cite[Theorem~2.4]{PS4}.
\end{proof}

 A weak factorization system $(\cL,\cR)$ in a category $\sE$ is said
to be \emph{cofibrantly generated} if there exists a set of morphisms
$\cS$ in $\sE$ such that $\cR=\cS^\square$.

\begin{lem} \label{cofibrantly-generated-abelian-wfs}
 Let\/ $\sE$ be a locally presentable abelian category and $(\cL,\cR)$
be an abelian weak factorization system in\/ $\sE$ corresponding to
a complete cotorsion pair $(\sL,\sR)$, as in
Proposition~\ref{abelian-wfs-and-cotorsion-pairs}.
 Then the weak factorization system $(\cL,\cR)$ is cofibrantly generated
if and only if the cotorsion pair $(\sL,\sR)$ is generated by some
set of objects.
\end{lem}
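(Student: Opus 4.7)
The plan is to handle the two directions of the equivalence separately, each by an explicit construction.

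For the direction ``cofibrantly generated implies the cotorsion pair is generated by a set of objects,'' I assume $\cR = \cS^\square$ for a set $\cS$ of morphisms and set $\sS := \{\coker f : f \in \cS\}$. The weak factorization system axioms force $\cS \subset {}^\square\cR = \cL = \sL\Mono$, so every $f \in \cS$ is a monomorphism with cokernel in $\sL$; hence $\sS \subset \sL$ and therefore $\sR \subset \sS^{\perp_1}$. For the reverse inclusion, let $R \in \sS^{\perp_1}$. For each $f\colon K \to L$ in $\cS$ with $\coker f = S$, applying $\Hom(-,R)$ to the short exact sequence $0 \to K \to L \to S \to 0$ and using $\Ext^1_\sE(S,R) = 0$ yields the surjectivity of $\Hom(L,R) \to \Hom(K,R)$; this surjectivity is precisely the right lifting property of $R \to 0$ with respect to $f$. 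Hence $R \to 0 \in \cS^\square = \cR = \sR\Epi$, and since $\ker(R \to 0) = R$, we conclude $R \in \sR$.

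For the converse direction, I assume $\sR = \sS^{\perp_1}$ for a set of objects $\sS$. Completeness of the cotorsion pair lets me choose, for each $S \in \sS$, a special precover sequence $0 \to R_S \to L_S \to S \to 0$ with $L_S \in \sL$ and $R_S \in \sR$, and I set $\cS := \{R_S \to L_S : S \in \sS\}$. Each element of $\cS$ is an $\sL$-monomorphism, so $\cS \subset \cL$ and $\cR \subset \cS^\square$ is automatic. For the reverse inclusion $\cS^\square \subset \cR$, I would take an arbitrary $f \in \cS^\square$, factor it via the abelian weak factorization system $(\cL, \cR)$ as $f = p \circ i$ with $i \in \sL\Mono$ and $p \in \sR\Epi \subset \cS^\square$, and use a standard retract argument to exhibit $f$ as a retract of $p \in \cR$; closure of $\cR$ under retracts then forces $f \in \cR$.

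The main obstacle lies in the retract argument: one must verify that the left factor $i$ has the left lifting property with respect to $f$, or equivalently that $\Cof(\cS) = \sL\Mono$. I would establish this by invoking Theorem~\ref{eklof-trlifaj}(b), which supplies the structure $\sL = \Fil(\sS)^\oplus$: given an arbitrary $\sL$-monomorphism $A \to B$ with cokernel $L \in \sL$, one enlarges it to a monomorphism $A \to B \oplus M$ whose cokernel $L \oplus M$ lies in $\Fil(\sS)$, pulls the filtration of $L \oplus M$ back to a transfinite composition $A = A_0 \to A_1 \to \cdots \to A_\alpha = B \oplus M$ with $A_{\beta+1}/A_\beta \in \sS$, realizes each successor step as a pushout of an appropriate generator $R_S \to L_S$, and recovers the original morphism $A \to B$ as a retract. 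The delicate point is the realization of each filtration step as a pushout, where the precover property $L_S \in \sL$ enters to guarantee the surjection $\Hom(R_S, A_\beta) \twoheadrightarrow \Ext^1(S, A_\beta)$ needed to classify the extension.
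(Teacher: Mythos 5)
Your first direction is correct and is the standard argument: from $\cR=\cS^\square$ one gets $\cS\subset{}^\square\cR=\cL=\sL\Mono$, the cokernels of the generating morphisms land in $\sL$, and the right lifting property of $R\to0$ against $f\colon K\to L$ is exactly the surjectivity of $\Hom_\sE(L,R)\to\Hom_\sE(K,R)$, so $\sS^{\perp_1}=\sR$.

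The converse direction, however, has a genuine gap, concentrated exactly at the point you flag as ``delicate''. To realize a filtration step $A_\beta\to A_{\beta+1}$ with cokernel $S$ as a pushout of your generator $R_S\to L_S$, you need the connecting map $\Hom_\sE(R_S,A_\beta)\to\Ext^1_\sE(S,A_\beta)$ to be surjective, and by the long exact sequence for $0\to R_S\to L_S\to S\to0$ this forces the restriction map $\Ext^1_\sE(S,A_\beta)\to\Ext^1_\sE(L_S,A_\beta)$ to vanish. But $L_S\in\sL$ only guarantees $\Ext^1_\sE(L_S,R)=0$ for $R\in\sR$, while the intermediate objects $A_\beta$ of a filtration are arbitrary. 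Concretely, take $\sE=\Ab$ and $\sS=\{\boZ/p\}$, so that $\sR$ is the class of $p$\+divisible groups. Since $\boZ/p\in\sL$, the sequence $0\to0\to\boZ/p\to\boZ/p\to0$ is a legitimate special precover sequence, so your recipe may produce $\cS=\{0\to\boZ/p\}$. Every pushout of $0\to\boZ/p$ is a \emph{split} monomorphism, so no nonsplit extension with cokernel $\boZ/p$ lies in $\Cell(\cS)$ and $\Cof(\cS)\ne\cL$; correspondingly $\cS^\square\ne\cR$, e.~g.\ the epimorphism $\boZ\oplus\boZ\to\boZ$, $(a,b)\mapsto a$, has the right lifting property against $\cS$ (all relevant $\Hom$'s from $\boZ/p$ vanish) but its kernel $\boZ$ is not $p$\+divisible. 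Note this failure occurs even where projectives abound; a second, independent omission is that your $\cS$ contains nothing forcing members of $\cS^\square$ to be epimorphisms at all (the non-epimorphism $0\to\mathbb{Q}$ lies in $\cS^\square$ above), which is usually repaired by adjoining morphisms $0\to G$ for a generating set $\{G\}\subset\sL$ (available by completeness). A smaller fixable point: your appeal to Theorem~\ref{eklof-trlifaj}(b) requires $\Fil(\sS)$ to be generating, which is not among your hypotheses; one must first enlarge $\sS$ by special $\sL$\+precovers of a generating set, which leaves the cotorsion pair unchanged.

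The underlying reason one generating monomorphism per $S$ cannot suffice is that this only works when $L_S$ can be taken with $\Ext^1_\sE(L_S,{-})\equiv0$, i.~e., projective --- and locally presentable abelian categories need not have enough projectives, which is precisely the difficulty the cited proof (\cite[Lemma~3.7]{PS4}, resting on the machinery of \cite[Section~4]{PR}) is designed to overcome. There one replaces projectivity by presentability: for a suitable regular cardinal~$\lambda$, every short exact sequence $0\to A\to E\to S\to0$ with $S\in\sS$ is a pushout, along a map $A'\to A$, of a short exact sequence $0\to A'\to E'\to S\to0$ with $E'$ (hence $A'$) $\lambda$\+presentable; taking for $\cS$ this \emph{set} of small monomorphisms together with the morphisms $0\to G$, one shows that any $r\in\cS^\square$ is an epimorphism whose kernel $N$ satisfies $\Ext^1_\sE(S,N)=0$ for all $S\in\sS$ (a lift in the relevant square extends the classifying map $A'\to N$ to $E'$, splitting the given extension), whence $\cS^\square=\sR\Epi=\cR$.
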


\begin{proof}
 This is~\cite[Lemma~3.7]{PS4}.
\end{proof}

\subsection{Model categories} \label{model-categories-subsecn}
 The concept of a model category is due to Quillen~\cite{Quil}.
 There are several modern expositions available;
see, e.~g.,~\cite{Hov0}.
 A discussion penned by the present authors can be found
in~\cite[Section~4]{PS4}.
 In this section, we restrict ourselves to a brief sketch.

 A \emph{model structure} on a category $\sE$ is the datum of
three classes of morphisms $\cL$, $\cR$, and $\cW$ satisfying
the following conditions:
\begin{itemize}
\item the pair of classes of morphisms $\cL$ and $\cR\cap\cW$ is
a weak factorization system in~$\sE$;
\item the pair of classes of morphisms $\cL\cap\cW$ and $\cR$ is
a weak factorization system in~$\sE$;
\item the class $\cW$ is closed under retracts and satisfies
the 2\+out-of\+3 property: given a composable pair of morphisms
$f$ and~$g$ in $\sE$, if two of the three morphisms $f$, $g$,
and~$fg$ belong to $\cW$, then the third one also does.
\end{itemize}
 The morphisms from the classes $\cL$, $\cR$, and $\cW$ are called
\emph{cofibrations}, \emph{fibrations}, and \emph{weak equivalences},
respectively.
 The morphisms from the class $\cL\cap\cW$ are called \emph{trivial
cofibrations}.
 The morphisms from the class $\cR\cap\cW$ are called \emph{trivial
fibrations}.

 A \emph{model category} is a complete, cocomplete category with
a model structure.
 The \emph{homotopy category} $\sE[\cW^{-1}]$ of a category $\sE$ with
a model structure is obtained by adjoining the formal inverse morphisms
for all weak equivalences.

 Let $\sE$ be a category with a model structure.
 Assume that $\sE$ has an initial object~$\varnothing$ and a terminal
object~$*$.
 Then an object $L\in\sE$ is said to be \emph{cofibrant} if
the morphism $\varnothing\rarrow L$ is a cofibration.
 Dually, an object $R\in\sE$ is said to be \emph{fibrant} if
the morphism $R\rarrow*$ is a fibration.

 Assume further that $\sE$ is a pointed category, i.~e., it has
an initial and a terminal object and they coincide, $\varnothing=*$.
 The initial-terminal object of a pointed category is called
the \emph{zero object} and denoted by~$0$.
 Given a pointed category $\sE$ with a model structure, an object
$W\in\sE$ is called \emph{weakly trivial} if the morphism $0\rarrow W$
is a weak equivalence, or equivalently, the morphism $W\rarrow0$ is
a weak equivalence.
 Weakly trivial cofibrant objects are called \emph{trivially cofibrant},
and weakly trivial fibrant objects are called \emph{trivially fibrant}.

\subsection{Abelian model structures} \label{abelian-model-structures-subsecn}
 The following definition of an \emph{abelian model structure} is
due to Hovey~\cite{Hov} (a generalization to exact categories in
the sense of Quillen can be found in~\cite{Gil0}).
 Let $\sE$ be an abelian category.
 A model structure $(\cL,\cW,\cR)$ on $\sE$ is said to be \emph{abelian}
if the class of all cofibrations $\cL$ is the class of all monomorphisms
with cofibrant cokernels and the class of all fibrations $\cR$ is
the class of all epimorphisms with fibrant kernels.
 A model category is called \emph{abelian} if its underlying category
is abelian and the model structure is abelian.

 Let $\sE$ be an abelian category.
 The following lemma tells that a model structure $(\cL,\cW,\cR)$ on
$\sE$ is abelian if and only if both the weak factorization systems
$(\cL,\>\cR\cap\cW)$ and $(\cL\cap\cW,\>\cR)$ are abelian.

\begin{lem}
 In an abelian model structure, the class of all trivial cofibrations\/
$\cL\cap\cW$ is the class of all monomorphisms with trivially cofibrant
cokernels and the class of all trivial fibrations\/ $\cR\cap\cW$ is
the class of all epimorphisms with trivially fibrant kernels.
\end{lem}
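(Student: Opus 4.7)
My plan is to unwind the definitions of trivially cofibrant and trivially fibrant objects in terms of the two weak factorization systems $(\cL,\cR\cap\cW)$ and $(\cL\cap\cW,\cR)$ packaged inside the model structure, and then to prove each half of the lemma by an easy forward inclusion (using (co)limit closures of the left and right classes of weak factorization systems) and a more delicate reverse inclusion (which will require an $\Ext^1$-vanishing argument). Write $\sL_1,\sR_1$ for the classes of cofibrant and fibrant objects, so the abelian hypothesis reads $\cL=\sL_1\Mono$ and $\cR=\sR_1\Epi$, and write $\sL_2,\sR_2$ for the classes of trivially cofibrant and trivially fibrant objects. Directly from the definitions in Section~\ref{model-categories-subsecn}, $\sL_2=\{L:0\to L\in\cL\cap\cW\}$ and $\sR_2=\{R:R\to 0\in\cR\cap\cW\}$. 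For the forward inclusion of the first assertion: if $l\:A\to B\in\cL\cap\cW$ then $l$ is a monomorphism (since $\cL\subseteq\Mono$), and $0\to\coker l$ is the pushout of $l$ along $A\to 0$, hence lies in $\cL\cap\cW$ by pushout-closure of the left class of a weak factorization system, giving $\coker l\in\sL_2$. The dual pullback argument handles the forward inclusion on the fibration side.

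The reverse inclusions rest on the auxiliary containment $\sL_2\subseteq{}^{\perp_1}\sR_1$ (and dually $\sR_2\subseteq\sL_1^{\perp_1}$), which I would establish as follows. Given $L\in\sL_2$ and $R\in\sR_1$, any short exact sequence $0\to R\to X\to L\to 0$ has its quotient map $X\to L$ sitting in $\sR_1\Epi=\cR$; the left lifting property of $0\to L\in\cL\cap\cW={}^\square\cR$ against this epimorphism (applied to the square whose lower horizontal arrow is $\id_L$) then produces a section of $X\to L$, so the sequence splits and $\Ext^1_\sE(L,R)=0$.

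Finally, the reverse inclusion $\sL_2\Mono\subseteq\cL\cap\cW$ itself is a standard obstruction-theoretic computation. Given a monomorphism $l\:A\to B$ with $C:=\coker l\in\sL_2$ and a test morphism $r\:X\to Y\in\cR$ (necessarily an epimorphism with kernel $K\in\sR_1$), pulling back $r$ along $B\to Y$ produces $P:=X\times_YB$, which fits in a short exact sequence $0\to K\to P\to B\to 0$ and is equipped with a partial section $A\to P$ induced by the top arrow of the lifting square; the required diagonal filler $B\to X$ is the same data as extending this partial section to a full section $B\to P$. The long exact sequence of $\Ext_\sE^*(-,K)$ attached to $0\to A\to B\to C\to 0$, combined with $\Ext^1_\sE(C,K)=0$ from the auxiliary containment, both forces the class $[P]\in\Ext^1_\sE(B,K)$ to vanish (via injectivity of $\Ext^1_\sE(B,K)\to\Ext^1_\sE(A,K)$) and produces, via the surjectivity of $\Hom_\sE(B,K)\to\Hom_\sE(A,K)$ modulo the boundary from $\Ext^1_\sE(C,K)$, enough freedom to adjust a chosen splitting over $B$ so that it restricts to the prescribed section over $A$. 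The dual argument then yields $\sR_2\Epi=\cR\cap\cW$. The hard part will be this last compatibility step, which exploits two adjacent portions of the long exact sequence simultaneously; everything else reduces to routine bookkeeping with weak factorization systems.
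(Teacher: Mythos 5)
Your proof is correct, and it is essentially the standard argument behind this statement: the paper itself gives no proof but cites Hovey's Proposition~4.2 (and \cite[Lemma~4.1]{PS4}), whose proof proceeds exactly as you do --- pushout/pullback closure of the left/right classes of the two weak factorization systems for the easy inclusions, the splitting-via-lifting argument giving $\Ext^1$-vanishing between trivially cofibrant and fibrant (resp.\ cofibrant and trivially fibrant) objects, and the pullback-plus-obstruction computation with the six-term $\Ext$ sequence for the converse inclusions. One cosmetic remark: since $\Ext^1_\sE(C,K)=0$, the restriction map $\Hom_\sE(B,K)\rarrow\Hom_\sE(A,K)$ is surjective outright, so no adjustment ``modulo the boundary'' is needed in your final compatibility step.
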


\begin{proof}
 This is a part of~\cite[Proposition~4.2]{Hov}; see
also~\cite[Lemma~4.1]{PS4}.
\end{proof}

 A class of objects $\sW$ in an abelian category $\sE$ is said to be
\emph{thick} if it is closed under direct summands and satisfies
the following version of 2\+out-of\+3 property: for any short exact
sequence $0\rarrow A\rarrow B\rarrow C\rarrow0$ in $\sE$, if two of
the three objects $A$, $B$, $C$ belong to $\sW$, then the third one
also does.

\begin{thm} \label{abelian-model-structures}
 Let\/ $\sE$ be an abelian category.
 Then there is a bijection between abelian model structures on\/
$\sE$ and triples of classes of objects $(\sL,\sW,\sR)$ in\/ $\sE$
such that
\begin{itemize}
\item $(\sL,\>\sR\cap\sW)$ is a complete cotorsion pair in\/~$\sE$;
\item $(\sL\cap\sW,\>\sR)$ is a complete cotorsion pair in\/~$\sE$;
\item $\sW$ is a thick class of objects in\/~$\sE$.
\end{itemize}
 To an abelian model structure $(\cL,\cW,\cR)$, the triple of
classes of objects $(\sL,\sW,\sR)$ is assigned, where\/ $\sL$ is
the class of all cofibrant objects, $\sR$ is the class of all
fibrant objects, and $\sW$ is the class of all weakly trivial objects.
 To a triple of classes of objects $(\sL,\sW,\sR)$ satisfying
the conditions above, the abelian model structure $(\cL,\cW,\cR)$ is
assigned, where\/ $\cL=\sL\Mono$ and $\cR=\sR\Epi$.
 The classes of all trivial cofibrations and trivial fibrations are
$\cL\cap\cW=(\sL\cap\sW)\Mono$ and $\cR\cap\cW=(\sR\cap\sW)\Epi$.
 The class of all weak equivalences\/ $\cW$ consists of all
morphisms~$w$ in\/ $\sE$ that can be factorized as $w=rl$ with
$r\in\cR\cap\cW$ and $l\in\cL\cap\cW$.
\end{thm}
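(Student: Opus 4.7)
The plan is to verify both directions of the bijection. For the forward direction, given an abelian model structure $(\cL,\cW,\cR)$, let $\sL$, $\sR$, $\sW$ be the classes of cofibrant, fibrant, and weakly trivial objects, respectively. The weak factorization systems $(\cL,\,\cR\cap\cW)$ and $(\cL\cap\cW,\,\cR)$ are both abelian --- the former directly by the definition of an abelian model structure, the latter by the lemma immediately preceding the theorem. Proposition~\ref{abelian-wfs-and-cotorsion-pairs} then translates them into complete cotorsion pairs $(\sL,\,\sR\cap\sW)$ and $(\sL\cap\sW,\,\sR)$. Thickness of $\sW$ is extracted from the 2-out-of-3 and retract properties of $\cW$ applied to short exact sequences $0\to A\to B\to C\to 0$ in $\sE$, realized as commutative pushout/pullback squares against the zero object, together with the compatibility of the class of weak equivalences with pushouts along cofibrations and pullbacks along fibrations that one derives from the two weak factorization systems.

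For the reverse direction, given a triple $(\sL,\sW,\sR)$, I set the cofibrations $\cL:=\sL\Mono$, fibrations $\cR:=\sR\Epi$, trivial cofibrations $(\sL\cap\sW)\Mono$, and trivial fibrations $(\sR\cap\sW)\Epi$. Proposition~\ref{abelian-wfs-and-cotorsion-pairs} produces the two requisite weak factorization systems at once. The class of weak equivalences $\cW$ is then defined as the class of morphisms that factor as a trivial cofibration followed by a trivial fibration, matching the final sentence of the theorem statement.

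The main obstacle, and the step requiring the thickness of $\sW$ in an essential way, is verifying that $\cW$ enjoys the 2-out-of-3 property and is closed under retracts. The crux is a characterization lemma that I would prove first: a morphism $f$ lies in $\cW$ if and only if for every (equivalently, for some) factorization $f=pi$ with $i\in\cL$ and $p\in\cR$, the morphism $i$ is a trivial cofibration and $p$ is a trivial fibration. The non-trivial ``only if'' direction compares an arbitrary such factorization $f=pi$ with an initial factorization $f=p'i'$ where $i'$ is a trivial cofibration and $p'$ a trivial fibration: the lifting axiom yields a diagonal fill-in between them, and the induced commutative diagram of kernels of $p,p'$ and cokernels of $i,i'$ produces short exact sequences to which thickness of $\sW$ applies, transferring weak triviality from the primed to the unprimed factorization. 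With this characterization in hand, closure of $\cW$ under retracts reduces to closure of $(\sL\cap\sW)\Mono$ and $(\sR\cap\sW)\Epi$ under retracts, which is automatic for either side of any weak factorization system. For 2-out-of-3, given composable $f$, $g$ with two of $f$, $g$, $gf$ in $\cW$, I would factor all three morphisms and use the characterization together with thickness of $\sW$ on the induced kernel/cokernel short exact sequences. Finally, the matching of weakly trivial objects with $\sW$ in both directions drops out of the characterization applied to the morphism $0\to W$, closing the bijection.
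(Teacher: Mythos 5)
The paper offers no proof of this theorem at all: it is quoted from Hovey (\cite[Theorem~2.2]{Hov}), so your proposal has to be judged against Hovey's argument, and it diverges from it exactly at the point where it goes wrong. Your crux characterization lemma is false in its ``for every factorization'' form: it is not true that whenever $f\in\cW$ and $f=pi$ with $i\in\sL\Mono$ and $p\in\sR\Epi$, the morphism~$i$ must be a trivial cofibration and $p$~a trivial fibration. Concretely, whenever $\sL\cap\sR\not\subset\sW$, pick an object $P\in(\sL\cap\sR)\setminus\sW$ and factor the identity morphism of the zero object as $0\rightarrowtail P\twoheadrightarrow 0$: the monomorphism has cokernel $P\in\sL$ and the epimorphism has kernel $P\in\sR$, so this is a (cofibration, fibration) factorization of a weak equivalence in which neither factor is trivial. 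This happens in the most standard examples, e.g.\ the projective model structure on unbounded complexes of modules ($\sL={}$dg\+projective complexes, $\sW={}$acyclic complexes, $\sR={}$all complexes), with $P$ the ring placed in degree~$0$. Accordingly, the diagram chase you sketch cannot work: the lift $d\:B\rarrow B'$ comparing the two factorizations does exist, but it induces no short exact sequences transferring weak triviality from $\coker(i')$ and $\ker(p')$ to $\coker(i)$ and $\ker(p)$ --- in the example above $\coker(i')=0$ while $\coker(i)=P\notin\sW$.

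Since your proofs of retract closure and of the 2\+out-of\+3 property are reduced entirely to this false lemma, the proposal has a genuine gap precisely at the hardest part of the theorem. The correct characterization is asymmetric: factoring $f=pi$ through the weak factorization system $((\sL\cap\sW)\Mono,\>\sR\Epi)$, so that $i$~is a trivial cofibration and $p$~a fibration, one has $f\in\cW$ if and only if $p$~is a trivial fibration (and dually with the other weak factorization system). Establishing this, and then deducing 2\+out-of\+3 and retract closure from the thickness of~$\sW$, is the actual content of Hovey's proof, culminating in the statement that a monomorphism is a weak equivalence if and only if its cokernel is weakly trivial, and dually for epimorphisms --- the very fact this paper quotes as \cite[Lemma~5.8]{Hov} in the proof of Lemma~\ref{contraderived-weak-equivalences}. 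Your forward direction (thickness of~$\sW$ for a given abelian model structure) also really rests on this mono/epi criterion rather than on the pushout/pullback compatibilities you invoke, but that half is repairable along those lines; the reverse direction as written is not.
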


\begin{proof}
 This is a part of~\cite[Theorem~2.2]{Hov}; see
also~\cite[Theorem~4.2]{PS4}.
 A generalization to Quillen exact categories can be found
in~\cite[Theorem~3.3]{Gil0}.
\end{proof}

 In the rest of this paper, we will identify abelian model structures
with the triples of classes of objects $(\sL,\sW,\sR)$ satisfying
the conditions of Theorem~\ref{abelian-model-structures}, and write
simply ``an abelian model structure $(\sL,\sW,\sR)$ on an abelian
category~$\sE$\,''.

 A model structure $(\cL,\cW,\cR)$ is called \emph{cofibrantly
generated} if both the weak factorization systems $(\cL,\>\cR\cap\cW)$
and $(\cL\cap\cW,\>\cR)$ are cofibrantly generated.

\begin{lem} \label{cofibrantly-generated-abelian-model}
 An abelian model structure $(\sL,\sW,\sR)$ on a locally presentable
abelian category\/ $\sE$ is cofibrantly generated if and only if both
the cotorsion pairs $(\sL,\>\sR\cap\sW)$ and $(\sL\cap\sW,\>\sR)$
are generated by some sets of objects in\/~$\sE$.
\end{lem}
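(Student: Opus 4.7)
The plan is to deduce the statement directly from previously established results by treating the two weak factorization systems comprising the model structure separately and then applying Lemma~\ref{cofibrantly-generated-abelian-wfs} to each.

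First, I would unpack the definition of a cofibrantly generated model structure given in Section~\ref{abelian-model-structures-subsecn}: the model structure $(\cL,\cW,\cR)$ is cofibrantly generated if and only if both of the weak factorization systems $(\cL,\>\cR\cap\cW)$ and $(\cL\cap\cW,\>\cR)$ are cofibrantly generated in the sense of Section~\ref{abelian-weak-factorization-subsecn}. Next, by Theorem~\ref{abelian-model-structures}, an abelian model structure on $\sE$ corresponds bijectively to a triple $(\sL,\sW,\sR)$ of classes of objects giving rise to two complete cotorsion pairs $(\sL,\>\sR\cap\sW)$ and $(\sL\cap\sW,\>\sR)$. Moreover, the two weak factorization systems $(\cL,\>\cR\cap\cW)$ and $(\cL\cap\cW,\>\cR)$ are precisely the abelian weak factorization systems associated to these two complete cotorsion pairs via the bijection of Proposition~\ref{abelian-wfs-and-cotorsion-pairs}; explicitly, $\cL=\sL\Mono$, \,$\cR\cap\cW=(\sR\cap\sW)\Epi$, \,$\cL\cap\cW=(\sL\cap\sW)\Mono$, and $\cR=\sR\Epi$.

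Finally, Lemma~\ref{cofibrantly-generated-abelian-wfs} asserts that, in a locally presentable abelian category, an abelian weak factorization system coming from a complete cotorsion pair $(\sL',\sR')$ is cofibrantly generated if and only if the cotorsion pair $(\sL',\sR')$ is generated by a set of objects in $\sE$. Applying this lemma separately to each of the two weak factorization systems $(\cL,\>\cR\cap\cW)$ and $(\cL\cap\cW,\>\cR)$ yields the claim that the abelian model structure is cofibrantly generated if and only if both cotorsion pairs $(\sL,\>\sR\cap\sW)$ and $(\sL\cap\sW,\>\sR)$ are generated by sets of objects.

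This is essentially a bookkeeping argument assembling previously established results, so there is no substantial obstacle to overcome. The only point that needs to be carefully verified is that the two weak factorization systems extracted from the abelian model structure are indeed abelian in the sense of Section~\ref{abelian-weak-factorization-subsecn}, which is exactly what Theorem~\ref{abelian-model-structures} guarantees.
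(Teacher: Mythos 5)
Your proposal is correct and follows exactly the paper's route: the paper's proof is a citation to~\cite[Corollary~4.3]{PS4} with the remark that the assertion follows from Lemma~\ref{cofibrantly-generated-abelian-wfs}, which is precisely the argument you spell out (both constituent weak factorization systems are abelian by Theorem~\ref{abelian-model-structures}, and Lemma~\ref{cofibrantly-generated-abelian-wfs} is applied to each). Your version merely makes the bookkeeping explicit; there is nothing to correct.
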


\begin{proof}
 This is~\cite[Corollary~4.3]{PS4}.
 The assertion follows from
Lemma~\ref{cofibrantly-generated-abelian-wfs}.
\end{proof}

\subsection{Projective and injective abelian model structures}
 Given an abelian category $\sE$, we denote by $\sE_\proj\subset\sE$
the class (or the full subcategory) of all projective objects in $\sE$
and by $\sE_\inj\subset\sE$ the class/full subcategory of all
injective objects in~$\sE$.

 A model structure $(\cL,\cW,\cR)$ on a category $\sB$ is called
\emph{projective} if all the objects of $\sB$ are fibrant.
 For an abelian model structure $(\sL,\sW,\sR)$, this means that
$\sR=\sB$, or equivalently, $\sL\cap\sW$ is the class of all
projective objects in $\sB$, that is $\sL\cap\sW=\sB_\proj$.
 So an abelian model structure is projective if and only if
the trivial cofibrations are the monomorphisms with projective
cokernel.

 Dually, a model structure $(\cL,\cW,\cR)$ on a category $\sA$ is
called \emph{injective} if all the objects of $\sA$ are cofibrant.
 For an abelian model structure $(\sL,\sW,\sR)$, this means that
$\sL=\sA$, or equivalently, $\sR\cap\sW$ is the class of all
injective objects in $\sA$, that is $\sR\cap\sW=\sA_\inj$.
 So an abelian model structure is injective if and only if
the trivial fibrations are the epimorphisms with injective kernel.

\begin{lem} \label{hereditary-abelian-model}
 Let $(\sL,\sW,\sR)$ be an abelian model structure on an abelian
category\/~$\sE$.
 Then the cotorsion pair $(\sL,\>\sR\cap\sW)$ is hereditary if and only
if the cotorsion pair $(\sL\cap\sW,\>\sR)$ is hereditary in\/~$\sE$.
\end{lem}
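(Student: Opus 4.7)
The plan is to use the characterization of hereditary cotorsion pairs from Lemma~\ref{hereditary-cotorsion} together with the thickness of the class~$\sW$ provided by Theorem~\ref{abelian-model-structures}. Since both cotorsion pairs $(\sL,\>\sR\cap\sW)$ and $(\sL\cap\sW,\>\sR)$ are complete (this is the definition of an abelian model structure), the classes $\sL$ and $\sL\cap\sW$ are generating while $\sR$ and $\sR\cap\sW$ are cogenerating in $\sE$. Hence Lemma~\ref{hereditary-cotorsion} applies to both cotorsion pairs, and it suffices to check closure under kernels of epimorphisms for the left-hand class (or cokernels of monomorphisms for the right-hand class).

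For the forward implication, I would assume $(\sL,\>\sR\cap\sW)$ is hereditary, so $\sL$ is closed under kernels of epimorphisms. Given a short exact sequence
\[
 0\lrarrow A\lrarrow B\lrarrow C\lrarrow 0
\]
in $\sE$ with $B,C\in\sL\cap\sW$, the hypothesis applied to $B,C\in\sL$ gives $A\in\sL$, and the thickness of $\sW$ applied to $B,C\in\sW$ (via the 2\+out-of\+3 property for short exact sequences) gives $A\in\sW$. Hence $A\in\sL\cap\sW$, showing that $\sL\cap\sW$ is closed under kernels of epimorphisms. By Lemma~\ref{hereditary-cotorsion}, the cotorsion pair $(\sL\cap\sW,\>\sR)$ is hereditary.

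For the converse, I would assume $(\sL\cap\sW,\>\sR)$ is hereditary, so $\sR$ is closed under cokernels of monomorphisms. Given a short exact sequence $0\to A\to B\to C\to 0$ with $A,B\in\sR\cap\sW$, the hypothesis gives $C\in\sR$ and thickness of $\sW$ gives $C\in\sW$, so $C\in\sR\cap\sW$. Hence $\sR\cap\sW$ is closed under cokernels of monomorphisms, and Lemma~\ref{hereditary-cotorsion} yields that $(\sL,\>\sR\cap\sW)$ is hereditary.

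There is no real obstacle here: the whole argument is the observation that ``hereditary'' is a closure property of a single class on one side of a cotorsion pair, and both sought closures reduce to the corresponding closure for the larger class ($\sL\supset\sL\cap\sW$ on the cofibrant side, $\sR\supset\sR\cap\sW$ on the fibrant side) together with the automatic 2\+out-of\+3 behavior of $\sW$. The only point requiring care is to confirm that the generating/cogenerating hypothesis in Lemma~\ref{hereditary-cotorsion} is met for both cotorsion pairs, which follows immediately from completeness.
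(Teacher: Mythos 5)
Your proof is correct and is essentially the argument the paper points to: its proof of Lemma~\ref{hereditary-abelian-model} simply cites \cite[Lemma~4.4]{PS4} and says the claim ``follows easily from Lemma~\ref{hereditary-cotorsion}'', and your write-up is precisely that derivation, combining the closure characterizations of heredity with the thickness of $\sW$ from Theorem~\ref{abelian-model-structures}. You also correctly verify the generating/cogenerating hypotheses of Lemma~\ref{hereditary-cotorsion} via completeness of both cotorsion pairs, so nothing is missing.
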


\begin{proof}
 This standard observation can be found in~\cite[Lemma~4.4]{PS4}.
 The assertion follows easily from Lemma~\ref{hereditary-cotorsion}
above.
\end{proof}

 An abelian model structure satisfying the equivalent conditions
of Lemma~\ref{hereditary-abelian-model} is called \emph{hereditary}.
 In particular, all projective and all injective abelian model
structures are hereditary~\cite[Corollary~1.1.12]{Bec} (see
also~\cite[paragraph before Lemma~4.5]{PS4}).

\begin{lem} \label{projective-injective-abelian-model-structures}
\textup{(a)} A pair of classes of objects $(\sL,\sW)$ in an abelian
category\/ $\sB$ defines a projective abelian model structure
$(\sL,\sW,\sB)$ on\/ $\sB$ if and only if\/ $\sB$ has enough projective
objects, $(\sL,\sW)$ is a complete cotorsion pair in\/ $\sB$ with\/
$\sL\cap\sW=\sB_\proj$, and the class\/ $\sW\subset\sB$ is thick. \par
\textup{(b)} A pair of classes of objects $(\sW,\sR)$ in an abelian
category\/ $\sA$ defines an injective abelian model structure
$(\sA,\sW,\sR)$ on\/ $\sA$ if and only if\/ $\sA$ has enough injective
objects, $(\sW,\sR)$ is a complete cotorsion pair in\/ $\sA$ with\/
$\sR\cap\sW=\sA_\inj$, and the class\/ $\sW\subset\sA$ is thick.
\end{lem}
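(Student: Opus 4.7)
The plan is to deduce both parts directly from Theorem~\ref{abelian-model-structures}, which identifies abelian model structures with triples $(\sL,\sW,\sR)$ such that $(\sL,\sR\cap\sW)$ and $(\sL\cap\sW,\sR)$ are complete cotorsion pairs and $\sW\subset\sE$ is thick. For part~(a), a projective abelian model structure on $\sB$ is by definition one with $\sR=\sB$, so specializing the theorem reduces the claim to showing that the conditions in the lemma are equivalent to the conjunction: $(\sL,\sB\cap\sW)=(\sL,\sW)$ is a complete cotorsion pair, $(\sL\cap\sW,\sB)$ is a complete cotorsion pair, and $\sW$ is thick. Part~(b) is entirely dual (working with $\sL=\sA$), so I will concentrate on~(a).

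For the forward implication, assume $(\sL,\sW,\sB)$ is a projective abelian model structure. Then $(\sL,\sW)$ is a complete cotorsion pair and $\sW$ is thick by Theorem~\ref{abelian-model-structures}. The completeness of the second cotorsion pair $(\sL\cap\sW,\sB)$ means that every object $E\in\sB$ admits a special precover sequence $0\to R'\to P\to E\to 0$ with $P\in\sL\cap\sW$ and $R'\in\sB$; since $\sL\cap\sW=\sB_\proj$ (by the definition of a projective model structure), this shows $\sB$ has enough projectives. The identity $\sL\cap\sW=\sB_\proj$ is again built into the definition.

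For the converse, assume the three conditions of the lemma. Set $\sR=\sB$. It suffices to verify that $(\sL\cap\sW,\sB)=(\sB_\proj,\sB)$ is a complete cotorsion pair; then all hypotheses of Theorem~\ref{abelian-model-structures} are met and the assigned abelian model structure $(\cL,\cW,\cR)$ satisfies $\cR=\sB\Epi=\{\text{all epimorphisms}\}$, which combined with the 2\+out-of\+3 property forces every morphism to $0$ from any object to be a fibration, i.e.\ the model structure is projective. To check that $(\sB_\proj,\sB)$ is a complete cotorsion pair, observe that $\sB_\proj^{\perp_1}=\sB$ tautologically, and ${}^{\perp_1}\sB=\sB_\proj$ because any object $L$ with $\Ext^1_\sB(L,{-})=0$ is projective; the completeness is provided by special precover sequences $0\to R'\to P\twoheadrightarrow E\to 0$ arising from enough projectives in $\sB$, and by the trivial special preenvelope sequences $0\to E\overset{\id}\to E\to 0\to 0$ with cokernel $0\in\sB_\proj$.

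No single step is genuinely hard; the main thing to get right is bookkeeping, namely that $\sR\cap\sW$ in the theorem becomes $\sW$ when $\sR=\sB$, and that $\sL\cap\sW$ being exactly $\sB_\proj$ is what turns the second cotorsion pair into the ``trivial'' one $(\sB_\proj,\sB)$, whose completeness is synonymous with the existence of enough projectives in~$\sB$. Part~(b) is obtained by dualizing every step, replacing $\sB_\proj$ with $\sA_\inj$, exchanging monomorphisms with epimorphisms, and using enough injectives.
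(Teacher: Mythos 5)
Your proposal is correct and takes essentially the same route as the paper, which proves this lemma simply by citing Becker's Corollary~1.1.9 (see also~\cite[Lemma~4.5]{PS4}) --- i.e.\ exactly the specialization of the Hovey correspondence (Theorem~\ref{abelian-model-structures}) to $\sR=\sB$, where the completeness of the ``trivial'' cotorsion pair $(\sB_\proj,\sB)$ encodes the existence of enough projectives, as in your bookkeeping. One small cleanup: no appeal to 2\+out-of\+3 is needed (and that property concerns weak equivalences, not fibrations) --- each morphism $X\rarrow 0$ is an epimorphism with kernel $X\in\sR=\sB$, hence a fibration outright, so all objects are fibrant directly.
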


\begin{proof}
 This is~\cite[Corollary~1.1.9]{Bec}; see also~\cite[Lemma~4.5]{PS4}.
\end{proof}

\begin{lem} \label{W-is-thick-lemma}
\textup{(a)} Let $(\sL,\sW)$ be a hereditary complete cotorsion pair
in an abelian category\/ $\sB$ such that\/ $\sL\cap\sW=\sB_\proj$.
 Then the class\/ $\sW$ is thick in\/~$\sB$. \par
\textup{(b)} Let $(\sW,\sR)$ be a hereditary complete cotorsion pair
in an abelian category\/ $\sA$ such that\/ $\sR\cap\sW=\sA_\inj$.
 Then the class\/ $\sW$ is thick in\/~$\sA$.
\end{lem}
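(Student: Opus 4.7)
My plan for part~(a) is as follows. Since $\sW=\sL^{\perp_1}$, the natural splitting $\Ext^1_\sB(L,X\oplus Y)\simeq\Ext^1_\sB(L,X)\oplus\Ext^1_\sB(L,Y)$ shows that $\sW$ is closed under direct summands in~$\sB$. The cotorsion pair being hereditary, Lemma~\ref{hereditary-cotorsion}(2) implies that $\sW$ is also closed under cokernels of monomorphisms. For a short exact sequence $0\to A\to B\to C\to 0$ in $\sB$, this already handles the case $A,B\in\sW\Rightarrow C\in\sW$. If instead $A,C\in\sW$, the long exact sequence $\Ext^1_\sB(L,A)\to\Ext^1_\sB(L,B)\to\Ext^1_\sB(L,C)$ has vanishing outer terms for every $L\in\sL$, so $B\in\sW$.

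The main obstacle is the remaining case: proving $A\in\sW$ when $B,C\in\sW$. For this I would apply the completeness of $(\sL,\sW)$ to obtain a special $\sW$\+preenvelope sequence $0\to A\to W\to L'\to 0$ with $W\in\sW$ and $L'\in\sL$. Forming the pushout $P=W\sqcup_A B$ yields two short exact sequences, $0\to B\to P\to L'\to 0$ and $0\to W\to P\to C\to 0$, in~$\sB$. The first splits, since $\Ext^1_\sB(L',B)=0$, giving $P\simeq B\oplus L'$. The second shows that $P\in\sW$, because the class $\sW$ is automatically closed under extensions as the right class of a cotorsion pair (by the long exact sequence argument). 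Consequently $L'$ is a direct summand of an object of $\sW$, so $L'\in\sL\cap\sW=\sB_\proj$; but then the preenvelope sequence $0\to A\to W\to L'\to 0$ splits, realizing $A$ as a direct summand of $W\in\sW$, and hence $A\in\sW$. This completes the proof of part~(a).

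Part~(b) is formally dual. Again $\sW={}^{\perp_1}\sR$ is closed under direct summands; among the three cases of the 2-out-of-3 property the direct ones are $B,C\in\sW\Rightarrow A\in\sW$, handled by the hereditary closure of $\sW$ under kernels of epimorphisms (Lemma~\ref{hereditary-cotorsion}(1)), and $A,C\in\sW\Rightarrow B\in\sW$, handled by the long exact sequence of $\Ext$. For the remaining case $A,B\in\sW\Rightarrow C\in\sW$, I would choose a special $\sW$\+precover sequence $0\to R\to W''\to C\to 0$ with $W''\in\sW$ and $R\in\sR$, and form the pullback $P=B\times_C W''$. The two resulting short exact sequences $0\to A\to P\to W''\to 0$ and $0\to R\to P\to B\to 0$ give, respectively, $P\in\sW$ (by extension closure of the left class) and a splitting $P\simeq R\oplus B$ (since $\Ext^1_\sA(B,R)=0$). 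Closure of $\sW$ under direct summands places $R$ in $\sW\cap\sR=\sA_\inj$, whereupon the precover sequence splits and $C$ is a direct summand of $W''\in\sW$, yielding $C\in\sW$.
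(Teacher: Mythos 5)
Your proof is correct and is essentially the argument the paper relies on: the paper simply cites Becker~\cite[Lemma~1.1.10]{Bec} (see also~\cite[Lemma~4.6]{PS4}), and your pushout/pullback argument --- splitting off $L'$ (resp.\ $R$) as a direct summand of an extension to land it in $\sL\cap\sW=\sB_\proj$ (resp.\ $\sR\cap\sW=\sA_\inj$) and then splitting the approximation sequence --- is exactly Becker's proof. No gaps; the use of heredity for the cokernel/kernel case and of completeness plus summand-closure for the remaining case is just as in the cited source.
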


\begin{proof}
 This is~\cite[Lemma~1.1.10]{Bec}; see also~\cite[Lemma~4.6]{PS4}.
\end{proof}

\subsection{Stable combinatorial abelian model structures}
 Let $\sE$ be a finitely complete, finitely cocomplete category
(i.~e., all finite limits and finite colimits exist in~$\sE$).
 Assume further that $\sE$ is a pointed category (as defined in
Section~\ref{model-categories-subsecn}).
 In particular, all abelian categories satisfy these conditions.

 Let $(\cL,\cW,\cR)$ be a model structure on~$\sE$.
 In this context, Quillen~\cite[Theorem~I.2.2]{Quil} constructed
a pair of adjoint endofunctors $\Sigma$ and $\Omega\:\sE[\cW^{-1}]
\rarrow\sE[\cW^{-1}]$ on the homotopy category~$\sE[\cW^{-1}]$.
 The functor $\Sigma$ is called the \emph{suspension functor}
and the functor $\Omega$ is the \emph{loop functor}.

 Under stricter assumptions that $\sE$ is complete and cocomplete
and that factorizations of morphisms in the weak factorization
systems $(\cL,\>\cR\cap\cW)$ and $(\cL\cap\cW,\>\cR)$ can be
chosen to be functorial, Hovey~\cite[Chapter~5]{Hov0} constructs
a natural closed action of the homotopy category of pointed
simplicial sets on $\sE[\cW^{-1}]$ and uses this action to define
the suspension and loop functors in~\cite[Definition~6.1.1]{Hov0}.
 Then~\cite[Lemma~6.1.2]{Hov0} tells that the functor $\Sigma$
is left adjoint to~$\Omega$.

 The model structure $(\cL,\cW,\cR)$ is called \emph{stable} if
the functor $\Sigma$ is an auto-equiv\-a\-lence of the category
$\sE[\cW^{-1}]$, or equivalently, the functor $\Omega$ is
an auto-equivalence of $\sE[\cW^{-1}]$.
 It is well-known that, for any stable model structure $(\cL,\cW,\cR)$
on a finitely complete, finitely cocomplete, pointed category $\sE$,
the homotopy category $\sE[\cW^{-1}]$ is
triangulated (see, e.~g.,~\cite[Proposition~7.1.6]{Hov0}).
 The suspension functor $\Sigma$ plays the role of the shift functor
$X\longmapsto X[1]$ on $\sE[\cW^{-1}]$. 

\begin{lem} \label{hereditary-is-stable}
 Any hereditary abelian model structure is stable.
\end{lem}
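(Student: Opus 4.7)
The plan is to identify the full subcategory $\sF := \sL\cap\sR$ of bifibrant objects as a Frobenius exact category whose class of projective-injective objects is $\omega := \sL\cap\sW\cap\sR$, and then invoke the classical fact that the stable category of a Frobenius exact category is triangulated with an autoequivalent suspension.

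First, $\sF$ inherits an exact structure from $\sE$: both $\sL$ and $\sR$, being the left and right classes of complete cotorsion pairs, are closed under extensions in $\sE$, hence so is $\sF$. Next, for any $X\in\sF$, the completeness of the cotorsion pair $(\sL\cap\sW,\sR)$ yields a short exact sequence $0\to\Omega X\to Y\to X\to 0$ with $Y\in\sL\cap\sW$ and $\Omega X\in\sR$. Since $X$ and $\Omega X$ both lie in $\sR$ and $\sR$ is extension-closed, $Y\in\sR$, so $Y\in\omega$. The hereditariness assumption, in the form that $\sL$ is closed under kernels of epimorphisms whose endpoints lie in $\sL$, forces $\Omega X\in\sL$, hence $\Omega X\in\sF$. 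Thus $\sF$ has enough $\omega$-projective resolutions; dually, completeness and hereditariness of $(\sL,\sR\cap\sW)$ produce sequences $0\to X\to Z\to\Sigma X\to 0$ with $Z\in\omega$ and $\Sigma X\in\sF$, providing enough injective coresolutions. The objects of $\omega$ are both projective and injective in $\sF$ by the $\Ext^1$-vanishing built into the two cotorsion pairs, and a splitting argument applied to the above resolutions, combined with closure of $\omega$ under direct summands, shows conversely that every projective (or injective) object of $\sF$ lies in $\omega$. Hence $\sF$ is Frobenius with projective-injectives~$\omega$.

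Next I would invoke the standard model-categorical identification $\sE[\cW^{-1}]\simeq\sF/\omega$, where $\sF/\omega$ denotes the stable category of the Frobenius structure (morphisms modulo those factoring through $\omega$). This rests on the fact that every object admits a bifibrant replacement and that the notions of left and right homotopy in the abelian model structure both coincide with the relation of differing by a morphism factoring through $\omega$. Under this equivalence, the Quillen suspension $\Sigma$ is carried to the cosyzygy functor of the Frobenius stable category, which by Happel's theorem is an autoequivalence. Stability of the model structure follows.

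The main obstacle is the identification of $\sE[\cW^{-1}]$ with $\sF/\omega$; once in place, the invertibility of $\Sigma$ is a purely Frobenius-categorical fact and requires no further input from model structures. I would cite \cite[Proposition~1.1.14]{Bec} (or the analogous statement in \cite{Hov}) for this step rather than spelling out the identification in detail.
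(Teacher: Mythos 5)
Your proposal is correct and takes essentially the same route as the paper: the paper proves this lemma simply by citing \cite[Corollary~1.1.15 and the preceding discussion]{Bec}, and Becker's argument there is exactly yours — the bifibrant objects $\sL\cap\sR$ form a Frobenius exact category with projective-injective objects $\sL\cap\sW\cap\sR$, the homotopy category is identified with its stable category, and the suspension becomes the cosyzygy autoequivalence. (Your only loose phrasing, attributing the cokernel-closure of $\sR$ to hereditariness of $(\sL,\>\sR\cap\sW)$ rather than $(\sL\cap\sW,\>\sR)$, is harmless since the two are equivalent by Lemma~\ref{hereditary-abelian-model}.)
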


\begin{proof}
 See~\cite[Corollary~1.1.15 and the preceding discussion]{Bec}.
\end{proof}

 We refer to~\cite{Neem1} or~\cite{Kra1,Kra2} for the definition of
a \emph{well-generated triangulated category} (see~\cite[Section~4]{PS4}
for a brief discussion).
 A \emph{combinatorial model category} $\sE$ is a locally presentable
category endowed with a cofibrantly generated model structure.
 (Notice that any locally presentable category is complete and
cocomplete, so our terminology here is consistent with the one
in Section~\ref{model-categories-subsecn}.)

\begin{prop} \label{stable-combinatorial-well-generated}
 For any stable combinatorial model category\/ $\sE$, the homotopy
category\/ $\sE[\cW^{-1}]$ is a well-generated triangulated category.
\end{prop}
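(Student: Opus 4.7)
The plan is to invoke the theorem of Rosick\'y that the homotopy category of every stable combinatorial model category is well-generated, and to sketch the ingredients going into it. Since the excerpt has already established (via Hovey, \cite[Proposition~7.1.6]{Hov0}) that stability makes $\sE[\cW^{-1}]$ triangulated with suspension $\Sigma$ as the shift, the task reduces to producing a set of $\lambda$\+compact generators in the sense of Neeman--Krause.

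The first step is to choose an infinite regular cardinal $\lambda$ large enough that (i)~$\sE$ is locally $\lambda$\+presentable; (ii)~the cofibrantly generating sets $\cI$ for $(\cL,\cR\cap\cW)$ and $\cJ$ for $(\cL\cap\cW,\cR)$, furnished by the hypothesis that the model structure is combinatorial, have domains and codomains that are $\lambda$\+presentable in $\sE$; and (iii)~the transfinite small-object-argument factorizations of Proposition~\ref{small-object-argument} at $\cI$ and $\cJ$ can be carried out so as to preserve $\lambda$\+presentability (possible by the standard sharply-large-cardinal trick, since the codomains of $\cI$ and $\cJ$ are $\lambda$\+presentable and $\lambda$\+presentable objects are closed under the relevant colimits of length $<\lambda$). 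For such a $\lambda$, the full subcategory $\sE_\lambda\subset\sE$ of $\lambda$\+presentable objects is essentially small, and the functorial fibrant-cofibrant replacement $(-)^{\cof,\fib}$ lands in $\lambda$\+presentable objects.

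Next, I would take as candidate generating set $\sG$ the (representatives of) isomorphism classes in $\sE[\cW^{-1}]$ of the cofibrant-fibrant replacements of objects of $\sE_\lambda$. That $\sG$ is a generating set of the triangulated category $\sE[\cW^{-1}]$ follows by a Brown-representability-style argument: every object of $\sE$ is a $\lambda$\+filtered colimit of objects in $\sE_\lambda$, and these colimits descend (via the small-object-argument factorizations) to colimits in $\sE[\cW^{-1}]$; so any object of $\sE[\cW^{-1}]$ right-orthogonal to $\sG$ in all shifts must be weakly equivalent to the zero object.

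The main obstacle is verifying $\lambda$\+compactness of the objects of $\sG$, i.e.\ that for $G\in\sG$ the functor $\Hom_{\sE[\cW^{-1}]}(G,-)$ preserves coproducts and, more importantly, $\lambda$\+filtered homotopy colimits. This rests on the key technical fact, due to Rosick\'y and Dugger, that in a combinatorial model category one can choose $\lambda$ sharply-large enough that $\lambda$\+filtered colimits in $\sE$ preserve weak equivalences and compute homotopy colimits in the relevant range; combined with the adjunction $\Hom_{\sE[\cW^{-1}]}(G^{\cof,\fib},X)=\pi_0\Hom_\sE(G^{\cof,\fib},X^{\fib})$ and $\lambda$\+presentability of $G^{\cof,\fib}$ in $\sE$, this yields the required commutation of $\Hom$ with $\lambda$\+filtered colimits in $\sE[\cW^{-1}]$. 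Once compactness is in hand, the remaining axioms of a well-generated triangulated category (closure of $\sG$ under $\Sigma^{\pm1}$, and the ``cocontinuity'' axiom) are either automatic from the construction or can be absorbed by enlarging~$\lambda$ and closing $\sG$ under the shift; the conclusion is then a direct citation of \cite[Theorem~6.3]{Ros} (or equivalently the later treatment in \cite[Proposition~6.10]{PS4} or \cite[Proposition~C.3.1.1]{Lur-SAG}).
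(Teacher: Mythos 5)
Your proposal is correct and is essentially the paper's own proof: the paper establishes this proposition by simply citing \cite[Proposition~6.10]{Ros} (or \cite[Theorem~3.1, Definition~3.2, and Theorem~3.9]{CR}), which is exactly the Rosick\'y theorem you invoke, and your sketch of its internal ingredients is a fair gloss on that argument. One small correction to the sketch: well-generation only requires the generators to be $\lambda$\+small and $\lambda$\+perfect in the Neeman--Krause sense, not that $\Hom_{\sE[\cW^{-1}]}(G,{-})$ preserve coproducts (that would be genuine compactness and would wrongly promise compact generation, which fails for general stable combinatorial model categories), and $\lambda$\+filtered colimits descend to $\sE[\cW^{-1}]$ only as \emph{weak} colimits, which is what the weak-generation argument actually uses.
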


\begin{proof}
 This is~\cite[Proposition~6.10]{Ros} or~\cite[Theorem~3.1,
Definition~3.2, and Theorem~3.9]{CR}.
\end{proof}

\Section{Contraderived Model Structure}  \label{contraderived-secn}

 The aim of this is section is to work out a common generalization
of the contraderived model structure on the abelian category of
CDG\+modules over a CDG\+ring~\cite[Proposition~1.3.6(1)]{Bec}
and the contraderived model structure on the category of complexes
over a locally presentable abelian category with enough projective
objects~\cite[Section~7]{PS4}.
 The locally presentable abelian DG\+categories with enough projective
objects (to be defined in this section) form a suitable context.

\subsection{Projective objects in abelian DG-categories}
\label{projectives-in-abelian-DG-categories-subsecn}
 We start with a general lemma connecting the groups $\Ext^1$ in
the abelian category $\sZ^0(\bE)$ with the groups $\Hom$ in
the triangulated category $\sH^0(\bE)$ for an abelian
DG\+category~$\bE$.

\begin{lem} \label{Ext-in-cocycles-and-Hom-in-homotopy}
 Let\/ $\bE$ be an abelian DG\+category and $X$, $Y\in\bE$ be
two objects.
 Then the kernel of the abelian group homomorphism
$$
 \Phi\:\Ext^1_{\sZ^0(\bE)}(X,Y)\lrarrow
 \Ext^1_{\sZ^0(\bE^\bec)}(\Phi(X),\Phi(Y))
$$
induced by the exact functor\/ $\Phi_\bE\:\sZ^0(\bE)\rarrow
\sZ^0(\bE^\bec)$ is naturally isomorphic to the group\/
$\Hom_{\sH^0(\bE)}(X,Y[1])$.
 In particular, if either\/ $\Phi(X)$ is a projective object in\/
$\sZ^0(\bE^\bec)$, or\/ $\Phi(Y)$ is an injective object in\/
$\sZ^0(\bE^\bec)$, then
$$
 \Ext^1_{\sZ^0(\bE)}(X,Y)\simeq\Hom_{\sH^0(\bE)}(X,Y[1]).
$$
\end{lem}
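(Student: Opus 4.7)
The plan is to identify $\ker(\Phi)$ with closed degree-$0$ morphisms $X\to Y[1]$ modulo homotopy, using the end-of-Section~\ref{twists-and-cones-subsecn} description of split-in-$\bE^0$ short exact sequences as cones, together with the fully faithful embedding $\widetilde\Phi\:\bE^0\to\sZ^0(\bE^\bec)$ of Section~\ref{tilde-Phi-and-Psi-subsecn}.

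First, I would characterize when an extension class $[0\to Y\to C\to X\to 0]$ lies in $\ker(\Phi)$. Since $\Phi$ is exact, this amounts to the sequence $0\to\Phi(Y)\to\Phi(C)\to\Phi(X)\to 0$ splitting in $\sZ^0(\bE^\bec)$, i.e.\ to the existence of a section $\sigma\:\Phi(X)\to\Phi(C)$ of $\Phi(C)\to\Phi(X)$ in $\sZ^0(\bE^\bec)$. Now $\widetilde\Phi$ coincides with $\Phi$ on objects and is fully faithful, so $\sigma$ corresponds bijectively to a degree-$0$ (not necessarily closed) morphism $s\:X\to C$ in $\bE^0$, and functoriality of $\widetilde\Phi$ preserves the section condition. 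Hence being in $\ker(\Phi)$ is equivalent to the sequence $0\to Y\to C\to X\to 0$ being split in the preadditive category~$\bE^0$.

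Next, by the last paragraph of Section~\ref{twists-and-cones-subsecn}, any such split-in-$\bE^0$ short exact sequence has the form $0\to Y\to\cone(f)[-1]\to X\to 0$ for some closed morphism $f\in\Hom^0_{\cZ(\bE)}(X,Y[1])$, and conversely every cone produces such a sequence. This yields a surjective group homomorphism $\Hom_{\sZ^0(\bE)}(X,Y[1])\twoheadrightarrow\ker(\Phi)$ sending $f$ to the class of its cone extension (with Baer sum corresponding to addition of maps). It remains to show that this factors through $\Hom_{\sH^0(\bE)}(X,Y[1])$ as an isomorphism: if $f=d(h)$ for some $h\in\Hom^{-1}_\bE(X,Y[1])=\Hom^0_\bE(X,Y)$, then $h$ combined with the structural morphisms of the cone yields a closed section $X\to\cone(f)[-1]$ in $\sZ^0(\bE)$, trivializing the extension; conversely, comparing a closed splitting with the automatic $\bE^0$-splitting of the cone produces a required~$h$ with $d(h)=f$.

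The particular case follows immediately, because if $\Phi(X)$ is projective or $\Phi(Y)$ is injective in $\sZ^0(\bE^\bec)$, then $\Ext^1_{\sZ^0(\bE^\bec)}(\Phi(X),\Phi(Y))=0$ forces $\Phi$ to be the zero map, so $\ker(\Phi)$ is the whole group. Naturality in $X$ and $Y$ is automatic since every step in the construction is functorial. The main obstacle is the final back-and-forth between null-homotopies of $f$ and closed sections of $\cone(f)[-1]\to X$, which requires explicit manipulation with the cone's structural morphisms $\iota,\pi,\iota',\pi'$ from Section~\ref{Phi-and-Psi-subsecn}; but these are routine sign computations once the framework is set up.
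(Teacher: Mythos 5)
Your proposal is correct and follows essentially the same route as the paper's own proof: factorizing $\Phi$ through the fully faithful functor $\widetilde\Phi\:\bE^0\rarrow\sZ^0(\bE^\bec)$ to reduce membership in $\ker(\Phi)$ to splitness in $\bE^0$, then invoking the description of such sequences as cone sequences $0\rarrow Y\rarrow\cone(f)[-1]\rarrow X\rarrow0$ of closed morphisms $f\:X\rarrow Y[1]$, and finally checking that $f'$, $f''$ yield equivalent extensions precisely when they are homotopic. The paper treats the last two steps as well-known and cites them without the explicit cone computations, but your outline fills in exactly those details.
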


\begin{proof}
 This is a generalization of a well-known~\cite[Lemma~5.1]{PS4}
and a particular case of~\cite[Lemma~9.41]{Pedg}.
 Following Section~\ref{tilde-Phi-and-Psi-subsecn}, the additive
functor $\Phi\:\sZ^0(\bE)\rarrow\sZ^0(\bE^\bec)$ factorizes as
$\sZ^0(\bE)\rarrow\bE^0\rarrow\sZ^0(\bE^\bec)$, where
$\sZ^0(\bE)\rightarrowtail\bE^0$ is the natural inclusion
(bijective on the objects) and $\widetilde\Phi\:\bE^0\rarrow
\sZ^0(\bE^\bec)$ is a fully faithful functor.
 Consequently, a short exact sequence in $\sZ^0(\bE)$ splits
after applying the functor $\Phi$ if and only if it splits in~$\bE^0$.

 It remains to refer to the (well-known and easy) description of
short sequences $0\rarrow Y\rarrow W\rarrow X\rarrow0$ in $\sZ^0(\bE)$
that are split exact in $\bE^0$ in terms of the cones of
closed morphisms $f\:X\rarrow Y[1]$ of degree~$0$ in~$\bE$ (see
Section~\ref{twists-and-cones-subsecn}).
 Notice that, for any such closed morphism~$f$, the related cone
sequence $0\rarrow Y\rarrow\cone(f)[-1]\rarrow X\rarrow0$ is exact
in $\sZ^0(\bE)$ by Lemma~\ref{cone-kernel-cokernel}.
 One also needs to observe that two closed morphisms~$f'$
and $f''\:X\rightrightarrows Y[1]$ of degree~$0$ in $\bE$ lead to
equivalent extensions if and only if they are homotopic.
\end{proof}

\begin{lem} \label{abelian-DG-enough-projectives}
 Let\/ $\bB$ be an abelian DG\+category.
 Then the abelian category\/ $\sZ^0(\bB)$ has enough projective objects
if and only if the abelian category\/ $\sZ^0(\bB^\bec)$ has enough
projective objects.
\end{lem}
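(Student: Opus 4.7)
The plan is to exploit the adjoint triple $\Psi^+_\bB \dashv \Phi_\bB \dashv \Psi^-_\bB$ between $\sZ^0(\bB)$ and $\sZ^0(\bB^\bec)$, together with the identities $\Psi^+\circ\Phi=\Xi_\bB$ and $\Phi\circ\Psi^-=\Xi_{\bB^\bec}$ and the canonical epimorphism $\Xi(X)\twoheadrightarrow X$ supplied by Corollary~\ref{Xi-extension-cokernel}(a).

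First I record two preliminary observations that make the argument essentially symmetric. All three functors $\Phi$, $\Psi^+$, $\Psi^-$ are exact: $\Phi$ has adjoints on both sides, and $\Psi^\pm$ each have an adjoint on at least one side while being shifts of each other ($\Psi^-=\Psi^+[1]$), so exactness on one side transfers to the other via the shift. Secondly, $\Phi$ and $\Psi^+$ \emph{preserve projectives}: indeed, if $P$ is projective in $\sZ^0(\bB)$, then $\Hom_{\sZ^0(\bB^\bec)}(\Phi(P),-)=\Hom_{\sZ^0(\bB)}(P,\Psi^-(-))$ is exact since $\Psi^-$ is exact, and symmetrically for $\Psi^+$ using exactness of $\Phi$.

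For the implication $(\sZ^0(\bB)\text{ has enough projectives})\Rightarrow(\sZ^0(\bB^\bec)\text{ has enough projectives})$, I would proceed as follows. Given $X^\bec\in\sZ^0(\bB^\bec)$, choose a projective $P\in\sZ^0(\bB)$ and an epimorphism $P\twoheadrightarrow\Psi^-(X^\bec)$. Applying the exact functor $\Phi$ yields an epimorphism $\Phi(P)\twoheadrightarrow\Phi\Psi^-(X^\bec)=\Xi_{\bB^\bec}(X^\bec)$. Composing with the canonical epimorphism $\Xi_{\bB^\bec}(X^\bec)\twoheadrightarrow X^\bec$ of Corollary~\ref{Xi-extension-cokernel}(a) produces an epimorphism $\Phi(P)\twoheadrightarrow X^\bec$, where $\Phi(P)$ is projective by the observation above.

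The converse is the mirror image. Given $A\in\sZ^0(\bB)$, pick a projective $Q^\bec\in\sZ^0(\bB^\bec)$ and an epimorphism $Q^\bec\twoheadrightarrow\Phi(A)$. Applying the exact functor $\Psi^+$ gives an epimorphism $\Psi^+(Q^\bec)\twoheadrightarrow\Psi^+\Phi(A)=\Xi_\bB(A)$, which composes with the canonical epimorphism $\Xi_\bB(A)\twoheadrightarrow A$ from Corollary~\ref{Xi-extension-cokernel}(a) to yield an epimorphism $\Psi^+(Q^\bec)\twoheadrightarrow A$ with $\Psi^+(Q^\bec)$ projective. There is no real obstacle here; the only subtlety worth flagging is that one must invoke exactness (not merely right-exactness) of $\Psi^\pm$ in order to apply them to the chosen epimorphisms, and this relies on the $\Psi^-=\Psi^+[1]$ identity to promote the one-sided adjunction into two-sided exactness.
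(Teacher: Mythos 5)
Your proof is correct and takes essentially the same route as the paper's: the same observation that $\Phi$ and $\Psi^+$ preserve projectives (being left adjoint to exact functors), and the same composite epimorphism onto the given object through $\Xi_\bB(A)\simeq\Psi^+\Phi(A)$ (resp.\ $\Xi_{\bB^\bec}(X^\bec)\simeq\Phi\Psi^-(X^\bec)$) supplied by Corollary~\ref{Xi-extension-cokernel}(a). The only difference is presentational: where the paper proves one direction and invokes the symmetry furnished by the equivalence $\bec\bec\:\bB\rarrow\bB^{\bec\bec}$, you unwind that symmetry and write out the dual argument explicitly, with a slightly different (but equally valid) justification of the exactness of $\Psi^\pm$ via $\Psi^-=\Psi^+[1]$.
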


\begin{proof}
 The functors $\Phi_\bB\:\sZ^0(\bB)\rarrow\sZ^0(\bB^\bec)$ and
$\Psi^+_\bB\:\sZ^0(\bB^\bec)\rarrow\sZ^0(\bB)$ take projectives
to projectives, because they are left adjoint to exact functors.
 Furthermore, the roles of the functors $\Phi$ and $\Psi^\pm$ are
completely symmetric in view of
Corollary~\ref{becbec-equivalence-for-abelian-DG} and
the last paragraph of Section~\ref{DG-functor-becbec-subsecn}.
 Assume that the abelian category $\sZ^0(\bB^\bec)$ has enough
projective objects, and let $B\in\sZ^0(\bB)$ be an object.
 Choose a projective object $P\in\sZ^0(\bB^\bec)$ together with
an epimorphism $p\:P\rarrow\Phi(B)$ in $\sZ^0(\bB^\bec)$.
 By adjunction, we have the corresponding morphism $q\:\Psi^+(P)\rarrow
B$ in $\sZ^0(\bB)$.
 Let us show that~$q$ is an epimorphism.

 Indeed, consider the morphism $\Psi^+(p)\:\Psi^+(P)\rarrow
\Psi^+\Phi(B)$ in $\sZ^0(\bB)$.
 The functor $\Psi^+$ is exact, since it has adjoints on both sides;
so in particular $\Psi^+(p)$ is an epimorphism.
 The morphism~$q$ is equal to the composition $\Psi^+(P)\rarrow
\Psi^+\Phi(B)\rarrow B$, where $\Psi^+\Phi(B)\rarrow B$ is
the adjunction morphism.
 According to the last paragraph of Section~\ref{Phi-and-Psi-subsecn},
we have a natural isomorphism $\Psi^+\Phi(B)\simeq\Xi(B)$
in $\sZ^0(\bB)$.
 It remains to recall that the natural morphism $\Xi(B)\rarrow B$
is an epimorphism in $\sZ^0(\bB)$ by
Corollary~\ref{Xi-extension-cokernel}(a).
 (Cf.~\cite[Lemma~5.7(b)]{Pedg}.)
\end{proof}

 An abelian DG\+category satisfying the equivalent conditions of
Lemma~\ref{abelian-DG-enough-projectives} is said to 
\emph{have enough projective objects}.

\begin{lem} \label{projectives-are-contractible-graded-projectives}
 Let\/ $\bB$ be an abelian DG\+category with enough projective objects.
 Then \par
\textup{(a)} an object $Q\in\sZ^0(\bB)$ is projective if and only
if $Q$ is contractible in\/ $\bB$ \emph{and} the object\/
$\Phi(Q)\in\sZ^0(\bB^\bec)$ is projective; \par
\textup{(b)} an object $P\in\sZ^0(\bB^\bec)$ is projective if and
only if $P$ is contractible in\/ $\bB^\bec$ \emph{and} the object\/
$\Psi^+(P)\in\sZ^0(\bB)$ is projective.
\end{lem}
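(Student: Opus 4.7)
The plan is to use Lemma~\ref{Ext-in-cocycles-and-Hom-in-homotopy} as the main tool, combined with the elementary fact that a left adjoint to an exact functor preserves projectives. The hypothesis that $\bB$ has enough projectives enters only at the level of ensuring, via Lemma~\ref{abelian-DG-enough-projectives}, that $\bB^\bec$ has enough projectives as well, which will let me deduce~(b) from~(a) by symmetry.

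For the forward direction of~(a), assume $Q$ is projective in $\sZ^0(\bB)$. The functor $\Phi_\bB\:\sZ^0(\bB)\rarrow\sZ^0(\bB^\bec)$ is left adjoint to $\Psi^-_\bB$, which is exact because it has adjoints on both sides; hence $\Phi_\bB$ preserves projectives and $\Phi(Q)$ is projective in $\sZ^0(\bB^\bec)$. To see that $Q$ is contractible, I apply Lemma~\ref{Ext-in-cocycles-and-Hom-in-homotopy} with $X=Q$ and $Y=Q[-1]$: projectivity forces $\Ext^1_{\sZ^0(\bB)}(Q,Q[-1])=0$, so in particular its subgroup $\Hom_{\sH^0(\bB)}(Q,Q)$ vanishes, which means $\id_Q=0$ in $\sH^0(\bB)$.

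For the reverse direction of~(a), assume $Q$ is contractible and $\Phi(Q)$ is projective in $\sZ^0(\bB^\bec)$. The second clause of Lemma~\ref{Ext-in-cocycles-and-Hom-in-homotopy} then provides, for every object $Y\in\bB$, a natural isomorphism $\Ext^1_{\sZ^0(\bB)}(Q,Y)\simeq\Hom_{\sH^0(\bB)}(Q,Y[1])$. Since $\id_Q=0$ in $\sH^0(\bB)$ the entire functor $\Hom_{\sH^0(\bB)}(Q,{-})$ is zero, so $\Ext^1_{\sZ^0(\bB)}(Q,{-})=0$ and $Q$ is projective.

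For part~(b) I plan to reduce to~(a) by invoking the ``almost involution'' $\bec\bec$. Corollary~\ref{becbec-equivalence-for-abelian-DG} says the canonical DG\+functor $\bec\bec\:\bB\rarrow\bB^{\bec\bec}$ is an equivalence, and the natural isomorphisms recorded at the end of Section~\ref{DG-functor-becbec-subsecn} identify $\Phi_{\bB^\bec}$ with $\sZ^0(\bec\bec)\circ\Psi^-_\bB$; combined with $\Psi^-_\bB=\Psi^+_\bB[1]$, this shows that projectivity of $\Phi_{\bB^\bec}(P)$ is equivalent to projectivity of $\Psi^+(P)$. By Lemma~\ref{abelian-DG-enough-projectives}, $\bB^\bec$ also has enough projective objects, so part~(a) applies to $\bB^\bec$ in place of $\bB$ and yields precisely the statement of~(b) after substituting these identifications. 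I do not anticipate a genuine obstacle; the only point requiring care is bookkeeping the $\Phi$\+versus\+$\Psi^\pm$ correspondence under $\bec\bec$ in the symmetry argument for~(b).
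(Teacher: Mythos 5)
Your proof is correct, and most of it matches the paper's: the ``if'' direction of~(a) is verbatim the paper's argument (contractibility kills $\Hom_{\sH^0(\bB)}(Q,{-})$, hence $\Ext^1_{\sZ^0(\bB)}(Q,{-})=0$ via Lemma~\ref{Ext-in-cocycles-and-Hom-in-homotopy}), and your reduction of~(b) to~(a) through the equivalence $\bec\bec\:\bB\rarrow\bB^{\bec\bec}$ and the identification $\Phi_{\bB^\bec}\simeq\sZ^0(\bec\bec)\circ\Psi^-_\bB$ is exactly the $\Phi$/$\Psi^\pm$ symmetry the paper invokes for~(b). Where you genuinely diverge is the ``only if'' direction of~(a). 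The paper deduces contractibility of a projective $Q\in\sZ^0(\bB)$ structurally: since there are enough projectives of the form $\Psi^+(P)$ with $P\in\sZ^0(\bB^\bec)$ projective, $Q$ is a direct summand of such a $\Psi^+(P)$, and the essential image of $\Psi^+$ consists of contractible objects. You instead extract contractibility purely formally from the kernel clause of Lemma~\ref{Ext-in-cocycles-and-Hom-in-homotopy} applied to the pair $(Q,\,Q[-1])$: projectivity gives $\Ext^1_{\sZ^0(\bB)}(Q,Q[-1])=0$, hence its subgroup $\Hom_{\sH^0(\bB)}(Q,Q)$ vanishes and $\id_Q=0$ in $\sH^0(\bB)$. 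Both are valid; your route is slightly slicker and shows in passing that part~(a) needs no enough-projectives hypothesis at all (neither direction of your argument for~(a) uses it, and even for~(b) one only needs that $\bB^\bec$ is an abelian DG\+category, which is automatic when $\bB$ is abelian --- though invoking Lemma~\ref{abelian-DG-enough-projectives} as you do is harmless bookkeeping). What the paper's argument buys in exchange is the concrete structural picture of projectives in $\sZ^0(\bB)$ as summands of the (co)freely generated objects $\Psi^+(P)$, which it reuses later, e.g.\ in the proofs of Lemma~\ref{cotorsion-pairs-with-graded-projectives} and Theorem~\ref{contraderived-cotorsion-pair}.
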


\begin{proof}
 This is a common generalization of~\cite[Lemma~1.3.3]{Bec}
(which is the CDG\+mod\-ule version) and of the long
well known~\cite[Lemma~5.2(b)]{PS4} (which is the version for
complexes in abelian categories).
 Let us prove part~(a).
 
 ``Only if'': let $Q$ be a projective object in $\sZ^0(\bB)$.
 As explained in the proof of Lemma~\ref{abelian-DG-enough-projectives},
the functor $\Phi$ preserves projectivity; so $\Phi(Q)\in
\sZ^0(\bB^\bec)$ is a projective object.
 Furthermore, following the same proof, there are enough projective
objects in $\sZ^0(\bB)$ of the form $\Psi^+(P)$, where $P$ ranges over
the projective objects of $\sZ^0(\bB^\bec)$.
 Therefore, $Q$ is a direct summand of $\Psi^+(P)$ in $\sZ^0(\bB)$
for some projective $P\in\sZ^0(\bB^\bec)$.
 It remains to observe that the essential image of the functor $\Psi^+$
consists of contractible objects (by construction) and direct  summands
of contractible objects are contractible in order to conclude that
the object $Q\in\bB$ is contractible.
 (Similarly, in view of a natural isomorphism in the last paragraph 
of Section~\ref{DG-functor-becbec-subsecn}, the essential image of
the functor $\Phi$ consists of contractible objects.)

 ``If'': let $Q\in\bB$ be a contractible object such that the object
$\Phi(Q)\in\sZ^0(\bB^\bec)$ is projective.
 For any object $Y\in\sZ^0(\bB)$, we have
$\Ext^1_{\sZ^0(\bB)}(Q,Y)\simeq\Hom_{\sH^0(\bB)}(Q,Y[1])$ by
Lemma~\ref{Ext-in-cocycles-and-Hom-in-homotopy}.
 Since the object $Q$ vanishes in $\sH^0(\bB)$, it follows that
$\Ext^1_{\sZ^0(\bB)}(Q,Y)=0$ for all $Y\in\sZ^0(\bB)$, hence
$Q\in\sZ^0(\bB)$ is a projective object.
\end{proof}

 Let $\bB$ be an abelian DG\+category.
 We will say that an object $Q\in\bB$ is \emph{graded-projective}
if the object $\Phi(Q)$ is projective in the abelian category
$\sZ^0(\bB^\bec)$.
 The full DG\+subcategory formed by the graded-projective objects
in $\bB$ is denoted by $\bB_\bproj\subset\bB$.
 So the notation $\sZ^0(\bB_\bproj)\subset\sZ^0(\bB)$ stands for
the full subcategory of all graded-projective objects in $\sZ^0(\bB)$,
while $\sZ^0(\bB)_\proj\subset\sZ^0(\bB)$ is the (smaller) full
subcategory of all projective objects in $\sZ^0(\bB)$.
 Since the functor $\Phi$ is additive and transforms shifts into
inverse shifts and twists into isomorphisms, the full DG\+subcategory
$\bB_\bproj$ is closed under finite direct sums, shifts, twists,
and cones in~$\bB$.

\begin{lem} \label{cotorsion-pairs-with-graded-projectives}
 Let\/ $\bB$ be an abelian DG\+category with enough projective objects,
and let $(\sL,\sW)$ be a cotorsion pair in the abelian category\/
$\sZ^0(\bB)$ such that\/ $\sL\subset\sZ^0(\bB_\bproj)$.
 Assume that the cotorsion pair $(\sL,\sW)$ is preserved by the shift:\/
$\sW=\sW[1]$, or equivalently, $\sL=\sL[1]$.
 Then\/ $\sL\cap\sW=\sZ^0(\bB)_\proj$.
 If, moreover, the cotorsion pair $(\sL,\sW)$ is complete, then it is
hereditary and the class\/ $\sW$ is thick in\/ $\sZ^0(\bB)$.
\end{lem}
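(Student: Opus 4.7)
My plan has three stages: first, prove $\sL\cap\sW=\sZ^0(\bB)_\proj$; second, deduce from completeness that the cotorsion pair is hereditary; and third, invoke Lemma~\ref{W-is-thick-lemma}(a) to conclude thickness of~$\sW$.

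For the equality, the central tool is Lemma~\ref{Ext-in-cocycles-and-Hom-in-homotopy}, applied in both directions, combined with the shift invariance $\sW=\sW[-1]$ (which is equivalent to $\sW=\sW[1]$). Given $Q\in\sL\cap\sW$, the hypothesis $\sL\subseteq\sZ^0(\bB_\bproj)$ makes $\Phi(Q)$ projective in $\sZ^0(\bB^\bec)$, so that lemma yields
\[
 \Hom_{\sH^0(\bB)}(Q,Q)\simeq\Ext^1_{\sZ^0(\bB)}(Q,Q[-1])=0,
\]
the right-hand side vanishing because $Q\in\sL$ and $Q[-1]\in\sW$. Hence $\id_Q=0$ in $\sH^0(\bB)$, so $Q$ is contractible; combining this with projectivity of $\Phi(Q)$, Lemma~\ref{projectives-are-contractible-graded-projectives}(a) forces $Q\in\sZ^0(\bB)_\proj$. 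The reverse inclusion is a second application of the same lemma: any projective $Q$ lies in ${}^{\perp_1}\sW=\sL$ automatically and is contractible and graded-projective by Lemma~\ref{projectives-are-contractible-graded-projectives}(a); so for $L\in\sL$ we get $\Ext^1(L,Q)\simeq\Hom_{\sH^0(\bB)}(L,Q[1])=0$ (using projectivity of $\Phi(L)$ and contractibility of $Q[1]$), giving $Q\in\sW$.

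Assume now that $(\sL,\sW)$ is complete. By Lemma~\ref{hereditary-cotorsion} it is enough to show that $\Ext^2_{\sZ^0(\bB)}(L,W)=0$ for every $L\in\sL$ and $W\in\sW$. The decisive input is the natural short exact sequence
\[
 0\lrarrow L[-1]\lrarrow\Xi(L)\lrarrow L\lrarrow0
\]
of Corollary~\ref{Xi-extension-cokernel}(a), together with the identification $\Xi(L)\simeq\Psi^+\Phi(L)$ recalled in Section~\ref{Phi-and-Psi-subsecn}. Since $\Phi(L)$ is projective in $\sZ^0(\bB^\bec)$ and $\Psi^+$, being left adjoint to the exact functor~$\Phi$, preserves projectives, the object $\Xi(L)$ is projective in $\sZ^0(\bB)$; meanwhile $L[-1]\in\sL[-1]=\sL$ by shift invariance. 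The fragment
\[
 \Ext^1(L[-1],W)\lrarrow\Ext^2(L,W)\lrarrow\Ext^2(\Xi(L),W)
\]
of the long exact $\Ext^*(-,W)$ sequence then sandwiches $\Ext^2(L,W)$ between two zeros (the left by $L[-1]\in\sL$ and $W\in\sW$, the right by projectivity of $\Xi(L)$), and the desired vanishing follows.

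Thickness of~$\sW$ is immediate from Lemma~\ref{W-is-thick-lemma}(a), applied to the hereditary complete cotorsion pair $(\sL,\sW)$ with $\sL\cap\sW=\sZ^0(\bB)_\proj$. The main technical obstacle is the $\Ext^2$ vanishing, and its resolution hinges on recognizing $\Xi(L)=\Psi^+\Phi(L)$ as a canonical projective cover of $L\in\sL$ whose kernel is merely $L[-1]$; the shift invariance of $\sL$ then propagates $\Ext^1$-vanishing into $\Ext^2$-vanishing in one step, and the same bootstrap extends it to all higher Ext groups if desired.
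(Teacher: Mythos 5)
Your proof is correct and follows essentially the same route as the paper's: the equality $\sL\cap\sW=\sZ^0(\bB)_\proj$ via Lemma~\ref{Ext-in-cocycles-and-Hom-in-homotopy} together with Lemma~\ref{projectives-are-contractible-graded-projectives}(a), heredity via the syzygy sequence $0\rarrow L[-1]\rarrow\Xi(L)\simeq\Psi^+\Phi(L)\rarrow L\rarrow0$ with $\Xi(L)$ projective, and thickness from Lemma~\ref{W-is-thick-lemma}(a). Your explicit $\Ext^2$-vanishing argument is precisely the ``standard dimension shifting argument'' the paper invokes in condensed form, so there is nothing to correct.
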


\begin{proof}
 This is a common generalization of~\cite[Lemma~1.3.4(1)]{Bec}
(the CDG\+module version) and~\cite[Lemma~5.3(b)]{PS4} (the version
for complexes in abelian categories).

 The inclusion $\sZ^0(\bB)_\proj\subset{}^{\perp_1}\sW=\sL$ is obvious.
 The inclusion $\sZ^0(\bB)_\proj\subset\sZ^0(\bB_\bproj)^{\perp_1}$
holds because $\sZ^0(\bB)_\proj$ is the full subcategory of
projective-injective objects in the Frobenius exact category
$\sZ^0(\bB_\bproj)$ (with the exact structure inherited from
the abelian exact structure of $\sZ^0(\bB)$).
 Specifically, for any $P\in\sZ^0(\bB_\bproj)$ and
$Q\in\sZ^0(\bB)_\proj$ we have, by
Lemmas~\ref{Ext-in-cocycles-and-Hom-in-homotopy}
and~\ref{projectives-are-contractible-graded-projectives}(a),
$\Ext^1_{\sZ^0(\bB)}(P,Q)\simeq\Hom_{\sH^0(\bB)}(P,Q[1])=0$
since the object $Q\in\bB$ is contractible.
 Hence $\sZ^0(\bB)_\proj\subset\sL^{\perp_1}=\sW$.

 To prove the inclusion $\sL\cap\sW\subset\sZ^0(\bB)_\proj$, it
suffices to show that every object $Q\in\sL\cap\sW$ is contractible
in~$\bB$ (as we already know that $\sL\cap\sW\subset\sL\subset
\sZ^0(\bB_\bproj)$, and all contractible objects
in $\sZ^0(\bB_\bproj)$ belong to $\sZ^0(\bB)_\proj$ by
Lemma~\ref{projectives-are-contractible-graded-projectives}(a)).
 Indeed, by Lemma~\ref{Ext-in-cocycles-and-Hom-in-homotopy},
$\Hom_{\sH^0(\bB)}(Q,Q)=\Ext^1_{\sZ^0(\bB)}(Q,Q[-1])=0$.

 To prove the remaining assertions of the lemma, we observe that
the class $\sL$ is closed under syzygies in $\sZ^0(\bB)$.
 Indeed, for each $P\in\sL$ we have a short exact sequence
$0\rarrow P[-1]\rarrow\Xi(P)\rarrow P\rarrow0$ in $\sZ^0(\bB)$
by Corollary~\ref{Xi-extension-cokernel}(a) and an isomorphism
$\Xi(P)\simeq\Psi^+\Phi(P)$ by the last paragraph of
Section~\ref{Phi-and-Psi-subsecn}.
 Now $\Phi(P)$ is a projective object in $\sZ^0(\bB^\bec)$, hence
$\Psi^+\Phi(P)\in\sZ^0(\bB)_\proj$ as explained in the proof of
Lemma~\ref{abelian-DG-enough-projectives}.
 It follows by a standard dimension shifting argument that
the cotorsion pair $(\sL,\sW)$ is hereditary.
 In order to conclude that the class $\sW$ is thick, it remains
to apply Lemma~\ref{W-is-thick-lemma}(a).
\end{proof}

\subsection{Locally presentable DG-categories}
\label{locally-presentable-DG-categories-subsecn}
 This section is mostly an extraction
from~\cite[Sections~9.1 and~9.2]{Pedg}.

 Let $\lambda$~be an infinite regular cardinal.
 The definitions of $\lambda$\+presentable objects and locally
$\lambda$\+presentable categories (based on the exposition in~\cite{AR})
have already appeared in Section~\ref{small-object-subsecn}.

\begin{lem} \label{coproducts-in-DG-categories}
 For any additive DG\+category\/ $\bE$ with shifts and cones,
coproducts in the additive category\/ $\sZ^0(\bE)$ are the same things
as coproducts in the DG\+category\/~$\bE$.
 (Infinite) coproducts exist in the DG\+category\/ $\bE$ if and only if
they exist in the additive category\/ $\sZ^0(\bE)$.
 If this is the case, then coproducts also exist in the DG\+category\/
$\bE^\bec$ and in the additive category\/ $\sZ^0(\bE^\bec)$, and
the functors\/ $\Phi_\bE$ and\/ $\Psi^\pm_\bE$ preserve them.
\end{lem}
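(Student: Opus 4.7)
The first direction is immediate: if $Y$ is a coproduct of $(X_\alpha)$ in $\bE$ as a DG\+category, then applying the functor $\sZ^0$ to the natural isomorphism $\Hom_\bE^\bu(Y,Z)\simeq\prod_\alpha\Hom_\bE^\bu(X_\alpha,Z)$ yields the coproduct property of $Y$ in $\sZ^0(\bE)$, since $\sZ^0$ commutes with products of complexes of abelian groups. The substantive content is the converse: one must upgrade a plain coproduct in $\sZ^0(\bE)$ to the DG\+enriched universal property, involving isomorphisms of Hom\+complexes in all degrees (including non-closed morphisms). My plan is to transport the problem, via the adjunction $\Phi\dashv\Psi^-$ and the fully faithful functor $\widetilde\Phi\:\bE^0\to\sZ^0(\bE^\bec)$ of Section~\ref{tilde-Phi-and-Psi-subsecn}, to the additive category $\sZ^0(\bE^\bec)$, where only closed degree-$0$ morphisms are involved.

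Suppose $Y=\coprod_\alpha X_\alpha$ in $\sZ^0(\bE)$ with insertion maps $\iota_\alpha\:X_\alpha\to Y$ (closed of degree~$0$). Using the adjunction $\Phi\dashv\Psi^-$ from Section~\ref{Phi-and-Psi-subsecn}, for any $W\in\sZ^0(\bE^\bec)$ I obtain natural isomorphisms
\[
\Hom_{\sZ^0(\bE^\bec)}(\Phi Y,W)\cong\Hom_{\sZ^0(\bE)}(Y,\Psi^-W)\cong\prod_\alpha\Hom_{\sZ^0(\bE)}(X_\alpha,\Psi^-W)\cong\prod_\alpha\Hom_{\sZ^0(\bE^\bec)}(\Phi X_\alpha,W),
\]
so $\Phi(Y)$ realizes the coproduct of the $\Phi(X_\alpha)$ in $\sZ^0(\bE^\bec)$. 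Combining the fully faithfulness of $\widetilde\Phi$ with the sign-changing shift intertwiner $\Phi(Z[n])\simeq\Phi(Z)[-n]$, I then compute for each $n\in\boZ$ and $Z\in\bE$
\[
\Hom_\bE^n(Y,Z)\simeq\Hom_{\bE^0}(Y,Z[n])\simeq\Hom_{\sZ^0(\bE^\bec)}(\Phi Y,\Phi(Z)[-n])\simeq\prod_\alpha\Hom_\bE^n(X_\alpha,Z).
\]
Unwinding the chain of identifications, this isomorphism is realized as precomposition $f\longmapsto(f\circ\iota_\alpha)_\alpha$. Since $d(\iota_\alpha)=0$, precomposition with $\iota_\alpha$ commutes with the differentials on the Hom\+complexes, so the isomorphisms assemble into a natural chain isomorphism $\Hom_\bE^\bu(Y,Z)\simeq\prod_\alpha\Hom_\bE^\bu(X_\alpha,Z)$, which is the DG\+enriched universal property of $Y$ as a coproduct in~$\bE$.

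For the remaining assertions, coproducts exist in $\bE^\bec$ (as a DG\+category) whenever they exist in $\bE$, by the obvious construction recalled at the end of Section~\ref{bec-construction-subsecn} (equip the given coproduct in $\bE$ with the coproduct of the contracting homotopies); applying $\sZ^0$ recovers the coproduct already identified as $\Phi(Y)$. Preservation by $\Phi$ and $\Psi^+$ is then automatic, as these are left adjoints of $\Psi^-$ and $\Phi$, respectively; since $\Psi^-\simeq\Psi^+\circ[1]$ and the shift is a self-equivalence preserving coproducts, $\Psi^-$ preserves them as well. The main obstacle is the central step above, converting a plain set-theoretic universal property in $\sZ^0(\bE)$ into an enriched one in $\bE$: the detour through $\sZ^0(\bE^\bec)$ afforded by $\widetilde\Phi$ seems unavoidable, since it is what lets one replace arbitrary degree-$n$ morphisms in $\bE$ by closed degree-$0$ morphisms in $\bE^\bec$, where the coproduct property is directly accessible.
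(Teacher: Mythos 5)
Your proof is correct, and it reaches the paper's conclusion by a slightly different, though ultimately equivalent, route. The paper's proof (following \cite[Lemma~9.2]{Pedg}) stays inside $\bE$: its key observation is the natural isomorphism $\Hom^n_\bE(X,Y)\simeq\sZ^0\Hom^\bu_\bE(X,\cone(\id_Y)[n])$, so the plain coproduct property of $Y$ in $\sZ^0(\bE)$, tested against the objects $\cone(\id_Z)[n]$, directly yields the degreewise isomorphisms $\Hom^n_\bE(Y,Z)\simeq\prod_\alpha\Hom^n_\bE(X_\alpha,Z)$, compatible with the differentials because the insertions are closed. Your detour through $\sZ^0(\bE^\bec)$ is the same mechanism in different packaging: unwinding your adjunction step with $W=\Phi(Z)[-n]$ gives $\Hom_{\sZ^0(\bE^\bec)}(\Phi Y,\Phi(Z)[-n])\simeq\Hom_{\sZ^0(\bE)}(Y,\Psi^-\Phi(Z)[n])$, and since $\Psi^-\Phi\simeq\Xi[1]$, i.e.\ $\Psi^-\Phi(Z)\simeq\cone(\id_Z)$, your composite identification is exactly the paper's cone observation. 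What each buys: the paper's version is more self-contained (only cones of identity morphisms, no $\bec$\+machinery in the key step), while yours lets the adjunctions $\Psi^+\dashv\Phi\dashv\Psi^-$ do the bookkeeping and produces the coproduct $\Phi(Y)=\coprod_\alpha\Phi(X_\alpha)$ in $\sZ^0(\bE^\bec)$ as a byproduct. One point deserving a line in a full write-up is your assertion that the chain of identifications unwinds to $f\longmapsto(f\circ\iota_\alpha)_\alpha$: this uses naturality of the adjunction unit, $\eta_Y\circ\iota_\alpha=\Psi^-\Phi(\iota_\alpha)\circ\eta_{X_\alpha}$, together with $\widetilde\Phi|_{\sZ^0(\bE)}=\Phi$; it is true, but signs from the shift intertwiner can enter (harmlessly, since the direct precomposition map is a chain map and degreewise bijectivity is all you need). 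Your handling of the remaining assertions (coproducts in $\bE^\bec$ via the induced contracting homotopy, preservation by $\Phi$ and $\Psi^+$ as left adjoints, and by $\Psi^-=\Psi^+[1]$) matches the paper's.
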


\begin{proof}
 The functors $\Phi$ and $\Psi^\pm$ always preserve limits and
colimits, since they have adjoint functors on both sides.
 The assertion that coproducts in $\bE$ (as defined above in
Section~\ref{co-products-in-DG-categories}) are the same things as
coproducts in $\sZ^0(\bE)$ when $\bE$ has shifts and cones is not
immediately obvious; it is~\cite[Lemma~9.2]{Pedg}.
 The key observation is that for each $n\in\boZ$ and $X$, $Y\in\bE$,
one has a natural isomorphism $\Hom^n_\bE(X,Y)\simeq
\sZ^0\Hom^\bu_\bE(X,\cone(\id_Y)[n])$.
 The coproducts in $\bE^\bec$ are easily constructed in terms of
those in $\bE$ when the latter exist.
\end{proof}

\begin{lem} \label{Xi-reflects-presentability}
 Let\/ $\bE$ be an additive DG\+category with shifts and cones such
that all colimits exist in the additive category\/~$\sZ^0(\bE)$.
 Then the additive functor\/ $\Xi_\bE\:\sZ^0(\bE)\rarrow\sZ^0(\bE)$
\emph{reflects} $\lambda$\+presentability of objects.
 In other words, if $A\in\bE$ is an object such that the object\/
$\Xi(A)$ is\/ $\lambda$\+presentable in\/ $\sZ^0(\bE)$, then
the object $A$ is\/ $\lambda$\+presentable in\/~$\sZ^0(\bE)$.
\end{lem}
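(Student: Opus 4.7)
The plan is a direct application of Corollary~\ref{Xi-extension-cokernel}(b) together with the standard closure properties of $\lambda$-presentable objects in a cocomplete category.

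By Corollary~\ref{Xi-extension-cokernel}(b), every object $A \in \bE$ is the cokernel, computed in $\sZ^0(\bE)$, of a natural morphism $\Xi(A)[-1] \rarrow \Xi(A)$. This cokernel exists since $\sZ^0(\bE)$ is assumed to have all colimits. Now suppose that $\Xi(A)$ is $\lambda$-presentable in $\sZ^0(\bE)$. The shift $[-1]$ is an auto-equivalence of the additive category $\sZ^0(\bE)$, so it preserves $\lambda$-presentability; hence $\Xi(A)[-1]$ is $\lambda$-presentable as well.

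It remains to invoke the standard fact that in any category with $\lambda$-small colimits, the class of $\lambda$-presentable objects is closed under $\lambda$-small colimits (see, e.g., \cite[Proposition~1.16]{AR}). A cokernel of a single morphism is a colimit over a finite diagram, hence a $\lambda$-small colimit for every infinite regular cardinal~$\lambda$. Writing $A$ as such a cokernel of the two $\lambda$-presentable objects $\Xi(A)[-1]$ and $\Xi(A)$ therefore shows that $A$ is $\lambda$-presentable in $\sZ^0(\bE)$, which is exactly what the lemma asserts.

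There is no real obstacle in this argument: all the conceptual work has already been done upstream, first in setting up the functor $\Xi$ and the natural short exact sequence of Corollary~\ref{Xi-extension-cokernel}(a), and then in deducing part~(b) that presents $A$ as a finite colimit built from $\Xi(A)$ and its shift. Given these, the presentability statement follows by pure category theory.
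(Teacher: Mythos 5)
Your proof is correct and follows essentially the same route as the paper, which also deduces the lemma from Corollary~\ref{Xi-extension-cokernel}(b) together with the closure of $\lambda$-presentable objects under $\lambda$-small colimits~\cite[Proposition~1.16]{AR}. Your explicit remark that the shift $[-1]$, being an auto-equivalence of $\sZ^0(\bE)$, preserves $\lambda$-presentability is a detail the paper leaves tacit, but it is the same argument.
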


\begin{proof}
 The class of all $\lambda$\+presentable objects is closed under
$\lambda$\+small colimits by~\cite[Proposition~1.16]{AR}; in
particular, it is closed under cokernels.
 Thus the assertion follows from
Corollary~\ref{Xi-extension-cokernel}(b).
\end{proof}

\begin{lem} \label{Phi-Psi-lambda-presentability}
 Let\/ $\bE$ be an additive DG\+category with shifts and cones such
that all colimits exist in the additive category\/~$\sZ^0(\bE)$.
 Then the additive functors\/ $\Phi_\bE\:\sZ^0(\bE)\rarrow
\sZ^0(\bE^\bec)$ and\/ $\Psi^+_\bE\:\sZ^0(\bE^\bec)\rarrow\sZ^0(\bE)$
preserve \emph{and} reflect\/ $\lambda$\+presentability of objects.
\end{lem}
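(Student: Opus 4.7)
The plan is to reduce everything to Lemma~\ref{Xi-reflects-presentability} together with the observation that every functor in the ladder $\Psi^+\dashv\Phi\dashv\Psi^-$ has adjoints on both sides.

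First I would handle preservation. Recall from Section~\ref{Phi-and-Psi-subsecn} that $\Psi^+_\bE$ is left adjoint to $\Phi_\bE$, and $\Psi^-_\bE$ is right adjoint to $\Phi_\bE$. Moreover $\Psi^-=[1]\circ\Psi^+$, and $[1]$ is an auto\+equivalence of $\sZ^0(\bE)$, so $\Psi^-$ preserves all colimits iff $\Psi^+$ does, which it does as a left adjoint. Hence the right adjoint $\Psi^-$ of $\Phi$ preserves $\lambda$\+directed colimits, and therefore $\Phi$ preserves $\lambda$\+presentable objects by the standard adjunction computation $\Hom(\Phi(X),\varinjlim Y_i)=\Hom(X,\Psi^-\varinjlim Y_i)=\varinjlim\Hom(X,\Psi^-Y_i)=\varinjlim\Hom(\Phi(X),Y_i)$. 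Similarly, the right adjoint $\Phi$ of $\Psi^+$ preserves $\lambda$\+directed colimits (since $\Phi$ has its own right adjoint $\Psi^-$), so $\Psi^+$ preserves $\lambda$\+presentable objects.

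For reflection I would use the composition identities from the end of Section~\ref{Phi-and-Psi-subsecn}. Suppose $\Phi(X)$ is $\lambda$\+presentable in $\sZ^0(\bE^\bec)$. Applying the preservation just proved to $\Psi^+$, the object $\Psi^+\Phi(X)\simeq\Xi_\bE(X)$ is $\lambda$\+presentable in $\sZ^0(\bE)$. Lemma~\ref{Xi-reflects-presentability} then yields that $X$ itself is $\lambda$\+presentable. Symmetrically, suppose $\Psi^+(Y)$ is $\lambda$\+presentable for $Y\in\sZ^0(\bE^\bec)$. Combining $\Phi\circ\Psi^-\simeq\Xi_{\bE^\bec}$ with $\Psi^-=[1]\circ\Psi^+$ and $\Phi\circ[1]=[-1]\circ\Phi$ gives $\Phi\circ\Psi^+\simeq\Xi_{\bE^\bec}[1]$. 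So $\Phi\Psi^+(Y)\simeq\Xi_{\bE^\bec}(Y)[1]$ is $\lambda$\+presentable (because $\Phi$ preserves $\lambda$\+presentability), and since shifts are auto\+equivalences, $\Xi_{\bE^\bec}(Y)$ is $\lambda$\+presentable. Applying Lemma~\ref{Xi-reflects-presentability} to $\bE^\bec$ concludes that $Y$ is $\lambda$\+presentable.

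The only step needing care is that Lemma~\ref{Xi-reflects-presentability} has a hypothesis that all colimits exist in $\sZ^0(\bE^\bec)$. I would dispatch this by noting that $\bE^\bec$ is itself an additive DG\+category with shifts and cones (Section~\ref{bec-construction-subsecn}), that coproducts in $\sZ^0(\bE^\bec)$ exist and agree with those in $\bE^\bec$ by Lemma~\ref{coproducts-in-DG-categories}, and that cokernels in $\sZ^0(\bE^\bec)$ exist by Lemma~\ref{Psi-reflects-existence-of-co-kernels}(b) applied to each morphism, using that $\sZ^0(\bE)$ has all cokernels. This verification is the only real obstacle; once it is in place, the remaining arguments are straightforward manipulations of adjunction and the identities relating $\Phi$, $\Psi^\pm$, and $\Xi$.
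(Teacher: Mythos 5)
Your proposal is correct and follows essentially the same route as the paper's proof: preservation because $\Phi$ and $\Psi^+$ are left adjoint to functors preserving $\lambda$\+directed colimits, reflection via the compositions $\Psi^+\circ\Phi\simeq\Xi_\bE$ and $\Phi\circ\Psi^-\simeq\Xi_{\bE^\bec}$ together with Lemma~\ref{Xi-reflects-presentability}, and the existence of all colimits in $\sZ^0(\bE^\bec)$ verified through Lemmas~\ref{coproducts-in-DG-categories} and~\ref{Psi-reflects-existence-of-co-kernels}. Your only deviation is cosmetic: you derive reflection for $\Psi^+$ directly from $\Phi\circ\Psi^+\simeq\Xi_{\bE^\bec}[1]$, where the paper states it for $\Psi^-$ and leaves the passage to $\Psi^+=\Psi^-[-1]$ implicit.
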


\begin{proof}
 This is explained in~\cite[Lemmas~9.6 and~9.7]{Pedg}.
 Under the assumptions of the lemma, the additive category
$\sZ^0(\bE^\bec)$ has all coproducts by
Lemma~\ref{coproducts-in-DG-categories} and
all cokernels by Lemma~\ref{Psi-reflects-existence-of-co-kernels}.
 Therefore, all colimits exist in $\sZ^0(\bE^\bec)$.
 The functors $\Phi_\bE$ and $\Psi^+_\bE$ preserve
$\lambda$\+presentability of objects, since they are left adjoint
to functors preserving $\lambda$\+directed colimits.
 Since the compositions $\Xi_\bE\simeq\Psi^+_\bE\circ\Phi_\bE$ and
$\Xi_{\bE^\bec}\simeq\Phi_\bE\circ\Psi^-_\bE$ (see the last paragraph
of Section~\ref{Phi-and-Psi-subsecn}) reflect $\lambda$\+presentability
by Lemma~\ref{Xi-reflects-presentability}, it follows that
the functors $\Phi_\bE$ and $\Psi^-_\bE$ also reflect
$\lambda$\+presentability.
\end{proof}

\begin{lem} \label{Phi-Psi-strong-generators}
 Let\/ $\bE$ be an additive DG\+category with shifts and cones.
 Then the additive functors\/ $\Phi_\bE\:\sZ^0(\bE)\rarrow
\sZ^0(\bE^\bec)$ and\/ $\Psi^+_\bE\:\sZ^0(\bE^\bec)\rarrow\sZ^0(\bE)$
take (strongly) generating sets of objects to (strongly) generating
sets of objects.
\end{lem}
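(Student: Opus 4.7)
The plan is to deduce the lemma formally from the two adjunctions $\Psi^+_\bE\dashv\Phi_\bE$ and $\Phi_\bE\dashv\Psi^-_\bE$ recorded in Section~\ref{Phi-and-Psi-subsecn}, combined with the fact, also stated there, that all three functors $\Phi_\bE$, $\Psi^+_\bE$, and $\Psi^-_\bE$ are faithful and conservative. The relevant notion of ``(strongly) generating'' is the one recalled in Section~\ref{small-object-subsecn}: a set of objects $(K_i)_{i\in I}$ in a category~$\sK$ is generating if and only if the induced functor $(\Hom_\sK(K_i,{-}))_{i\in I}\:\sK\rarrow\Sets^I$ is faithful, and strongly generating if and only if this functor is moreover conservative.

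For the functor $\Phi_\bE$, I would argue as follows. Let $(S_j)_{j\in J}$ be a (strongly) generating set in $\sZ^0(\bE)$. By the adjunction $\Phi_\bE\dashv\Psi^-_\bE$, for every $X\in\sZ^0(\bE^\bec)$ there is a natural bijection
\[
\Hom_{\sZ^0(\bE^\bec)}(\Phi_\bE(S_j),X)\;\simeq\;\Hom_{\sZ^0(\bE)}(S_j,\Psi^-_\bE(X)).
\]
Given two distinct morphisms $f,g\:X\rarrow Y$ in $\sZ^0(\bE^\bec)$, faithfulness of $\Psi^-_\bE$ yields $\Psi^-_\bE(f)\neq\Psi^-_\bE(g)$, and the generating property of $(S_j)_j$ produces some $h\:S_j\rarrow\Psi^-_\bE(X)$ separating them; transporting $h$ across the adjunction gives an $h'\:\Phi_\bE(S_j)\rarrow X$ with $fh'\neq gh'$. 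The strong generation statement is analogous: if $f$ induces bijections on $\Hom(\Phi_\bE(S_j),-)$ for every $j$, then $\Psi^-_\bE(f)$ induces bijections on $\Hom(S_j,-)$ for every $j$, so $\Psi^-_\bE(f)$ is an isomorphism by strong generation of $(S_j)_j$, and hence $f$ is an isomorphism by conservativity of $\Psi^-_\bE$.

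The argument for $\Psi^+_\bE$ is entirely symmetric, using instead the adjunction $\Psi^+_\bE\dashv\Phi_\bE$. For a (strongly) generating set $(T_i)_{i\in I}$ in $\sZ^0(\bE^\bec)$, one has the natural bijection
\[
\Hom_{\sZ^0(\bE)}(\Psi^+_\bE(T_i),A)\;\simeq\;\Hom_{\sZ^0(\bE^\bec)}(T_i,\Phi_\bE(A)),
\]
and faithfulness (respectively, conservativity) of $\Phi_\bE$ together with the (strong) generation property of $(T_i)_i$ yields the corresponding property of $(\Psi^+_\bE(T_i))_i$ in $\sZ^0(\bE)$.

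There is essentially no obstacle to overcome here: the lemma is a formal consequence of adjointness and of the faithfulness and conservativity of $\Phi_\bE$ and $\Psi^\pm_\bE$, both of which have already been established in the preceding sections. The only implicit input is that $\sZ^0(\bE^\bec)$ is a well-defined additive category with the expected $\Hom$ groups for the adjunctions to be formulated, which is automatic from the hypothesis that $\bE$ has shifts and cones.
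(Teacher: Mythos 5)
Your proof is correct and follows essentially the same route as the paper, which justifies the lemma precisely by the observation that $\Phi_\bE$ and $\Psi^+_\bE$ have faithful, conservative right adjoints (namely $\Psi^-_\bE$ and $\Phi_\bE$, respectively) and by the characterization of (strong) generation via the faithfulness (and conservativity) of the functor $(\Hom(K_i,{-}))_{i\in I}$. You have merely spelled out the adjunction-transport details that the paper leaves implicit by citing~\cite[Lemma~9.1]{Pedg}.
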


\begin{proof}
 This is~\cite[Lemma~9.1]{Pedg}.
 The assertion about generating sets of objects holds because
the functors $\Phi$ and $\Psi^+$ have faithful right adjoints.
 The assertion about strongly generating sets of objects is valid
because the right adjoint functors to $\Phi$ and $\Psi^+$
are also conservative
(but recall also the discussion of generators and strong generators
in Section~\ref{small-object-subsecn}).
\end{proof}

\begin{prop} \label{locally-presentable-DG-categories}
 Let\/ $\bE$ be an additive DG\+category with shifts and cones such
that the additive category\/ $\sZ^0(\bE)$ has all colimits.
 Then the additive category\/ $\sZ^0(\bE)$ is locally\/
$\lambda$\+presentable if and only if the additive category\/
$\sZ^0(\bE^\bec)$ is locally\/ $\lambda$\+presentable.
\end{prop}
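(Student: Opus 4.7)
The plan is to use the characterization from \cite[Theorem~1.20]{AR} (recalled in Section~\ref{small-object-subsecn}): an additive category is locally $\lambda$\+presentable if and only if it is cocomplete and admits a strong generating set consisting of $\lambda$\+presentable objects. The argument is then a matter of transporting such a strong generating set across the adjoint pair $(\Psi^+_\bE,\Phi_\bE)$ using the lemmas already proved in this subsection.

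First I would verify that cocompleteness transfers from $\sZ^0(\bE)$ to $\sZ^0(\bE^\bec)$. Since $\sZ^0(\bE)$ has all colimits by hypothesis, coproducts exist in $\sZ^0(\bE^\bec)$ by Lemma~\ref{coproducts-in-DG-categories}. For cokernels in $\sZ^0(\bE^\bec)$, given any morphism $f$, the morphism $\Psi^+(f)$ has a cokernel in $\sZ^0(\bE)$ by cocompleteness; then Lemma~\ref{Psi-reflects-existence-of-co-kernels}(b) produces a cokernel of $f$ in $\sZ^0(\bE^\bec)$. Hence $\sZ^0(\bE^\bec)$ is cocomplete. (In the converse direction there is nothing to check, as cocompleteness of $\sZ^0(\bE)$ is part of the standing assumption.)

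Next I would transport strong generators. Suppose $\sZ^0(\bE)$ is locally $\lambda$\+presentable, and let $\mathcal{G}$ be a (small) strong generating set consisting of $\lambda$\+presentable objects. Applying $\Phi_\bE$, we obtain a set $\Phi_\bE(\mathcal{G})\subset\sZ^0(\bE^\bec)$; by Lemma~\ref{Phi-Psi-lambda-presentability}, all objects in $\Phi_\bE(\mathcal{G})$ are $\lambda$\+presentable in $\sZ^0(\bE^\bec)$, and by Lemma~\ref{Phi-Psi-strong-generators} they form a strongly generating set. Combined with the cocompleteness established above, this shows that $\sZ^0(\bE^\bec)$ is locally $\lambda$\+presentable. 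The converse direction is symmetric: given a small strongly generating set $\mathcal{H}$ of $\lambda$\+presentable objects in $\sZ^0(\bE^\bec)$, the set $\Psi^+_\bE(\mathcal{H})\subset\sZ^0(\bE)$ consists of $\lambda$\+presentable objects by Lemma~\ref{Phi-Psi-lambda-presentability} and is strongly generating by Lemma~\ref{Phi-Psi-strong-generators}, so $\sZ^0(\bE)$ is locally $\lambda$\+presentable.

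There is no real obstacle here: the argument is a clean assembly of the preceding lemmas. The only conceptual point worth highlighting is the symmetric role of the functors $\Phi$ and $\Psi^+$ in these preservation-and-reflection statements, which is what makes the equivalence in the proposition work in both directions despite the superficially asymmetric construction of the DG\+category $\bE^\bec$.
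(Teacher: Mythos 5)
Your proof is correct and follows essentially the same route as the paper: the paper likewise establishes cocompleteness of $\sZ^0(\bE^\bec)$ (via Lemmas~\ref{coproducts-in-DG-categories} and~\ref{Psi-reflects-existence-of-co-kernels}, inside the proof of Lemma~\ref{Phi-Psi-lambda-presentability}) and then transports strong generating sets of $\lambda$\+presentable objects with $\Phi$ and $\Psi^+$ using Lemmas~\ref{Phi-Psi-lambda-presentability} and~\ref{Phi-Psi-strong-generators}. The only remark worth making is that you silently corrected the evident typo in the statement of Lemma~\ref{Psi-reflects-existence-of-co-kernels}(b), reading its conclusion as asserting a cokernel of $f$ in $\sZ^0(\bE^\bec)$, which is the intended meaning.
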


\begin{proof}
 Under the assumptions of the proposition, the additive category
$\sZ^0(\bE^\bec)$ also has all colimits, as it was explained in
the proof of Lemma~\ref{Phi-Psi-lambda-presentability}.
 Now applying the functor $\Phi$ to any strong generating set of
$\lambda$\+presentable objects in $\sZ^0(\bE)$ produces a strong
generating set of $\lambda$\+presentable objects in $\sZ^0(\bE^\bec)$,
and conversely with the functor~$\Psi^+$ (by
Lemmas~\ref{Phi-Psi-lambda-presentability}
and~\ref{Phi-Psi-strong-generators}).
\end{proof}

 We will say that an additive DG\+category $\bE$ with shifts and cones
is \emph{locally $\lambda$\+pre\-sent\-able} if both the additive
categories $\sZ^0(\bE)$ and $\sZ^0(\bE^\bec)$ are locally
$\lambda$\+pre\-sent\-able.
 In other words, this means that $\bE$ satisfies the assumptions and
the equivalent conditions of
Proposition~\ref{locally-presentable-DG-categories}.
 A DG\+category is called \emph{locally presentable} if it is locally
$\lambda$\+presentable for some infinite regular cardinal~$\lambda$.

 In particular, locally $\omega$\+presentable DG\+categories (where
$\omega$~is the minimal infinite cardinal) are called \emph{locally
finitely presentable}.

\begin{exs} \label{locally-presentable-abelian-DG-of-complexes}
 (1)~Let $\sE$ be a locally $\lambda$\+presentable additive category.
 Then it is clear that the additive category of graded objects $\sG(\sE)
=\sE^\boZ$ is locally $\lambda$\+presentable.
 For an uncountable regular cardinal~$\lambda$, an object of $\sG(\sE)$
is $\lambda$\+presentable if and only if all its grading components
are $\lambda$\+presentable (see
Example~\ref{locally-coherent-DG-category-of-complexes}(1) below for
a discussion of the case $\lambda=\omega$).
 The additive category of complexes $\sC(\sE)=\sZ^0(\bC(\sE))$ is
locally $\lambda$\+presentable by~\cite[Lemma~6.3]{PS4}.
 We recall that the additive category $\sZ^0(\bC(\sE)^\bec)$ is
equivalent to $\sG(\sE)$ according to
Section~\ref{almost-involution-complexes-subsecn}.
 Hence the DG\+category $\bC(\sE)$ of complexes in $\sE$ is locally
$\lambda$\+presentable.

 (2)~Let $\sB$ be a locally presentable abelian category with enough
projective objects.
 Then it is obvious that the abelian category of graded objects
$\sG(\sB)=\sB^\boZ$ also has enough projectives.
 Following Example~\ref{abelian-DG-category-of-complexes-example},
the DG\+category of complexes $\bC(\sB)$ is an abelian DG\+category.
 Thus $\bC(\sB)$ is a locally presentable abelian DG\+category with
enough projective objects.
\end{exs}

\begin{ex} \label{locally-presentable-abelian-DG-of-CDG-modules}
 Let $\biR^\cu=(R^*,d,h)$ be a CDG\+ring.
 Then the DG\+category $\biR^\cu\bModl$ of left CDG\+modules over
$\biR^\cu$ is an abelian DG\+category
(see Example~\ref{abelian-DG-category-of-CDG-modules-example}).
 Moreover, following
Section~\ref{almost-involution-CDG-modules-subsecn}, both the abelian
categories $\sZ^0(\biR^\cu\bModl)$ and $\sZ^0((\biR^\cu\bModl)^\bec)$ are
graded module categories, $\sZ^0(\biR^\cu\bModl)=\widehat\biR^*\bModl$
and $\sZ^0((\biR^\cu\bModl)^\bec)\simeq R^*\bModl$.
 The abelian category of graded modules over any graded ring is
locally finitely presentable with enough projective objects; hence
$\biR^\cu\bModl$ is a locally finitely presentable abelian DG\+category
with enough projective objects.
\end{ex}

\begin{exs} \label{locally-presentable-abelian-DG-of-factorizations}
 (1)~Let $\sE$ be a locally $\lambda$\+presentable additive category
and $\Delta\:\sE\rarrow\sE$ be an autoequivalence.
 Then it is clear that the additive category of $2$\+$\Delta$-periodic
objects $\sP(\sE,\Delta)\simeq\sE\times\sE$ is locally
$\lambda$\+presentable.
 For any potential $v\:\Id_\sE\rarrow\Delta$, the additive category of
factorizations $\sF(\sE,\Delta,v)$ has all colimits (since
the category $\sE$ does).
 Recall that the additive category $\sZ^0(\bF(\sE,\Delta,v)^\bec)$ is
equivalent to $\sP(\sE,\Delta)$ according to
Section~\ref{almost-involution-factorizations-subsecn}.
 Thus the DG\+category $\bF(\sE,\Delta,v)$ of factorizations of~$v$
in $\sE$ is locally $\lambda$\+presentable by
Proposition~\ref{locally-presentable-DG-categories}.

 (2)~Let $\sB$ be a locally presentable abelian category with enough
projective objects, $\Delta\:\sB\rarrow\sB$ be an autoequivalence, and
$v\:\Id_\sB\rarrow\Delta$ be a potential.
 Then it is obvious that the abelian category of $2$\+$\Delta$-periodic
objects $\sP(\sB,\Delta)\simeq\sB\times\sB$ also has enough projectives.
 Following Example~\ref{abelian-DG-category-of-factorizations-example},
the DG\+category of factorizations $\bF(\sB,\Delta,v)$ is an abelian
DG\+category.
 Thus $\bF(\sB,\Delta,v)$ is a locally presentable abelian
DG\+category with enough projective objects.
\end{exs}

\subsection{Becker's contraderived category}
\label{becker-contraderived-subsecn}
 In the rest of Section~\ref{contraderived-secn}, we will be mostly
working with a locally presentable abelian DG\+category $\bB$ with
enough projective objects.
 Here the definition of an abelian DG\+category was given
in Section~\ref{abelian-DG-categories-subsecn},
abelian DG\+categories with enough projective objects were defined
in Section~\ref{projectives-in-abelian-DG-categories-subsecn}, and
the definition of a locally presentable DG\+category can be found in
Section~\ref{locally-presentable-DG-categories-subsecn}.

 In this paper we are interested in the contraderived category
\emph{in the sense of Becker}~\cite{Jor,Bec,PS4}, which we denote
by $\sD^\bctr(\bB)$.
 In well-behaved cases, it is equivalent to the homotopy category of
graded-projective objects, $\sD^\bctr(\bB)\simeq\sH^0(\bB_\bproj)$.

 The contraderived category in the sense of Becker has to be
distinguished from the contraderived category in the sense of
books and papers~\cite{Psemi,Pkoszul,Pps,Prel}.
 The two definitions of a contraderived category are known to be
equivalent under certain assumptions~\cite[Section~3.8]{Pkoszul},
\cite[Corollary~4.9]{Pctrl}, \cite[Theorem~5.10(b)]{Pedg}, but it is
an open question whether they are equivalent for the category of
modules over an arbitrary ring (see~\cite[Example~2.6(3)]{Pps},
\cite[Remark~9.2]{PS4}, and~\cite[Section~7.9]{Pksurv} for a discussion).

 Notice that, in any locally presentable category, all limits exists,
and in particular all products exist, by~\cite[Corollary~2.47]{AR}.
 So the dual version of Lemma~\ref{coproducts-in-DG-categories}
tells that infinite products exist in any locally presentable
DG\+category $\bB$, and consequently they also exist in the homotopy
category~$\sH^0(\bB)$.
 Furthermore, infinite products are exact in any abelian category with
enough projective objects (see~\cite[Remark~5.2]{Pedg} for a more
general assertion concerning exact categories.)

 Let $\bB$ be an abelian DG\+category.
 An object $X\in\bB$ is said to be \emph{contraacyclic} (\emph{in
the sense of Becker}) if $\Hom_{\sH^0(\bB)}(Q,X)=0$ for all
graded-projective objects $Q\in\bB_\bproj$.
 We will denote the full subcategory of Becker-contraacyclic objects
by $\sH^0(\bB)_\ac^\bctr\subset\sH^0(\bB)$.
 Clearly, $\sH^0(\bB)_\ac^\bctr$ is a triangulated (and even thick)
subcategory in the homotopy category $\sH^0(\bB)$.
 The triangulated Verdier quotient category $\sD^\bctr(\bB)=
\sH^0(\bB)/\sH^0(\bB)_\ac^\bctr$ is called the \emph{contraderived
category of\/~$\bB$} (in the sense of Becker).

\begin{lem} \label{becker-and-lp-contraacyclic}
\textup{(a)} For any short exact sequence\/ $0\rarrow K\rarrow L\rarrow
M\rarrow0$ in the abelian category\/ $\sZ^0(\bB)$, the total object\/
$\Tot(K\to L\to M)\in\bB$ (as defined in
Section~\ref{totalizations-subsecn}) belongs to\/
$\sH^0(\bB)_\ac^\bctr$. \par
\textup{(b)} The full subcategory of contraacyclic objects\/
$\sH^0(\bB)_\ac^\bctr$ is closed under infinite products in\/
$\sH^0(\bB)$.
\end{lem}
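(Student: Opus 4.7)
For part~(a), my plan is to show that the Hom-complex $\Hom_\bB^\bu(Q,T)$ is acyclic for every $Q\in\bB_\bproj$. The first step is to observe that the functor $\Hom_\bB^n(Q,{-})\:\sZ^0(\bB)\rarrow\Ab$ is exact for every $n\in\boZ$. To this end, I will use the identification
$$\Hom_\bB^n(Q,X)=\Hom_{\bB^0}(Q,X[n])\simeq\Hom_{\sZ^0(\bB^\bec)}(\Phi(Q),\Phi(X[n]))$$
provided by the fully faithful embedding $\widetilde\Phi\:\bB^0\rarrow\sZ^0(\bB^\bec)$ from Section~\ref{tilde-Phi-and-Psi-subsecn}. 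Since $\Phi_\bB$ is exact (having adjoints on both sides), $X\mapsto X[n]$ is an autoequivalence of $\sZ^0(\bB)$, and $\Phi(Q)$ is projective in $\sZ^0(\bB^\bec)$ by the very definition of $\bB_\bproj$, exactness will follow by composing these three operations.

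The second step is to apply this exactness to the given short exact sequence $0\rarrow K\rarrow L\rarrow M\rarrow 0$ and use the defining property of the totalization (Section~\ref{totalizations-subsecn}) to identify
$$\Hom_\bB^\bu(Q,T)\simeq\Tot\bigl(\Hom_\bB^\bu(Q,K)\rarrow\Hom_\bB^\bu(Q,L)\rarrow\Hom_\bB^\bu(Q,M)\bigr),$$
which is a totalization of a short exact sequence of complexes of abelian groups. Such a totalization is acyclic (e.g., filter by the horizontal index so that $E_1=0$ on the first page, or, equivalently, observe that it agrees up to a shift with the cone of the natural quasi-isomorphism $\cone(\Hom_\bB^\bu(Q,K)\to\Hom_\bB^\bu(Q,L))\to\Hom_\bB^\bu(Q,M)$ coming from the exactness of the sequence). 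It then follows that $\Hom_{\sH^0(\bB)}(Q,T)=H^0\Hom_\bB^\bu(Q,T)=0$, so $T\in\sH^0(\bB)_\ac^\bctr$.

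For part~(b), my plan is essentially formal. A locally presentable abelian category is complete, so products exist in $\sZ^0(\bB)$, and by the dual of Lemma~\ref{coproducts-in-DG-categories} these coincide with products in $\bB$ (and in $\sH^0(\bB)$). The defining property of products in a DG\+category yields the natural isomorphism of complexes
$$\Hom_\bB^\bu\bigl(Q,\,{\textstyle\prod_\alpha} X_\alpha\bigr)\simeq{\textstyle\prod_\alpha}\Hom_\bB^\bu(Q,X_\alpha),$$
and since arbitrary products are exact in $\Ab$, the functor $H^0$ commutes with them, giving $\Hom_{\sH^0(\bB)}(Q,\prod_\alpha X_\alpha)\simeq\prod_\alpha\Hom_{\sH^0(\bB)}(Q,X_\alpha)$. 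If every $X_\alpha$ is Becker-contraacyclic, the right-hand side vanishes for all $Q\in\bB_\bproj$, hence so does the product.

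The only substantive input is the exactness of $\Hom_\bB^n(Q,{-})$ in part~(a); this is where the definition of graded-projectivity \emph{through $\Phi$}, rather than through any naive $\Hom$-exactness in $\sZ^0(\bB)$, plays its essential role. The remaining steps in both parts consist of unpacking the definitions of totalization and of product in a DG\+category, together with the standard acyclicity of the total complex of a short exact sequence.
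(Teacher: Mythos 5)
Your proposal is correct, and it supplies in full the argument that the paper itself only cites (the paper's proof of Lemma~\ref{becker-and-lp-contraacyclic} is a pointer to \cite[Lemma~7.1]{PS4}, \cite[Proposition~4.2]{Pctrl}, and the more general \cite[Theorem~5.5(b)]{Pedg}). Your argument is the natural generalization of the proof for complexes of projectives: the one genuinely DG\+categorical input is the exactness of $\Hom^n_\bB(Q,{-})$ on $\sZ^0(\bB)$ for graded-projective $Q$, which you correctly extract from the fully faithful functor $\widetilde\Phi\:\bB^0\rarrow\sZ^0(\bB^\bec)$ of Section~\ref{tilde-Phi-and-Psi-subsecn} together with the exactness of $\Phi$, the exactness of the shift, and the projectivity of $\Phi(Q)$ in $\sZ^0(\bB^\bec)$ --- exactly the point where the definition of graded-projectivity through $\Phi$ is used. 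The remaining steps (the defining isomorphism $\Hom^\bu_\bB(Q,\Tot(K\to L\to M))\simeq\Tot(\Hom^\bu_\bB(Q,K)\to\Hom^\bu_\bB(Q,L)\to\Hom^\bu_\bB(Q,M))$ from Section~\ref{totalizations-subsecn}, acyclicity of the total complex of a short exact sequence of complexes of abelian groups, and in part~(b) the agreement of products in $\bB$, $\sZ^0(\bB)$, and $\sH^0(\bB)$ plus exactness of products in $\Ab$) are standard and correctly deployed; note your part~(b) argument in fact never uses local presentability beyond guaranteeing that the products exist, which matches the paper's setup in Section~\ref{becker-contraderived-subsecn}.
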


\begin{proof}
 This is a common generalization of~\cite[Lemma~7.1]{PS4}
(which is the version for complexes in abelian categories)
and~\cite[Proposition~4.2]{Pctrl} (the CDG\+module version).
 A proof of an even more general assertion can be found
in~\cite[Theorem~5.5(b)]{Pedg}.
\end{proof}

 The following result, generalizing~\cite[Corollary~7.4]{PS4} and
a similar unstated corollary of~\cite[Proposition~1.3.6(1)]{Bec},
will be deduced below in
Section~\ref{contraderived-model-structure-subsecn}.

\begin{cor} \label{becker-contraderived-corollary}
 Let\/ $\bB$ be a locally presentable abelian DG\+category with
enough projective objects.
 Then the composition of the fully faithful inclusion of triangulated
categories\/ $\sH^0(\bB_\bproj)\rarrow\sH^0(\bB)$ with the Verdier
quotient functor\/ $\sH^0(\bB)\rarrow\sD^\bctr(\bB)$ is a triangulated
equivalence\/ $\sH^0(\bB_\bproj)\simeq\sD^\bctr(\bB)$.
\end{cor}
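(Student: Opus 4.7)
The plan is to derive the corollary by constructing a projective abelian model structure $(\sL,\sW,\sZ^0(\bB))$ on $\sZ^0(\bB)$ with cofibrant class $\sL=\sZ^0(\bB_\bproj)$ (graded-projective objects) and weakly trivial class $\sW=\sH^0(\bB)^\bctr_\ac$ (Becker-contraacyclic objects), and then identifying the homotopy category of this model structure with both $\sH^0(\bB_\bproj)$ and $\sD^\bctr(\bB)$.

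The main step is the construction of the complete cotorsion pair $(\sL,\sW)$. The orthogonality $\Ext^1_{\sZ^0(\bB)}(Q,Y)\simeq\Hom_{\sH^0(\bB)}(Q,Y[1])=0$ for $Q\in\sL$ and $Y\in\sW$ follows from Lemma~\ref{Ext-in-cocycles-and-Hom-in-homotopy}, and the same isomorphism yields $\sW=\sL^{\perp_1}$ (using closure of $\sW$ under shifts). For completeness, I plan to invoke Theorem~\ref{eklof-trlifaj}. Since $\bB$ is locally $\lambda$-presentable for some regular cardinal $\lambda$, the abelian category $\sZ^0(\bB^\bec)$ is locally $\lambda$-presentable with enough projectives (Lemma~\ref{abelian-DG-enough-projectives}); pick a set $\{P_\alpha\}$ of $\lambda$-presentable projective generators there. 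The objects $\Psi^+_\bB(P_\alpha)$ are $\lambda$-presentable (Lemma~\ref{Phi-Psi-lambda-presentability}), projective in $\sZ^0(\bB)$ (as shown in the proof of Lemma~\ref{abelian-DG-enough-projectives}), and generators of $\sZ^0(\bB)$ (Lemma~\ref{Phi-Psi-strong-generators}). Enlarging this collection to a set $\sS$ of $\lambda$-presentable graded-projective objects deconstructing $\sZ^0(\bB_\bproj)$ (in the sense that every graded-projective is a direct summand of a transfinite extension of objects from $\sS$), Theorem~\ref{eklof-trlifaj} yields a complete cotorsion pair with left class $\Fil(\sS)^\oplus=\sL$ (graded-projectivity being closed under transfinite extensions and summands, as $\Phi_\bB$ is exact and preserves colimits) and right class $\sS^{\perp_1}=\sW$.

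With the cotorsion pair in hand, the remaining steps proceed formally. The pair $(\sL,\sW)$ is shift-invariant, so Lemma~\ref{cotorsion-pairs-with-graded-projectives} gives $\sL\cap\sW=\sZ^0(\bB)_\proj$, heredity, and thickness of $\sW$. Lemma~\ref{projective-injective-abelian-model-structures}(a) then produces the sought-after projective abelian model structure, which is cofibrantly generated by Lemma~\ref{cofibrantly-generated-abelian-model} and stable by Lemma~\ref{hereditary-is-stable}. The homotopy category of any projective abelian model structure is obtained by restricting to the cofibrant objects $\sL=\sZ^0(\bB_\bproj)$ and quotienting by morphisms that factor through an object of $\sL\cap\sW=\sZ^0(\bB)_\proj$. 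By Lemma~\ref{projectives-are-contractible-graded-projectives}(a), objects of $\sL\cap\sW$ are precisely the contractible graded-projectives, and a direct verification (using that $\cone(\id_X)$ is a contractible graded-projective whenever $X$ is graded-projective) shows that a closed degree-$0$ morphism between graded-projectives factors through such an object if and only if it is null-homotopic. Hence the homotopy category of the model structure is $\sH^0(\bB_\bproj)$. On the other hand, by stability, inverting the weak equivalences realizes the Verdier quotient $\sH^0(\bB)/\sH^0(\bB)^\bctr_\ac=\sD^\bctr(\bB)$. Combining the two identifications gives the desired equivalence $\sH^0(\bB_\bproj)\simeq\sD^\bctr(\bB)$ via the composition appearing in the corollary.

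The main technical obstacle is the deconstructibility of the class of graded-projective objects: producing a set $\sS$ of $\lambda$-presentable graded-projectives with $\Fil(\sS)^\oplus=\sZ^0(\bB_\bproj)$. Deconstructibility for the class of projective objects in $\sZ^0(\bB^\bec)$ is standard in locally presentable abelian categories with enough projectives, but transporting this to graded-projectives in $\sZ^0(\bB)$ along the functor $\Psi^+_\bB$ requires the addition of auxiliary generators---encoding non-contractible graded-projectives via constructions such as cones of identity maps and the twists supplied by Proposition~\ref{abelian-DG-categories-have-twists}---and a careful verification that the enlarged set still consists of $\lambda$-presentable objects.
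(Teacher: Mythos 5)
Your overall architecture coincides with the paper's actual development: deconstructibility of the graded-projectives, Eklof--Trlifaj, the complete cotorsion pair $(\sZ^0(\bB_\bproj),\allowbreak\sZ^0(\bB)_\ac^\bctr)$ (the paper's Theorem~\ref{contraderived-cotorsion-pair}), and the projective abelian model structure (Theorem~\ref{contraderived-model-structure}). But the step you flag as ``the main technical obstacle'' is a genuine gap, and your sketched remedy does not close it. Producing a set $\sS$ of $\lambda$\+presentable graded-projectives with $\sZ^0(\bB_\bproj)=\Fil(\sS)^\oplus$ is exactly the paper's Proposition~\ref{graded-projective-deconstructible}, and it is where the hardest work of the whole construction is invested. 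Enlarging $\{\Psi^+(P_\alpha)\}$ by ``auxiliary generators'' built from cones of identity maps and the twists of Proposition~\ref{abelian-DG-categories-have-twists} cannot suffice: cones of identities are contractible, and a twist of $X$ is by definition an object isomorphic to $X$ in $\bB^0$, so these operations never leave the graded-isomorphism closure of what you already have. What deconstructibility actually requires is a filtration of an \emph{arbitrary} graded-projective $Q$ by $\kappa$\+presentable graded-projective subquotients, and the subtlety is that a graded direct summand $P^{(Y)}\subset\Phi(Q)$ need not underlie any subobject of $Q$ in $\sZ^0(\bB)$. The paper's explicit proof fixes a shift-invariant projective generator $P$ of $\sZ^0(\bB^\bec)$, reduces by a cancellation trick to the case $\Phi(Q)\simeq P^{(X)}$, and runs a transfinite induction whose key device is the closure sets $W_x=\bigcup_{n\in\omega}W_x^n$ (exploiting $\kappa$\+presentability of $\Phi\Psi^+(P)$) and the images $L_x$ of the morphisms $\Psi^+(P^{(W_x)})\rarrow Q$, for which one checks $\Phi(L_x)=P^{(W_x)}$. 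None of this is recoverable from the ingredients you name, so the proposal is incomplete at its load-bearing point.

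Two further remarks. First, your endgame is more roundabout than the paper's: once the complete cotorsion pair is in hand, the paper proves the corollary directly, with no model structure at all --- fully faithfulness of $\sH^0(\bB_\bproj)\rarrow\sD^\bctr(\bB)$ is formal, and for essential surjectivity one takes a special precover sequence $0\rarrow X\rarrow Q\rarrow B\rarrow0$ with $Q\in\sZ^0(\bB_\bproj)$ and $X\in\sZ^0(\bB)_\ac^\bctr$, and observes via Lemma~\ref{becker-and-lp-contraacyclic}(a) that the totalization $\Tot(X\to Q\to B)$ is contraacyclic, whence so is the cone of $Q\rarrow B$. Second, your route through the model-categorical homotopy category rests on two assertions you do not justify: that the weak equivalences are precisely the closed morphisms with Becker-contraacyclic cones (the paper's Lemma~\ref{contraderived-weak-equivalences}), and that $\sZ^0(\bB)[\cW^{-1}]$ agrees with the Verdier quotient $\sH^0(\bB)/\sH^0(\bB)_\ac^\bctr$, which additionally needs the observation that inverting the homotopy equivalences in $\sZ^0(\bB)$ already produces $\sH^0(\bB)$ (proved in the paper within Corollary~\ref{contraderived-well-generated}); ``by stability'' alone does not discharge these. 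Your verification that, on graded-projectives, factoring through $\sL\cap\sW=\sZ^0(\bB)_\proj$ is the same as null-homotopy is correct.
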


\subsection{Deconstructibility of graded-projectives}
\label{deconstructibility-of-graded-projectives-subsecn}
 Let $\sE$ be a cocomplete abelian category.
 A class of objects $\sL\subset\sE$ is said to be \emph{deconstructible}
if there exists a set of objects $\sS\subset\sL$ such that
$\sL=\Fil(\sS)$ (see Section~\ref{transfinite-extensions-subsecn}
for the notation).
 Another useful property, which does not seem to have a convenient
name in the literature, is existence of a set of objects $\sS\subset
\sL$ such that $\sL=\Fil(\sS)^\oplus$.

\begin{prop} \label{graded-projective-deconstructible}
 Let\/ $\bB$ be a locally presentable abelian DG\+category with
enough projective objects.
 Then there exists a set of graded-projective objects\/
$\sS\subset\sZ^0(\bB_\bproj)$ such that\/ $\sZ^0(\bB_\bproj)=
\Fil(\sS)^\oplus$ in the abelian category\/ $\sZ^0(\bB)$.
\end{prop}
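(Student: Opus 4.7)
The plan is to combine the Eklof--Trlifaj theorem (Theorem~\ref{eklof-trlifaj}(b)) with a Kaplansky--Hill style argument. Choose a regular cardinal $\lambda$ large enough that $\bB$ is locally $\lambda$-presentable; by Proposition~\ref{locally-presentable-DG-categories} and Lemma~\ref{abelian-DG-enough-projectives}, both $\sZ^0(\bB)$ and $\sZ^0(\bB^\bec)$ are then locally $\lambda$-presentable with enough projective objects. Enlarging $\lambda$ if necessary, I assume moreover that $\sZ^0(\bB^\bec)$ admits a set of $\lambda$-presentable projective generators. Take $\sS \subseteq \sZ^0(\bB_\bproj)$ to be the set of isomorphism classes of $\lambda$-presentable graded-projective objects of $\sZ^0(\bB)$. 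Applying $\Psi^+$ to the chosen $\lambda$-presentable projective generators of $\sZ^0(\bB^\bec)$ and invoking Lemmas~\ref{Phi-Psi-lambda-presentability} and~\ref{Phi-Psi-strong-generators} produces a set of $\lambda$-presentable projective generators of $\sZ^0(\bB)$ lying inside $\sS$; in particular $\sS$ (hence $\Fil(\sS)$) is generating in $\sZ^0(\bB)$.

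The inclusion $\Fil(\sS)^\oplus \subseteq \sZ^0(\bB_\bproj)$ is routine. Applying the exact, colimit-preserving functor $\Phi$ to a filtration $(F_i)_{i\le\alpha}$ in $\sZ^0(\bB)$ with quotients in $\sS$ produces a filtration of $\Phi(F_\alpha)$ in $\sZ^0(\bB^\bec)$ whose successive quotients $\Phi(F_{i+1}/F_i)$ are projective. Each successor short exact sequence splits, and a transfinite induction with coherent splittings identifies $\Phi(F_\alpha)$ with the coproduct of the projective quotients, so $\Phi(F_\alpha)$ is projective and hence $F_\alpha \in \sZ^0(\bB_\bproj)$. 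Since graded-projectivity is preserved under direct summands, $\Fil(\sS)^\oplus \subseteq \sZ^0(\bB_\bproj)$. For the reverse inclusion, Theorem~\ref{eklof-trlifaj}(b) gives that the cotorsion pair $(\sL,\sR)$ generated by $\sS$ satisfies $\sL = \Fil(\sS)^\oplus$, so it remains to prove $\sZ^0(\bB_\bproj) \subseteq \sL$, that is, every $Q \in \sZ^0(\bB_\bproj)$ is a direct summand of some $\sS$-filtered object.

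This last step is the main obstacle and requires a Kaplansky-style construction. Given $Q \in \sZ^0(\bB_\bproj)$, the goal is to build a continuous well-ordered chain $(Q_i)_{i<\beta}$ of $\lambda$-presentable subobjects of $Q$ in $\sZ^0(\bB)$ with $Q_0=0$, $\varinjlim Q_i = Q$, and such that each $Q_i$ as well as each successive quotient $Q_{i+1}/Q_i$ is a $\lambda$-presentable graded-projective object; this directly yields $Q \in \Fil(\sS)$. Under $\Phi$ the problem translates to producing a continuous chain of $\lambda$-presentable direct summands of the projective object $\Phi(Q) \in \sZ^0(\bB^\bec)$, which is available by a standard Kaplansky-style decomposition of a projective object in a locally $\lambda$-presentable abelian category with enough projective objects. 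The technical difficulty is that $\Phi$ is not essentially surjective on subobjects, so this chain in $\sZ^0(\bB^\bec)$ does not automatically lift to a chain of $\sZ^0(\bB)$-subobjects of $Q$. Following the model of the CDG-module case (see \cite[Lemma~1.3.4]{Bec} and its proof), the resolution is to iteratively enlarge a candidate $\lambda$-presentable $\sZ^0(\bB^\bec)$-subobject so that it actually comes from a $\sZ^0(\bB)$-subobject of $Q$, by closing under the extra ``differential''-type data encoded by the adjunction units $\id\to\Psi^\pm\circ\Phi$ together with the graded-category structure underlying $\bB$. A Löwenheim--Skolem bookkeeping keeps every step $\lambda$-presentable, producing the chain $(Q_i)$ and showing $Q \in \Fil(\sS) \subseteq \sL$.
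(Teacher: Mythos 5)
Your overall plan coincides in outline with the paper's second (direct) proof: take $\sS$ a set of small graded-projectives, prove $\Fil(\sS)^\oplus\subseteq\sZ^0(\bB_\bproj)$ by pushing filtrations through the exact colimit-preserving functor $\Phi$ (this half is fine; it is essentially \cite[Lemma~6.2]{PS4} transported along~$\Phi$), and then face the crux that a Kaplansky-style filtration of the projective object $\Phi(Q)\in\sZ^0(\bB^\bec)$ does not lift along $\Phi$, which is not surjective on subobjects. Note that your Eklof--Trlifaj detour buys nothing: since $\sL=\Fil(\sS)^\oplus$, proving $Q\in\sL$ is literally the same task as exhibiting $Q$ as a summand of an $\sS$-filtered object, which you must still do by hand.

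The genuine gap is that the crux is left as a gesture, and the gesture does not transfer from the CDG-module case. Becker's proof closes a graded submodule under the differential \emph{elementwise}; an abstract abelian DG-category has no elements and no differential acting on subobjects, so ``closing under the differential-type data encoded by the adjunction units'' is precisely the step that must be invented, and two specific devices are missing. First, before any L\"owenheim--Skolem bookkeeping can start, one must reduce to the case where $\Phi(Q)$ is an honest coproduct $P^{(X)}$ of a single shift-invariant projective generator $P$ with $\Phi\Psi^+(P)\simeq P\oplus P$; this requires the swindle $Q=Q'\oplus\Psi^+(P^{(\mu\times\omega)})$ and a cancellation argument. Your sketch instead works with a chain of abstract $\lambda$-presentable direct summands of $\Phi(Q)$, but a union of an increasing chain of abstract direct summands need not be a summand, and without coordinates there is no index set on which to run the closure; it is the coordinate subcoproducts $P^{(Y_\beta)}\subset P^{(X)}$ that make a smooth chain with projective quotients possible. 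Second, the lifting device itself: after closing an index set $W_x\subset X$ under the factorizations of the adjoint maps (using $\kappa$-presentability of $\Phi\Psi^+(P)$, iterated countably), one defines $L_x\subset Q$ as the \emph{image} of the adjoint morphism $\Psi^+(P^{(W_x)})\rarrow Q$ and verifies $\Phi(L_x)=P^{(W_x)}$ as subobjects of $\Phi(Q)$ by comparing the factorization $P^{(W_x)}\rarrow\Phi\Psi^+(P^{(W_x)})\rarrow P^{(W_x)}\rightarrowtail P^{(X)}$ with the image factorization under the exact functor~$\Phi$. This is the element-free replacement for ``closing a submodule under the differential,'' and nothing in your proposal produces it. A smaller slip: you cannot demand that all the subobjects $Q_i$ in the chain be $\lambda$-presentable (beyond stage~$\lambda$ they will not be, for large~$Q$); only the successive quotients need lie in~$\sS$, which is all that membership in $\Fil(\sS)$ requires. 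Finally, be aware the paper also offers an entirely different first proof, via monadicity of $\Phi$ and the deconstructibility transfer results of Makkai--Rosick\'y and Lurie, which bypasses the Kaplansky-style construction altogether.
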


 This important technical assertion is a generalization
of~\cite[Proposition~7.2]{PS4}.
 Similarly to the exposition in~\cite{PS4}, we suggest two proofs.

\begin{proof}[First proof]
 We restrict ourselves to a sketch of this abstract
category-theoretic argument based on~\cite[Proposition~A.1.5.12]{Lur},
\cite[Corollary~3.6]{MR}, and~\cite[Proposition~2.5]{PS4}.

 Let $\bE$ be an additive DG\+category with shifts and cones.
 Then we have a pair of adjoint functors $\Psi^-_\bE\:\sZ^0(\bE^\bec)
\rarrow\sZ^0(\bE)$ and $\Phi_\bE\:\sZ^0(\bE)\rarrow\sZ^0(\bE^\bec)$.
 Consequently, the composition $\Xi_\bE[1]=\Psi^-_\bE\circ\Phi_\bE$,
\ $A\longmapsto\cone(\id_A)$ for all $A\in\bE$, is a monad on
the additive category $\sZ^0(\bE)$.
 The claim is that the functor $\Psi^-_\bE$ is monadic, i.~e.,
the comparison functor from $\sZ^0(\bE^\bec)$ to the category of
algebras over the monad $\Xi_\bE[1]$ on the category $\sZ^0(\bE)$
is a category equivalence.
 This can be established by examining the construction of
the DG\+category~$\bE^\bec$.
 (Dually, the functor $\Xi_\bE\:A\longmapsto\cone(\id_A[-1])$ is
a comonad on $\sZ^0(\bE)$ and the functor $\Psi^+_\bE\:\sZ^0(\bE^\bec)
\rarrow\sZ^0(\bE)$ is comonadic, so this functor is in fact both
monadic and comonadic; cf.~\cite[Section~2]{NST} for a discussion
of a particular case.)

 Notice that the monad $\Xi_\bE[1]\:\sZ^0(\bE)\rarrow\sZ^0(\bE)$ preserves all limits and colimits, as does its left and right
adjoint comonad~$\Xi_\bE$
(see Section~\ref{exactness-properties-of-Xi-and-Psi-subsecn}).

 Now let $\bE$ be an abelian DG\+category.
 Then, according to Corollary~\ref{becbec-equivalence-for-abelian-DG},
we have $\bE\simeq\bE^{\bec\bec}$.
 Taking into account the isomorphisms of functors from the last
paragraph of Section~\ref{DG-functor-becbec-subsecn}, it follows
that the functor $\Phi_\bE\:\sZ^0(\bE)\rarrow\sZ^0(\bE^\bec)$ is
monadic (as well as comonadic), i.~e., the additive category
$\sZ^0(\bE)$ is equivalent to the category of algebras over the monad
$\Xi_{\bE^\bec}[1]=\Phi_\bE\circ\Psi_\bE^+$ on the additive category
$\sZ^0(\bE^\bec)$.
 This sets the stage for our intended application
of~\cite[Corollary~3.6]{MR}.

 The rest is essentially explained in~\cite[first proof of
Proposition~7.2]{PS4}.
 Following~\cite{PS4}, we say that a class of objects $\sL$ in
an abelian category $\sE$ is \emph{transmonic} if any transfinite
composition of $\sL$\+monomorphisms is a monomorphism.
 Then it follows from~\cite[Proposition~A.1.5.12]{Lur} in view
of~\cite[Proposition~2.5]{PS4} that, for any transmonic set of
objects $\sS$ in a locally presentable abelian category $\sE$,
the class of objects $\Fil(\sS)^\oplus$ is deconstructible, i.~e.,
there exists a set of objects $\sS'\subset\sE$ such that
$\Fil(\sS)^\oplus=\Fil(\sS')$.

 Furthermore, for any locally presentable abelian category $\sE$
with a colimit-preserving monad $M\:\sE\rarrow\sE$ and a transmonic
deconstructible class of objects $\sL\subset\sE$, consider
the category $\sF$ of $M$\+algebras in $\sE$ and the forgetful
functor $U\:\sF\rarrow\sE$.
 Then it follows from~\cite[Corollary~3.6]{MR} in view
of~\cite[Proposition~2.5]{PS4} that the class $U^{-1}(\sL)\subset\sF$
is (transmonic and) deconstructible.
 Notice that the category $\sF$ is abelian (with an exact functor~$U$)
by~\cite[Lemma~5.9]{Prel} and $\sF$ is locally presentable
by~\cite[Theorem and Remark~2.78]{AR}.
 It is also easy to see that the functor $U$ preserves all colimits.

 In particular, for any locally presentable abelian category $\sB$
with a projective generator $P$, one has $\sB_\proj=\Fil(\{P\})^\oplus$
(see~\cite[Lemmas~6.1 and~6.2]{PS4}).
 As the class $\Fil(\{P\})$ is transmonic and deconstructible in $\sB$,
it follows that the class of all projective objects $\sB_\proj$ is
deconstructible.
 
 Returning to the situation at hand with a locally presentable abelian
DG\+category $\bB$ with enough projective objects, it remains to apply
the observations above to the abelian categories
$\sB=\sE=\sZ^0(\bB^\bec)$ and $\sF=\sZ^0(\bB)$ and the class of objects
$\sL=\sB_\proj$ in order to conclude that the class $\sZ^0(\bB_\bproj)$
is deconstructible in~$\sZ^0(\bB)$.
 So there exists a set of objects $\sS''\subset\sZ^0(\bB_\bproj)$ such
that $\sZ^0(\bB_\bproj)=\Fil(\sS'')$ in~$\sZ^0(\bB)$.
 As one obviously has $\sZ^0(\bB_\bproj)=\sZ^0(\bB_\bproj)^\oplus$,
we have obtained an even stronger result than claimed in
the proposition.
\end{proof}

\begin{proof}[Second proof]
 Here is a direct proof of the assertion stated in
Proposition~\ref{graded-projective-deconstructible}.
 Notice that the functor $\Phi\:\sZ^0(\bB)\rarrow\sZ^0(\bB^\bec)$
preserves extensions and directed colimits; hence it also preserves
transfinitely iterated extensions (in the sense of
the directed colimit).
 Since the class of all projective objects in $\sZ^0(\bB^\bec)$ is
closed under transfinitely iterated extensions and direct
summands~\cite[Lemma~6.2]{PS4}, it follows that the class
$\sZ^0(\bB_\bproj)$ is also closed under these operations in
$\sZ^0(\bB)$, i.~e., $\Fil(\sZ^0(\bB_\bproj))^\oplus
\subset\sZ^0(\bB_\bproj)$.
 So it suffices to find a set of graded-projective objects
$\sS\subset\sZ^0(\bB_\bproj)$ such that
$\sZ^0(\bB_\bproj)\subset\Fil(\sS)^\oplus$.

 Fix a single projective generator $P$ of the abelian category
$\sZ^0(\bB^\bec)$ (cf.~\cite[Lemma~6.1]{PS4}).
 Replacing if needed $P$ by $\coprod_{n\in\boZ}P[n]$, we can assume
without loss of generality that $P$ is invariant under the shift,
$P\simeq P[1]$.
 According to Section~\ref{Phi-and-Psi-subsecn}, we have
$\Phi\Psi^+(P)\simeq\Xi_{\bB^\bec}(P)[1]$, and by
Corollary~\ref{Xi-extension-cokernel}(a) there is a short exact
sequence $0\rarrow P\rarrow\Xi_{\bB^\bec}(P)\rarrow P[1]\rarrow0$
in $\sZ^0(\bB^\bec)$, which is split because the object $P[1]$
is projective.
 So we have $\Phi\Psi^+(P)\simeq P\oplus P[1]\simeq P\oplus P$.

 Let us show that any graded-projective object $Q'\in\sZ^0(\bB_\bproj)$
is a direct summand of an object $Q\in\sZ^0(\bB_\bproj)$ such that
$\Phi(Q)$ is a coproduct of copies of~$P$.
 Indeed, let $\mu$~be a cardinal such that $\Phi(Q')$ is a direct
summand of~$P^{(\mu)}$, i.~e.\ $\Phi(Q')\oplus P'\simeq P^{(\mu)}$
for some $P'\in\sZ^0(\bB^\bec)$.
 Put $Q=Q'\oplus\Psi^+(P^{(\mu\times\omega)})$.
 Then $\Phi(Q)=\Phi(Q')\oplus \Phi\Psi^+(P^{(\mu\times\omega)})
\simeq\Phi(Q')\oplus P^{(\mu\times2\omega)}\simeq
\Phi(Q')\oplus P^{(\mu\times\omega)}$.
Now $\Phi(Q')\oplus P^{(\mu\times\omega)}\simeq\Phi(Q')\oplus(\Phi(Q')\oplus P')^{(\omega)}\simeq(\Phi(Q')\oplus P')^{(\omega)}\simeq P^{(\mu\times\omega)}$ by the cancellation trick.

 Let $\kappa$~be an uncountable regular cardinal such that the object
$P\in\sZ^0(\bB^\bec)$ is $\kappa$\+presentable (then the category
$\sZ^0(\bB^\bec)$ is locally $\kappa$\+presentable).
 Let $\sS$ be the set of (representatives of isomorphism classes) of
objects $S\in\sZ^0(\bB)$ such that $\Phi(S)$ is a coproduct of less
than~$\kappa$ copies of~$P$.
 Notice that the object $\Phi(S)\in\sZ^0(\bB^\bec)$ is
$\kappa$\+presentable in this case, and by
Proposition~\ref{Phi-Psi-lambda-presentability} it follows that
the object $S\in\sZ^0(\bB)$ is also $\kappa$\+presentable.
 Since there is only a set of $\kappa$\+presentable objects in
a locally presentable category (up to isomorphism)
\cite[Remarks~1.19 and~1.20]{AR}, we can see that $\sS$ is indeed a set.
 Clearly, $\sS\subset\sZ^0(\bB_\bproj)$.
 We claim that any object $Q\in\sZ^0(\bB)$ such that $\Phi(Q)$ is
a coproduct of copies of $P$ belongs to $\Fil(\sS)$.

 Indeed, assume that $\Phi(Q)=P^{(X)}$ for some set~$X$.
 Let $\alpha$~be the successor cardinal of the cardinality of~$X$.
 Proceeding by transfinite induction on ordinals $0\le\beta\le\alpha$,
we will construct a smooth chain of subobjects $R_\beta\subset Q$
and a smooth chain of subsets $Y_\beta\subset X$ such that
$\Phi(R_\beta)=P^{(Y_\beta)}$ as a subobject in $\Phi(Q)=P^{(X)}$
for all $0\le\beta\le\alpha$,\ $Y_0=\varnothing$ (hence $R_0=0$) and
$Y_\alpha=X$ (hence $R_\alpha=Q$), and the cardinality of
$Y_{\beta+1}\setminus Y_\beta$ is smaller than~$\kappa$ for all
$0\le\beta<\alpha$.
 Then it will follow that $\Phi(R_{\beta+1}/R_\beta)\simeq
P^{(Y_{\beta+1})}/P^{(Y_\beta)}\simeq
P^{(Y_{\beta+1}\setminus Y_\beta)}$, since the functor $\Phi$ preserves cokernels; hence $R_{\beta+1}/R_\beta\in\sS$, as desired.

 Suppose that the subsets $Y_\gamma\subset X$ and the subobjects
$R_\gamma\subset Q$ have been already constructed for all
$\gamma<\beta$.
 For a limit ordinal~$\beta$, we put $Y_\beta=
\bigcup_{\gamma<\beta}Y_\gamma$ and $R_\beta=\varinjlim_{\gamma<\beta}
R_\gamma$; then $Y_\beta$ is a subset in $X$ and there is
the induced morphism $R_\beta\rarrow Q$ in~$\sZ^0(\bB)$.
 We have $\Phi(R_\beta)=\varinjlim_{\gamma<\beta}\Phi(R_\gamma)=
\varinjlim_{\gamma<\beta}P^{(Y_\gamma)}=P^{(Y_\beta)}$, since
the functor $\Phi$ preserves colimits.
 So, applying the functor $\Phi$ to the morphism $R_\beta\rarrow Q$,
we obtain the (split) monomorphism $P^{(Y_\beta)}\rarrow P^{(X)}$.
 Since the functor $\Phi$ is faithful and preserves kernels, it
follows that the morphism $R_\beta\rarrow Q$ is a monomorphism.
 This finishes the construction in the case of a limit ordinal~$\beta$.
 To deal with the case of a successor ordinal, we need some
preparatory work.

 For every element $x\in X$, let $\iota_x\:P\rarrow P^{(X)}=\Phi(Q)$
be the direct summand inclusion corresponding to the element~$x$.
 By adjunction, we have the corresponding morphism $\Psi^+(P)\rarrow Q$
in the category~$\sZ^0(\bB)$.
 Applying the functor $\Phi$, we obtain a morphism $\tilde\iota_x\:
\Phi\Psi^+(P)\rarrow\Phi(Q)=P^{(X)}$ in $\sZ^0(\bB^\bec)$.
 We know that $\Phi\Psi^+(P)\simeq P\oplus P$; so the object
$\Phi\Psi^+(P)$ is $\kappa$\+presentable in $\sZ^0(\bB^\bec)$.
 Therefore, there exists a subset $V_x\subset X$ of the cardinality
smaller than~$\kappa$ such that the morphism $\tilde\iota_x$ factorizes
as $\Phi\Psi^+(P)\rarrow P^{(V_x)}\rightarrowtail P^{(X)}$.
 Proceeding by induction in $n\in\omega$, define subsets
$W_x^n\subset X$ by the rules $W_x^0=\{x\}$ and $W_x^{n+1}=
\bigcup_{w\in W_x^n} V_w$.
 Put $W_x=\bigcup_{n\in\omega}W_x^n$.
 Then $W_x$ is a subset in $X$ of the cardinality smaller than~$\kappa$.

 Similarly to the notation in the previous paragraph, given a subset
$W\subset X$, let us denote by $\iota_W\:P^{(W)}\rarrow P^{(X)}=\Phi(Q)$
the related split monomorphism.
 By adjunction, we have the corresponding morphism $\Psi^+(P^{(W)})
\rarrow Q$ in $\sZ^0(\bB)$; applying the functor $\Phi$, we obtain
a morphism $\tilde\iota_W\:\Phi\Psi^+(P^{(W)})\rarrow\Phi(Q)=P^{(X)}$.
 There is also the natural adjunction morphism
$P^{(W)}\rarrow\Phi\Psi^+(P^{(W)})$, whose composition with
the morphism~$\tilde\iota_W$ is the morphism~$\iota_W$.
 Notice that $\Phi\Psi^+(P^{(W)})=(\Phi\Psi^+(P))^{(W)}$.
 For any element $x\in X$, the subset $W_x\subset X$ was constructed
in such a way that the morphism~$\tilde\iota_{W_x}$ factorizes through
the split monomorphism~$\iota_{W_x}$:
\begin{equation} \label{tilde-iota-W-factorizes-through-iota-W}
 P^{(W_x)}\rarrow\Phi\Psi^+(P^{(W_x)})\rarrow P^{(W_x)}
 \rightarrowtail P^{(X)}=\Phi(Q).
\end{equation}

 Denote by $L_x$ the image of the morphism $\Psi^+(P^{(W_x)})
\rarrow Q$ in the abelian category~$\sZ^0(\bB)$.
 Applying the exact functor $\Phi$, we see that the object $\Phi(L_x)$
is the image of the morphism~$\tilde\iota_W$ in the abelian
category~$\sZ^0(\bB^\bec)$:
\begin{equation} \label{image-of-tilde-iota-W}
 \Phi\Psi^+(P^{(W_x)})\twoheadrightarrow\Phi(L_x)
 \rightarrowtail\Phi(Q).
\end{equation}
 Comparing~\eqref{tilde-iota-W-factorizes-through-iota-W}
with~\eqref{image-of-tilde-iota-W}, we conclude that
$\Phi(L_x)$ and $P^{(W_x)}$ is one and the same subobject in
$\Phi(Q)=P^{(X)}$.

 Now we can return to our successor ordinal $\beta=\gamma+1\le\alpha$.
 If $Y_\gamma=X$, then $\Phi(Q/R_\gamma)=0$ and it follows that
$R_\gamma=Q$ (since the functor $\Phi$ is faithful); so we put
$Y_\beta=X$ and $R_\beta=Q$ as well.
 Otherwise, choose an element $x\in X\setminus Y_\gamma$.
 Put $Y_\beta=Y_\gamma\cup W_x\subset X$ and
$R_\beta=R_\gamma+L_x\subset Q$.
\end{proof}

\subsection{Contraderived abelian model structure}
\label{contraderived-model-structure-subsecn}
 The results of this section form a common generalization
of~\cite[Proposition~1.3.6(1)]{Bec} (the CDG\+module case)
and~\cite[Section~7]{PS4} (the case of complexes in abelian categories).
 The references to preceding results in the literature can be
found in~\cite{PS4}.

 Let $\bB$ be an abelian DG\+category.
 Introduce the notation $\sZ^0(\bB)_\ac^\bctr$ for the full
subcategory of Becker-contraacyclic objects in~$\sZ^0(\bB)$.
 So $\sZ^0(\bB)_\ac^\bctr\subset\sZ^0(\bB)$ is the full preimage of
$\sH^0(\bB)_\ac^\bctr\subset\sH^0(\bB)$ under the obvious functor
$\sZ^0(\bB)\rarrow\sH^0(\bB)$.

\begin{thm} \label{contraderived-cotorsion-pair}
 Let\/ $\bB$ be a locally presentable abelian DG\+category with
enough projective objects.
 Then the pair of classes of objects\/ $\sZ^0(\bB_\bproj)$ and\/
$\sZ^0(\bB)_\ac^\bctr$ is a hereditary complete cotorsion pair
in the abelian category\/~$\sZ^0(\bB)$.
\end{thm}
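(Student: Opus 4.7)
The plan is to produce the cotorsion pair from the deconstructibility result Proposition~\ref{graded-projective-deconstructible} via the Eklof--Trlifaj machinery of Theorem~\ref{eklof-trlifaj}, and then transfer hereditariness and thickness from Lemma~\ref{cotorsion-pairs-with-graded-projectives}. I would begin by fixing a set $\sS\subset\sZ^0(\bB_\bproj)$ with $\sZ^0(\bB_\bproj)=\Fil(\sS)^\oplus$, and letting $(\sL,\sR)$ denote the cotorsion pair generated by $\sS$ in the locally presentable abelian category $\sZ^0(\bB)$. The class $\Fil(\sS)$ is generating: since $\bB$ has enough projectives and every projective is graded-projective, $\sZ^0(\bB)_\proj\subset\sZ^0(\bB_\bproj)=\Fil(\sS)^\oplus$, so every object of $\sZ^0(\bB)$ is a quotient of a direct summand of some object of $\Fil(\sS)$, and composing with the split projection produces the required epimorphism from an object of $\Fil(\sS)$. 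Theorem~\ref{eklof-trlifaj}(b) therefore identifies $\sL$ with $\Fil(\sS)^\oplus=\sZ^0(\bB_\bproj)$.

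Next I would identify $\sR$ with $\sZ^0(\bB)_\ac^\bctr$. By the Eklof lemma (Lemma~\ref{eklof-lemma}), together with the stability of $\Ext^1({-},Y)$ under direct summands, we have $\sR=\sS^{\perp_1}=\Fil(\sS)^{\perp_1}=(\Fil(\sS)^\oplus)^{\perp_1}=\sZ^0(\bB_\bproj)^{\perp_1}$. The class $\bB_\bproj$ is closed under shifts because $\Phi$ interchanges $[1]$ with $[-1]$ while projectivity in $\sZ^0(\bB^\bec)$ is shift-invariant. Combining this with the Ext-vs-Hom comparison of Lemma~\ref{Ext-in-cocycles-and-Hom-in-homotopy}, we see that $Y\in\sZ^0(\bB_\bproj)^{\perp_1}$ if and only if $\Hom_{\sH^0(\bB)}(Q,Y[1])=0$ for every $Q\in\bB_\bproj$, and, by replacing $Q$ with $Q[-1]$, this is exactly the definition of Becker-contraacyclicity.

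For completeness, Theorem~\ref{eklof-trlifaj}(a) still requires $\sR$ to be cogenerating. For any $E\in\sZ^0(\bB)$, Corollary~\ref{Xi-extension-cokernel}(a) applied to $A=E[1]$ delivers a natural monomorphism $E\rightarrowtail\Xi(E[1])=\cone(\id_E)$ in $\sZ^0(\bB)$. The object $\cone(\id_E)$ is contractible in $\bB$, hence zero in $\sH^0(\bB)$, and therefore trivially Becker-contraacyclic. This witnesses the cogeneration, and Theorem~\ref{eklof-trlifaj}(a) concludes completeness of $(\sZ^0(\bB_\bproj),\sZ^0(\bB)_\ac^\bctr)$.

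Hereditariness and thickness then follow by applying Lemma~\ref{cotorsion-pairs-with-graded-projectives} to this complete cotorsion pair: its left class is trivially contained in $\sZ^0(\bB_\bproj)$ and its right class is visibly shift-invariant. The truly substantive input is the already-established Proposition~\ref{graded-projective-deconstructible}; the only genuinely new verification beyond invoking the preceding lemmas is the cogeneration of $\sR$. I expect this to be the main obstacle in the overall argument—though the contractible witness $\cone(\id_E)$ built out of Corollary~\ref{Xi-extension-cokernel}(a) resolves it cleanly.
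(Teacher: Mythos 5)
Your proposal is correct and follows essentially the same route as the paper's proof: the same deconstructibility input from Proposition~\ref{graded-projective-deconstructible}, the same Eklof--Trlifaj identification of the two classes via Lemma~\ref{Ext-in-cocycles-and-Hom-in-homotopy} and Lemma~\ref{eklof-lemma}, and the same contractible witness $\cone(\id_E)$ for cogeneration (you even patch, correctly, the small point that Theorem~\ref{eklof-trlifaj}(b) needs $\Fil(\sS)$ itself, not just $\Fil(\sS)^\oplus$, to be generating). The only divergence is the final step, where the paper deduces hereditariness directly from Lemma~\ref{becker-and-lp-contraacyclic}(a) together with kernel-closure of graded-projectives, whereas you invoke Lemma~\ref{cotorsion-pairs-with-graded-projectives}; since that lemma is proved independently earlier and is exactly what the paper itself uses in Theorem~\ref{contraderived-model-structure}, this is a harmless variation with no circularity.
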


\begin{proof}
 This is a generalization of~\cite[Theorem~7.3]{PS4}.

 Let $\sS\subset\sZ^0(\bB_\bproj)$ be a set of graded-projective objects
in $\bB$ such that $\sZ^0(\bB_\bproj)=\Fil(\sS)^\oplus$ in $\sZ^0(\bB)$,
as in Proposition~\ref{graded-projective-deconstructible}.
 The claim is that the set $\sS\subset\sZ^0(\bB)$ generates
the desired cotorsion pair.

 Indeed, for any objects $Q\in\sZ^0(\bB_\bproj)$ and $B\in\sZ^0(\bB)$
we have $\Ext_{\sZ^0(\bB)}^1(Q,B)\simeq\Hom_{\sH^0(\bB)}(Q,B[1])$
by Lemma~\ref{Ext-in-cocycles-and-Hom-in-homotopy}.
 Hence $\sZ^0(\bB_\bproj)^{\perp_1}=\sZ^0(\bB)_\ac^\bctr
\subset\sZ^0(\bB)$.
 By Lemma~\ref{eklof-lemma}, it follows that
$\sS^{\perp_1}=\sZ^0(\bB)_\ac^\bctr$.

 Furthermore, the class $\sZ^0(\bB_\bproj)=\Fil(\sS)^\oplus$ is
generating in $\sZ^0(\bB)$ (e.~g., because $\sZ^0(\bB)_\proj\subset
\sZ^0(\bB_\bproj)$ by
Lemma~\ref{projectives-are-contractible-graded-projectives}(a)
and there are enough projective objects in the abelian category
$\sZ^0(\bB)$ by assumption).
 Applying Theorem~\ref{eklof-trlifaj}(b), we can conclude that
$\sZ^0(\bB_\bproj)={}^{\perp_1}(\sZ^0(\bB)_\ac^\bctr)$.

 Here is an alternative elementary argument proving the latter equality.
 Notice that in the pair of adjoint functors between abelian categories
$\Phi\:\sZ^0(\bB)\rarrow\sZ^0(\bB^\bec)$ and $\Psi^-\:\sZ^0(\bB^\bec)
\rarrow\sZ^0(\bB)$, both the functors are exact.
 Therefore, for any objects $B\in\sZ^0(\bB)$ and $E\in\sZ^0(\bB^\bec)$
one has $\Ext^n_{\sZ^0(\bB)}(B,\Psi^-(E))\simeq
\Ext^n_{\sZ^0(\bB^\bec)}(\Phi(B),E)$ for all $n\ge0$
(see, e.~g., \cite[Lemma~9.34]{Pedg}).
 In particular, this isomorphism holds for $n=1$.
 Since the object $\Psi^-(E)$ is contractible (by the construction of
the functor~$\Psi^-$), and consequently belongs to
$\sZ^0(\bB)_\ac^\bctr$, it follows that
${}^{\perp_1}(\sZ^0(\bB)_\ac^\bctr)\subset\sZ^0(\bB_\bproj)$.
 The converse inclusion holds since we already know that
$\sZ^0(\bB_\bproj)^{\perp_1}=\sZ^0(\bB)_\ac^\bctr$.

 Any object $B\in\sZ^0(\bB)$ is a subobject of the contractible object
$\Xi(B)[1]=\cone(\id_B)$ by Corollary~\ref{Xi-extension-cokernel}(a),
so the class $\sZ^0(\bB)_\ac^\bctr$ is cogenerating in~$\sZ^0(\bB)$.
 Hence Theorem~\ref{eklof-trlifaj}(a) is applicable, and we can
conclude that our cotorsion pair is complete.
 The cotorsion pair is hereditary, since the class
$\sZ^0(\bB)_\ac^\bctr$ is closed under the cokernels of monomorphisms
in $\sZ^0(\bB)$, as one can see from
Lemma~\ref{becker-and-lp-contraacyclic}(a).
 It is also clear that the class $\sZ^0(\bB_\bproj)\subset\sZ^0(\bB)$
is closed under the kernels of epimorphisms, since the functor $\Phi$
preserves kernels.
\end{proof}

 Now we are ready to prove the corollary formulated in
Section~\ref{becker-contraderived-subsecn}.

\begin{proof}[Proof of Corollary~\ref{becker-contraderived-corollary}]
 It is clear from the definitions that the triangulated functor
$\sZ^0(\bB_\bproj)\rarrow\sD^\bctr(\bB)$ is fully faithful
(cf.\ \cite[Proposition 6.5]{PS4}).
 In order to prove the corollary, it remains to find, for any
object $B\in\sZ^0(\bB)$, a graded-projective object
$Q\in\sZ^0(\bB_\bproj)$ together with a morphism $Q\rarrow B$ in
$\sZ^0(\bB)$ whose cone belongs to $\sZ^0(\bB)_\ac^\bctr$.

 For this purpose, consider a special precover short exact sequence
$0\rarrow X\rarrow Q\rarrow B\rarrow0$ in the complete cotorsion
pair of Theorem~\ref{contraderived-cotorsion-pair}.
 So we have $X\in\sZ^0(\bB)_\ac^\bctr$ and $Q\in\sZ^0(\bB_\bproj)$.
 By Lemma~\ref{becker-and-lp-contraacyclic}(a), the totalization
$\Tot(X\to Q\to C)$ of the short exact sequence $0\rarrow X\rarrow Q
\rarrow C\rarrow0$ in $\sZ^0(\bB)$ is a Becker-contraacyclic object.
 Since the object $X$ is Becker-contraacyclic, it follows that the cone
of the closed morphism $Q\rarrow C$ is Becker-contraacyclic, too.
\end{proof}

\begin{thm} \label{contraderived-model-structure}
 Let\/ $\bB$ be a locally presentable abelian DG\+category with
enough projective objects.
 Then the triple of classes of objects\/ $\sL=\sZ^0(\bB_\bproj)$, \
$\sW=\sZ^0(\bB)_\ac^\bctr$, and\/ $\sR=\sZ^0(\bB)$ is a cofibrantly
generated hereditary abelian model structure on the abelian
category\/~$\sZ^0(\bB)$.
\end{thm}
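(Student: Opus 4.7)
The plan is to recognize $(\sL,\sW,\sR)$ as a projective abelian model structure by invoking Lemma~\ref{projective-injective-abelian-model-structures}(a), and then to establish cofibrant generation via Lemma~\ref{cofibrantly-generated-abelian-model}. The bulk of the work has already been carried out in Theorem~\ref{contraderived-cotorsion-pair} and Proposition~\ref{graded-projective-deconstructible}, so this final theorem is essentially an assembly of prior results.

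First, I would verify the hypotheses of Lemma~\ref{projective-injective-abelian-model-structures}(a) for the pair $(\sL,\sW)=(\sZ^0(\bB_\bproj),\sZ^0(\bB)_\ac^\bctr)$. The abelian category $\sZ^0(\bB)$ has enough projectives by hypothesis (in the sense of Lemma~\ref{abelian-DG-enough-projectives}). That $(\sL,\sW)$ is a complete hereditary cotorsion pair is the content of Theorem~\ref{contraderived-cotorsion-pair}. It remains to check that $\sL\cap\sW=\sZ^0(\bB)_\proj$ and that $\sW$ is thick; both assertions follow from Lemma~\ref{cotorsion-pairs-with-graded-projectives} applied to $(\sL,\sW)$. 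The applicability of that lemma requires the shift-invariance $\sL=\sL[1]$, which holds because the functor $\Phi_\bB$ intertwines shift with inverse shift (Section~\ref{Phi-and-Psi-subsecn}) and the shift is an auto-equivalence of $\sZ^0(\bB^\bec)$ preserving projectivity. Once $(\sL,\sW,\sR)$ is recognized as a projective abelian model structure, it is automatically hereditary (as recalled after Lemma~\ref{hereditary-abelian-model}).

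Next, for cofibrant generation via Lemma~\ref{cofibrantly-generated-abelian-model}, I need both cotorsion pairs $(\sL,\>\sR\cap\sW)=(\sZ^0(\bB_\bproj),\sZ^0(\bB)_\ac^\bctr)$ and $(\sL\cap\sW,\>\sR)=(\sZ^0(\bB)_\proj,\sZ^0(\bB))$ to be generated by sets. For the former, Proposition~\ref{graded-projective-deconstructible} supplies a set $\sS\subset\sZ^0(\bB_\bproj)$ with $\sZ^0(\bB_\bproj)=\Fil(\sS)^\oplus$; the Eklof lemma~\ref{eklof-lemma} then gives ${}^{\perp_1}(\sS^{\perp_1})\supset\Fil(\sS)^\oplus=\sZ^0(\bB_\bproj)$, and combined with the obvious inclusion $\sS^{\perp_1}\supset\sZ^0(\bB_\bproj)^{\perp_1}=\sZ^0(\bB)_\ac^\bctr$, this identifies the cotorsion pair generated by $\sS$ as the desired one. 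For the latter cotorsion pair, one takes $\sS'=\{P\}$ where $P$ is any projective generator of $\sZ^0(\bB)$; then $\{P\}^{\perp_1}=\sZ^0(\bB)$ since $P$ is projective, and ${}^{\perp_1}\sZ^0(\bB)=\sZ^0(\bB)_\proj$ by the definition of projectivity, so the cotorsion pair generated by $\{P\}$ is $(\sZ^0(\bB)_\proj,\sZ^0(\bB))$.

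There is no genuine obstacle at this stage: the real technical input, namely the deconstructibility Proposition~\ref{graded-projective-deconstructible}, is already in place, and the model-category machinery of Sections~\ref{cotorsion-pairs-secn}--\ref{abelian-model-structures-secn} reduces the verification to the bookkeeping described above. If there is any subtlety to flag, it is the shift-invariance of $\sZ^0(\bB_\bproj)$, which one should state explicitly before appealing to Lemma~\ref{cotorsion-pairs-with-graded-projectives}; but this is immediate from the formula $\Phi\circ[n]=[-n]\circ\Phi$ together with the fact that shifts preserve projectivity in $\sZ^0(\bB^\bec)$.
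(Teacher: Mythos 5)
Your proof is correct and follows essentially the same route as the paper: recognize the triple as a projective abelian model structure via Theorem~\ref{contraderived-cotorsion-pair}, Lemma~\ref{cotorsion-pairs-with-graded-projectives}, and Lemma~\ref{projective-injective-abelian-model-structures}(a), then obtain cofibrant generation from Lemma~\ref{cofibrantly-generated-abelian-model}, with the pair $(\sL,\sW)$ generated by the set from Proposition~\ref{graded-projective-deconstructible} (which is exactly what the proof of Theorem~\ref{contraderived-cotorsion-pair} already established) and $(\sZ^0(\bB)_\proj,\sZ^0(\bB))$ generated by a projective generator. Your explicit check of the shift-invariance $\sL=\sL[1]$ via $\Phi\circ[n]=[-n]\circ\Phi$ is a point the paper leaves implicit, and is a welcome addition.
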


\begin{proof}
 This is a common generalization of~\cite[Proposition~1.3.6(1)]{Bec}
(which is the CDG\+module case) and~\cite[Theorem~7.5]{PS4} (which
is the case of complexes in abelian categories).
 The pair of classes $(\sL,\sW)$ is a hereditary complete cotorsion
pair in $\sZ^0(\bB)$ by Theorem~\ref{contraderived-cotorsion-pair}.
 According to Lemma~\ref{cotorsion-pairs-with-graded-projectives},
it follows that $\sL\cap\sW=\sZ^0(\bB)_\proj$ is the class of all
projective objects in $\sZ^0(\bB)$ and the class of all
Becker-contraacyclic objects $\sW$ is thick in $\sZ^0(\bB)$
(see also Lemma~\ref{becker-and-lp-contraacyclic}(a)).
 By Lemma~\ref{projective-injective-abelian-model-structures}(a),
the triple $(\sL,\sW,\sR)$ is a projective abelian model structure
on the abelian category~$\sZ^0(\bB)$.

 Finally, it was shown in the proof of
Theorem~\ref{contraderived-cotorsion-pair} that the cotorsion pair
$(\sL,\sW)$ in $\sZ^0(\bB)$ is generated by a set of objects.
 The cotorsion pair $(\sZ^0(\bB)_\proj,\sZ^0(\bB))$ is generated by
the empty set of objects (or by the single zero object, or by any
chosen single projective generator) in~$\sZ^0(\bB)$.
 According to Lemma~\ref{cofibrantly-generated-abelian-model}, this
means that our abelian model structure is cofibrantly generated.
\end{proof}

 The abelian model structure $(\sL,\sW,\sR)$ defined in
Theorem~\ref{contraderived-model-structure} is called
the \emph{contraderived model structure} on the category~$\sZ^0(\bB)$.

\begin{lem} \label{contraderived-weak-equivalences}
 For any locally presentable abelian DG\+category\/ $\bB$ with enough
projective objects, the class\/ $\cW$ of all weak equivalences in
the contraderived model structure on\/ $\sZ^0(\bB)$ coincides with
the class of all closed morphisms of degree\/~$0$ in\/ $\bB$ with
the cones belonging to\/ $\sZ^0(\bB)_\ac^\bctr$.
\end{lem}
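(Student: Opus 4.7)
The plan is to prove both inclusions $\cW \subset \cW'$ and $\cW' \subset \cW$, where $\cW'$ denotes the class of closed degree-$0$ morphisms $f$ in $\bB$ with $\cone(f) \in \sZ^0(\bB)_\ac^\bctr$. The key preliminary observation is that $\sH^0(\bB)_\ac^\bctr$ is a thick triangulated subcategory of $\sH^0(\bB)$ (being the common kernel of the cohomological functors $\Hom_{\sH^0(\bB)}(Q,-)$ for $Q$ ranging over $\sZ^0(\bB_\bproj)$), so the octahedral axiom applied to a composable pair $f_1,f_2$ yields a triangle
$$
 \cone(f_1)\lrarrow\cone(f_2 f_1)\lrarrow\cone(f_2)\lrarrow\cone(f_1)[1]
$$
in $\sH^0(\bB)$, whence $\cW'$ satisfies the 2-out-of-3 property: if any two of $f_1$, $f_2$, $f_2 f_1$ lie in $\cW'$, so does the third.

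To establish $\cW \subset \cW'$, by Theorem~\ref{abelian-model-structures} every $f \in \cW$ factors as $rl$ with $l$ a trivial cofibration and $r$ a trivial fibration, so composition closure of $\cW'$ reduces the task to these two special cases. If $l\:A\to C$ is a trivial cofibration, its cokernel $P$ is projective in $\sZ^0(\bB)$ by Theorem~\ref{contraderived-model-structure}, so the short exact sequence $0 \to A \to C \to P \to 0$ splits in $\sZ^0(\bB)$ and $\cone(l)$ decomposes as $\cone(\id_A) \oplus P$; both summands are contractible (the former tautologically, the latter by Lemma~\ref{projectives-are-contractible-graded-projectives}(a)), hence $\cone(l) \in \sZ^0(\bB)_\ac^\bctr$. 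If $r\:C \to B$ is a trivial fibration with kernel $W \in \sZ^0(\bB)_\ac^\bctr$, then for any $Q \in \sZ^0(\bB_\bproj)$ the long exact $\Ext^*_{\sZ^0(\bB)}(Q,-)$ sequence associated with $0 \to W \to C \to B \to 0$, together with $\Ext^1_{\sZ^0(\bB)}(Q, W) = 0$ coming from the cotorsion pair of Theorem~\ref{contraderived-cotorsion-pair}, shows that $\Hom_{\sZ^0(\bB)}(Q, C) \to \Hom_{\sZ^0(\bB)}(Q, B)$ is surjective and $\Ext^1_{\sZ^0(\bB)}(Q, C) \to \Ext^1_{\sZ^0(\bB)}(Q, B)$ is injective. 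Passing to $\sH^0(\bB)$ and using Lemma~\ref{Ext-in-cocycles-and-Hom-in-homotopy} to identify $\Hom_{\sH^0(\bB)}(Q, X[1]) \simeq \Ext^1_{\sZ^0(\bB)}(Q, X)$ (valid because $\Phi(Q)$ is projective in $\sZ^0(\bB^\bec)$), the long exact sequence for $\Hom_{\sH^0(\bB)}(Q,-)$ applied to the triangle $C \to B \to \cone(r) \to C[1]$ then yields $\Hom_{\sH^0(\bB)}(Q, \cone(r)) = 0$, whence $\cone(r) \in \sZ^0(\bB)_\ac^\bctr$.

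For the reverse inclusion $\cW' \subset \cW$, suppose $\cone(f) \in \sZ^0(\bB)_\ac^\bctr$ and factor $f = rl$ via the weak factorization system $(\cL, \cR \cap \cW)$, so that $l\:A \to C'$ is a monomorphism with cokernel $P \in \sZ^0(\bB_\bproj)$ and $r$ is a trivial fibration. The first direction gives $\cone(r) \in \sZ^0(\bB)_\ac^\bctr$, so the 2-out-of-3 property of $\cW'$ yields $\cone(l) \in \sZ^0(\bB)_\ac^\bctr$. A direct computation shows that the natural projection $\cone(l) \twoheadrightarrow P$ is a closed epimorphism of degree~$0$ in $\sZ^0(\bB)$ whose kernel is $\cone(\id_A)$, which is contractible and hence Becker-contraacyclic; by Lemma~\ref{hereditary-cotorsion} applied to the hereditary cotorsion pair of Theorem~\ref{contraderived-cotorsion-pair}, the class $\sZ^0(\bB)_\ac^\bctr$ is closed under cokernels of monomorphisms, so $P \in \sZ^0(\bB)_\ac^\bctr$. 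Combining $P \in \sZ^0(\bB_\bproj) \cap \sZ^0(\bB)_\ac^\bctr$ with Lemma~\ref{cotorsion-pairs-with-graded-projectives} yields $P \in \sZ^0(\bB)_\proj$, so $l$ is in fact a trivial cofibration and $f = rl$ is a weak equivalence. The main technical hurdle is the first-direction computation for trivial fibrations, which crucially relies on Lemma~\ref{Ext-in-cocycles-and-Hom-in-homotopy} to transfer between $\Ext^1$ in $\sZ^0(\bB)$ and $\Hom$ in the homotopy category; the remaining steps are essentially formal manipulations with the triangulated structure on $\sH^0(\bB)$ and the explicit relationship between the cone of a monomorphism in $\bB$ and its cokernel in $\sZ^0(\bB)$.
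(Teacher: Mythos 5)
Your proof is correct, but it follows a genuinely different route from the paper's. The paper's argument rests on \cite[Lemma~5.8]{Hov} (in an abelian model category, a monomorphism, resp.\ epimorphism, is a weak equivalence if and only if its cokernel, resp.\ kernel, is weakly trivial) together with Lemma~\ref{becker-and-lp-contraacyclic}(a), which it uses to pass between kernels/cokernels and cones; it then factors $f=rl$ with $l$ a trivial cofibration and $r$ a fibration and runs the 2-out-of-3 property of both classes on the epimorphism~$r$ in both directions. You bypass Hovey's lemma altogether: for a trivial cofibration you split off the projective cokernel and compute the cone explicitly; for a trivial fibration you re-derive $\Hom_{\sH^0(\bB)}(Q,\cone(r))=0$ by comparing the $\Ext$ long exact sequence in $\sZ^0(\bB)$ with the $\Hom$ long exact sequence in $\sH^0(\bB)$ through the identification of Lemma~\ref{Ext-in-cocycles-and-Hom-in-homotopy} --- in effect a hands-on special case of what the paper extracts from the totalization lemma; and in the converse direction, instead of showing that a contraacyclic cone of the epimorphism~$r$ forces a contraacyclic kernel, you work on the cofibration side, observing that the cokernel $P$ of~$l$ is both graded-projective and contraacyclic, hence projective in $\sZ^0(\bB)$ by the identity $\sL\cap\sW=\sZ^0(\bB)_\proj$ of Lemma~\ref{cotorsion-pairs-with-graded-projectives}, so that $l$ is in fact a trivial cofibration --- a neat use of the cotorsion-pair data. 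One step you wave at, namely that the natural morphism $\cone(l)\rarrow P$ is an epimorphism in $\sZ^0(\bB)$ with kernel $\cone(\id_A)$, does hold in any abelian DG\+category: the morphism of arrows from $\id_A$ to~$l$ induces a morphism between the two short exact sequences provided by Lemma~\ref{cone-kernel-cokernel}, and the nine lemma in the abelian category $\sZ^0(\bB)$ gives the claim; this merits a sentence rather than ``a direct computation''. On balance, the paper's proof is shorter and reuses machinery already deployed throughout (Hovey's lemma and Lemma~\ref{becker-and-lp-contraacyclic}(a)), while yours is more self-contained, needing only what is established in Theorems~\ref{contraderived-cotorsion-pair} and~\ref{contraderived-model-structure}, at the cost of somewhat longer homological bookkeeping.
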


\begin{proof}
 \cite[Lemma~5.8]{Hov} tells that a monomorphism in an abelian model
category is a weak equivalence if and only if its cokernel is weakly
trivial.
 Dually, an epimorphism is a weak equivalence if and only if its
kernel is weakly trivial.

 In the contraderived model structure on $\sZ^0(\bB)$, the class of
weakly trivial objects $\sW$ is the class of all Becker-contraacyclic
objects.
 Any morphism~$f$ in $\sZ^0(\bB)$ can be factorized as $f=rl$,
where $l$~is, say, a trivial cofibration, and $r$~is a fibration.
 Now $l$~is a monomorphism with a Becker-contraacyclic cokernel,
hence in view of Lemma~\ref{becker-and-lp-contraacyclic}(a) also
with a Becker-contraacyclic cone; at the same time~$l$ is a weak
equivalence.
 On the other hand, $r$~is an epimorphism.
 
 Notice that both the class of weak equivalences and the class of
morphisms with a Becker-contraacyclic cone satisfy the 2\+out-of\+3
property.
 If $f$~is a weak equivalence, then so is~$r$;
then the kernel of~$r$ is Becker-contraacyclic, hence by
Lemma~\ref{becker-and-lp-contraacyclic}(a) the cone of~$r$ is
Becker-contraacyclic as well; it follows that the cone of~$f$
is Becker-contraacyclic.
 If $f$~has a Becker-contraacyclic cone, then so does~$r$;
hence by Lemma~\ref{becker-and-lp-contraacyclic}(a) the kernel of~$r$
is Becker-contraacyclic; thus $r$~is a weak equivalence, and
it follows that~$f$ is a weak equivalence.
 Alternatively, one could use the fact that any morphism~$f$ is
the composition of a cofibration~$l$ and a trivial fibration~$r$
and analogous reasoning in that case.
\end{proof}

 The following corollary presumes existence of infinite coproducts
in Becker's contraderived category $\sD^\bctr(\bB)$.
 Such coproducts can be simply constructed as the coproducts in
the homotopy category of graded-projective objects $\sH^0(\bB_\bproj)$,
which is equivalent to $\sB^\bctr(\bB)$ by
Corollary~\ref{becker-contraderived-corollary}.
 Notice that coproducts of graded-projective objects in $\bB$ are
graded-projective, since the coproducts of projectives in
$\sZ^0(\bB^\bec)$ are projective and the functor $\Phi$ preserves
coproducts.

\begin{cor} \label{contraderived-well-generated}
 For any locally presentable abelian DG\+category\/ $\bB$ with
enough projective objects, Becker's contraderived category\/
$\sD^\bctr(\bB)$ is a well-generated triangulated category.
\end{cor}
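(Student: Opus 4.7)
The plan is to invoke the general theory of stable combinatorial model categories, applied to the contraderived model structure just constructed. By the hypothesis that $\bB$ is a locally presentable abelian DG-category, the abelian category $\sZ^0(\bB)$ is locally presentable (by the discussion in Section~\ref{locally-presentable-DG-categories-subsecn}). Theorem~\ref{contraderived-model-structure} equips $\sZ^0(\bB)$ with a cofibrantly generated hereditary abelian model structure $(\sL,\sW,\sR)=(\sZ^0(\bB_\bproj),\,\sZ^0(\bB)_\ac^\bctr,\,\sZ^0(\bB))$. Taken together, these facts say that $\sZ^0(\bB)$ with the contraderived model structure is a combinatorial model category.

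Next, since the contraderived model structure is hereditary, Lemma~\ref{hereditary-is-stable} tells us it is stable. Proposition~\ref{stable-combinatorial-well-generated} then immediately gives that the homotopy category $\sZ^0(\bB)[\cW^{-1}]$ is a well-generated triangulated category.

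The remaining step is to identify $\sZ^0(\bB)[\cW^{-1}]$ with $\sD^\bctr(\bB)$. For this, I would use Lemma~\ref{contraderived-weak-equivalences}, which says that the weak equivalences $\cW$ are precisely the closed morphisms of degree~$0$ whose cones lie in $\sZ^0(\bB)_\ac^\bctr$. Since any closed morphism homotopic to~$0$ has a contractible cone and hence lies in $\cW$, inverting $\cW$ factors through the passage $\sZ^0(\bB) \to \sH^0(\bB)$ followed by the Verdier quotient by $\sH^0(\bB)_\ac^\bctr$, which is exactly $\sD^\bctr(\bB)$ by definition. Alternatively, in the projective abelian model structure every object is fibrant, so the homotopy category equals the cofibrant objects modulo chain homotopy, namely $\sH^0(\bB_\bproj)$, which Corollary~\ref{becker-contraderived-corollary} identifies with $\sD^\bctr(\bB)$.

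Since this is an assembly of results from the prior sections, there is no real obstacle within the argument itself; the substantive work was carried out in Theorem~\ref{contraderived-model-structure} and Corollary~\ref{becker-contraderived-corollary}. The one place demanding care is checking that both parts of the combinatorial hypothesis are in hand: local presentability of $\sZ^0(\bB)$ (which needs the DG-category-level hypothesis translated via Section~\ref{locally-presentable-DG-categories-subsecn}), and cofibrant generation of the model structure (the final assertion of Theorem~\ref{contraderived-model-structure}, which in turn rested on the deconstructibility Proposition~\ref{graded-projective-deconstructible}).
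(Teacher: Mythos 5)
Your skeleton coincides with the paper's proof: both deduce well-generation from Theorem~\ref{contraderived-model-structure} via Lemma~\ref{hereditary-is-stable} and Proposition~\ref{stable-combinatorial-well-generated}, and both reduce to identifying $\sZ^0(\bB)[\cW^{-1}]$ with $\sD^\bctr(\bB)$ using Lemma~\ref{contraderived-weak-equivalences}. However, your justification of that identification contains a genuine error: it is not true that a closed morphism homotopic to~$0$ has a contractible cone. If $f\:X\rarrow Y$ is null-homotopic, then $\cone(f)$ is isomorphic in $\sZ^0(\bB)$ to $\cone(0)\simeq Y\oplus X[1]$, which is contractible only when $X$ and $Y$ are. (What is true is that a homotopy \emph{equivalence} has a contractible, hence Becker-contraacyclic, cone.) More importantly, even after this correction, knowing that homotopy equivalences lie in $\cW$ does not by itself show that the localization $\sZ^0(\bB)\rarrow\sZ^0(\bB)[\cW^{-1}]$ factors through $\sH^0(\bB)$: for that one must prove that homotopic closed morphisms become \emph{equal} after inverting~$\cW$. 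This is exactly the ``key observation'' occupying most of the paper's proof: given homotopic $f'$, $f''\:A\rightrightarrows B$, one factors both as $h\iota'$ and $h\iota''$ through $A\oplus\cone(\id_A)$ with $\pi\iota'=\id_A=\pi\iota''$, where the projection~$\pi$ is a homotopy equivalence since $\cone(\id_A)$ is contractible; inverting~$\pi$ forces $\iota'=\iota''$ and hence $f'=f''$. Without some such argument your first route has a hole.

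Your alternative route---all objects are fibrant, so the homotopy category of the model structure is the category of cofibrant objects modulo homotopy, identified with $\sH^0(\bB_\bproj)$ and then with $\sD^\bctr(\bB)$ by Corollary~\ref{becker-contraderived-corollary}---does repair the gap, but it leaves one check implicit: that the model-categorical homotopy relation on morphisms between cofibrant (that is, graded-projective) objects coincides with the chain-homotopy relation of the DG\+category~$\bB$. This holds because in an abelian model structure two such morphisms are homotopic if and only if their difference factors through an object that is both trivially cofibrant and trivially fibrant, which here means a projective object of $\sZ^0(\bB)$; by Lemma~\ref{projectives-are-contractible-graded-projectives}(a) these are precisely the contractible graded-projective objects, and conversely a DG null-homotopy of a morphism out of a graded-projective $Q$ provides a factorization through $\cone(\id_Q)$, which is such an object. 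With that supplied, your second argument is complete and amounts to a slightly different packaging of the same content as the paper's proof.
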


\begin{proof}
 This is a generalization of~\cite[Corollary~7.7]{PS4}.
 Let us show that the contraderived category $\sD^\bctr(\bB)$ can be
equivalently defined as the homotopy category $\sZ^0(\bB)[\cW^{-1}]$
of the contraderived model structure on $\sZ^0(\bB)$.

 The key observation, generalizing the proof in~\cite{PS4}, is that,
for any DG\+category $\bE$ with shifts and cones, inverting all
the homotopy equivalences in $\sZ^0(\bE)$ produces the homotopy
category $\sH^0(\bE)$.
 In other words, homotopic closed morphisms of degree~$0$ become equal
after inverting the homotopy equivalences.
 Indeed, for any pair of homotopic closed morphisms of degree zero
$f'$, $f''\:A\rarrow B$ in $\bE$, there exist a morphism $h\:A\oplus
\cone(\id_A)\rarrow B$ and two morphisms $\iota'$, $\iota''\:A
\rarrow A\oplus\cone(\id_A)$ in $\sZ^0(\bE)$ such that $f'=h\iota'$
and $f''=h\iota''$, while $\pi\iota'=\id_A=\pi\iota''$, where
$\pi\:A\oplus\cone(\id_A)\rarrow A$ is the direct summand projection.
 The morphism~$\pi$ is a homotopy equivalence in $\bE$, since
the object $\cone(\id_A)$ is contractible.
 Inverting~$\pi$ identifies $\iota'$ with~$\iota''$ and consequently
$f'$ with~$f''$.

 In the situation at hand, it follows that inverting all the morphisms
with Becker-contraacyclic cones in $\sZ^0(\bB)$ produces Becker's
contraderived category $\sD^\bctr(\bB)$.
 It remains to use Lemma~\ref{contraderived-weak-equivalences} in
order to conclude that $\sD^\bctr(\bB)=\sZ^0(\bB)[\cW^{-1}]$.

 By Theorem~\ref{contraderived-model-structure}, the contraderived
model structure on $\sZ^0(\bB)$ is hereditary abelian and cofibrantly
generated.
 By Lemma~\ref{hereditary-is-stable}, this model structure is also
stable.
 So Proposition~\ref{stable-combinatorial-well-generated} can be
applied, finishing the proof. 
\end{proof}

\Section{Coderived Model Structure} \label{coderived-secn}

 In this section we work out a common generalization of the coderived
model structure on the abelian category of CDG\+modules over
a CDG\+ring~\cite[Proposition~1.3.6(2)]{Bec} and the coderived model
structure on the category of complexes over a Grothendieck abelian
category~\cite[Section~4.1]{Gil}, \cite[Section~9]{PS4}.
 The Grothendieck abelian DG\+categories~$\bA$ (as defined
in~\cite[Section~9.1]{Pedg}) form a suitable context.

 The main results of the section, however, are
Corollary~\ref{coacyclics-as-closure-extensions-directed-colimits},
claiming that the class of all Becker-coacyclic objects is closed
under directed colimits, and
Theorem~\ref{generators-of-coderived-category}, providing a sufficient
condition for a set of objects to generate Becker's coderived category
$\sD^\bco(\bA)$ as a triangulated category with coproducts.

\subsection{Injective objects in abelian DG-categories}
 In this section we briefly list the dual assertions to the ones
in Section~\ref{projectives-in-abelian-DG-categories-subsecn} and
fix the related terminology/notation.

\begin{lem} \label{abelian-DG-enough-injectives}
 Let\/ $\bA$ be an abelian DG\+category.
 Then the abelian category\/ $\sZ^0(\bA)$ has enough injective objects
if and only if the abelian category\/ $\sZ^0(\bA^\bec)$ has enough
injective objects.
\end{lem}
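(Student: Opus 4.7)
This is a straightforward dualization of Lemma~\ref{abelian-DG-enough-projectives}. The plan is to first observe that the functors $\Phi_\bA$ and $\Psi^-_\bA$ preserve injective objects, since they are right adjoint to the functors $\Psi^+_\bA$ and $\Phi_\bA$ respectively (both of which are exact because they have adjoints on both sides). Then, given any $A\in\sZ^0(\bA)$, I would construct a monomorphism from $A$ into $\Psi^-(I)$ for a suitable injective $I\in\sZ^0(\bA^\bec)$.

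For the ``if'' direction, assume $\sZ^0(\bA^\bec)$ has enough injective objects and pick a monomorphism $\iota\:\Phi(A)\rightarrowtail I$ with $I$ injective in $\sZ^0(\bA^\bec)$. By the adjunction $\Phi\dashv\Psi^-$, this corresponds to a morphism $j\:A\rarrow\Psi^-(I)$ which factors as
\[
 A\lrarrow\Psi^-\Phi(A)\overset{\Psi^-(\iota)}{\lrarrow}\Psi^-(I),
\]
where the first arrow is the adjunction unit. Since $\Psi^-$ is exact, the second arrow is a monomorphism. Using $\Psi^-=\Psi^+[1]$ together with the isomorphism $\Psi^+\Phi\simeq\Xi_\bA$ from the end of Section~\ref{Phi-and-Psi-subsecn}, one identifies $\Psi^-\Phi(A)\simeq\Xi_\bA(A)[1]=\cone(\id_A)$; under this identification the adjunction unit $A\rarrow\Psi^-\Phi(A)$ matches the natural inclusion coming from the short exact sequence $A\rightarrowtail\Xi(A)[1]\twoheadrightarrow A[1]$, that is the $[1]$\+shifted version of Corollary~\ref{Xi-extension-cokernel}(a). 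Hence the composite $j$ is a monomorphism into the injective object $\Psi^-(I)$ of $\sZ^0(\bA)$.

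For the ``only if'' direction, I would invoke the symmetry between $\bA$ and $\bA^\bec$ furnished by the equivalence $\bec\bec\:\bA\simeq\bA^{\bec\bec}$ of Corollary~\ref{becbec-equivalence-for-abelian-DG}, combined with the natural isomorphisms
\[
 \Psi^+_{\bA^\bec}\circ\sZ^0(\bec\bec)\simeq\Phi_\bA,\qquad
 \sZ^0(\bec\bec)\circ\Psi^-_\bA\simeq\Phi_{\bA^\bec}
\]
from the end of Section~\ref{DG-functor-becbec-subsecn}. These interchange the roles of $\Phi$ and $\Psi^\pm$, so that running the argument of the previous paragraph with $\bA$ replaced by $\bA^\bec$ delivers the desired implication.

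The only mildly delicate step, which deserves a careful check, is the identification of the adjunction unit $A\rarrow\Psi^-\Phi(A)$ with the canonical monomorphism $A\rightarrowtail\cone(\id_A)$ under the isomorphism $\Psi^-\Phi(A)\simeq\Xi(A)[1]$. This is a naturality statement at the level of the defining data of $\Phi$ and $\Psi^\pm$ recalled in Section~\ref{Phi-and-Psi-subsecn}, entirely parallel to the identification of the counit $\Psi^+\Phi(B)\rarrow B$ with $\Xi(B)\twoheadrightarrow B$ used implicitly in the proof of Lemma~\ref{abelian-DG-enough-projectives}; I do not expect it to pose any real obstacle.
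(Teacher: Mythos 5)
Your proposal is correct and matches the paper's argument: the paper proves this lemma simply as the dual of Lemma~\ref{abelian-DG-enough-projectives}, whose proof is precisely the argument you dualize (right adjoints of exact functors preserve injectives, the adjoint morphism $A\rarrow\Psi^-(I)$ factors through the unit, which is identified via $\Psi^-\Phi(A)\simeq\Xi(A)[1]=\cone(\id_A)$ with the canonical monomorphism from Corollary~\ref{Xi-extension-cokernel}(a), and the $\bec\bec$\+symmetry handles the other direction). The one step you flag as delicate can also be dispatched abstractly: since $\Phi$ is faithful and is the left adjoint in $\Phi\dashv\Psi^-$, the unit $A\rarrow\Psi^-\Phi(A)$ is automatically a monomorphism, exactly dual to the counit $\Psi^+\Phi(B)\rarrow B$ being an epimorphism in the paper's projective proof.
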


\begin{proof}
 This is the dual version of
Lemma~\ref{abelian-DG-enough-projectives}.
\end{proof}

 An abelian DG\+category is said to \emph{have enough injective objects}
if it satisfies the equivalent conditions of
Lemma~\ref{abelian-DG-enough-injectives}.

\begin{lem} \label{injectives-are-contractible-graded-injectives}
 Let\/ $\bA$ be an abelian DG\+category with enough injective objects.
 Then \par
\textup{(a)} an object $J\in\sZ^0(\bA)$ is injective if and only
if $J$ is contractible in\/ $\bA$ \emph{and} the object\/
$\Phi(J)\in\sZ^0(\bA^\bec)$ is injective; \par
\textup{(b)} an object $I\in\sZ^0(\bA^\bec)$ is injective if and
only if $I$ is contractible in\/ $\bA^\bec$ \emph{and} the object\/
$\Psi^-(I)\in\sZ^0(\bA)$ is injective.
\end{lem}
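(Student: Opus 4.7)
The plan is to dualize the proof of Lemma~\ref{projectives-are-contractible-graded-projectives}, observing that the roles of $\Psi^+$ and $\Psi^-$ get swapped. In the projective case one exploited the left adjoint structure (e.g.\ $\Psi^+_\bB$ is left adjoint to $\Phi_\bB$, so it preserves projectives); here we will exploit the dual fact that $\Phi_\bA$ is right adjoint to the exact functor $\Psi^+_\bA$ (hence preserves injectives), and that $\Psi^-_\bA$ is right adjoint to the exact functor $\Phi_\bA$ (hence preserves injectives). Moreover, the essential image of $\Psi^-_\bA$ still consists of contractible objects in $\bA$, since by construction $\Psi^-(X^\bec)=X[1]$ and $X$ carries the contracting homotopy $\sigma_X$ built into the definition of $\bA^\bec$.

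For the forward direction of part~(a): if $J\in\sZ^0(\bA)$ is injective, then $\Phi(J)\in\sZ^0(\bA^\bec)$ is injective by the above. Mimicking (dually) the proof of Lemma~\ref{abelian-DG-enough-projectives} outlined before Lemma~\ref{abelian-DG-enough-injectives}, for any $A\in\sZ^0(\bA)$ one produces a monomorphism $A\rarrow\Psi^-(I)$ by picking a monomorphism $\Phi(A)\rightarrowtail I$ into an injective $I\in\sZ^0(\bA^\bec)$, passing to its adjoint, and factoring through the natural morphism $A\rightarrowtail\Xi(A)\simeq\Phi\Psi^-(A)$ of Corollary~\ref{Xi-extension-cokernel}(a). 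Applied to $A=J$, this embedding splits because $J$ is injective, so $J$ is a direct summand of the contractible object $\Psi^-(I)$, hence contractible. Conversely, if $J\in\bA$ is contractible and $\Phi(J)$ is injective, then Lemma~\ref{Ext-in-cocycles-and-Hom-in-homotopy} (applied in its ``$\Phi(Y)$ injective'' form, with $Y=J$) gives $\Ext^1_{\sZ^0(\bA)}(Y,J)\simeq\Hom_{\sH^0(\bA)}(Y,J[1])$ for every $Y\in\sZ^0(\bA)$; this vanishes because $J$, and hence $J[1]$, is zero in $\sH^0(\bA)$. So $J$ is injective.

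Part~(b) will be obtained by applying part~(a) to the abelian DG-category $\bA^\bec$ (which is itself abelian with enough injectives by Lemma~\ref{abelian-DG-enough-injectives}) and transporting via the equivalence $\bec\bec\:\bA\simeq\bA^{\bec\bec}$ of Corollary~\ref{becbec-equivalence-for-abelian-DG}, together with the natural isomorphism $\sZ^0(\bec\bec)\circ\Psi^-_\bA\simeq\Phi_{\bA^\bec}$ recorded at the end of Section~\ref{DG-functor-becbec-subsecn}. Under this almost involution, ``$\Phi$ applied to an object of $\bA^\bec$'' becomes ``$\Psi^-$ applied to an object of $\bA$'', which converts the criterion of part~(a) for $\bA^\bec$ into the criterion of part~(b) for~$\bA$. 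The only genuine obstacle is notational bookkeeping: since in the dual setting $\Psi^-$ (not $\Psi^+$) is the relevant right adjoint, one must be careful to apply the correct one of the two natural isomorphisms from Section~\ref{DG-functor-becbec-subsecn}; once this is done, every step mirrors the projective case.
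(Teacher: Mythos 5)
Your proposal is correct and follows exactly the paper's approach: the paper's entire proof is ``this is the dual version of Lemma~\ref{projectives-are-contractible-graded-projectives}'' (with the dualization of Lemma~\ref{abelian-DG-enough-projectives} implicit, and the symmetry of $\Phi$ and $\Psi^\pm$ under $\bec\bec$ justifying part~(b)), which is precisely what you spell out. One notational slip to fix: the composition you need is $\Psi^-\Phi(A)\simeq\Xi(A)[1]=\cone(\id_A)$, not ``$\Phi\Psi^-(A)\simeq\Xi(A)$'' (the latter has the functors in the wrong order and misses a shift), and the monomorphism $A\rightarrowtail\cone(\id_A)$ through which the adjoint morphism factors is the shift of the one in Corollary~\ref{Xi-extension-cokernel}(a); with this correction every step goes through as you describe.
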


\begin{proof}
 This is the dual version of
Lemma~\ref{projectives-are-contractible-graded-projectives}.
\end{proof}

 Let $\bA$ be an abelian DG\+category.
 We will say that an object $J\in\bA$ is \emph{graded-injective}
if the object $\Phi(J)$ is injective in the abelian category
$\sZ^0(\bA^\bec)$.
 The full DG\+subcategory formed by the graded-injective objects
in $\bA$ is denoted by $\bA_\binj\subset\bA$.
 So the notation $\sZ^0(\bA_\binj)\subset\sZ^0(\bA)$ stands for
the full subcategory of all graded-injective objects in $\sZ^0(\bA)$,
while $\sZ^0(\bA)_\inj\subset\sZ^0(\bA)$ is the (smaller) full
subcategory of all injective objects in $\sZ^0(\bA)$.
 Similarly to the case of graded-projectives discussed in
Section~\ref{projectives-in-abelian-DG-categories-subsecn},
the full DG\+subcategory of graded-injective objects $\bA_\binj$ is
closed under finite direct sums, shifts, twists, and cones in~$\bA$.

\begin{lem} \label{cotorsion-pairs-with-graded-injectives}
 Let\/ $\bA$ be an abelian DG\+category with enough injective objects,
and let $(\sW,\sR)$ be a cotorsion pair in the abelian category\/
$\sZ^0(\bA)$ such that\/ $\sR\subset\sZ^0(\bA_\binj)$.
 Assume that the cotorsion pair $(\sW,\sR)$ is preserved by the shift:\/
$\sW=\sW[1]$, or equivalently, $\sR=\sR[1]$.
 Then\/ $\sR\cap\sW=\sZ^0(\bA)_\inj$.
 If, moreover, the cotorsion pair $(\sW,\sR)$ is complete, then it is
hereditary and the class\/ $\sW$ is thick in\/ $\sZ^0(\bA)$.
\end{lem}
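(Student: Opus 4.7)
The plan is to dualize the proof of Lemma~\ref{cotorsion-pairs-with-graded-projectives} under the standard projective/injective duality, using the $\Hom_{\sH^0(\bA)}$-description of $\Ext^1_{\sZ^0(\bA)}$ from Lemma~\ref{Ext-in-cocycles-and-Hom-in-homotopy} in its graded-injective form (where the isomorphism is available whenever $\Phi$ applied to the second argument is injective in $\sZ^0(\bA^\bec)$).

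To establish the identity $\sR \cap \sW = \sZ^0(\bA)_\inj$, I would first observe that $\sZ^0(\bA)_\inj \subset \sR$ is immediate from the definition of a cotorsion pair. For the inclusion $\sZ^0(\bA)_\inj \subset \sW = {}^{\perp_1}\sR$, given an injective object $I$ and an arbitrary $R \in \sR \subset \sZ^0(\bA_\binj)$, Lemma~\ref{Ext-in-cocycles-and-Hom-in-homotopy} supplies an isomorphism $\Ext^1_{\sZ^0(\bA)}(I,R) \simeq \Hom_{\sH^0(\bA)}(I,R[1])$, which vanishes because $I$ is contractible in $\bA$ by Lemma~\ref{injectives-are-contractible-graded-injectives}(a). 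Conversely, for $J \in \sR \cap \sW$, the identity-morphism computation $\Hom_{\sH^0(\bA)}(J,J) \simeq \Ext^1_{\sZ^0(\bA)}(J,J[-1]) = 0$ (the $\Ext^1$ vanishes because $J \in \sW$ while $J[-1] \in \sR$ by shift invariance) shows that $J$ is contractible; since $J$ is also graded-injective by hypothesis, Lemma~\ref{injectives-are-contractible-graded-injectives}(a) then identifies $J$ as an element of $\sZ^0(\bA)_\inj$.

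For the final assertion (assuming the cotorsion pair is complete), I would verify hereditariness via the $\Ext^2=0$ criterion of Lemma~\ref{hereditary-cotorsion}. For any $R \in \sR$, shifting the sequence of Corollary~\ref{Xi-extension-cokernel}(a) produces a short exact sequence $0 \to R \to \Xi_\bA(R)[1] \to R[1] \to 0$ in $\sZ^0(\bA)$. Using $\Psi^+_\bA \circ \Phi_\bA = \Xi_\bA$ together with the shift relation $\Psi^-_\bA \simeq \Psi^+_\bA[1]$ from Section~\ref{Phi-and-Psi-subsecn}, the middle term can be rewritten as $\Psi^-_\bA\Phi_\bA(R)$; this object is injective in $\sZ^0(\bA)$, since $\Psi^-_\bA$ is right adjoint to the exact functor $\Phi_\bA$ and $\Phi_\bA(R)$ is injective by graded-injectivity of $R$. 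Applying $\Hom_{\sZ^0(\bA)}(W,-)$ for $W \in \sW$ and invoking $\sR = \sR[1]$ yields $\Ext^2_{\sZ^0(\bA)}(W,R) \simeq \Ext^1_{\sZ^0(\bA)}(W,R[1]) = 0$, so the cotorsion pair is hereditary, and thickness of $\sW$ follows from Lemma~\ref{W-is-thick-lemma}(b). The only point requiring real care is identifying $\Xi_\bA(R)[1]$ as a functorial injective cosyzygy of $R$; once the interplay between $\Phi$, $\Psi^\pm$, and $\Xi$ recalled in Sections~\ref{Phi-and-Psi-subsecn}--\ref{tilde-Phi-and-Psi-subsecn} is unpacked, this is automatic, and the remainder of the argument proceeds in exact parallel with the projective case without substantive new input.
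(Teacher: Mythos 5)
Your proof is correct and is essentially the paper's own argument: the paper proves this lemma simply by dualizing Lemma~\ref{cotorsion-pairs-with-graded-projectives}, and your dualization (contractibility of objects in $\sR\cap\sW$ via Lemma~\ref{Ext-in-cocycles-and-Hom-in-homotopy}, cosyzygies $0\to R\to\Xi(R)[1]\simeq\Psi^-\Phi(R)\to R[1]\to0$ with injective middle term, dimension shifting, and Lemma~\ref{W-is-thick-lemma}(b)) matches it step for step, merely spelling out the dimension-shifting detail the paper leaves implicit.
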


\begin{proof}
 This is the dual version of
Lemma~\ref{cotorsion-pairs-with-graded-projectives}.
\end{proof}

\subsection{Grothendieck abelian DG-categories}
\label{Grothendieck-DG-categories-subsecn}
 This section is mostly an extraction from~\cite[Section~9.1]{Pedg}.

\begin{lem} \label{abelian-DG-exact-directed-colimits}
 Let\/ $\bA$ be an abelian DG\+category with infinite coproducts.
 Then the coproduct functors are exact in the abelian category\/
$\sZ^0(\bA)$ if and only if they are exact in the abelian category\/
$\sZ^0(\bA^\bec)$.
 Moreover, the directed colimits are exact in\/ $\sZ^0(\bA)$ if and
only if they are exact in\/ $\sZ^0(\bA^\bec)$.
\end{lem}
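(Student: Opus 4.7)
The plan is to exploit the pair of adjunctions between $\sZ^0(\bA)$ and $\sZ^0(\bA^\bec)$ given by $\Phi_\bA$, $\Psi^+_\bA$, and $\Psi^-_\bA$, which by the earlier discussion in Sections~\ref{Phi-and-Psi-subsecn}--\ref{tilde-Phi-and-Psi-subsecn} all have adjoints on both sides. In particular, each of these three functors is exact between the abelian categories $\sZ^0(\bA)$ and $\sZ^0(\bA^\bec)$, preserves all small limits and colimits (hence in particular coproducts and directed colimits), and is faithful and conservative. The observation that we will use repeatedly is that any exact conservative functor $F$ between abelian categories \emph{reflects} exactness of short sequences: given a complex $0 \rarrow X \rarrow Y \rarrow Z \rarrow 0$, exactness of $0 \rarrow F(X) \rarrow F(Y) \rarrow F(Z) \rarrow 0$ forces the kernel of $X \rarrow Y$, the cokernel of $Y \rarrow Z$, and the homology at $Y$ to be mapped to zero by $F$, and then to vanish by conservativity.

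Assume first that coproducts are exact in $\sZ^0(\bA^\bec)$, and let $\{0 \rarrow X_\alpha \rarrow Y_\alpha \rarrow Z_\alpha \rarrow 0\}_\alpha$ be a family of short exact sequences in $\sZ^0(\bA)$. Apply the exact functor $\Phi$ termwise, then take coproducts in $\sZ^0(\bA^\bec)$; by the assumption on $\sZ^0(\bA^\bec)$ the resulting sequence remains short exact. Since $\Phi$ preserves coproducts, this sequence is canonically identified with the image under $\Phi$ of the coproduct sequence $0 \rarrow \coprod_\alpha X_\alpha \rarrow \coprod_\alpha Y_\alpha \rarrow \coprod_\alpha Z_\alpha \rarrow 0$ in $\sZ^0(\bA)$. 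Applying the reflection-of-exactness principle to the exact conservative functor $\Phi$, we conclude that the latter is short exact. Conversely, if coproducts are exact in $\sZ^0(\bA)$, the same argument with $\Psi^+$ in place of $\Phi$ (using that $\Psi^+$ is also exact, conservative, and preserves coproducts) shows that coproducts are exact in $\sZ^0(\bA^\bec)$.

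The proof for directed colimits is entirely parallel. The functors $\Phi$, $\Psi^+$, and $\Psi^-$ preserve directed colimits (being left adjoints to $\Phi$'s own right adjoint, and so on), and they remain exact, faithful, and conservative. Thus if directed colimits are exact in $\sZ^0(\bA^\bec)$, then applying $\Phi$ termwise to a directed system of short exact sequences in $\sZ^0(\bA)$ and taking directed colimits in $\sZ^0(\bA^\bec)$ gives a short exact sequence, which by preservation of directed colimits is $\Phi$ applied to the directed colimit in $\sZ^0(\bA)$; exactness is then reflected by conservativity. The converse direction uses $\Psi^+$ in the symmetric role.

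There is no serious obstacle in this argument; the only point that requires care is the reflection-of-exactness principle for exact conservative functors, which is completely formal. All the nontrivial input -- exactness of $\Phi$ and $\Psi^\pm$, preservation of arbitrary colimits, faithfulness, and conservativity -- has already been established earlier in the paper, so the proof is essentially a matter of assembling these ingredients in the two symmetric directions.
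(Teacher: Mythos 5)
Your proof is correct and follows essentially the same route as the paper's (which refers to~\cite[Lemma~9.3]{Pedg}): transport exactness back and forth along the exact, coproduct- and directed-colimit-preserving functors $\Phi$ and $\Psi^+$, using that such functors reflect exactness of sequences. The only cosmetic difference is that you invoke conservativity where the paper's sketch invokes faithfulness; both give the reflection-of-exactness principle, since an exact functor commutes with homology and either property forces the homology objects to vanish.
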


\begin{proof}
 This is~\cite[Lemma~9.3]{Pedg}.
 The point is that both the functors $\Phi\:\sZ^0(\bA)\rarrow
\sZ^0(\bA^\bec)$ and $\Psi^+\:\sZ^0(\bA^\bec)\rarrow\sZ^0(\bA)$
are exact and faithful, and preserve directed colimits.
 So any instance of nonexactness of a directed coproduct/colimit
in one of the two abelian categories would be taken by
the respective functor to an instance of nonexactness of a similar
coproduct/colimit in the other category.
\end{proof}

 A \emph{Grothendieck category} is an abelian category with
a (single) generator, infinite coproducts, and exact directed colimits.

\begin{prop} \label{Grothendieck-DG-categories}
 Let $\bA$ be an abelian DG\+category with infinite coproducts.
 Then the abelian category\/ $\sZ^0(\bA)$ is Grothendieck if and
only if the abelian category\/ $\sZ^0(\bA^\bec)$ is Grothendieck.
\end{prop}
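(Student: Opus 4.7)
The plan is to verify each of the three defining properties of a Grothendieck category (a single generator, infinite coproducts, and exact directed colimits) is transferred between $\sZ^0(\bA)$ and $\sZ^0(\bA^\bec)$ under the hypothesis that infinite coproducts exist in $\bA$.

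First, I would dispose of the easy parts. The existence of infinite coproducts is symmetric by Lemma~\ref{coproducts-in-DG-categories}: having coproducts in $\sZ^0(\bA)$ is equivalent to having them in the DG-category $\bA$ itself, and then coproducts also exist in $\bA^\bec$ and hence in $\sZ^0(\bA^\bec)$, and conversely. The exactness of directed colimits is even more directly symmetric, as this is precisely the content of Lemma~\ref{abelian-DG-exact-directed-colimits}.

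The remaining step is to transfer the existence of a generator. Here I would invoke Lemma~\ref{Phi-Psi-strong-generators}: the additive functors $\Phi_\bA\:\sZ^0(\bA)\rarrow\sZ^0(\bA^\bec)$ and $\Psi^+_\bA\:\sZ^0(\bA^\bec)\rarrow\sZ^0(\bA)$ carry (strongly) generating sets to (strongly) generating sets. So given a generator $G$ of $\sZ^0(\bA)$, the singleton $\{\Phi(G)\}$ generates $\sZ^0(\bA^\bec)$, and dually $\{\Psi^+(H)\}$ generates $\sZ^0(\bA)$ given a generator $H$ of $\sZ^0(\bA^\bec)$. (If, following a variant of the definition, one only requires a generating set rather than a single generator, one may at the end pass from a set of generators to a single generator by forming the coproduct, which exists by the first paragraph.) Combining these three ingredients, $\sZ^0(\bA)$ satisfies the axioms of a Grothendieck category if and only if $\sZ^0(\bA^\bec)$ does.

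There is no real obstacle here: all three ingredients are packaged in lemmas already proved. The only minor point worth flagging in the write-up is the variant of the definition of Grothendieck category (single generator versus generating set), and that in either version the transfer is immediate from the fact that $\Phi$ and $\Psi^+$ are faithful, conservative, preserve colimits, and have right adjoints that are also faithful and conservative.
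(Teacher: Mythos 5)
Your proof is correct and takes essentially the same route as the paper: the paper's proof of Proposition~\ref{Grothendieck-DG-categories} consists precisely of invoking Lemmas~\ref{Phi-Psi-strong-generators} and~\ref{abelian-DG-exact-directed-colimits}, with the existence of coproducts in both $\sZ^0(\bA)$ and $\sZ^0(\bA^\bec)$ supplied by Lemma~\ref{coproducts-in-DG-categories} exactly as you say. Your remark on reducing a generating set to a single generator via a coproduct is a standard detail the paper leaves implicit.
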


\begin{proof}
 This is~\cite[Proposition~9.4]{Pedg}.
 The assertion follows from Lemmas~\ref{Phi-Psi-strong-generators}
and~\ref{abelian-DG-exact-directed-colimits} above.
\end{proof}

 An abelian DG\+category $\bA$ with infinite coproducts is said to be
\emph{Grothendieck} if both the abelian categories $\sZ^0(\bA)$ and
$\sZ^0(\bA^\bec)$ are Grothendieck.
 In other words, this means that $\bA$ satisfies assumptions and
the equivalent conditions of
Proposition~\ref{Grothendieck-DG-categories}.

 Notice that any locally finitely presentable abelian category is
Grothendieck~\cite[Proposition~1.59]{AR}.
 Consequently, any locally finitely presentable abelian DG\+category
is Grothendieck.
 On the other hand, all Grothendieck abelian categories are locally
presentable~\cite[Corollary~5.2]{Kra3}.
 Therefore, any Grothendieck abelian DG\+category is locally
presentable.
 Furthermore, it is well-known that all Grothendieck abelian categories 
have enough injective objects.
 Consequently, all Grothendieck abelian DG\+categories have enough
injective objects.

\begin{ex} \label{Grothendieck-abelian-DG-of-complexes}
 Let $\sA$ be a Grothendieck abelian category.
 Then the DG\+category $\bC(\sA)$ of complexes in $\sA$ is abelian,
as explained in Example~\ref{abelian-DG-category-of-complexes-example}.
 It is clear that the abelian category of graded objects
$\sG(\sA)=\sA^\boZ$ is Grothendieck, and it is also easy to see
that the abelian category of complexes $\sC(\sA)=\sZ^0(\bC(\sA))$ is
Grothendieck.
 According to Section~\ref{almost-involution-complexes-subsecn},
the abelian category $\sZ^0(\bC(\sA)^\bec)$ is equivalent to $\sG(\sA)$.
 Hence $\bC(\sA)$ is a Grothendieck abelian DG\+category.
\end{ex}

\begin{ex} \label{Grothendieck-abelian-DG-of-CDG-modules}
 Let $\biR^\cu=(R^*,d,h)$ be a CDG\+ring.
 Then, according to
Example~\ref{locally-presentable-abelian-DG-of-CDG-modules},
the DG\+category $\biR^\cu\bModl$ of left CDG\+modules over $\biR^\cu$
is a locally finitely presentable abelian DG\+category.
 Consequently, $\biR^\cu\bModl$ is also a Grothendieck DG\+category.
\end{ex}

\begin{ex} \label{Grothendieck-abelian-DG-of-factorizations}
 Let $\sA$ be a Grothendieck abelian category, $\Delta\:\sA\rarrow
\sA$ be an autoequivalence, and $v\:\Id_\sA\rarrow\sA$ be a potential.
 Then the DG\+category $\bF(\sA,\Delta,v)$ of factorizations of~$v$
in $\sA$ is abelian, as per
Example~\ref{abelian-DG-category-of-factorizations-example}.
 It is clear that the abelian category of $2$\+$\Delta$-periodic
objects $\sP(\sA,\Delta)\simeq\sA\times\sA$ is Grothendieck and
the abelian category of factorizations $\sF(\sA,\Delta,v)$ has
infinite coproducts.
 According to Section~\ref{almost-involution-factorizations-subsecn},
the abelian category $\sZ^0(\bF(\sA,\Delta,v)^\bec)$ is equivalent to
$\sP(\sA,\Delta)$.
 Hence $\bF(\sA,\Delta,v)$ is a Grothendieck abelian DG\+category.
\end{ex}

\subsection{Becker's coderived category}
\label{becker-coderived-subsecn}
 In the rest of Section~\ref{coderived-secn}, we will be mostly working
with a Grothendieck abelian DG\+category~$\bA$.

 In this paper we are interested in the coderived category \emph{in
the sense of Becker}~\cite{Kra,Bec,Sto2,PS4}, which we denote by
$\sD^\bco(\bA)$.
 In well-behaved cases, it is equivalent to the homotopy category of
graded-injective objects, $\sD^\bco(\bA)\simeq\sH^0(\bA_\binj)$.

 The coderived category in the sense of Becker has to be distinguished
from the coderived category in the sense of books and
papers~\cite{Psemi,Pkoszul,EP,Pps,Prel}.
 The two definitions of a coderived category are known to be equivalent
under certain assumptions~\cite[Section~3.7]{Pkoszul},
\cite[Corollary~4.18]{Pctrl}, \cite[Theorem~5.10(a)]{Pedg}, but it is
an open question whether they are equivalent for the category of modules
over an arbitrary ring (see~\cite[Example~2.5(3)]{Pps},
\cite[Remark~9.2]{PS4}, and~\cite[Section~7.9]{Pksurv} for a discussion).

 Let $\bA$ be an abelian DG\+category.
 An object $Y\in\bA$ is said to be \emph{coacyclic} (\emph{in the sense
of Becker}) if $\Hom_{\sH^0(\bA)}(Y,J)=0$ for all graded-injective
objects $J\in\bA_\binj$.
 We will denote the full subcategory of Becker-coacyclic objects by
$\sH^0(\bA)_\ac^\bco\subset\sH^0(\bA)$ and its preimage under
the obvious functor $\sZ^0(\bA)\rarrow\sH^0(\bA)$ by
$\sZ^0(\bA)_\ac^\bco\subset\sZ^0(\bA)$.
 Clearly, $\sH^0(\bA)_\ac^\bco$ is a triangulated (and even thick)
subcategory in the homotopy category $\sH^0(\bA)$.
 The triangulated Verdier quotient category $\sD^\bco(\bA)=
\sH^0(\bA)/\sH^0(\bA)_\ac^\bco$ is called the \emph{coderived category
of\/~$\bA$} (in the sense of Becker).

\begin{lem} \label{becker-and-lp-coacyclic}
 The class of all Becker-coacyclic objects\/ $\sZ^0(\bA)_\ac^\bco$
is closed under transfinitely iterated extensions (in the sense of
the directed colimit) in the abelian category\/~$\sZ^0(\bA)$.
 In particular, \par
\textup{(a)} for any short exact sequence\/ $0\rarrow K\rarrow L\rarrow
M\rarrow0$ in the abelian category\/ $\sZ^0(\bA)$, the total object\/
$\Tot(K\to L\to M)\in\bA$ (as defined in
Section~\ref{totalizations-subsecn}) belongs to\/ $\sH^0(\bA)_\ac^\bco$.
\par
\textup{(b)} The full subcategory of coacyclic objects\/
$\sH^0(\bA)_\ac^\bco$ is closed under infinite coproducts in\/
$\sH^0(\bA)$.
\end{lem}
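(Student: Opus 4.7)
The core idea is to characterize Becker-coacyclic objects in terms of $\Ext^1$-vanishing against graded-injectives and then invoke the Eklof lemma. When $J\in\bA_\binj$, the object $\Phi(J)$ is injective in $\sZ^0(\bA^\bec)$, so Lemma~\ref{Ext-in-cocycles-and-Hom-in-homotopy} supplies a natural isomorphism $\Ext^1_{\sZ^0(\bA)}(Y,J)\simeq\Hom_{\sH^0(\bA)}(Y,J[1])$. Since $\bA_\binj$ is closed under shifts, $Y$ is Becker-coacyclic if and only if $\Hom_{\sH^0(\bA)}(Y,J[1])=0$ for every $J\in\bA_\binj$; by the above isomorphism this is equivalent to $\Ext^1_{\sZ^0(\bA)}(Y,J)=0$ for every $J\in\bA_\binj$. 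Thus
\[
\sZ^0(\bA)_\ac^\bco \;=\; {}^{\perp_1}\sZ^0(\bA_\binj),
\]
and the main assertion of the lemma follows immediately from Lemma~\ref{eklof-lemma}, which also yields closure under direct summands as a bonus.

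For part~(a), fix a short exact sequence $0\to K\to L\to M\to 0$ in $\sZ^0(\bA)$ and a graded-injective $J\in\bA_\binj$; put $T=\Tot(K\to L\to M)$. The DG-universal property of finite totalizations (Section~\ref{totalizations-subsecn}) yields a natural isomorphism
\[
\Hom_\bA^\bu(T,J)\;\simeq\;\Tot\bigl(\Hom_\bA^\bu(M,J)\to\Hom_\bA^\bu(L,J)\to\Hom_\bA^\bu(K,J)\bigr)
\]
of complexes of abelian groups. The fully faithful functor $\widetilde\Phi\colon\bA^0\hookrightarrow\sZ^0(\bA^\bec)$ (Section~\ref{tilde-Phi-and-Psi-subsecn}) identifies $\Hom_\bA^n(X,J)$ with $\Hom_{\sZ^0(\bA^\bec)}(\Phi(X),\Phi(J)[-n])$; since $\Phi$ is exact and $\Phi(J)[-n]$ is injective, this functor is exact in $X\in\sZ^0(\bA)$ for each $n$. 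Hence the displayed three-term sequence of complexes is short exact in every cohomological degree, and the total complex of such a sequence is acyclic by the standard horizontal spectral-sequence argument (whose $E_1$-page vanishes). Consequently $\Hom_{\sH^0(\bA)}(T,J)=H^0(\Hom_\bA^\bu(T,J))=0$, proving that $T$ is Becker-coacyclic.

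For part~(b), infinite coproducts in $\sZ^0(\bA)$ coincide with coproducts in $\sH^0(\bA)$ (Section~\ref{co-products-in-DG-categories}), so for any family $(Y_i)_{i\in I}$ of Becker-coacyclic objects and any $J\in\bA_\binj$ one has
\[
\Hom_{\sH^0(\bA)}\!\bigl({\textstyle\coprod_i}\,Y_i,\,J\bigr)\;\simeq\;{\textstyle\prod_i}\,\Hom_{\sH^0(\bA)}(Y_i,J)\;=\;0;
\]
alternatively, a coproduct is a transfinitely iterated extension of its summands, so (b) also follows directly from the main assertion. The principal technical subtlety lies in part~(a): namely, the verification via $\widetilde\Phi$ that $\Hom_\bA^n(-,J)$ is exact on $\sZ^0(\bA)$ when $J$ is graded-injective, combined with unwinding the universal property of $\Tot$ to realize $\Hom_\bA^\bu(T,J)$ as a total complex. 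Once these ingredients are in place, the rest is routine.
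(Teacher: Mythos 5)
Your proof is correct, and the main assertion is argued exactly as in the paper: the identification $\sZ^0(\bA)_\ac^\bco={}^{\perp_1}\sZ^0(\bA_\binj)$ via Lemma~\ref{Ext-in-cocycles-and-Hom-in-homotopy} (using that $\bA_\binj$ is closed under shifts), followed by the Eklof lemma (Lemma~\ref{eklof-lemma}), is verbatim the paper's argument. Where you genuinely diverge is in part~(a), and to a lesser extent~(b). The paper treats both parts as literal particular cases of the closure statement---that is what its ``In particular'' means: for~(a) it exhibits the totalization as an extension of contractible (hence coacyclic) objects via the short exact sequence $0\rarrow\cone(\id_K)[1]\rarrow\Tot(K\to L\to M)\rarrow\cone(\id_M)\rarrow0$ in $\sZ^0(\bA)$, and for~(b) it interprets a coproduct in $\sH^0(\bA)$ as a transfinitely iterated extension in $\sZ^0(\bA)$ (it also remarks that both parts are dual to Lemma~\ref{becker-and-lp-contraacyclic}). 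You instead prove~(a) by a direct computation: unwinding the universal property of $\Tot$, identifying $\Hom^n_\bA(X,J)\simeq\Hom_{\sZ^0(\bA^\bec)}(\Phi(X),\Phi(J)[-n])$ via the fully faithful functor $\widetilde\Phi$, using exactness of $\Phi$ together with injectivity of $\Phi(J)[-n]$ to obtain a degreewise short exact sequence of Hom complexes, and concluding that the total complex is acyclic. This is a valid, self-contained verification; it makes~(a) logically independent of the Eklof-lemma part and in fact yields slightly more (acyclicity of the whole complex $\Hom^\bu_\bA(T,J)$, which is anyway equivalent since $\bA_\binj$ is shift-closed). The trade-off is that the paper's extension-of-contractibles argument is shorter, displays the structural reason~(a) holds, and keeps the ``In particular'' in the statement literally true, whereas your route pays for its independence with the exactness bookkeeping. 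For~(b) you give both the direct product computation $\Hom_{\sH^0(\bA)}(\coprod_i Y_i,J)\simeq\prod_i\Hom_{\sH^0(\bA)}(Y_i,J)$ and the paper's transfinite-extension derivation; both are fine.
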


\begin{proof}
 For any object $Y\in\bA$ and any graded-injective object
$J\in\bA_\binj$, there is an isomorphism of abelian groups
$\Hom_{\sH^0(\bA)}(Y,J)\simeq\Ext^1_{\sZ^0(\bA)}(Y,J[-1])$ provided
by Lemma~\ref{Ext-in-cocycles-and-Hom-in-homotopy}.
 Therefore, the full subcategory $\sZ^0(\bA)_\ac^\bco\subset
\sZ^0(\bA)$ consists precisely of all the objects $Y\in\bA$ such
that $\Ext^1_{\sZ^0(\bA)}(Y,J)=0$ for all $J\in\bA_\binj$.
 Now Lemma~\ref{eklof-lemma} implies that the class
$\sZ^0(\bA)_\ac^\bco$ is closed under transfinitely iterated extensions
in~$\sZ^0(\bA)$.

 Both parts~(a) and~(b) are particular cases of the assertion about
transfinitely iterated extensions (assuming that coproducts exist
in~$\bA$).
 Indeed, the object $\Tot(K\to L\to M)$ fits into a short exact
sequence
\[ 0 \rarrow \cone(\id_K) \rarrow \Tot(K\to L\to M) \rarrow \cone(\id_M)[-1] \rarrow 0 \]
in $\sZ^0(\bA)$ and (the suitable shifts of) the cones of identity
endomorphisms of the objects $K$ and $M$ are contractible and
therefore coacyclic.
 If coproducts exist in $\bA$, then the coproducts in $\sH^0(\bA)$
agree with those in $\sZ^0(\bA)$, and the latter can be interpreted
as transfinitely iterated extensions.
 Alternatively, both parts~(a) and~(b) are the dual versions of
the respective parts of Lemma~\ref{becker-and-lp-contraacyclic}.
\end{proof}

 The following result, generalizing~\cite[Corollary~9.5]{PS4} and
a similar unstated corollary of~\cite[Proposition~1.3.6(2)]{Bec},
will be deduced below in
Section~\ref{coderived-model-structure-subsecn}.
 Other relevant references include~\cite[Theorem~2.13]{Neem2},
\cite[Corollary~5.13]{Kra3}, and~\cite[Theorem~4.2]{Gil}.

\begin{cor} \label{becker-coderived-corollary}
 Let\/ $\bA$ be a Grothendieck abelian DG\+category.
 Then the composition of the fully faithful inclusion of triangulated
categories\/ $\sH^0(\bA_\binj)\rarrow\sH^0(\bA)$ with the Verdier
quotient functor\/ $\sH^0(\bA)\rarrow\sD^\bco(\bA)$ is a triangulated
equivalence\/ $\sH^0(\bA_\binj)\simeq\sD^\bco(\bA)$.
\end{cor}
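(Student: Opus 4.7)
The argument mirrors the proof of Corollary~\ref{becker-contraderived-corollary} by duality, splitting into full faithfulness and essential surjectivity of the composite $\sH^0(\bA_\binj)\rarrow\sH^0(\bA)\rarrow\sD^\bco(\bA)$.

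For full faithfulness, the defining property of Becker-coacyclic objects gives $\Hom_{\sH^0(\bA)}(Y,J)=0$ for every $Y\in\sH^0(\bA)_\ac^\bco$ and every $J\in\bA_\binj$; in other words, $\sH^0(\bA_\binj)$ is right orthogonal to the thick triangulated subcategory $\sH^0(\bA)_\ac^\bco$. A standard calculus-of-fractions argument for the Verdier quotient then yields $\Hom_{\sD^\bco(\bA)}(I,J)=\Hom_{\sH^0(\bA)}(I,J)$ for all $I$, $J\in\bA_\binj$.

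For essential surjectivity, the goal is to produce, for every $A\in\sZ^0(\bA)$, a closed morphism of degree~$0$, say $A\rarrow J$, with $J\in\sZ^0(\bA_\binj)$ and $\cone(A\to J)\in\sZ^0(\bA)_\ac^\bco$; such a morphism will automatically be an isomorphism in $\sD^\bco(\bA)$. To construct it, invoke the coderived complete cotorsion pair $(\sZ^0(\bA)_\ac^\bco,\,\sZ^0(\bA_\binj))$ in $\sZ^0(\bA)$ --- the dual of Theorem~\ref{contraderived-cotorsion-pair} --- and take a special preenvelope short exact sequence $0\rarrow A\rarrow J\rarrow Y\rarrow0$ with $J$ graded-injective and $Y$ Becker-coacyclic. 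By Lemma~\ref{becker-and-lp-coacyclic}(a) the totalization $\Tot(A\to J\to Y)$ is Becker-coacyclic, and the brutal truncation of the three-term complex $[A\to J\to Y]$ produces a short exact sequence
\[
 0\rarrow Y[-2]\rarrow\Tot(A\to J\to Y)\rarrow\cone(A\to J)[-1]\rarrow0
\]
in $\sZ^0(\bA)$. The resulting distinguished triangle in $\sH^0(\bA)$, together with the thickness of $\sH^0(\bA)_\ac^\bco$, forces $\cone(A\to J)$ to be Becker-coacyclic.

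The principal obstacle is the coderived cotorsion pair itself --- the dual of Theorem~\ref{contraderived-cotorsion-pair}. In the Grothendieck setting the class $\sZ^0(\bA_\binj)$ is generally not deconstructible, so Proposition~\ref{graded-projective-deconstructible} does not dualize directly. Instead, one would generate the desired cotorsion pair from a set $\sS$ of test objects built from a Grothendieck generator of $\sZ^0(\bA)$ --- for instance, totalizations of short exact sequences of suitably bounded quotients and subobjects of the generator --- and apply Theorem~\ref{eklof-trlifaj}. Identifying $\sS^{\perp_1}$ with $\sZ^0(\bA_\binj)$ and the left class ${}^{\perp_1}(\sS^{\perp_1})$ with $\sZ^0(\bA)_\ac^\bco$, using Lemma~\ref{Ext-in-cocycles-and-Hom-in-homotopy} and Lemma~\ref{cotorsion-pairs-with-graded-injectives}, is the technical heart of the construction.
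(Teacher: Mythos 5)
Your proof of the corollary proper coincides with the paper's own argument: full faithfulness comes from the defining orthogonality $\Hom_{\sH^0(\bA)}(Y,J)=0$ for $Y\in\sH^0(\bA)_\ac^\bco$ and $J\in\bA_\binj$, and essential surjectivity is obtained by taking a special preenvelope sequence $0\rarrow A\rarrow J\rarrow Y\rarrow0$ in the coderived complete cotorsion pair, observing that $\Tot(A\to J\to Y)$ is Becker-coacyclic by Lemma~\ref{becker-and-lp-coacyclic}(a), and concluding by thickness of $\sH^0(\bA)_\ac^\bco$ that $\cone(A\to J)$ is Becker-coacyclic. Up to sign/shift conventions, your brutal-truncation sequence is exactly the bookkeeping the paper leaves implicit.

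The only divergence is in the paragraph you yourself call the technical heart. The cotorsion pair you need is precisely Theorem~\ref{coderived-cotorsion-pair} of the paper, so for the purposes of this corollary it can simply be cited; and where you sketch its proof, your plan differs from the paper's and would not obviously work. Your observation that Proposition~\ref{graded-projective-deconstructible} does not dualize (graded-injectives are not deconstructible) is correct, and the paper likewise generates the pair by a set --- but its generating set is not your ``totalizations of short exact sequences of bounded subquotients of a generator''. Instead one takes a set $\sS_0\subset\sZ^0(\bA^\bec)$ with $\sZ^0(\bA^\bec)=\Fil(\sS_0)$ (available in any Grothendieck category) and puts $\sS=\{\Psi^+(S)\mid S\in\sS_0\}$. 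Since both $\Psi^+$ and $\Phi$ are exact, adjunction gives $\Ext^1_{\sZ^0(\bA)}(\Psi^+(E),A)\simeq\Ext^1_{\sZ^0(\bA^\bec)}(E,\Phi(A))$, whence $\sS^{\perp_1}=\sZ^0(\bA_\binj)$ via the Eklof lemma; the identification ${}^{\perp_1}\sZ^0(\bA_\binj)=\sZ^0(\bA)_\ac^\bco$ is immediate from Lemma~\ref{Ext-in-cocycles-and-Hom-in-homotopy} (no appeal to Lemma~\ref{cotorsion-pairs-with-graded-injectives} is needed here; that lemma enters only for the model structure). Completeness then follows from Theorem~\ref{eklof-trlifaj}(a), the coacyclics being generating (every $A$ is a quotient of the contractible object $\Xi(A)$) and the graded-injectives cogenerating (injectives are graded-injective). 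By contrast, $\Ext^1$-orthogonality to totalizations of short exact sequences does not detect injectivity of $\Phi(J)$ in $\sZ^0(\bA^\bec)$; the $\Psi^+$-images --- the DG analogues of disk complexes --- are the test objects for which the required $\Ext^1$-computation is actually available.
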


\subsection{Coderived abelian model structure}
\label{coderived-model-structure-subsecn}
 The results of this section form a common generalization
of~\cite[Proposition~1.3.6(2)]{Bec} (the CDG\+module case)
and~\cite[Section~9]{PS4} (the case of complexes in abelian categories).
 The references to preceding results in the literature can be
found two paragraphs above or in~\cite{PS4}.

\begin{thm} \label{coderived-cotorsion-pair}
 Let\/ $\bA$ be a Grothendieck abelian DG\+category.
 Then the pair of classes of objects\/ $\sZ^0(\bA)_\ac^\bco$ and\/
$\sZ^0(\bA_\binj)$ is a hereditary complete cotorsion pair in
the abelian category\/~$\sZ^0(\bA)$.
\end{thm}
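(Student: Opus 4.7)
The plan is to apply the Eklof--Trlifaj theorem (Theorem~\ref{eklof-trlifaj}) to a carefully chosen set $\sS \subset \sZ^0(\bA)$. First, by Lemma~\ref{Ext-in-cocycles-and-Hom-in-homotopy}, for $J\in\bA_\binj$ (so that $\Phi(J)$ is injective) one has the natural isomorphism $\Ext^1_{\sZ^0(\bA)}(Y,J)\simeq\Hom_{\sH^0(\bA)}(Y,J[1])$, and since $\sZ^0(\bA_\binj)$ is stable under shifts, letting $J$ range over $\bA_\binj$ yields the identity $\sZ^0(\bA)_\ac^\bco={}^{\perp_1}\sZ^0(\bA_\binj)$. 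Consequently, it suffices to exhibit a \emph{set} $\sS\subset\sZ^0(\bA)$ with $\sS^{\perp_1}=\sZ^0(\bA_\binj)$: the cotorsion pair generated by $\sS$ will then automatically be $(\sZ^0(\bA)_\ac^\bco,\,\sZ^0(\bA_\binj))$.

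The construction of $\sS$ is the crux. In the adjunction $\Psi^+\dashv\Phi$ both functors are exact, and $\Phi$ preserves injectives (being right adjoint to the exact $\Psi^+$); hence applying $\Phi$ to an injective resolution of $J$ in $\sZ^0(\bA)$ yields an injective resolution of $\Phi(J)$ in $\sZ^0(\bA^\bec)$, and combined with the $\Hom$-adjunction this gives the Ext-transfer formula
\[
 \Ext^n_{\sZ^0(\bA)}(\Psi^+(X),\,J)\;\simeq\;\Ext^n_{\sZ^0(\bA^\bec)}(X,\,\Phi(J))
\]
for all $n\ge 0$, $X\in\sZ^0(\bA^\bec)$, and $J\in\sZ^0(\bA)$. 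Now pick a generator $G$ of the Grothendieck abelian category $\sZ^0(\bA^\bec)$. By the Baer criterion for Grothendieck categories, an object $E\in\sZ^0(\bA^\bec)$ is injective if and only if $\Ext^1_{\sZ^0(\bA^\bec)}(G/H,E)=0$ for every subobject $H\subseteq G$, and the collection of such subobjects is a set. So setting $\sS=\{\Psi^+(G/H):H\subseteq G\}$, the Ext-transfer applied with $E=\Phi(J)$ shows that $J\in\sS^{\perp_1}$ iff $\Phi(J)$ is injective iff $J\in\sZ^0(\bA_\binj)$, as desired.

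Completeness then follows from Theorem~\ref{eklof-trlifaj}(a). The right class $\sZ^0(\bA_\binj)$ is cogenerating in $\sZ^0(\bA)$ because $\sZ^0(\bA)$ is Grothendieck (hence has enough injectives) and every injective in $\sZ^0(\bA)$ is graded-injective by Lemma~\ref{injectives-are-contractible-graded-injectives}(a); the left class $\sZ^0(\bA)_\ac^\bco$ is generating because Corollary~\ref{Xi-extension-cokernel}(a) furnishes, for each $Y\in\sZ^0(\bA)$, an epimorphism $\Xi(Y)\twoheadrightarrow Y$ whose source is contractible and therefore Becker-coacyclic. For hereditariness I invoke Lemma~\ref{hereditary-cotorsion}: given a short exact sequence $0\to J'\to J\to J''\to 0$ with $J',J\in\sZ^0(\bA_\binj)$, the exact functor $\Phi$ takes it to a short exact sequence in $\sZ^0(\bA^\bec)$ that splits (since $\Phi(J')$ is injective), so $\Phi(J'')$ is a direct summand of the injective object $\Phi(J)$ and hence injective, proving $J''\in\sZ^0(\bA_\binj)$. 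The principal obstacle is precisely the construction of $\sS$ in the middle paragraph: unlike the deconstructibility argument underlying the contraderived analog Theorem~\ref{contraderived-cotorsion-pair} (Proposition~\ref{graded-projective-deconstructible}), the coderived case instead exploits the Grothendieck property of $\sZ^0(\bA^\bec)$ together with Baer's criterion to transfer the injective cogeneration across the adjunction $\Psi^+\dashv\Phi$.
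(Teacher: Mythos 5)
Your proof is correct, and its skeleton matches the paper's proof of Theorem~\ref{coderived-cotorsion-pair}: both identify $\sZ^0(\bA)_\ac^\bco={}^{\perp_1}\sZ^0(\bA_\binj)$ via Lemma~\ref{Ext-in-cocycles-and-Hom-in-homotopy}, generate the cotorsion pair by a set of objects of the form $\Psi^+({-})$ using the adjunction isomorphism $\Ext^1_{\sZ^0(\bA)}(\Psi^+(X),J)\simeq\Ext^1_{\sZ^0(\bA^\bec)}(X,\Phi(J))$ (the paper cites \cite[Lemma~9.34]{Pedg} for this; your derivation from exactness of $\Phi$ and the fact that $\Phi$, being right adjoint to the exact $\Psi^+$, preserves injectives is a valid substitute), verify that the left class is generating via $\Xi(Y)\twoheadrightarrow Y$ and the right class cogenerating via enough injectives plus Lemma~\ref{injectives-are-contractible-graded-injectives}(a), and conclude completeness by Theorem~\ref{eklof-trlifaj}(a). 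The one genuinely different ingredient is how you pin down $\sS^{\perp_1}=\sZ^0(\bA_\binj)$: the paper takes $\sS=\{\Psi^+(S)\mid S\in\sS_0\}$ for a set $\sS_0$ with $\sZ^0(\bA^\bec)=\Fil(\sS_0)$ and invokes the Eklof lemma (Lemma~\ref{eklof-lemma}), whereas you take $\sS=\{\Psi^+(G/H)\}$ over the (well-powered, hence small) lattice of subobjects of a single generator $G$ of $\sZ^0(\bA^\bec)$ and invoke Baer's criterion, which does hold in Grothendieck categories. Both devices are legitimate; the paper's filtration-based choice has the extra payoff that the same set $\sS$ feeds directly into Theorem~\ref{eklof-trlifaj}(b) to give the description $\sZ^0(\bA)_\ac^\bco=\Fil(\sS)^\oplus$ of Corollary~\ref{coacyclics-as-filtered-by-contractibles}, which is exploited later in the paper, while your Baer-type set is arguably the more classical and self-contained route to the cotorsion pair itself. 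For hereditariness, your splitting argument after applying $\Phi$ is an expanded version of the paper's terse remark that $\sZ^0(\bA_\binj)$ is closed under cokernels of monomorphisms; the paper also records closure of the coacyclics under kernels of epimorphisms, but by Lemma~\ref{hereditary-cotorsion} either condition alone suffices once generating and cogenerating are known, so your proof is complete as written.
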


\begin{proof}
 This is a generalization of~\cite[Theorem~9.3]{PS4}.

 In any Grothendieck abelian category $\sA$, there exists a set
of objects $\sS_0$ such that $\sA=\Fil(\sS_0)$
\,\cite[Example~1.2.6(1)]{Bec}, \cite[Lemma~8.2]{PS4}.
 Let $\sS_0$ be such a set of objects in the Grothendieck abelian
category $\sA=\sZ^0(\bA^\bec)$.
 The claim is that the desired cotorsion pair is generated by the set
of objects $\sS=\{\Psi^+(S)\mid S\in\sS_0\}\subset\sZ^0(\bA)$.

 Indeed, for any objects $E\in\sZ^0(\bA^\bec)$ and $A\in\sZ^0(\bA)$
we have $\Ext^1_{\sZ^0(\bA)}(\Psi^+(E),A)\simeq
\Ext^1_{\sZ^0(\bA^\bec)}(E,\Phi(A))$ by~\cite[Lemma~9.34]{Pedg},
because both functors in the adjoint pair $\Psi^+\:\sZ^0(\bA^\bec)
\rarrow\sZ^0(\bA)$ and $\Phi\:\sZ^0(\bA)\rarrow\sZ^0(\bA^\bec)$ are
exact (cf.\ the proof of Theorem~\ref{contraderived-cotorsion-pair}).
 The functor $\Psi^+$ is exact and preserves colimits, so it also
preserves transfinitely iterated extensions.
 In view of Lemma~\ref{eklof-lemma}, it follows that
$\sS^{\perp_1}=\Psi^+(\sZ^0(\bA^\bec))^{\perp_1}=
\sZ^0(\bA_\binj)\subset\sZ^0(\bA)$.

 Furthermore, $\sZ^0(\bA)_\ac^\bco={}^{\perp_1}\sZ^0(\bA_\binj)\subset
\sZ^0(\bA)$, as we have already seen in the proof of
Lemma~\ref{becker-and-lp-coacyclic}.
 Hence our pair of classes of objects is indeed the cotorsion pair
generated by the set $\sS$ in~$\sZ^0(\bA)$.

 Any object $A\in\sZ^0(\bA)$ is a quotient object of the contractible
object $\Xi(A)=\cone(\id_A[-1])$ by
Corollary~\ref{Xi-extension-cokernel}.
 All contractible objects are Becker-coacyclic; so the class
$\sZ^0(\bA)_\ac^\bco$ is generating in $\sZ^0(\bA)$.
 Any object in $\sZ^0(\bA)$ is also a subobject of an injective
object, and all injective objects are graded-injective by
Lemma~\ref{injectives-are-contractible-graded-injectives}(a);
hence the class $\sZ^0(\bA_\binj)$ is cogenerating in~$\sZ^0(\bA)$.
 Applying Theorem~\ref{eklof-trlifaj}(a), we conclude that our
cotorsion pair is complete.

 The cotorsion pair is hereditary, because the class
$\sZ^0(\bA)_\ac^\bco$ is closed under the kernels of epimorphisms
in $\sZ^0(\bA)$, as one can see from
Lemma~\ref{becker-and-lp-coacyclic}(a).
 It is also clear that the class $\sZ^0(\bA_\binj)\subset\sZ^0(\bA)$ is
closed under the cokernels of monomorphisms, since the functor $\Phi$
preserves cokernels.
\end{proof}

 The proof of Theorem~\ref{coderived-cotorsion-pair} implies
the following description of the class of all Becker-coacyclic objects.

\begin{cor} \label{coacyclics-as-filtered-by-contractibles}
 Let\/ $\bA$ be a Grothendieck abelian DG\+category.
 Then the Becker-coacyclic objects in\/ $\sZ^0(\bA)$ are precisely
the direct summands of objects filtered by contractible ones.
 In fact, if\/ $\sS_0\subset\sZ^0(\bA^\bec)$ is a set of objects
such that\/ $\sZ^0(\bA^\bec)=\Fil(\sS_0)$ and\/
$\sS=\{\Psi^+(S)\mid S\in\sS_0\}$ as above, then\/
$\sZ^0(\bA)_\ac^\bco=\Fil(\sS)^\oplus$.
\end{cor}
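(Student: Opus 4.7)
The plan is to leverage the construction in the proof of Theorem~\ref{coderived-cotorsion-pair}, where the set $\sS=\{\Psi^+(S)\mid S\in\sS_0\}$ was exhibited and shown to generate the cotorsion pair $(\sZ^0(\bA)_\ac^\bco,\>\sZ^0(\bA_\binj))$. To promote this generation to an explicit description of the left class as $\Fil(\sS)^\oplus$, I would invoke Theorem~\ref{eklof-trlifaj}(b), whose conclusion is precisely that $\sZ^0(\bA)_\ac^\bco=\Fil(\sS)^\oplus$ as soon as the class $\Fil(\sS)$ is generating in $\sZ^0(\bA)$.

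The first step would therefore be to verify that $\Fil(\sS)$ is generating. For any $A\in\sZ^0(\bA)$, Corollary~\ref{Xi-extension-cokernel}(a) provides a natural epimorphism $\Xi(A)\twoheadrightarrow A$, and the isomorphism $\Xi_\bA\simeq\Psi^+_\bA\circ\Phi_\bA$ from the last paragraph of Section~\ref{Phi-and-Psi-subsecn} identifies the source with $\Psi^+\Phi(A)$. Since $\Phi(A)\in\sZ^0(\bA^\bec)=\Fil(\sS_0)$ by the assumption on $\sS_0$, and since $\Psi^+$ is exact and preserves directed colimits (having adjoints on both sides), it preserves transfinitely iterated extensions; hence $\Psi^+\Phi(A)\in\Fil(\sS)$. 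This yields the required epimorphism from an object of $\Fil(\sS)$ onto $A$. Applying Theorem~\ref{eklof-trlifaj}(b) then gives $\sZ^0(\bA)_\ac^\bco=\Fil(\sS)^\oplus$, which is the second (more precise) assertion of the corollary.

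For the equivalence with the description via contractibles, I would argue by two inclusions. Every object in $\sS$ lies in the essential image of $\Psi^+$, which consists of contractible objects by the construction of $\Psi^+$ (cf.\ the discussion in the proof of Lemma~\ref{projectives-are-contractible-graded-projectives}). Thus $\Fil(\sS)^\oplus$ is contained in the class of direct summands of objects filtered by contractibles. Conversely, any contractible object is Becker-coacyclic (immediately from the definition, using that contractibles vanish in $\sH^0(\bA)$), and the class $\sZ^0(\bA)_\ac^\bco$ is closed under transfinitely iterated extensions by Lemma~\ref{becker-and-lp-coacyclic} and under direct summands as the left class of a complete cotorsion pair; hence any direct summand of an object filtered by contractibles belongs to $\sZ^0(\bA)_\ac^\bco$.

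The main (mild) obstacle is the verification that $\Fil(\sS)$ is generating, which hinges on combining the epimorphism $\Xi(A)\twoheadrightarrow A$ with the preservation of transfinitely iterated extensions by $\Psi^+$. Everything else is essentially bookkeeping once Theorem~\ref{coderived-cotorsion-pair} and its proof are in hand, so the argument should fit into just a few lines.
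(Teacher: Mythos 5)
Your proposal is correct and follows essentially the same route as the paper's own proof: establishing that $\Fil(\sS)$ is generating via the epimorphism $\Xi(A)\simeq\Psi^+\Phi(A)\twoheadrightarrow A$ together with $\Psi^+$ preserving transfinitely iterated extensions, applying Theorem~\ref{eklof-trlifaj}(b) to get $\sZ^0(\bA)_\ac^\bco=\Fil(\sS)^\oplus$, and then deducing the description via contractibles from the two inclusions (objects of $\sS$ are contractible; contractibles are coacyclic and the coacyclic class is closed under transfinitely iterated extensions and direct summands by Lemmas~\ref{becker-and-lp-coacyclic} and~\ref{eklof-lemma}). There is nothing to add.
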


\begin{proof}
 This is a generalization of~\cite[Corollary~9.4]{PS4}.

 Let $E\in\sZ^0(\bA^\bec)$ be an arbitrary object.
 Since $E\in\Fil(\sS_0)$ and the functor $\Psi^+$ preserves
transfinitely iterated extensions, we have $\Psi^+(E)\in\Fil(\sS)$.
 As any object $A\in\sA$ is a quotient object of the object
$\Xi(A)\simeq\Psi^+\Phi(A)$, we see that the class $\Fil(\sS)$
is generating in~$\sZ^0(\bA)$.
 Applying Theorem~\ref{eklof-trlifaj}(b), we can conclude that
$\sZ^0(\bA)_\ac^\bco=\Fil(\sS)^\oplus$.
 Since every object in $\sS$ is contractible, any contractible
object is Becker-coacyclic, and the class of all Becker-coacyclic
objects is closed under transfinitely iterated extensions, it also
follows that $\sZ^0(\bA)_\ac^\bco$ is the class of all direct summands
of transfinitely iterated extensions of contractible objects.
\end{proof}

\begin{proof}[Proof of Corollary~\ref{becker-coderived-corollary}]
 It is clear from the definitions that the functor $\sH^0(\bA_\binj)
\rarrow\sD^\bco(\bA)$ is fully faithful
(cf.\ \cite[Corollary 9.5]{PS4}).
 In order to prove the corollary, it remains to find, for any object
$A\in\sZ^0(\bA)$, a graded-injective object $J\in\sZ^0(\bA_\binj)$
together with a morphism $A\rarrow J$ in $\sZ^0(\bA)$ whose cone
belongs to $\sZ^0(\bA)_\ac^\bco$.

 For this purpose, consider a special preenvelope short exact sequence
$0\rarrow A\rarrow J\rarrow Y\rarrow0$ in the complete cotorsion pair
of Theorem~\ref{coderived-cotorsion-pair}.
 So we have $J\in\sZ^0(\bA_\binj)$ and $Y\in\sZ^0(\bA)_\ac^\bco$.
 By Lemma~\ref{becker-and-lp-coacyclic}(a), the totalization
$\Tot(A\to J\to Y)$ of the short exact sequence $0\rarrow A\rarrow
J\rarrow Y\rarrow0$ is a Becker-coacyclic object.
 Since the object $Y$ is Becker-coacyclic, it follows that the cone of
the morphism $A\rarrow J$ is Becker-coacyclic, too.
\end{proof}

\begin{thm} \label{coderived-model-structure}
 Let\/ $\bA$ be a Grothendieck abelian DG\+category.
 Then the triple of classes of objects\/ $\sL=\sZ^0(\bA)$, \
$\sW=\sZ^0(\bA)_\ac^\bco$, and\/ $\sR=\sZ^0(\bA_\binj)$ is
a cofibrantly generated hereditary abelian model structure on
the abelian category\/~$\sZ^0(\bA)$.
\end{thm}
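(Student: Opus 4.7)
The plan is to deduce the theorem by packaging together Theorem~\ref{coderived-cotorsion-pair} with the abstract formalism of Section~\ref{abelian-model-structures-secn}. Since the coderived model structure is intended to be an \emph{injective} abelian model structure, the natural tool is Lemma~\ref{projective-injective-abelian-model-structures}(b), which reduces the problem to verifying the three conditions: (i) $(\sW,\sR)$ is a complete cotorsion pair in $\sZ^0(\bA)$; (ii) $\sR\cap\sW = \sZ^0(\bA)_\inj$; and (iii) the class $\sW$ is thick. Existence of enough injective objects in $\sZ^0(\bA)$ is automatic since $\bA$ is Grothendieck (as recorded at the end of Section~\ref{Grothendieck-DG-categories-subsecn}).

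First, Theorem~\ref{coderived-cotorsion-pair} has already established that $(\sW,\sR)$ is a hereditary complete cotorsion pair, which gives condition~(i). For condition~(ii), I would invoke Lemma~\ref{cotorsion-pairs-with-graded-injectives}: the full DG\+subcategory $\bA_\binj\subset\bA$ is closed under shifts (since $\Phi$ transforms shifts into inverse shifts and the class of injective objects in $\sZ^0(\bA^\bec)$ is shift\+invariant), so the right\+hand class $\sR = \sZ^0(\bA_\binj)$ satisfies $\sR = \sR[1]$, and the hypotheses of the lemma are met. Lemma~\ref{cotorsion-pairs-with-graded-injectives} then simultaneously delivers both $\sR\cap\sW = \sZ^0(\bA)_\inj$ and the thickness of $\sW$, giving condition~(iii). (Alternatively, thickness follows from Lemma~\ref{W-is-thick-lemma}(b) applied to the hereditary cotorsion pair from Theorem~\ref{coderived-cotorsion-pair}.) Thus Lemma~\ref{projective-injective-abelian-model-structures}(b) produces the desired injective abelian model structure $(\sL,\sW,\sR)$ on $\sZ^0(\bA)$, and it is hereditary by Lemma~\ref{hereditary-abelian-model} (or simply because every injective abelian model structure is hereditary, as noted after that lemma).

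It remains to verify cofibrant generation. By Lemma~\ref{cofibrantly-generated-abelian-model}, it suffices to show that both of the cotorsion pairs $(\sL,\sR\cap\sW) = (\sZ^0(\bA),\sZ^0(\bA)_\inj)$ and $(\sL\cap\sW,\sR) = (\sW,\sR)$ are generated by sets of objects. The former is generated by the empty set (the class $\sZ^0(\bA)_\inj^{\perp_1}$ consists of everything precisely because every object embeds into an injective, equivalently one may take any single generator of the Grothendieck category). The latter was \emph{explicitly} exhibited as set\+generated inside the proof of Theorem~\ref{coderived-cotorsion-pair}, where the set $\sS = \{\Psi^+(S)\mid S\in\sS_0\}$ arising from a filtering set $\sS_0\subset\sZ^0(\bA^\bec)$ was used to generate the pair $(\sW,\sR)$.

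There is no real obstacle here; this theorem is essentially a corollary of Theorem~\ref{coderived-cotorsion-pair} together with the general machinery already assembled in Section~\ref{abelian-model-structures-secn}. The only point that deserves a moment of care is checking shift\+invariance of $\sR$ so that Lemma~\ref{cotorsion-pairs-with-graded-injectives} applies cleanly, and noting the locally presentable (in fact Grothendieck, hence locally presentable by~\cite{Kra3}) hypothesis needed for Lemma~\ref{cofibrantly-generated-abelian-model}.
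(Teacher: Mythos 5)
Your argument follows the paper's own proof almost step for step: completeness of $(\sW,\sR)$ from Theorem~\ref{coderived-cotorsion-pair}; the identification $\sR\cap\sW=\sZ^0(\bA)_\inj$ and the thickness of $\sW$ via Lemma~\ref{cotorsion-pairs-with-graded-injectives} (the paper also points to Lemma~\ref{becker-and-lp-coacyclic}(a) and Lemma~\ref{W-is-thick-lemma}(b) for thickness); then Lemma~\ref{projective-injective-abelian-model-structures}(b) to assemble the injective abelian model structure; and the observation that $(\sW,\sR)$ was already exhibited as generated by the set $\sS=\{\Psi^+(S)\mid S\in\sS_0\}$ inside the proof of Theorem~\ref{coderived-cotorsion-pair}. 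All of this is correct, including your explicit check of shift-invariance of~$\sR$.

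However, your treatment of the cofibrant generation of the \emph{other} cotorsion pair, $(\sZ^0(\bA),\sZ^0(\bA)_\inj)$, contains a genuine error. In the paper's convention, a cotorsion pair $(\sL',\sR')$ is generated by a set $\sS'$ when $\sR'=\sS'^{\perp_1}$; the generating set always sits on the \emph{left}. The empty set therefore generates the pair $({}^{\perp_1}\sZ^0(\bA),\>\sZ^0(\bA))=(\sZ^0(\bA)_\proj,\>\sZ^0(\bA))$ --- which is precisely the pair occurring in the contraderived Theorem~\ref{contraderived-model-structure}. You have dualized that step illegitimately: ``generated by a set'' is not a self-dual notion, and the empty set does \emph{not} generate $(\sZ^0(\bA),\sZ^0(\bA)_\inj)$. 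Your fallback claim that ``any single generator of the Grothendieck category'' would do also fails: in $\Ab$ with generator $\boZ$ one has $\{\boZ\}^{\perp_1}=\Ab$, whereas the injective abelian groups are only the divisible ones. The correct repair, which is what the paper does, is to take a set $\sS_0'\subset\sZ^0(\bA)$ with $\sZ^0(\bA)=\Fil(\sS_0')$ --- such a set exists in any Grothendieck abelian category by~\cite[Example~1.2.6(1)]{Bec} or~\cite[Lemma~8.2]{PS4} --- and then Lemma~\ref{eklof-lemma} yields $\sS_0'^{\perp_1}=\sZ^0(\bA)_\inj$: any $J\in\sS_0'^{\perp_1}$ satisfies $\Ext^1_{\sZ^0(\bA)}(X,J)=0$ for \emph{all} $X$, so any monomorphism from $J$ into an injective object splits. (Alternatively, the set of quotients $G/H$ of a fixed generator $G$ by all its subobjects works, by a Baer-type criterion for Grothendieck categories, but the bare generator does not.) With this step corrected, the rest of your proof goes through and coincides with the paper's.
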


\begin{proof}
 This is a common generalization of~\cite[Proposition~1.3.6(2)]{Bec}
(which is the CDG\+module case) and~\cite[Theorem~4.2]{Gil}
or~\cite[Theorem~9.6]{PS4} (which is the case of complexes in
abelian categories).

 The pair of classes $(\sW,\sR)$ is a complete cotorsion pair in
$\sZ^0(\bA)$ by Theorem~\ref{coderived-cotorsion-pair}.
 According to Lemma~\ref{cotorsion-pairs-with-graded-injectives},
it follows that $\sR\cap\sW=\sZ^0(\bA)_\inj$.
 It follows from Lemma~\ref{becker-and-lp-coacyclic}(a) that the class
of Becker-coacyclic complexes $\sW$ is thick in $\sZ^0(\bA)$
(see also Lemma~\ref{W-is-thick-lemma}(b)
or~\ref{cotorsion-pairs-with-graded-injectives}).
 By Lemma~\ref{projective-injective-abelian-model-structures}(b),
the triple $(\sL,\sW,\sR)$ is an injective abelian model structure on
the category~$\sZ^0(\bA)$.

 Finally, it was shown in the proof of
Theorem~\ref{coderived-cotorsion-pair} that the cotorsion pair
$(\sW,\sR)$ in $\sZ^0(\bA)$ is generated by a set of objects.
 The cotorsion pair $(\sZ^0(\bA),\sZ^0(\bA)_\inj)$ is generated
by any set of objects $\sS_0\subset\sZ^0(\bA)$ such that
$\sZ^0(\bA)=\Fil(\sS_0)$, as in~\cite[Example~1.2.6(1)]{Bec} or
\cite[Lemma~8.2]{PS4} applied to the category~$\sZ^0(\bA)$.
 By Lemma~\ref{cofibrantly-generated-abelian-model}, this means that
our abelian model structure is cofibrantly generated.
\end{proof}

 The abelian model structure $(\sL,\sW,\sR)$ defined in
Theorem~\ref{coderived-model-structure} is called the \emph{coderived
model structure} on the category~$\sZ^0(\bA)$.

\begin{lem} \label{coderived-weak-equivalences}
 For any Grothendieck abelian DG\+category\/ $\bA$, the class\/ $\cW$ of
all weak equivalences in the coderived model structure on\/ $\sZ^0(\bA)$
coincides with the class of all closed morphisms of degree\/~$0$
in\/ $\bA$ with the cones belonging to\/ $\sZ^0(\bA)_\ac^\bco$.
\end{lem}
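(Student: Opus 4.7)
The plan is to mirror the proof of Lemma~\ref{contraderived-weak-equivalences}, with the appropriate dualization. The key inputs are Hovey's characterization \cite[Lemma~5.8]{Hov} that in any abelian model category a monomorphism is a weak equivalence if and only if its cokernel is weakly trivial (and dually for epimorphisms), the coderived abelian model structure established in Theorem~\ref{coderived-model-structure} (whose class of weakly trivial objects is precisely $\sZ^0(\bA)_\ac^\bco$), and Lemma~\ref{becker-and-lp-coacyclic}(a) relating\/ $\Tot$ of a short exact sequence to coacyclicity.

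I will begin by noting that the class of morphisms in $\sZ^0(\bA)$ whose cone lies in $\sZ^0(\bA)_\ac^\bco$ satisfies the 2-out-of-3 property, since $\sH^0(\bA)_\ac^\bco$ is a thick triangulated subcategory of $\sH^0(\bA)$ and the cones of any composable pair and of their composition fit into an octahedron. Next, I will factor an arbitrary morphism $f$ in $\sZ^0(\bA)$ as $f = rl$ with $l\:X\rarrow Z$ a trivial cofibration and $r\:Z\rarrow Y$ a fibration. Here $l$ is by construction a weak equivalence, and it is a monomorphism with coacyclic cokernel~$C$; applying Lemma~\ref{becker-and-lp-coacyclic}(a) to the short exact sequence $0\rarrow X\rarrow Z\rarrow C\rarrow 0$ yields that $\Tot(X\to Z\to C)$ is coacyclic. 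Combining this with the natural triangle relating $\Tot(X\to Z\to C)$, $\cone(l)$, and $C$ (in which the summands involving cones of identity morphisms are contractible, hence coacyclic) together with the triangulated thickness of $\sH^0(\bA)_\ac^\bco$, one concludes that $\cone(l)$ is coacyclic if and only if $C$ is, so in particular $\cone(l)$ is coacyclic.

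The remaining steps then proceed exactly as in the contraderived case. If $f$ is a weak equivalence, then so is $r$ by 2-out-of-3 for $\cW$; being both a fibration and a weak equivalence, $r$ is a trivial fibration, so its kernel lies in $\sR\cap\sW = \sZ^0(\bA)_\inj$ and is coacyclic. Dualizing the preceding argument, the cone of $r$ is coacyclic, so by 2-out-of-3 for morphisms with coacyclic cone, $\cone(f)$ is coacyclic as well. Conversely, if $\cone(f)$ is coacyclic, then $\cone(r)$ is coacyclic by 2-out-of-3, whence $\ker(r)$ is coacyclic (again by the $\Tot$-and-triangle identification), and Hovey's characterization applied to the epimorphism $r$ gives that $r$ is a weak equivalence; then $f = rl$ is a composition of weak equivalences. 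The only technical point is the identification of coacyclicity of $\cone(l)$ with that of $\coker(l)$ (and symmetrically for $r$) via Lemma~\ref{becker-and-lp-coacyclic}(a); this is the exact dual of the corresponding step in the contraderived setting, and no new obstacles are expected.
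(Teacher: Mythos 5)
Your proposal is correct and takes essentially the same approach as the paper: the published proof of Lemma~\ref{coderived-weak-equivalences} simply says ``Similar to Lemma~\ref{contraderived-weak-equivalences},'' and your argument is precisely that dualization, with the same ingredients (Hovey's \cite[Lemma~5.8]{Hov}, factorization of~$f$ as a trivial cofibration followed by a fibration in the model structure of Theorem~\ref{coderived-model-structure}, Lemma~\ref{becker-and-lp-coacyclic}(a) to pass between kernels/cokernels and cones, and 2\+out-of\+3 for both classes). Your explicit handling of the identification of $\cone(l)$ with $\coker(l)$ up to the coacyclic totalization is exactly the step left implicit in the paper's contraderived proof, so nothing is missing.
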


\begin{proof}
 Similar to Lemma~\ref{contraderived-weak-equivalences}.
\end{proof}

 The next corollary presumes existence of infinite coproducts in
Becker's coderived category $\sD^\bco(\bA)$.
 Here we point out that, since the thick subcategory of Becker-coacyclic
complexes $\sH^0(\bA)_\ac^\bco\subset\sH^0(\bA)$ is closed under
infinite coproducts by Lemma~\ref{becker-and-lp-coacyclic}(b),
the coproducts in Becker's coderived category $\sD^\bco(\bA)$ are
induced by those in the homotopy category $\sH^0(\bA)$.
 In other words, the Verdier quotient functor $\sH^0(\bA)\rarrow
\sD^\bco(\bA)$ preserves coproducts~\cite[Lemma~3.2.10]{Neem1}.

\begin{cor} \label{coderived-well-generated}
 For any Grothendieck abelian DG\+category\/ $\bA$, Becker's coderived
category\/ $\sD^\bco(\bA)$ is a well-generated triangulated category.
\end{cor}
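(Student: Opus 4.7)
The plan is to mirror the argument used in Corollary~\ref{contraderived-well-generated} for the contraderived case, working with the coderived model structure constructed in Theorem~\ref{coderived-model-structure} instead of the contraderived one. The first step is to identify Becker's coderived category $\sD^\bco(\bA)$ with the homotopy category $\sZ^0(\bA)[\cW^{-1}]$ of the coderived model structure on $\sZ^0(\bA)$, where $\cW$ is the class of weak equivalences of that model structure.

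To obtain this identification, I would use exactly the same trick as in the proof of Corollary~\ref{contraderived-well-generated}: for any DG-category $\bE$ with shifts and cones, inverting all homotopy equivalences in $\sZ^0(\bE)$ produces $\sH^0(\bE)$. Indeed, two homotopic closed morphisms of degree zero $f',f''\:A\rarrow B$ can be factored through a common map $h\:A\oplus\cone(\id_A)\rarrow B$ composed with the two sections $\iota',\iota''\:A\to A\oplus\cone(\id_A)$ of the direct summand projection $\pi$, and $\pi$ is a homotopy equivalence because $\cone(\id_A)$ is contractible. Consequently, inverting the class of closed morphisms of degree~$0$ whose cones are Becker-coacyclic first produces the Verdier quotient $\sH^0(\bA)/\sH^0(\bA)_\ac^\bco$, which is precisely $\sD^\bco(\bA)$. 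In view of Lemma~\ref{coderived-weak-equivalences}, this identifies $\sD^\bco(\bA)$ with $\sZ^0(\bA)[\cW^{-1}]$.

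It remains to check that the coderived model category $\sZ^0(\bA)$ is stable and combinatorial. By Theorem~\ref{coderived-model-structure}, this model structure is cofibrantly generated and hereditary abelian, so Lemma~\ref{hereditary-is-stable} gives stability. For the combinatorial part, the category $\sZ^0(\bA)$ is locally presentable: every Grothendieck abelian category is locally presentable by~\cite[Corollary~5.2]{Kra3} (as noted in Section~\ref{Grothendieck-DG-categories-subsecn}), and $\sZ^0(\bA)$ is Grothendieck by the definition of a Grothendieck abelian DG-category. Together with the cofibrant generation from Theorem~\ref{coderived-model-structure}, this makes $\sZ^0(\bA)$ a stable combinatorial model category.

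Finally, applying Proposition~\ref{stable-combinatorial-well-generated} to $\sZ^0(\bA)$ yields that $\sZ^0(\bA)[\cW^{-1}]\simeq\sD^\bco(\bA)$ is a well-generated triangulated category, completing the proof. I do not anticipate significant obstacles here: the heavy lifting has already been done in constructing the coderived model structure (Theorem~\ref{coderived-model-structure}) and in establishing the general machinery of stable combinatorial model categories; the only step requiring any care is the identification of $\sD^\bco(\bA)$ as the model-categorical homotopy category, which proceeds verbatim dually to the contraderived argument.
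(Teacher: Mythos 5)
Your proposal is correct and follows essentially the same route as the paper, which proves Corollary~\ref{coderived-well-generated} precisely by dualizing the argument of Corollary~\ref{contraderived-well-generated}: identifying $\sD^\bco(\bA)$ with $\sZ^0(\bA)[\cW^{-1}]$ via Lemma~\ref{coderived-weak-equivalences} and the homotopy-equivalence-inversion trick, and then combining Theorem~\ref{coderived-model-structure}, Lemma~\ref{hereditary-is-stable}, and Proposition~\ref{stable-combinatorial-well-generated}. Your explicit verification that $\sZ^0(\bA)$ is locally presentable (Grothendieck implies locally presentable) is a detail the paper leaves implicit, but it is exactly the intended justification.
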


\begin{proof}
 This is a generalization of the well-known result for the categories
of complexes in Grothendieck abelian categories (which can be found
in~\cite[Theorem~3.13]{Neem2}, \cite[Theorem~5.12]{Kra3},
\cite[Theorem~4.2]{Gil}, or~\cite[Corollary~9.8]{PS4}).

 The proof is similar to that of
Corollary~\ref{contraderived-well-generated}.
 One observes that Becker's coderived category $\sD^\bco(\bA)$ can be
equivalently defined as the homotopy category $\sZ^0(\bA)[\cW^{-1}]$
of the coderived model structure on $\sZ^0(\bA)$, and uses
Theorem~\ref{coderived-model-structure} together with
Lemma~\ref{hereditary-is-stable} and
Proposition~\ref{stable-combinatorial-well-generated}. 
\end{proof}

\subsection{Directed colimits of Becker-coacyclic objects}
\label{directed-colimits-of-becker-coacyclics}
 Varions descriptions of the class of all Becker-coacyclic objects
play a key role in the rest of this paper.
 One such description was already obtained in
Corollary~\ref{coacyclics-as-filtered-by-contractibles}.
 The aim of this section is to provide another one.
 Under more restrictive assumptions, yet another description of
the class $\sZ^0(\bA)_\ac^\bco$ will be given in
Section~\ref{absolute-derived-subsecn}.

\begin{prop} \label{directed-colimits-from-transfinite-extensions}
 Let\/ $\sA$ be an abelian category with coproducts and exact functors
of directed colimit, and let\/ $\sC\subset\sA$ be a class of objects.
 Then the following conditions are equivalent:
\begin{enumerate}
\item the class\/ $\sC$ is closed under the cokernels of monomorphisms and transfinitely iterated extensions (in the sense of the directed
colimit) in\/~$\sA$;
\item the class\/ $\sC$ is closed under the cokernels of monomorphisms,
extensions, and directed colimits of smooth well-ordered chains of
monomorphisms in\/~$\sA$;
\item the class\/ $\sC$ is closed under the cokernels of monomorphisms,
extensions, and directed colimits in\/~$\sA$.
\end{enumerate}
\end{prop}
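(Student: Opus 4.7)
The plan is to establish (1)$\Leftrightarrow$(2) by a direct translation between the two formulations of transfinite extension, observe that (3)$\Rightarrow$(2) is trivial, and devote the main work to (2)$\Rightarrow$(3), which reduces an arbitrary directed colimit to a smooth well-ordered chain of monomorphisms via images and subobject sums.

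For (1)$\Rightarrow$(2), any short exact sequence $0\to A\to B\to C\to0$ with $A,C\in\sC$ exhibits $B$ as a transfinite extension of length~$2$. For a smooth well-ordered chain of monomorphisms $(F_i)_{i\le\alpha}$ with all $F_i\in\sC$, one reindexes by prepending a $0$: set $\tilde F_0=0$ and $\tilde F_{j+1}=F_j$. The cokernels are $F_0\in\sC$ and $F_{j+1}/F_j\in\sC$ (by closure under cokernels of monomorphisms), so the transfinite extension axiom yields $F_\alpha\in\sC$. Conversely, for (2)$\Rightarrow$(1), given a transfinite extension chain $(F_i)_{i\le\alpha}$ with cokernels $S_i\in\sC$, I would prove $F_i\in\sC$ by transfinite induction: $F_0=0\in\sC$, successor steps use closure under extensions applied to $0\to F_i\to F_{i+1}\to S_i\to 0$, and limit steps use closure under directed colimits of smooth monomorphism chains.

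The implication (3)$\Rightarrow$(2) is immediate. For (2)$\Rightarrow$(3), let $A=\varinjlim_{i\in I}A_i$ with all $A_i\in\sC$, and let $\bar A_i\subset A$ denote the image of the structural map $A_i\to A$. Since $\bar A_i$ is the cokernel of a monomorphism out of $A_i$, closure under cokernels of monomorphisms gives $\bar A_i\in\sC$. Exactness of directed colimits in $\sA$ ensures $A$ is the directed union of the subobjects $\bar A_i$. Well-ordering $I=\{i_\beta:\beta<\kappa\}$, define a smooth chain of subobjects $C_\beta\subset A$ by $C_0=0$, $C_{\beta+1}=C_\beta+\bar A_{i_\beta}$, and $C_\beta=\varinjlim_{\gamma<\beta}C_\gamma$ at limit ordinals; then $C_\kappa=A$. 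At successor steps, the short exact sequence
\[
0\lrarrow C_\beta\cap\bar A_{i_\beta}\lrarrow\bar A_{i_\beta}\lrarrow C_{\beta+1}/C_\beta\lrarrow0
\]
shows $C_{\beta+1}/C_\beta\in\sC$ (cokernel of a monomorphism from an object of $\sC$), whence $C_{\beta+1}\in\sC$ by closure under extensions. Limit steps are handled by closure under smooth monomorphism chains, and the initial step $C_0=0\in\sC$ is provided by cokernel closure applied to any identity (the case $\sC=\varnothing$ is vacuous).

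The main obstacle I anticipate is organizing the induction in (2)$\Rightarrow$(3) so that each subobject sum $C_\beta+\bar A_{i_\beta}$ genuinely fits into the asserted short exact sequence and so that the smoothness at limit ordinals is preserved; both hinge on the exactness of directed colimits in $\sA$, which is the essential hypothesis on the ambient category. Once those points are granted, the rest is routine bookkeeping.
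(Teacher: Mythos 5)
Your treatment of (1)$\Leftrightarrow$(2) and of (3)$\Rightarrow$(2) is fine and agrees with the paper, but your proof of the main implication (2)$\Rightarrow$(3) has a genuine gap: twice you invoke ``closed under cokernels of monomorphisms'' for a monomorphism whose \emph{source} is not known to lie in $\sC$. The image $\bar A_i$ is the cokernel of $\ker(A_i\rarrow A)\rightarrowtail A_i$, and nothing guarantees $\ker(A_i\to A)\in\sC$; the same defect recurs at your successor step, where $C_\beta\cap\bar A_{i_\beta}$ is an arbitrary subobject of $\bar A_{i_\beta}$, again not known to be in $\sC$. The closure property in the proposition is the two-out-of-three one: given a short exact sequence $0\rarrow X\rarrow Y\rarrow Z\rarrow 0$ with $X,Y\in\sC$, one concludes $Z\in\sC$. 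What you are actually using is closure under arbitrary epimorphic images, which is strictly stronger and demonstrably false in the intended application: every object $A$ of an abelian DG\+category is a quotient of the contractible (hence Becker-coacyclic) object $\Xi(A)[1]=\cone(\id_A)$, so if the class $\sZ^0(\bA)_\ac^\bco$ were closed under images it would be the whole category. Any correct proof must avoid taking images of the structural maps $A_i\rarrow A$ altogether, so your reduction via $\bar A_i$ and the subobject sums $C_\beta+\bar A_{i_\beta}$ cannot be repaired within your scheme.

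The paper's device for (2)$\Rightarrow$(3) circumvents exactly this point. One first reduces, by a standard fact from the theory of accessible categories, to showing closure under colimits of smooth well-ordered chains $(D_i)_{i\le\alpha}$ with \emph{arbitrary} (not necessarily monic) transition maps and $D_i\in\sC$ for $i<\alpha$. Then, instead of passing to images inside the colimit, one considers for each $\beta\le\alpha$ the canonical presentation $0\rarrow K_\beta\rarrow\coprod_{i<\beta}D_i\rarrow\varinjlim_{i<\beta}D_i\rarrow 0$ and proves $K_\beta\in\sC$ by transfinite induction: at a successor $\beta=\gamma+1$ the sequence splits and $K_\beta\simeq\coprod_{i<\gamma}D_i$, which lies in $\sC$ because coproducts are built from (split) extensions and smooth chains of monomorphisms; at a limit $\beta$ the objects $K_i$ form a smooth chain of monomorphisms with colimit $K_\beta$ (here exactness of directed colimits is used). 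Finally $D_\alpha$ is the cokernel of the monomorphism $K_\alpha\rightarrowtail\coprod_{i<\alpha}D_i$ \emph{between two objects already known to be in $\sC$}, which is the only form of cokernel-closure the hypotheses supply.
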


\begin{proof}
 (3)\,$\Longrightarrow$\,(2) Obvious.
  
 (2)\,$\Longrightarrow$\,(1) It suffices to observe that
a transfinitely iterated extension in a category with exact functors
of directed colimit is by the definition built from
extensions and directed colimits of smooth chains of monomorphisms.

 (1)\,$\Longrightarrow$\,(2)
 Let $(C_i\to C_j)_{i<j\le\alpha}$ be a smooth well-ordered chain of
objects in $\sA$, indexed by a limit ordinal~$\alpha$, such that
$C_i\in\sC$ for all $i<\alpha$ and the morphism $C_i\rarrow C_{i+1}$ is
a monomorphism in $\sA$ for all $i<\alpha$.
 Without loss of generality we can assume that $C_0=0$.
 By assumption, the cokernel $S_i$ of the morphism $C_i\rarrow C_{i+1}$
belongs to $\sC$ for all $i<\alpha$.
 The object $C=C_\alpha$ is a transfinitely iterated extension of
the objects $S_i$, \,$i<\alpha$; hence it also belongs to $\sC$,
as desired.

 (2)\,$\Longrightarrow$\,(3)
 It is well-known that a class of objects in a cocomplete category
is closed under directed colimits if and only if it is closed under
directed colimits of smooth well-ordered chains~\cite[Lemma~1.6,
Corollary~1.7, and Remark~1.7]{AR}.

 Let $(f_{ji}\:D_i\to D_j)_{i<j\le\alpha}$ be a smooth well-ordered
chain of objects in $\sA$, indexed by a limit ordinal~$\alpha$,
such that $D_i\in\sC$ for all $i<\alpha$.
 Under the assumptions of~(2), we have to show that $D_\alpha\in\sC$.
 Following~\cite[Lemma~2.1]{GG} or~\cite[Proposition~4.1]{BPS},
we consider for every ordinal $\beta\le\alpha$ the short exact
sequence
\begin{equation} \label{intermediate-colimit}
 0\lrarrow K_\beta\lrarrow\coprod\nolimits_{i<\beta} D_i\lrarrow
 \varinjlim\nolimits_{i<\beta} D_i\lrarrow0
\end{equation}
of canonical presentation of the directed colimit
$\varinjlim_{i<\beta}D_i$.
 In particular, for any limit ordinal $\beta\le\alpha$,
the sequence~\eqref{intermediate-colimit} takes the form
\begin{equation} \label{smoothness-limit-ordinal-colimit}
 0\lrarrow K_\beta\lrarrow\coprod\nolimits_{i<\beta} D_i\lrarrow
 D_\beta\lrarrow0,
\end{equation}
since $D_\beta=\varinjlim_{i<\beta}D_i$ by assumption.

 For any successor ordinal $\beta=\gamma+1<\alpha$,
we have $\varinjlim_{i<\beta}D_i=\varinjlim_{i\le\gamma}D_i=D_\gamma$.
 In this case, the sequence~\eqref{intermediate-colimit} is split
and takes the form
\begin{equation} \label{split-colimit}
 0\lrarrow\coprod\nolimits_{i<\gamma}D_i\lrarrow
 \coprod\nolimits_{i\le\gamma}D_i\lrarrow D_\gamma\lrarrow0,
\end{equation}
 Here the components of the map
$\coprod_{i<\beta}D_i\rarrow D_\gamma$ are precisely
the morphisms $f_{\gamma i}\:D_i\rarrow D_\gamma$,
\,$i<\gamma$, and $\id_{D_\gamma}\:D_\gamma\rarrow D_\gamma$.
 In this case, we have a natural isomorphism
$K_\beta\simeq\coprod_{i<\gamma}D_i$.


 The short exact sequences~\eqref{intermediate-colimit}
form a smooth chain (in the category) of short exact sequences in~$\sA$.
 Proceeding by the transfinite induction, we can now prove that
$K_\beta\in\sC$ for all ordinals $\beta\le\alpha$.
 Indeed, the assertion holds for successor ordinals~$\beta$, since
the class $\sC$ is closed under coproducts (which can be easily
expressed in terms of extensions and directed colimits of smooth
well-ordered chains of monomorphisms).
 For a limit ordinal~$\beta\le\alpha$, the object $K_\beta$ is
the directed colimit of the smooth well-ordered chain of
monomorphisms $(K_i\to K_j)_{i<j<\beta}$.

 We have shown that $K_\alpha\in\sC$.
 Finally, consider the short exact
sequence~\eqref{smoothness-limit-ordinal-colimit} for $\beta=\alpha$,
\begin{equation} \label{final-colimit}
 0\lrarrow K_\alpha\lrarrow\coprod\nolimits_{i<\alpha} D_i\lrarrow
 D_\alpha\lrarrow0,
\end{equation}
and observe that $\coprod\nolimits_{i<\alpha}D_i\in\sC$ and
$K_\alpha\in\sC$.
 Since the class $\sC$ is closed under the cokernels of monomorphisms,
we can conclude that $D_\alpha\in\sC$.
\end{proof}

\begin{cor} \label{coacyclics-as-closure-extensions-directed-colimits}
 Let\/ $\bA$ be a Grothendieck abelian DG\+category.
 Then the class of all Becker-coacyclic objects in\/ $\sZ^0(\bA)$ is
closed under directed colimits.
 Moreover, the class\/ $\sZ^0(\bA)_\ac^\bco$ is precisely the closure
of the class of all contractible objects under extensions and directed
colimits.
 In fact, if\/ $\sS_0\subset\sZ^0(\bA^\bec)$ is a set of objects
such that\/ $\sZ^0(\bA^\bec)=\Fil(\sS_0)$ and\/ $\sS=\{\Psi^+(S)\mid
S\in\sS_0\}\subset\sZ^0(\bA)$, then\/ $\sZ^0(\bA)_\ac^\bco$ is
the closure of\/ $\sS$ under extensions and directed colimits
in\/~$\sZ^0(\bA)$.
\end{cor}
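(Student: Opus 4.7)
The plan is to combine Corollary~\ref{coacyclics-as-filtered-by-contractibles} with Proposition~\ref{directed-colimits-from-transfinite-extensions} and the classical fact that a split idempotent can be realised as a directed colimit. The main delicate point is to verify that $\sZ^0(\bA)_\ac^\bco$ is closed under cokernels of monomorphisms in $\sZ^0(\bA)$; I obtain this by appealing to the thickness of the weakly trivial class $\sW=\sZ^0(\bA)_\ac^\bco$ in the coderived abelian model structure of Theorem~\ref{coderived-model-structure}, which gives the full 2\+out-of\+3 property in short exact sequences and, in particular, closure under cokernels of monomorphisms.

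With this in hand, closure of $\sZ^0(\bA)_\ac^\bco$ under directed colimits is immediate from Proposition~\ref{directed-colimits-from-transfinite-extensions} applied inside the Grothendieck abelian category $\sZ^0(\bA)$ (which has coproducts and exact directed colimits by the definition of a Grothendieck abelian DG\+category): closure under transfinitely iterated extensions is supplied by Lemma~\ref{becker-and-lp-coacyclic}, and closure under cokernels of monomorphisms has just been observed. This settles the first assertion of the corollary.

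For the explicit description, let $\cE$ denote the closure of $\sS$ under extensions and directed colimits in $\sZ^0(\bA)$. The inclusion $\cE\subseteq\sZ^0(\bA)_\ac^\bco$ is immediate from the first assertion together with the fact that each object $\Psi^+(S)\in\sS$ is contractible, hence Becker-coacyclic. For the reverse inclusion, I would take $Y\in\sZ^0(\bA)_\ac^\bco$ and, by Corollary~\ref{coacyclics-as-filtered-by-contractibles}, write it as a direct summand of some $Z\in\Fil(\sS)$. The proof of the implication (2)$\Rightarrow$(1) in Proposition~\ref{directed-colimits-from-transfinite-extensions} realises every transfinitely iterated extension through extensions and directed colimits of smooth chains of monomorphisms, whence $\Fil(\sS)\subseteq\cE$ and in particular $Z\in\cE$. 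To place $Y$ itself in $\cE$, I would invoke the standard observation that in any additive category with directed colimits the image of a split idempotent $e\:Z\to Z$ is the colimit of the constant directed system on $Z$ with all transition maps equal to $e$; this exhibits $Y$ as a directed colimit of copies of $Z\in\cE$ and hence places it in $\cE$. The variant with the class of all contractible objects in place of $\sS$ follows by exactly the same argument, using Lemma~\ref{contractibles-described-lemma} together with the identity $\Psi^+\circ\Phi\simeq\Xi$ to present every contractible as a direct summand of an object of $\Psi^+(\Fil(\sS_0))\subseteq\Fil(\sS)$, which therefore already lies in~$\cE$.
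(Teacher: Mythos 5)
Your proof is correct and follows essentially the same route as the paper's: Lemma~\ref{becker-and-lp-coacyclic} plus Proposition~\ref{directed-colimits-from-transfinite-extensions}\,(1)\,$\Rightarrow$\,(3) for closure under directed colimits, and Corollary~\ref{coacyclics-as-filtered-by-contractibles} together with the realization of a split direct summand as a countable directed colimit of copies of the ambient object for the explicit description. Your only deviations are cosmetic: the paper obtains closure under cokernels of monomorphisms directly from Lemma~\ref{becker-and-lp-coacyclic}(a) rather than through the thickness of $\sW$ in Theorem~\ref{coderived-model-structure} (which rests on that same lemma anyway), and it dispenses with your separate verification via Lemma~\ref{contractibles-described-lemma} and $\Xi\simeq\Psi^+\circ\Phi$ that every contractible object lies in the closure of $\sS$, simply noting instead that every object of $\sS$ is contractible.
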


\begin{proof}
 The class of all Becker-coacyclic objects\/ $\sC=\sZ^0(\bA)_\ac^\bco$
in the Grothendieck abelian category $\sA=\sZ^0(\bA)$ is closed under
the cokernels of monomorphisms by
Lemma~\ref{becker-and-lp-coacyclic}(a), and under transfinitely
iterated extensions by the first assertion of the same lemma.
 So the class $\sC$ satisfies condition~(1) of
Proposition~\ref{directed-colimits-from-transfinite-extensions}.
 Therefore, the equivalent condition~(3) of the same proposition is
also satisfied, and we can conclude that the class $\sZ^0(\bA)_\ac^\bco$
is closed under directed colimits in~$\sZ^0(\bA)$.
 Furthermore, all contractible objects obviously belong to
$\sZ^0(\bA)_\ac^\bco$; so the closure of the class of contractible
objects under extensions and directed colimits is contained
in $\sZ^0(\bA)_\ac^\bco$.

 Conversely, by Corollary~\ref{coacyclics-as-filtered-by-contractibles},
all the Becker-coacyclic objects in $\bA$ can be obtained as direct
summands of transfinitely iterated extensions of objects from $\sS$
in the abelian category~$\sZ^0(\bA)$.
 A transfinitely iterated extension is built from extensions and
directed colimits; and a direct summand of an object can be expressed
as a countable directed colimit of copies of that object.
\end{proof}

\subsection{Generation theorem} \label{generation-theorem-subsecn}
 Let $\sT$ be a triangulated category.
 A class of objects $\sS\subset\sT$ is said to \emph{weakly generate}
$\sT$ if any object $X\in\sT$ such that $\Hom_\sT(S,X[n])=0$
for all $S\in\sS$ and $n\in\boZ$ vanishes in $\sT$, that is $X=0$.

 Let $\sT$ be a triangulated category with (infinite) coproducts.
 Then a class of objects $\sS\subset\sT$ is said to \emph{generate}
$\sT$ (as a triangulated category with coproducts) if the minimal full
triangulated subcategory of $\sT$ containing $\sS$ and closed under
coproducts coincides with~$\sT$.

 Notice that, for any object $X\in\sT$, the full subcategory of
all objects $Y\in\sT$ such that $\Hom_\sT(Y,X[n])=0$ for all $n\in\boZ$
is a full triangulated subcategory closed under coproducts.
 Therefore, the terminology above is consistent: if $\sS$ generates
$\sT$, then $\sS$ also weakly generates~$\sT$.

 The converse implication holds for well-generated triangulated
categories $\sT$ and \emph{sets} of objects $\sS\subset\sT$.
 If $\sT$ is a well-generated triangulated category and $\sS$ is
a weakly generating set of objects in $\sT$, then $\sS$ is also
a generating set of objects for~$\sT$
\,\cite[Theorems~7.2.1 and~5.1.1]{Kra2}.

\begin{thm} \label{generators-of-coderived-category}
 Let\/ $\bA$ be a Grothendieck abelian DG\+category and\/
$\sS\subset\sZ^0(\bA)$ be a set of objects such that the whole
abelian category\/ $\sZ^0(\bA)$ is the closure of\/ $\sS$ under
shifts, kernels of epimorphisms, cokernels of monomorphisms,
extensions, and directed colimits.
 Then the set of objects\/ $\sS$ generates the triangulated category\/
$\sD^\bco(\bA)$.
\end{thm}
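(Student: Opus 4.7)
The plan is to use well-generation of $\sD^\bco(\bA)$ to reduce the problem to weak generation, then establish weak generation via an $\Ext$-theoretic argument based on Eklof's lemma.

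By Corollary~\ref{coderived-well-generated}, $\sD^\bco(\bA)$ is well-generated, so by Krause's theorem cited at the end of Section~\ref{generation-theorem-subsecn} it is enough to show that $\sS$ weakly generates $\sD^\bco(\bA)$. Let $X\in\sD^\bco(\bA)$ satisfy $\Hom_{\sD^\bco(\bA)}(S,X[n])=0$ for all $S\in\sS$ and $n\in\boZ$; the aim is to deduce $X\simeq 0$. By Corollary~\ref{becker-coderived-corollary}, $X$ can be represented by a graded-injective object $J\in\bA_\binj$, and then $\Hom_{\sD^\bco(\bA)}(A,X[n])\simeq\Hom_{\sH^0(\bA)}(A,J[n])$ for every $A\in\sZ^0(\bA)$ and every $n\in\boZ$. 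Define
\[
\sC=\bigl\{A\in\sZ^0(\bA) \mid \Hom_{\sH^0(\bA)}(A,J[n])=0\text{ for all } n\in\boZ\bigr\}.
\]
Applying Lemma~\ref{Ext-in-cocycles-and-Hom-in-homotopy} to each graded-injective object $J[n-1]$ identifies $\sC$ with $\bigcap_{n\in\boZ}(J[n])^{\perp_1}$, the orthogonal being taken with respect to $\Ext^1_{\sZ^0(\bA)}$; by hypothesis $\sS\subset\sC$.

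Closure of $\sC$ under shifts is tautological. Closure under extensions, kernels of monomorphisms, and cokernels of monomorphisms follows from the triangulated structure: any short exact sequence $0\to A'\to A\to A''\to 0$ in $\sZ^0(\bA)$ yields a distinguished triangle $A'\to A\to A''\to A'[1]$ in $\sD^\bco(\bA)$, because its total object is Becker-coacyclic by Lemma~\ref{becker-and-lp-coacyclic}(a); applying $\Hom_{\sD^\bco(\bA)}(-,X[n])$ then gives a long exact sequence from which all three closure properties follow simultaneously.

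The main obstacle is closure of $\sC$ under directed colimits. Here the $\Ext^1$-description is crucial: Eklof's lemma (Lemma~\ref{eklof-lemma}) asserts that for any object $R\in\sZ^0(\bA)$, the class ${}^{\perp_1}\{R\}$ is closed under transfinitely iterated extensions in the sense of the directed colimit, hence so is the intersection $\sC$. Combined with the already-established closure under cokernels of monomorphisms, Proposition~\ref{directed-colimits-from-transfinite-extensions} (equivalence (1)$\Leftrightarrow$(3)) then upgrades this to closure under arbitrary directed colimits. Therefore $\sC$ satisfies all the closure conditions of the hypothesis, and since it contains $\sS$, we must have $\sC=\sZ^0(\bA)$. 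Specializing to $J\in\sC$ gives $\Hom_{\sH^0(\bA)}(J,J)=0$, so $\id_J$ is null-homotopic, $J$ is contractible, and $X\simeq 0$ in $\sD^\bco(\bA)$; hence $\sS$ weakly generates and, by Krause's theorem, generates $\sD^\bco(\bA)$.
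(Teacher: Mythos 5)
Your proposal is correct and follows essentially the same route as the paper's own proof: reduce to weak generation using well-generatedness, represent $X$ by a graded-injective $J$, identify the vanishing class with ${}^{\perp_1}\{J[n]\mid n\in\boZ\}$ via Lemma~\ref{Ext-in-cocycles-and-Hom-in-homotopy}, deduce closure under kernels of epimorphisms, cokernels of monomorphisms and extensions from coacyclicity of totalizations, and upgrade Eklof's lemma to closure under directed colimits by Proposition~\ref{directed-colimits-from-transfinite-extensions}\,(1)$\Rightarrow$(3). Two slips of notation only: ``kernels of monomorphisms'' should read ``kernels of epimorphisms,'' and in the paper's convention the relevant orthogonal class is ${}^{\perp_1}\{J[n]\}$, not $(J[n])^{\perp_1}$.
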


\begin{proof}
 Corollary~\ref{coderived-well-generated} tells that Becker's coderived
category $\sD^\bco(\bA)$ is well-generated.
 According to the discussion above, it suffices to show that the set
$\sS$ weakly generates the triangulated category $\sD^\bco(\bA)$.

 According to Corollary~\ref{becker-coderived-corollary}, the coderived
category $\sD^\bco(\bA)$ is equivalent to the homotopy category of
graded-injective objects $\sH^0(\bA_\binj)$.
 It is clear from the definition of the class of Becker-coacyclic
objects $\sH^0(\bA)_\ac^\bco$ that the natural map of abelian groups
$\Hom_{\sH^0(\bA)}(Y,J)\rarrow\Hom_{\sD^\bco(\bA)}(Y,J)$ is
an isomorphism for all $Y\in\bA$ and $J\in\bA_\binj$.
 Let $X\in\sD^\bco(\bA)$ be an object such that
$\Hom_{\sD^\bco(\bA)}(S,X[n])=0$ for all $S\in\sS$ and $n\in\boZ$.
 Choose a graded-injective object $J\in\bA_\binj$ isomorphic to $X$
in $\sD^\bco(\bA)$.
 Then we have $\Hom_{\sH^0(\bA)}(S,J[n])=0$ for all $S\in\sS$ and
$n\in\boZ$.

 Denote by $\sY\subset\sZ^0(\bA)$ the class of all objects $Y$ such
that $\Hom_{\sH^0(\bA)}(Y,J[n])=0$ for all $n\in\boZ$.
 Clearly, the class of objects $\sY$ is closed under shifts and
cones.
 Furthermore, by the definition, all Becker-coacyclic objects in
$\bA$ belong to $\sY$, that is $\sZ^0(\bA)_\ac^\bco\subset\sY$.
 In particular, all the totalizations of short exact sequences in
$\sZ^0(\bA)$ belong to $\sY$ by Lemma~\ref{becker-and-lp-coacyclic}(a).
 It follows that the class $\sY$ is closed under the kernels of
epimorphisms, cokernels of monomorphisms, and extensions
in~$\sZ^0(\bA)$.

 Finally, by Lemma~\ref{Ext-in-cocycles-and-Hom-in-homotopy},
the class $\sY$ can be described as $\sY={}^{\perp_1}\{J[n]\mid n\in\boZ\}
\subset\sZ^0(\bA)$.
 By Lemma~\ref{eklof-lemma}, the class $\sY$ is closed under
transfinitely iterated extensions in~$\sZ^0(\bA)$.
 Now Proposition~\ref{directed-colimits-from-transfinite-extensions}%
\,(1)\,$\Rightarrow$\,(3) tells that the class $\sY$ is closed
under directed colimits in~$\sZ^0(\sY)$.
 Since $\sS\subset\sY$ by assumption, we can conclude that
the class $\sY$ coincides with the whole abelian category~$\sZ^0(\bA)$.
 Thus $J\in\sY$, and it follows that the object $J$ is contractible.
 Hence $X=0$ in $\sD^\bco(\bA)$.
\end{proof}

 Let us formulate explicitly the natural particular case of
Theorem~\ref{generators-of-coderived-category} for locally
finitely presentable abelian DG\+categories (as defined in
Section~\ref{locally-presentable-DG-categories-subsecn}).

\begin{cor} \label{generators-of-coderived-for-locally-presentable}
 Let\/ $\bA$ be a locally finitely presentable abelian DG\+category
and\/ $\sS$ be the set of all (representatives of isomorphism
classes of) finitely presentable objects in the locally finitely
presentable abelian category\/~$\sZ^0(\bA)$.
 Then the set of objects\/ $\sS$ generates Becker's coderived
category\/ $\sD^\bco(\bA)$.
\end{cor}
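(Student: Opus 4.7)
The plan is simply to check that the hypothesis of Theorem~\ref{generators-of-coderived-category} is satisfied and then invoke that theorem. There is no serious obstacle here; the corollary is essentially an immediate specialization.

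First I would observe that $\bA$ is in fact a Grothendieck abelian DG-category, so Theorem~\ref{generators-of-coderived-category} is applicable. Indeed, by the definition of a locally finitely presentable abelian DG-category (Section~\ref{locally-presentable-DG-categories-subsecn}), both $\sZ^0(\bA)$ and $\sZ^0(\bA^\bec)$ are locally finitely presentable abelian categories. As pointed out in Section~\ref{Grothendieck-DG-categories-subsecn}, every locally finitely presentable abelian category is Grothendieck, so $\bA$ is a Grothendieck abelian DG-category.

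Second, I would note that $\sS$ is indeed a set: in any locally finitely presentable category there is only a set of isomorphism classes of finitely presentable objects (cf.\ \cite[Remarks~1.9 and~1.19]{AR}).

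Third, I would verify the generation hypothesis. Since $\sZ^0(\bA)$ is locally finitely presentable, every object $M \in \sZ^0(\bA)$ is a directed colimit of finitely presentable objects, i.e.\ a directed colimit of objects from $\sS$. Hence the closure of $\sS$ under directed colimits alone already equals the whole category $\sZ^0(\bA)$; in particular it contains $\sZ^0(\bA)$, so a fortiori the closure of $\sS$ under shifts, kernels of epimorphisms, cokernels of monomorphisms, extensions, and directed colimits coincides with $\sZ^0(\bA)$.

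With the three points above in place, Theorem~\ref{generators-of-coderived-category} immediately yields that $\sS$ generates $\sD^\bco(\bA)$ as a triangulated category with coproducts, proving the corollary.
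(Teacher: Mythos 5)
Your proof is correct and coincides with the paper's own (first) proof: both reduce the statement to Theorem~\ref{generators-of-coderived-category} by noting that a locally finitely presentable abelian DG\+category is Grothendieck and that every object of $\sZ^0(\bA)$ is a directed colimit of finitely presentable objects. Your additional remarks that $\sS$ forms a set and that the closure under directed colimits alone suffices are harmless elaborations of the same argument.
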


\begin{proof}[First proof]
 We recall that any locally finitely presentable abelian DG\+category
is Grothendieck (see Section~\ref{Grothendieck-DG-categories-subsecn}).
 All the objects of the locally finitely presentable category
$\sZ^0(\bA)$ can be obtained as directed colimits of finitely
presentable objects, so Theorem~\ref{generators-of-coderived-category}
is applicable.
\end{proof}

 Another proof of
Corollary~\ref{generators-of-coderived-for-locally-presentable},
working in the special case of a locally coherent abelian DG\+category
$\bA$,  will be given below in
Section~\ref{full-and-faithfulness-and-compactness-subsecn}.

\Section{Locally Coherent DG-Categories}  \label{locally-coherent-secn}

 The aim of this section is to prove that Becker's coderived category
$\sD^\bco(\bA)$ is compactly generated, and describe (up to direct
summands) its full subcategory of compact objects, for a locally
coherent abelian DG\+category~$\bA$.
 In particular, this result applies to the coderived category
$\sD^\bco(\biR^\cu\bModl)$ of CDG\+modules over a CDG\+ring $(R^*,d,h)$
whose underlying graded ring $R^*$ is graded left coherent.
 The main results are Theorem~\ref{becker-coderived-compactly-generated}
and Corollary~\ref{CDG-modules-becker-coderived-comp-gen}.

\subsection{Locally finitely presentable abelian categories}
\label{loc-fin-pres-abelian-subsecn}
 The definitions of a finitely presentable object and a locally finitely
presentable category were given in Section~\ref{small-object-subsecn}
(based on the book~\cite{AR}, which we use as the standard reference).
 Other important references include the papers~\cite{Len,CB,Kra0}.

 Notice that the categories called ``locally finitely presented''
in~\cite{CB,Kra0} are called ``finitely accessible'' in
the terminology of~\cite{AR}.
 These categories need not have finite colimits or finite limits,
but only directed colimits.
 So the general setting in~\cite{CB,Kra0} is more general than ours in
this section.

 Before stating the following lemma, we recall that an object
$E\in\sA$ is said to be \emph{finitely generated} if
the functor $\Hom_\sA(E,{-})\:\sA\rarrow\Sets$ preserves
the colimits of directed diagrams of
monomorphisms~\cite[Section~1.E]{AR}.
 An object $E\in\sA$ is finitely generated if and only if it cannot be
represented as the union of an infinite directed set of its proper
subobjects \cite[Proposition V.3.2]{Sten}.

\begin{lem} \label{finitely-presentable-closed-under-extensions}
 In any locally finitely presentable abelian category, the class of all
finitely presentable objects is closed under extensions.
\end{lem}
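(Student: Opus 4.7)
The plan is to verify directly that $B$ satisfies the defining property of finite presentability, namely that the functor\/ $\Hom_\sA(B,-)$ preserves directed colimits.
 Let $(Y_i)_{i\in I}$ be a directed system in\/ $\sA$ with colimit $Y=\varinjlim_i Y_i$.
 Applying\/ $\Hom_\sA({-},Y_j)$ to the short exact sequence $0\to A\to B\to C\to 0$ produces a natural four-term exact sequence
\[
 0\lrarrow\Hom_\sA(C,Y_j)\lrarrow\Hom_\sA(B,Y_j)\lrarrow\Hom_\sA(A,Y_j)\lrarrow\Ext^1_\sA(C,Y_j)
\]
for every $j\in I$, where\/ $\Ext^1$ denotes the Yoneda\/ $\Ext^1$ classifying extensions (which exists in any abelian category).
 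Taking the directed colimit over $I$ preserves exactness, since directed colimits are exact in the category\/ $\Ab$ of abelian groups, and the canonical maps to the analogous sequence for $Y$ produce a commutative diagram with exact rows in which the two outer vertical maps\/ $\varinjlim_i\Hom_\sA(A,Y_i)\to\Hom_\sA(A,Y)$ and\/ $\varinjlim_i\Hom_\sA(C,Y_i)\to\Hom_\sA(C,Y)$ are isomorphisms by the assumed finite presentability of $A$ and~$C$.

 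The main step is to show that the rightmost comparison map\/ $\varinjlim_i\Ext^1_\sA(C,Y_i)\to\Ext^1_\sA(C,Y)$ is injective.
 Given a class $\xi\in\Ext^1_\sA(C,Y_{i_0})$ represented by an extension $0\to Y_{i_0}\to E\to C\to 0$ whose image in\/ $\Ext^1_\sA(C,Y)$ vanishes, I would form the pushouts $E_j=E\sqcup_{Y_{i_0}}Y_j$ for all $j\geq i_0$, each fitting into a short exact sequence $0\to Y_j\to E_j\to C\to 0$ because pushouts of monomorphisms are monomorphisms in any abelian category.
 Since pushouts commute with directed colimits and directed colimits are exact in a locally finitely presentable abelian category, one has\/ $E\sqcup_{Y_{i_0}}Y\simeq\varinjlim_{j\geq i_0}E_j$, and the pushout extension $0\to Y\to E\sqcup_{Y_{i_0}}Y\to C\to 0$ is the directed colimit of the extensions $0\to Y_j\to E_j\to C\to 0$.
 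A splitting\/ $\sigma\:C\to E\sqcup_{Y_{i_0}}Y$ factors, by finite presentability of $C$, through some $E_j$; the resulting\/ $\sigma_j\:C\to E_j$ satisfies that its composition with $E_j\to C$ agrees after passing to the colimit with\/ $\sigma$ followed by\/ $E\sqcup_{Y_{i_0}}Y\to C$, which is\/ $\id_C$, so already at stage~$j$ the map\/ $\sigma_j$ is a splitting of the extension, showing that $\xi$ becomes trivial in\/ $\Ext^1_\sA(C,Y_j)$.

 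With the injectivity of the fourth vertical map in hand, the five lemma applied to the comparison of the two four-term exact sequences yields that the middle vertical map\/ $\varinjlim_i\Hom_\sA(B,Y_i)\to\Hom_\sA(B,Y)$ is an isomorphism, as desired.
 The only substantive ingredient is the splitting argument for injectivity of the map on\/ $\Ext^1$, which relies solely on finite presentability of $C$ together with the commutation of pushouts with directed colimits and exactness of the latter; the remainder is a formal diagram chase, and no coherence-type hypothesis on\/ $\sA$ is needed.
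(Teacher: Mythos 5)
Your proof is correct, but it takes a genuinely different route from the paper. The paper argues through the theory of finitely generated objects: it quotes the standard facts (from Stenstr\"om) that finitely generated objects are closed under quotients and extensions, and that a finitely generated object $F$ is finitely presentable if and only if every epimorphism onto $F$ from a finitely generated object has finitely generated kernel; it then verifies this last criterion for the middle term $Y$ of an extension via a pullback diagram. You instead work directly with the defining property of finite presentability: you run the four-term $\Hom$--$\Ext^1$ sequence of the extension through directed colimits and reduce everything to the injectivity of the comparison map $\varinjlim_i\Ext^1_\sA(C,Y_i)\rarrow\Ext^1_\sA(C,\varinjlim_i Y_i)$ for finitely presentable $C$, which you establish by a pushout--splitting argument (a splitting of the colimit extension factors through some finite stage, and since the colimit cocone commutes with the projections onto the constant term $C$, the factorization is already a splitting at that stage --- in fact on the nose, so your hedge ``agrees after passing to the colimit'' is not even needed). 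All the ingredients you use are available: pushouts of monomorphisms are monomorphisms in any abelian category, and directed colimits are exact here because any locally finitely presentable abelian category is Grothendieck. What each approach buys: the paper's proof is shorter modulo the standard finite-generation toolkit; yours avoids finite generation entirely and, as a reusable byproduct, isolates the coherence-free half of Lemma~\ref{ext-from-coherent-preserves-directed-colimits} --- the injectivity (though not surjectivity) of the $\Ext^1$ comparison map holds for every finitely presentable $C$ without any local coherence hypothesis, which is exactly the point of your closing remark.
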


\begin{proof}
 Let $\sA$ be a locally finitely presentable (abelian) category.
The following properties are straightforward to prove:
\begin{itemize}
\item the class of all finitely generated objects is closed under
quotients and extensions in~$\sA$ \cite[Lemma V.3.1]{Sten};
\item an object in $\sA$ is finitely generated if and only if it is
a quotient of a finitely presentable object;
\item a finitely generated object $F\in\sA$ is finitely presentable if
and only if, for any epimorphism $f\:E\rarrow F$ onto $F$ from
a finitely generated object $E\in\sA$, the kernel of~$f$ is
finitely generated \cite[Proposition V.3.4]{Sten}.
\end{itemize}

Suppose now that $0\rarrow X\rarrow Y\rarrow Z\rarrow 0$ is a short exact sequence in $\sA$ with $X$ and $Z$ finitely presentable and that $f\:E\rarrow Y$ is an epimorphism from a finitely generated object $E$. Then we can form a commutative diagram with exact rows,
\[
\xymatrix{
  0 \ar[r] & D \ar[r] \ar@{->>}[d]_g & E \ar[r] \ar@{->>}[d]_f & Z \ar[r] \ar@{=}[d] & 0 \\
  0 \ar[r] & X \ar[r] & Y \ar[r] & Z \ar[r] & 0
}
\]
where $D$ is the pullback of $X\rarrow Y\longleftarrow E$. Since $D$ is the kernel of $E\rarrow Z$ and $Z$ is finitely presentable, it follows that $D$ is finitely generated. Consequently, the kernel of $g$, which coincides with the kernel of $f$, is finitely generated since $X$ is finitely presentable. Since this holds for any $f$ as above, it follows that $Y$ is finitely presentable.
\end{proof}

 Given a category $\sA$ with directed colimits and a class of objects
$\sC\subset\sA$, we denote by $\varinjlim\sC\subset\sA$ the class of
all colimits of directed diagrams of objects from~$\sC$ in~$\sA$.
 For a locally finitely presentable category $\sA$, we denote by
$\sA_\fp\subset\sA$ the full subcategory of all finitely presentable
objects.

\begin{prop} \label{varinjlim-of-class-of-finitely-presentables}
 Let\/ $\sA$ be a locally finitely presentable additive category
and\/ $\sC\subset\sA_\fp$ be a full subcategory closed under finite
direct sums.
 Then the class of objects\/ $\varinjlim\sC\subset\sA$ is closed under
coproducts and directed colimits in\/~$\sA$.
 An object $B\in\sA$ belongs to\/ $\varinjlim\sC$ if and if, for any
object $A\in\sA_\fp$, any morphism $A\rarrow B$ in\/ $\sA$ factorizes
through an object from\/~$\sC$.
\end{prop}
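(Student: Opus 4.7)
The plan is to establish the characterization in part~(2) first, and then derive the closure properties in part~(1) as easy consequences. The ``only if'' direction of~(2) is immediate: if $B = \varinjlim_{j \in J} C_j$ with $C_j \in \sC$ and $A \in \sA_\fp$, then $\Hom_\sA(A, B) = \varinjlim_j \Hom_\sA(A, C_j)$, so any morphism $A \to B$ factors through some~$C_j$.

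For the ``if'' direction, I plan to form the comma category $\sC \downarrow B$ (with objects the pairs $(C, f)$, where $C \in \sC$ and $f\colon C \to B$), show that it is filtered, and argue that the canonical morphism $B' := \varinjlim_{(C,f) \in \sC \downarrow B} C \to B$ is an isomorphism. For filteredness, the existence of upper bounds for pairs of objects follows immediately from the closure of $\sC$ under finite direct sums (given $(C_1,f_1)$ and $(C_2,f_2)$, use $(C_1\oplus C_2,[f_1,f_2])$). The coequalization of parallel morphisms is the main technical point: given $\alpha, \beta\colon (C_1, f_1) \rightrightarrows (C_2, f_2)$ with $f_2\alpha = f_2\beta = f_1$, I will use the canonical filtered presentation $B = \varinjlim_{(D,h) \in \sA_\fp \downarrow B} D$ (available because $\sA$ is locally finitely presentable) to factor $f_2$ as $h \circ g$ with $g\colon C_2 \to D$ and $D\in\sA_\fp$. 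Since $C_1$ is finitely presentable and $hg\alpha = hg\beta$ in~$B$, after replacing $D$ by a further object along a morphism in $\sA_\fp \downarrow B$, we may assume $g\alpha = g\beta$ already in~$D$. The hypothesis on~$B$ then yields a factorization of $D \to B$ through some $C_3 \in \sC$, which provides the required coequalizing morphism in $\sC\downarrow B$. With filteredness in hand, the canonical morphism $B' \to B$ induces a bijection on $\Hom_\sA(A, -)$ for every $A \in \sA_\fp$ (surjectivity directly from the hypothesis; injectivity from the same coequalization argument applied to a pair of liftings), and since $\sA_\fp$ is a strong generator of $\sA$, this morphism is an isomorphism.

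Part~(1) then follows easily from~(2). For a directed colimit $\varinjlim_\alpha B_\alpha$ with $B_\alpha \in \varinjlim\sC$, any morphism from $A \in \sA_\fp$ factors first through some $B_\alpha$ (as $A$ is finitely presentable) and then through some object of~$\sC$ (by the characterization applied to $B_\alpha$), so $\varinjlim_\alpha B_\alpha \in \varinjlim\sC$ by the characterization again. For a coproduct $\coprod_{i \in I} B_i$, write it as the directed colimit over finite subsets $F \subset I$ of the finite direct sums $\bigoplus_{i \in F} B_i$; any finite direct sum of objects in $\varinjlim\sC$ lies in $\varinjlim\sC$ because $\sC$ is closed under finite direct sums (applied diagram-wise to presentations of each summand), and then the result follows from the closure under directed colimits just established. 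The main obstacle is the coequalization step in the proof of filteredness of $\sC \downarrow B$; it is the only place requiring both the hypothesis on~$B$ and a nontrivial use of finite presentability in~$\sA$, and all other steps are routine manipulations with filtered colimits in a locally finitely presentable category.
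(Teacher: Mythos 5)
Your proof is correct. Note that the paper offers no argument of its own for this proposition---it simply cites Lenzing, Crawley-Boevey, and Krause---and your argument is essentially the standard one found in those references: prove the factorization criterion by showing that the comma category $\sC\downarrow B$ is filtered (closure of $\sC$ under finite direct sums gives upper bounds for pairs of objects, while the hypothesis on $B$ combined with finite presentability and the canonical presentation $B=\varinjlim_{(D,h)\in\sA_\fp\downarrow B}D$ gives the coequalization), then identify $B$ with $\varinjlim_{(C,f)\in\sC\downarrow B}C$ via conservativity of the strong generating set $\sA_\fp$, and finally deduce the closure properties in the routine way. Your coequalization step is sound, and it is a good observation that the same argument does double duty for the injectivity of $\Hom_\sA(A,B')\rarrow\Hom_\sA(A,B)$, where it is applied to a pair of morphisms out of an object of $\sA_\fp$ that need not lie in $\sC$. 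Two points you leave implicit are worth a word each: $\sC\downarrow B$ is essentially small (since $\sA_\fp$ is essentially small in a locally finitely presentable category) and, being filtered, admits a cofinal functor from a directed poset, so its colimit is literally a directed colimit as the definition of $\varinjlim\sC$ requires; and nonemptiness of $\sC\downarrow B$ uses that $\sC$ contains the zero object (the empty direct sum), or at least that $\sC$ is nonempty, since the zero morphism then provides an object of the comma category.
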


\begin{proof}
 This is~\cite[Proposition~2.1]{Len},
\cite[Section~4.1]{CB}, or~\cite[Proposition~5.11]{Kra0}.
\end{proof}

\subsection{Locally coherent abelian categories}
\label{locally-coherent-abelian-subsecn}
 Let $\sA$ be a locally finitely presentable abelian category.
 A finitely presentable object $F\in\sA$ is said to be \emph{coherent} 
if any finitely generated subobject of $F$ (as defined in
Section~\ref{loc-fin-pres-abelian-subsecn}) is finitely presentable.
 The category $\sA$ is said to be \emph{locally
coherent}~\cite[Section~2]{Roo} if it has a generating set consisting
of coherent objects.
 Equivalently, $\sA$ is locally coherent if and only if the kernel
of any (epi)morphism of finitely presentable objects in $\sA$ is
finitely presentable.

 In a locally coherent abelian category $\sA$, an object is coherent
if and only if it is finitely presentable, and the full subcategory
of all coherent objects $\sA_\fp$ is closed under kernels, cokernels,
and extensions in~$\sA$.
 So, $\sA_\fp$ is an abelian category; in the terminology of
Section~\ref{exactly-embedded-subsecn}, \,$\sA_\fp$ is an exactly
embedded full abelian subcategory of~$\sA$.
 We refer to~\cite[Section~9.5]{Pedg} for a further discussion.

 In the following lemma, we denote by $\Ab$ the category of abelian
groups.

\begin{lem} \label{ext-from-coherent-preserves-directed-colimits}
 Let\/ $\sA$ be a locally coherent abelian category and $E\in\sA_\fp$ be
a finitely presentable object.
 Then, for every $n\ge0$, the functor\/ $\Ext_\sA^n(E,{-})\:\sA\rarrow
\Ab$ preserves directed colimits.
\end{lem}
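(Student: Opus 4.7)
The plan is to proceed by induction on $n$, with $n = 0$ being precisely the definition of $E$ being finitely presentable.

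For the inductive step, the key auxiliary fact (itself needing proof) is that in a locally coherent category the class $\mathcal{J}$ of \emph{fp-injective} objects (those $J$ with $\Ext_\sA^1(X, J) = 0$ for all $X \in \sA_\fp$) is closed under directed colimits in $\sA$, and moreover $\Ext_\sA^k(X, J) = 0$ for all $k \geq 1$, $X \in \sA_\fp$, and $J \in \mathcal{J}$. Both assertions rely crucially on $\sA_\fp$ being closed under kernels in $\sA$: for any $X \in \sA_\fp$ choose a resolution $0 \to K \to F \to X \to 0$ with $K, F \in \sA_\fp$, and the long exact sequence of $\Ext_\sA^\ast(-, J)$ combined with the base case ($\Hom_\sA(K,-)$ and $\Hom_\sA(F,-)$ preserving directed colimits since $K, F \in \sA_\fp$) and iterated syzygies yields both statements.

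Given these, for any $M \in \sA$ take an injective resolution $M \to I^\bullet$; since injectives are fp-injective and $\Ext_\sA^k(E, I^j) = 0$ for $k \geq 1$, a standard acyclic-resolution argument gives $\Ext_\sA^n(E, M) = H^n\Hom_\sA(E, I^\bullet)$. For a directed system $(M_\lambda)_{\lambda \in \Lambda}$ with colimit $M$, construct compatible fp-injective resolutions $M_\lambda \to J_\lambda^\bullet$ whose term-wise directed colimit $J^\bullet = \varinjlim_\lambda J_\lambda^\bullet$ is an fp-injective resolution of $M$ (possible since $\mathcal{J}$ is closed under directed colimits and since directed colimits are exact in $\sA$). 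Because $\Hom_\sA(E, -)$ commutes with directed colimits for $E \in \sA_\fp$ and cohomology commutes with directed colimits in $\Ab$, one concludes $\varinjlim_\lambda \Ext_\sA^n(E, M_\lambda) = \varinjlim_\lambda H^n\Hom_\sA(E, J_\lambda^\bullet) = H^n\Hom_\sA(E, J^\bullet) = \Ext_\sA^n(E, M)$. The main technical obstacle is the functorial construction of compatible fp-injective resolutions along the directed system $(M_\lambda)$; this is handled by transfinite stage-by-stage construction, exploiting that the cokernel at each partially built stage is itself the directed colimit of the corresponding cokernels in the component resolutions, so a coherent choice of fp-injective embeddings (and direct sums thereof to ensure enough room) extends the resolution one degree at a time.
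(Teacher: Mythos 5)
Your overall architecture---compute $\Ext^n_\sA(E,{-})$ via coresolutions by fp-injectives, after establishing that fp-injectives are closed under directed colimits and are $\Hom_\sA(E,{-})$-acyclic---is genuinely different from the paper's proof and is not unreasonable; in particular, your final step is fine (note that $\sA$, being locally finitely presentable abelian, is Grothendieck, so functorial monomorphisms into injectives exist and your compatible coresolutions can indeed be built degreewise on the directed systems of cokernels, and the colimit interchange at the end is correct). The genuine gap is in your proof of the key auxiliary fact. The category $\sA$ has, in general, \emph{no} projective objects at all, so the long exact sequence of $\Ext^*_\sA({-},J)$ associated with $0\rarrow K\rarrow F\rarrow X\rarrow0$ does not anchor your induction: from surjectivity of $\Hom_\sA(F,J)\rarrow\Hom_\sA(K,J)$ (which does pass to directed colimits by the base case) you only conclude that the connecting map vanishes, i.e.\ you get a monomorphism $\Ext^1_\sA(X,J)\rightarrowtail\Ext^1_\sA(F,J)$---and $F$ is itself merely finitely presentable, so you face the same unknown group one level up. Iterating syzygies produces an infinite regress $\Ext^1_\sA(X,J)\rightarrowtail\Ext^1_\sA(F,J)\rightarrowtail\Ext^1_\sA(F',J)\rightarrowtail\dotsb$ that never closes, and the same circularity affects both halves of your auxiliary claim (the $\varinjlim$-closure of fp-injectives and the vanishing of $\Ext^{k}$, $k\ge1$, against fp-injectives): each reduces, via your sequence, to the statement being proved for the cover $F$. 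Over a coherent \emph{ring} your argument does work, because there $F$ can be chosen finitely generated free, killing $\Ext^{\ge1}_\sA(F,{-})$; that anchor simply does not exist in the present generality.

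Both auxiliary facts are nonetheless true in a locally coherent Grothendieck category, but the known proofs use precisely the ingredient your sketch is missing, which is also the heart of the paper's own (quite different) argument: in an arbitrary abelian category, $\Ext^n_\sA(E,Y)$ is the filtered colimit of the groups $H^n\Hom_\sA(R_\bu,Y)$ over the category of \emph{all} exact complexes $\dotsb\rarrow R_1\rarrow R_0\rarrow E\rarrow0$ (no projectivity required), and local coherence makes $\sA_\fp$ self-resolving, so resolutions with finitely presentable terms are cofinal. With that, the lemma follows in one stroke, since $\Hom_\sA(F,{-})$ for $F\in\sA_\fp$, cohomology of complexes of abelian groups, and the colimit over fp-resolutions all commute with directed colimits. (The same formula also yields your acyclicity claim: for an fp-resolution $F_\bu\rarrow E$ one checks $H^n\Hom_\sA(F_\bu,J)=0$ for $n\ge1$ and $J$ fp-injective, using that all syzygies of $E$ lie in $\sA_\fp$; the $\varinjlim$-closure of fp-injectives can alternatively be obtained via purity of the canonical presentation of a directed colimit.) But once the all-resolutions computation is available, your fp-injective detour becomes redundant---so to repair your proof you would either import the paper's cofinality argument, at which point the direct proof is shorter, or supply an independent purity-type proof of the closure property.
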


\begin{proof}
 In any abelian category $\sA$, the functor $\Ext_\sA^n(X,Y)$ can be
computed as the filtered colimit of the cohomology groups
$H^n\Hom_\sA(R_\bu,Y)$, taken over the (large) filtered category of
all exact complexes $\dotsb\rarrow R_2\rarrow R_1\rarrow R_0\rarrow
X\rarrow0$ in~$\sA$.
 Here the morphisms in the category of such arbitrary resolutions
$R_\bu\rarrow X$ are the usual closed morphisms of complexes acting
by the identity morphisms on the object $X$ and viewed up to
the cochain homotopy.

 In the situation at hand with $\sA$ locally coherent, the full
subcategory $\sA_\fp\subset\sA$ has the property
of~\cite[Section~12]{Kel2}, or in other words, it is ``self-resolving''
in the sense of~\cite[Section~7.1]{Pedg}.
 This means that for any epimorphism $A\rarrow F$ with $A\in\sA$ and
$F\in\sA_\fp$ there exists an epimorphism $G\rarrow F$ with
$G\in\sA_\fp$ and a morphism $G\rarrow A$ in $\sA$ such that
the triangle diagram $G\rarrow A\rarrow F$ is commutative in~$\sA$.
 Furthermore, the kernel of the morphism $G\rarrow F$ also belongs
to~$\sA_\fp$.

 Consequently, the full subcategory of resolutions $\dotsb\rarrow
F_2\rarrow F_1\rarrow F_0\rarrow E\rarrow0$ with $F_i\in\sA_\fp$ is
cofinal in the category of all resolutions $\dotsb\rarrow R_2\rarrow
R_1\rarrow R_0\rarrow E\rarrow0$ with $R_i\in\sA$ (when $E\in\sA_\fp$).
 So one can compute the group $\Ext_\sA^n(E,Y)$ as the filtered
colimit of $H^n\Hom_\sA(F_\bu,Y)$ taken over all the resolutions
$F_\bu\rarrow E$ with $F_i\in\sA_\fp$.
 Now the functor $\Hom_\sA(F,{-})\:\sA\rarrow\Ab$, by the definition,
preserves directed colimits when $F\in\sA_\fp$.
 The functors of cohomology of a complex of abelian groups also
preserve directed colimits, and the filtered colimit over the category
of all resolutions $F_\bu$ commutes with directed colimits.
\end{proof}

 A version of the following proposition for module categories can be
found in~\cite[Theorem~2.3]{AT}.

\begin{prop} \label{varinjlim-closed-under-extensions}
 Let\/ $\sA$ be a locally coherent abelian category and\/ $\sC\subset
\sA_\fp$ be a full subcategory closed under extensions.
 Then the full subcategory\/ $\varinjlim\sC\subset\sA$ is also
closed under extensions in\/~$\sA$.
\end{prop}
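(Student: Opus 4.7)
Let $0 \to X \to Y \to Z \to 0$ be a short exact sequence in $\sA$ with $X$, $Z \in \varinjlim\sC$; I will show $Y \in \varinjlim\sC$. The first step is to reduce to the case where $Z$ belongs to $\sC$. Writing $Z = \varinjlim_{j\in J} Z_j$ with $Z_j \in \sC$ and pulling back the given sequence along each structural morphism $Z_j \to Z$, one obtains a directed system of short exact sequences $0 \to X \to Y_j \to Z_j \to 0$. Since $\sA$ is Grothendieck and directed colimits are exact, $Y = \varinjlim_j Y_j$. As $\varinjlim\sC$ is closed under directed colimits (by Proposition~\ref{varinjlim-of-class-of-finitely-presentables}), it suffices to prove $Y_j \in \varinjlim\sC$ for each $j$, i.e.\ one may assume from the outset that $Z \in \sC$.

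Now write $X = \varinjlim_{i \in I} X_i$ with $X_i \in \sC$. The extension $0 \to X \to Y \to Z \to 0$ is classified by an element $\xi \in \Ext^1_\sA(Z,X)$. Because $Z$ is coherent (hence finitely presentable), Lemma~\ref{ext-from-coherent-preserves-directed-colimits} yields a natural isomorphism
\[
 \Ext^1_\sA(Z,X) \;\simeq\; \varinjlim_{i \in I} \Ext^1_\sA(Z, X_i).
\]
Hence there exist an index $i_0 \in I$ and an element $\xi_{i_0} \in \Ext^1_\sA(Z, X_{i_0})$, represented by a short exact sequence $0 \to X_{i_0} \to W_{i_0} \to Z \to 0$, whose pushout along the structural morphism $X_{i_0} \to X$ produces the given extension~$\xi$.

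The key step is to build the approximating directed system inside~$\sC$. For every $k \geq i_0$ in~$I$, form the pushout
\[
 W_k \;=\; W_{i_0} \sqcup_{X_{i_0}} X_k,
\]
which fits into a short exact sequence $0 \to X_k \to W_k \to Z \to 0$. Since $X_k$, $Z \in \sC$ and $\sC$ is closed under extensions by hypothesis, $W_k \in \sC$. The assignment $k \mapsto W_k$ is a directed system indexed by the cofinal subset $\{k \in I : k \geq i_0\}$, and because pushouts commute with directed colimits,
\[
 \varinjlim_{k \geq i_0} W_k
 \;=\; W_{i_0} \sqcup_{X_{i_0}} \bigl(\varinjlim_{k \geq i_0} X_k\bigr)
 \;=\; W_{i_0} \sqcup_{X_{i_0}} X
 \;=\; Y,
\]
where the last equality uses the pushout description of~$Y$ from the previous paragraph. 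Hence $Y \in \varinjlim\sC$, completing the proof.

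The argument is essentially routine once the right reductions are in place; the only nontrivial ingredient is Lemma~\ref{ext-from-coherent-preserves-directed-colimits}, which encodes the local coherence hypothesis and is what allows the extension class $\xi$ to be lifted to a finite stage of the directed system $(X_i)$. Had $\sA$ only been locally finitely presentable, this lifting would fail in general, which is why the statement genuinely requires coherence.
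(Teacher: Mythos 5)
Your proof is correct and follows essentially the same route as the paper: the paper's argument factors the claim into the inclusions $\varinjlim\sC*\varinjlim\sC\subset\varinjlim(\varinjlim\sC*\sC)\subset\varinjlim\varinjlim(\sC*\sC)\subset\varinjlim(\sC*\sC)$, where the first inclusion is exactly your pullback reduction of the quotient to a finite stage, and the second is exactly your use of Lemma~\ref{ext-from-coherent-preserves-directed-colimits} to lift the extension class to some $X_{i_0}$ followed by pushouts along $X_{i_0}\rarrow X_k$ whose colimit recovers the original sequence. The only cosmetic difference is organizational (the paper's $*$-notation versus your direct chase), and both arguments identify the same single nontrivial ingredient, namely the coherence hypothesis feeding into the $\Ext^1$-colimit lemma.
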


\begin{proof}
 Given an abelian category $\sA$ and two classes of objects
$\sX$, $\sY\subset\sA$, let us denote by $\sX*\sY$ the class of
of all objects $Z\in\sA$ for which there exists a short exact
sequence $0\rarrow X\rarrow Z\rarrow Y\rarrow0$ in~$\sA$.
 In the situation at hand, the proposition claims that
$\varinjlim\sC*\varinjlim\sC\subset\varinjlim\sC$.
 We will prove three inclusions
$$
 \varinjlim\sC*\varinjlim\sC\subset\varinjlim(\varinjlim\sC*\sC)
 \subset\varinjlim\varinjlim(\sC*\sC)\subset\varinjlim(\sC*\sC)
$$
for any class of objects $\sC\subset\sA_\fp$.

 Firstly, given a cocomplete abelian category $\sA$ with exact
directed colimits and two classes of objects $\sD$, $\sE\subset\sA$,
we claim that $\sD*\varinjlim\sE\subset\varinjlim(\sD*\sE)$.
 Let $(E_i)_{i\in I}$ be a directed system in $\sA$, indexed by
a directed poset $I$, with $E_i\in\sE$, and let
\begin{equation} \label{first-original-ses}
 0\lrarrow D\lrarrow G\lrarrow\varinjlim\nolimits_{i\in I} E_i\lrarrow0
\end{equation}
be a short exact sequence in $\sA$ with $D\in\sD$.
 Taking the pullback of~\eqref{first-original-ses}
with respect to the natural morphism $E_j\rarrow\varinjlim_{i\in I}E_i$,
for every $j\in I$, we obtain a short exact sequence
\begin{equation} \label{pullback-of-first-original-ses}
 0\lrarrow D\lrarrow G_j\lrarrow E_j\lrarrow0.
\end{equation}
 As $j\in I$ varies, the short exact
sequences~\eqref{pullback-of-first-original-ses} form
a directed system whose directed colimit is the original short
exact sequence~\eqref{first-original-ses}.
 Thus $G_j\in\sD*\sE$ and $G=\varinjlim_{j\in I}G_j$.

 Secondly, for a locally coherent abelian category $\sA$ and two
classes of objects $\sC\subset\sA$ and $\sD\subset\sA_\fp$, we
assert that $(\varinjlim\sC)*\sD\subset\varinjlim(\sC*\sD)$.
 Let $(C_i)_{i\in I}$ be a directed system in $\sA$, indexed by
a directed poset $I$, with $C_i\in\sC$, and let
\begin{equation} \label{second-original-ses}
 0\lrarrow\varinjlim\nolimits_{i\in I}C_i\lrarrow H\lrarrow D\lrarrow0
\end{equation}
be a short exact sequence in $\sA$ with $D\in\sD$.
 By Lemma~\ref{ext-from-coherent-preserves-directed-colimits}
(for $n=1$), the related class in $\Ext^1_\sA(D,\varinjlim_{i\in I}C_i)$
comes from an element of $\Ext^1_\sA(D,C_k)$ for some index $k\in I$.
 Consider the related short exact sequence
\begin{equation} \label{preimage-of-second-original-ses}
 0\lrarrow C_k\lrarrow H_k\lrarrow D\lrarrow0
\end{equation}
 For every $j\in I$, \,$j\ge k$, take the pushout of the short
exact sequence~\eqref{preimage-of-second-original-ses} with respect
to the morphism $C_k\rarrow C_j$:
\begin{equation} \label{pushout-of-preimage-of-second-original-ses}
 0\lrarrow C_j\lrarrow H_j\lrarrow D\lrarrow0.
\end{equation}
 As the index $j\in I$, \,$j\ge k$ varies, the short exact
sequences~\eqref{pushout-of-preimage-of-second-original-ses} form
a directed system whose directed colimit is the original short
exact sequence~\eqref{second-original-ses}.
 Therefore, $H_j\in\sC*\sD$ and $H=\varinjlim_{j\in I}^{j\ge k}H_j$.

 Finally, for any class of objects $\sC\subset\sA_\fp$ we have
$\sC*\sC\subset\sA_\fp$ by
Lemma~\ref{finitely-presentable-closed-under-extensions} and
$\varinjlim\varinjlim\sC\subset\varinjlim\sC$ by
Proposition~\ref{varinjlim-of-class-of-finitely-presentables}.
\end{proof}

\subsection{Locally coherent abelian DG-categories}
 In this section, which is largely an extraction
from~\cite[Section~9.5]{Pedg}, we prove the (locally) coherent
versions of the results of
Sections~\ref{locally-presentable-DG-categories-subsecn}
and~\ref{Grothendieck-DG-categories-subsecn}.

\begin{lem} \label{Xi-preserves-coherence}
 Let\/ $\bA$ be a locally finitely presentable abelian DG\+category.
 Then the additive functor\/ $\Xi_\bA\:\sZ^0(\bA)\rarrow\sZ^0(\bA)$
\emph{preserves} coherence of objects.
\end{lem}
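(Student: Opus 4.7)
The plan is to reduce the statement to the closure of the class of coherent objects under extensions in the ambient locally finitely presentable abelian category $\sZ^0(\bA)$, and then to prove that closure by the standard elementary argument.

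First I would recall that by Corollary~\ref{Xi-extension-cokernel}(a), for any object $A\in\bA$ there is a short exact sequence
\[
 0\lrarrow A[-1]\lrarrow\Xi(A)\lrarrow A\lrarrow0
\]
in $\sZ^0(\bA)$. The shift $[-1]$ is an auto-equivalence of $\sZ^0(\bA)$, and any auto-equivalence of an abelian category preserves both finite presentability and finite generation of objects (and preserves monomorphisms), hence preserves coherence. So it is enough to show that $\Xi(A)$ is coherent whenever both $A[-1]$ and $A$ are, i.e.\ that the class of coherent objects is closed under extensions in $\sZ^0(\bA)$.

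To prove this last statement in an arbitrary locally finitely presentable abelian category $\sE$, let $0\to X\to Y\to Z\to 0$ be a short exact sequence with $X$ and $Z$ coherent. By Lemma~\ref{finitely-presentable-closed-under-extensions}, the class of finitely presentable objects in $\sE$ is closed under extensions, so $Y$ is finitely presentable. It remains to show that every finitely generated subobject $W\subseteq Y$ is finitely presentable. Form the induced short exact sequence
\[
 0\lrarrow W\cap X\lrarrow W\lrarrow W'\lrarrow0,
\]
where $W'$ is the image of $W$ in $Z$. Since $W'$ is a quotient of $W$ it is finitely generated, hence finitely presentable by the coherence of $Z$. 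The standard $2$-out-of-$3$ principle (a finitely generated object with a finitely presentable quotient has finitely generated kernel of the quotient map; see \cite[Proposition~V.3.4]{Sten} and the list of properties used in the proof of Lemma~\ref{finitely-presentable-closed-under-extensions}) then implies that $W\cap X$ is finitely generated, and hence finitely presentable by the coherence of $X$. Applying Lemma~\ref{finitely-presentable-closed-under-extensions} once more to the displayed short exact sequence, we conclude that $W$ is finitely presentable, as required.

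The argument is essentially routine; the only mildly delicate point is the ``$2$-out-of-$3$'' step showing that $W\cap X$ is finitely generated, which in the context of general locally finitely presentable abelian categories rests on the characterizations of finitely generated and finitely presentable objects recalled in Section~\ref{loc-fin-pres-abelian-subsecn}. I expect no substantial obstacle beyond this, since everything else reduces to the short exact sequence supplied by Corollary~\ref{Xi-extension-cokernel}(a) and the fact that $[-1]$ is an auto-equivalence of $\sZ^0(\bA)$.
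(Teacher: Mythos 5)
Your proposal is correct and follows essentially the same route as the paper: reduce via the short exact sequence $0\to A[-1]\to\Xi(A)\to A\to0$ of Corollary~\ref{Xi-extension-cokernel}(a) to the closure of coherent objects under extensions in a locally finitely presentable abelian category. The only difference is that the paper leaves that closure property as an easy consequence of the facts listed in the proof of Lemma~\ref{finitely-presentable-closed-under-extensions}, whereas you carry out the verification in full (correctly, including the Stenstr\"om-type step that an epimorphism from a finitely generated object onto a finitely presentable one has finitely generated kernel).
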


\begin{proof}
 Using the properties listed in the proof of
Lemma~\ref{finitely-presentable-closed-under-extensions}, one can
easily show that the class of all coherent objects in a locally
finitely presentable abelian category is closed under extensions.
 Hence the assertion of the lemma follows from
Corollary~\ref{Xi-extension-cokernel}(a).
\end{proof}

\begin{lem} \label{Phi-Psi-preserve-reflect-coherence}
 Let\/ $\bA$ be a locally finitely presentable abelian DG\+category.
 Then the additive functors\/ $\Phi_\bA\:\sZ^0(\bA)\rarrow
\sZ^0(\bA^\bec)$ and\/ $\Psi^+_\bA\:\sZ^0(\bA^\bec)\rarrow
\sZ^0(\bA)$ preserve \emph{and} reflect coherence of objects.
\end{lem}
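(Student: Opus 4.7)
The plan is to combine three ingredients already available: Lemma~\ref{Phi-Psi-lambda-presentability} (which, for $\lambda=\omega$, tells us that $\Phi$ and $\Psi^+$ preserve \emph{and} reflect finite presentability), Lemma~\ref{Xi-preserves-coherence} (applicable to $\bA$ and to $\bA^\bec$, since the latter is again a locally finitely presentable abelian DG\+category), and the natural isomorphisms $\Psi^+\Phi\simeq\Xi_\bA$ and $\Phi\Psi^+\simeq\Xi_{\bA^\bec}[1]$ (the latter deduced from $\Psi^-=\Psi^+[1]$, the identity $\Phi\Psi^-\simeq\Xi_{\bA^\bec}$, and the commutation $\Phi\circ[n]=[-n]\circ\Phi$ of Section~\ref{Phi-and-Psi-subsecn}).

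The first step is a preliminary observation: both $\Phi$ and $\Psi^+$ preserve \emph{finitely generated} objects. This is not formal from colimit preservation; it uses exactness of the adjoint. If $G\in\sZ^0(\bA^\bec)$ is finitely generated and $(X_i\to X_j)$ is a directed system of monomorphisms in $\sZ^0(\bA)$ with colimit~$X$, then $(\Phi(X_i)\to\Phi(X_j))$ is a directed system of monomorphisms in $\sZ^0(\bA^\bec)$ (since $\Phi$ is exact) with colimit $\Phi(X)$ (since $\Phi$ preserves colimits), and the adjunction $\Hom_{\sZ^0(\bA)}(\Psi^+(G),{-})\simeq\Hom_{\sZ^0(\bA^\bec)}(G,\Phi({-}))$ transports the preservation of this colimit of monomorphisms. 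The analogous argument with the right-adjoint pair $(\Phi,\Psi^-)$ shows that $\Phi$ also preserves finite generation.

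Next, for \emph{preservation of coherence by\/} $\Phi$: let $F\in\sZ^0(\bA)$ be coherent. Then $\Phi(F)$ is finitely presentable by Lemma~\ref{Phi-Psi-lambda-presentability}. For any finitely generated subobject $G\rightarrowtail\Phi(F)$, the exact functor $\Psi^+$ produces a monomorphism $\Psi^+(G)\rightarrowtail\Psi^+\Phi(F)\simeq\Xi_\bA(F)$; by Lemma~\ref{Xi-preserves-coherence}, $\Xi_\bA(F)$ is coherent, and by the preliminary step $\Psi^+(G)$ is finitely generated, so $\Psi^+(G)$ is finitely presentable. Reflecting via Lemma~\ref{Phi-Psi-lambda-presentability}, $G$ is finitely presentable, so $\Phi(F)$ is coherent. \emph{Reflection by\/} $\Phi$ is easier: if $\Phi(F)$ is coherent, then $F$ is finitely presentable (Lemma~\ref{Phi-Psi-lambda-presentability}), and for any finitely generated $G\rightarrowtail F$ the exact functor $\Phi$ produces a finitely generated monomorphism $\Phi(G)\rightarrowtail\Phi(F)$, hence $\Phi(G)$ is finitely presentable by coherence of $\Phi(F)$, hence $G$ is finitely presentable.

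The case of $\Psi^+$ is wholly parallel, using $\Phi\Psi^+\simeq\Xi_{\bA^\bec}[1]$ in place of $\Psi^+\Phi\simeq\Xi_\bA$, together with Lemma~\ref{Xi-preserves-coherence} applied to the abelian DG\+category $\bA^\bec$; a coherent $E\in\sZ^0(\bA^\bec)$ has $\Psi^+(E)$ finitely presentable, a finitely generated subobject $H\rightarrowtail\Psi^+(E)$ yields $\Phi(H)\rightarrowtail\Phi\Psi^+(E)\simeq\Xi_{\bA^\bec}(E)[1]$ finitely generated inside a coherent object, hence finitely presentable, hence $H$ is finitely presentable by reflection. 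The reflection direction for $\Psi^+$ mirrors the one for $\Phi$. There is no genuine obstacle in the argument; the only point requiring care is the preliminary preservation of finite generation, which crucially uses exactness of the adjoint (and would fail for functors that merely preserve directed colimits without being exact).
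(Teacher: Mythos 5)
Your proposal is correct and takes essentially the same approach as the paper: the reflection half is word-for-word the paper's argument (finite presentability via Lemma~\ref{Phi-Psi-lambda-presentability} plus preservation of finite generatedness applied to subobjects), and the preservation half rests on the same ingredients, namely Lemma~\ref{Xi-preserves-coherence} and the composition isomorphisms $\Psi^+\Phi\simeq\Xi_\bA$ and $\Phi\Psi^-\simeq\Xi_{\bA^\bec}$. The only cosmetic difference is that the paper deduces preservation formally (the composite $\Xi$ preserves coherence and one factor reflects it, hence the other factor preserves it, with $\Psi^+$ recovered from $\Psi^-$ by a shift), whereas you unwind this into a direct subobject chase, and you spell out the exactness-of-the-adjoint mechanism behind preservation of finite generation that the paper leaves as an easy analogue of its cited Lemma~9.6 of~\cite{Pedg}.
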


\begin{proof}
 This is our version of~\cite[Lemma~9.13]{Pedg}.
 Let us show that the functor $\Phi_\bA$ reflects coherence.
 Let $E\in\bA$ be an object for which the object $\Phi(E)$ is
coherent in~$\sZ^0(\bA^\bec)$.
 By Lemma~\ref{Phi-Psi-lambda-presentability}, the object $E$
is finitely presentable in~$\sZ^0(\bA)$.

 Let $F\subset E$ be a finitely generated subobject.
 It is easily provable similarly to
Lemma~\ref{Phi-Psi-lambda-presentability}
(cf.~\cite[Lemma~9.6]{Pedg}) that the functor $\Phi$ preserves
finite generatedness of objects; so the object $\Phi(F)$ is
finitely generated.
 Therefore, $\Phi(F)$ is a finitely generated subobject in $\Phi(E)$;
by assumption, it follows that the object $\Phi(F)$ is finitely
presentable in $\sZ^0(\bA^\bec)$.
 It remains to invoke Lemma~\ref{Phi-Psi-lambda-presentability}
again in order to conclude that the object $F\in\sZ^0(\bA)$ is
finitely presentable.

 Similarly one shows that the functor $\Psi^+_\bA$ reflects
coherence.
 Since the compositions $\Xi_\bA\simeq\Psi^+_\bA\circ\Phi_\bA$
and $\Xi_{\bA^\bec}\simeq\Phi_\bA\circ\Psi^-_\bA$ (see the last
paragraph of Section~\ref{Phi-and-Psi-subsecn}) preserve
coherence by Lemma~\ref{Xi-preserves-coherence}, and the functors
$\Psi^+_\bA$ and $\Phi_\bA$ reflect coherence, it follows that
the functors $\Phi_\bA$ and~$\Psi^-_\bA$ preserve coherence.
\end{proof}

\begin{prop} \label{locally-coherent-DG-category}
 Let\/ $\bA$ be a locally finitely presentable abelian DG\+category.
 Then the abelian category\/ $\sZ^0(\bA)$ is locally coherent if and
only if the abelian category\/ $\sZ^0(\bA^\bec)$ is locally coherent.
\end{prop}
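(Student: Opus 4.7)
The plan is to reduce the proposition to the preservation and reflection properties of the functors $\Phi_\bA$ and $\Psi^+_\bA$ established just above, combined with the following standard characterization of locally coherent categories: a locally finitely presentable abelian category is locally coherent if and only if it admits a generating set consisting of coherent objects. Granting this characterization for the moment, the proposition becomes almost immediate. Indeed, by Proposition~\ref{locally-presentable-DG-categories} both $\sZ^0(\bA)$ and $\sZ^0(\bA^\bec)$ are already known to be locally finitely presentable abelian categories. If $\sZ^0(\bA^\bec)$ is locally coherent, pick a generating set $\{S_i\}$ of coherent objects in it. Then Lemma~\ref{Phi-Psi-strong-generators} shows that $\{\Psi^+_\bA(S_i)\}$ is a generating set in $\sZ^0(\bA)$, and Lemma~\ref{Phi-Psi-preserve-reflect-coherence} shows that each $\Psi^+_\bA(S_i)$ is coherent; so $\sZ^0(\bA)$ has a generating set of coherent objects, hence is locally coherent. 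The converse direction is entirely symmetric, using $\Phi_\bA$ in place of $\Psi^+_\bA$ applied to a generating set of coherent objects in $\sZ^0(\bA)$.

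It remains to justify the characterization invoked above. Let $\sA$ be a locally finitely presentable abelian category admitting a generating set $\{S_i\}$ of coherent objects; I would show that every finitely presentable $E \in \sA$ is coherent. Any epimorphism onto $E$ from a direct sum of copies of the $S_i$ factors through a finite subsum $G \twoheadrightarrow E$ because $E$ is finitely presentable; the object $G$ is coherent (the class of coherent objects is closed under finite direct sums), and the kernel $K$ of $G \twoheadrightarrow E$ is finitely generated since $E$ is finitely presentable and $G$ is finitely presentable. As $G$ is coherent, $K$ is in fact finitely presentable. Now for any finitely generated subobject $F \subseteq E$, a pullback along $G \twoheadrightarrow E$ gives a finitely generated subobject $F' \subseteq G$ (extension of $F$ by $K$, both finitely generated), which is therefore finitely presentable by coherence of $G$; then $F = F'/K$ is a cokernel of a morphism between finitely presentable objects and is itself finitely presentable, as in the proof of Lemma~\ref{finitely-presentable-closed-under-extensions}. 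So $E$ is coherent, and $\sA$ is locally coherent.

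The main (and in fact only mild) obstacle is this bookkeeping with the three-property list of coherent objects; all the genuine DG-categorical content of the proposition is already encoded in Lemmas~\ref{Phi-Psi-strong-generators} and~\ref{Phi-Psi-preserve-reflect-coherence}, so once the characterization of local coherence by a coherent generating set is in hand, the two implications are formal.
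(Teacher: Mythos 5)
Your proposal is correct and takes essentially the same approach as the paper, whose proof likewise consists of applying Lemma~\ref{Phi-Psi-strong-generators} and Lemma~\ref{Phi-Psi-preserve-reflect-coherence} to a generating set of coherent objects (via $\Psi^+_\bA$ in one direction and $\Phi_\bA$ in the other). Your second paragraph, while a correct standard argument, is not needed here: the paper \emph{defines} a locally coherent category (Section~\ref{locally-coherent-abelian-subsecn}) as one admitting a generating set of coherent objects, so what you verify there is only the accompanying remark that under this definition every finitely presentable object is coherent.
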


\begin{proof}
 This is~\cite[Proposition~9.14]{Pedg}.
 The assertion follows from Lemmas~\ref{Phi-Psi-strong-generators}
and~\ref{Phi-Psi-preserve-reflect-coherence} above.
\end{proof}

 We will say that an abelian DG\+category $\bA$ is \emph{locally
coherent} if both the abelian categories $\sZ^0(\bA)$ and
$\sZ^0(\bA^\bec)$ are locally coherent.
 In other words, this means that $\bA$ satisfies the assumptions
and the equivalent conditions of
Proposition~\ref{locally-coherent-DG-category}.

 Given a locally finitely presentable abelian DG\+category $\bA$,
we denote by $\bA_\bfp\subset\bA$ the full DG\+subcategory whose
objects are all the objects of $\bA$ that are finitely presentable
as objects of the category~$\sZ^0(\bA)$.
 So, by the definition, we have $\sZ^0(\bA_\bfp)=\sZ^0(\bA)_\fp$.
 The full DG\+subcategory $\bA_\bfp\subset\bA$ is obviously closed
under shifts and finite direct sums; in view of
Lemmas~\ref{cone-kernel-cokernel}
and~\ref{finitely-presentable-closed-under-extensions}, it is also
closed under cones.
 Lemma~\ref{Phi-Psi-lambda-presentability} implies that the full
subcategory $\sZ^0((\bA_\bfp)^\bec)\subset\sZ^0(\bA^\bec)$ consists
of all the finitely presentable objects in $\sZ^0(\bA^\bec)$,
that is $\sZ^0((\bA_\bfp)^\bec)=\sZ^0(\bA^\bec)_\fp$.

 When $\bA$ is a locally coherent abelian DG\+category, the full
DG\+subcategory $\bA_\bfp\subset\bA$ is an exactly embedded full
abelian DG\+subcategory in $\bA$ in the sense of
Section~\ref{exactly-embedded-subsecn}.
 In particular, $\bA_\bfp$ is an abelian DG\+category.

\begin{exs} \label{locally-coherent-DG-category-of-complexes}
 (1)~Let $\sA$ be a locally finitely presentable abelian category.
 Consider the abelian DG\+category $\bC(\sA)$ of complexes in $\sA$,
as per Example~\ref{abelian-DG-category-of-complexes-example}.
 According to
Sections~\ref{DG-category-of-complexes-defined-subsecn}
and~\ref{almost-involution-complexes-subsecn}, we have
$\sZ^0(\bC(\sA)^\bec)\simeq\sG(\sA)=\sA^\boZ$.
 By Example~\ref{locally-presentable-abelian-DG-of-complexes}(1),
\,$\bC(\sA)$ is a locally finitely presentable abelian DG\+category.

 One can easily see that a graded object in $\sA$ is finitely
presentable if and only if it is \emph{bounded} with all but a finite
number of grading components vanishing \emph{and} all the grading
components finitely presentable.
 By Lemma~\ref{Phi-Psi-lambda-presentability},
the finitely presentable objects of the abelian category $\sC(\sA)=
\sZ^0(\bC(\sA))$ of complexes in $\sA$ are precisely all the bounded
complexes of finitely presentable objects in~$\sA$.

 (2)~Now if $\sA$ is a locally coherent abelian category, then so
is $\sG(\sA)=\sA^\boZ$.
 By Proposition~\ref{locally-coherent-DG-category}, it follows that
$\bC(\sA)$ is a locally coherent abelian DG\+category.
\end{exs}

 A graded ring $R^*$ is said to be \emph{graded left coherent} if
all finitely generated homogeneous left ideals in $R^*$ are finitely
presentable (as graded left $R^*$\+modules).
 In other words, a graded ring $R^*$ is graded left coherent if and
only if the abelian category of graded left $R^*$\+modules $R^*\sModl$
is locally coherent.

\begin{cor} \label{finitely-presentable-CDG-modules-coherent-CDG-rings}
 Let $\biR^\cu=(R^*,d,h)$ be a CDG\+ring and $\biM^\cu=(M^*,d_M)$ be a left
CDG\+module over~$\biR^\cu$.
 Then \par
\textup{(a)} $M^*$ is finitely presentable as a graded left
$R^*[\delta]$\+module (i.~e., as an object of the category
$\sZ^0(\biR^\cu\bModl)\simeq R^*[\delta]\sModl$) if and only if it is
finitely presentable as a graded left $R^*$\+module; \par
\textup{(b)} the graded ring $R^*[\delta]$ is graded left coherent
if and only if the graded ring $R^*$ is graded left coherent, and
if and only if the locally finitely presentable abelian DG\+category
$\biR^\cu\bModl$ is locally coherent.
\end{cor}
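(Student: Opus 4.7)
My plan is to deduce both parts directly from the general theory of locally finitely presentable abelian DG-categories developed above, applied to $\bA=\biR^\cu\bModl$. From Example~\ref{locally-presentable-abelian-DG-of-CDG-modules} we already know that $\biR^\cu\bModl$ is a locally finitely presentable abelian DG-category, and from Section~\ref{almost-involution-CDG-modules-subsecn} (specifically the commutative diagrams~\eqref{cdg-modules-Phi-diagram} and~\eqref{cdg-modules-Psi-diagram}) we have the identifications $\sZ^0(\biR^\cu\bModl)\simeq R^*[\delta]\sModl$ and $\sZ^0((\biR^\cu\bModl)^\bec)\simeq R^*\sModl$, under which $\Phi_{\biR^\cu\bModl}$ corresponds to the forgetful functor $\#$ restricting scalars along the inclusion $R^*\hookrightarrow R^*[\delta]$.

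For part~(a), I would simply invoke Lemma~\ref{Phi-Psi-lambda-presentability} applied to $\bA=\biR^\cu\bModl$ with $\lambda=\omega$: the lemma asserts that $\Phi_\bA$ both preserves and reflects finite presentability of objects. Under the identification of $\Phi$ with the scalar-restriction functor, this is exactly the statement that a graded $R^*[\delta]$-module $M^*$ is finitely presentable in $R^*[\delta]\sModl$ if and only if its underlying graded $R^*$-module is finitely presentable.

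For part~(b), I would unwind the definitions. By definition, a graded ring $S^*$ is graded left coherent precisely when the category $S^*\sModl$ is locally coherent. Hence the assertions ``$R^*[\delta]$ is graded left coherent'' and ``$R^*$ is graded left coherent'' are the statements that $\sZ^0(\biR^\cu\bModl)$ and $\sZ^0((\biR^\cu\bModl)^\bec)$ are, respectively, locally coherent abelian categories. According to the definition given in the paragraph following Proposition~\ref{locally-coherent-DG-category}, the DG-category $\biR^\cu\bModl$ is itself locally coherent precisely when both of these conditions hold simultaneously, and Proposition~\ref{locally-coherent-DG-category} guarantees that the two conditions are equivalent. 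This finishes part~(b).

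There is no real obstacle in this proof: the corollary is a transparent application of the abstract results Lemma~\ref{Phi-Psi-lambda-presentability} and Proposition~\ref{locally-coherent-DG-category} to the concrete setting of CDG-modules, with the translation between the abstract setup and the module-theoretic one already recorded in diagrams~\eqref{cdg-modules-Phi-diagram}--\eqref{cdg-modules-Psi-diagram} of Section~\ref{almost-involution-CDG-modules-subsecn}. All the substantive work was completed earlier in establishing those general results; in particular the nontrivial direction in part~(b) (that graded coherence of $R^*$ implies graded coherence of $R^*[\delta]$, despite the latter being only a rank-two free extension) is handled cleanly by the DG-categorical framework rather than through any direct module-theoretic computation.
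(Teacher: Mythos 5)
Your proposal is correct and takes essentially the same route as the paper's own proof: the paper likewise deduces part~(a) as a particular case of Lemma~\ref{Phi-Psi-lambda-presentability} and part~(b) as a particular case of Proposition~\ref{locally-coherent-DG-category}, invoking the category equivalences $\sZ^0(\biR^\cu\bModl)\simeq R^*[\delta]\sModl$ and $\sZ^0((\biR^\cu\bModl)^\bec)\simeq R^*\sModl$ from Section~\ref{almost-involution-CDG-modules-subsecn} and the local finite presentability of $\biR^\cu\bModl$ from Example~\ref{locally-presentable-abelian-DG-of-CDG-modules}. The only difference is that you spell out the definitional unwinding in part~(b) slightly more explicitly than the paper does.
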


\begin{proof}
 The equivalences of abelian categories $\sZ^0(\biR^\cu\bModl)\simeq
R^*[\delta]\sModl$ and $\sZ^0((\biR^\cu\bModl)^\bec)\simeq R^*\sModl$
were discussed in Section~\ref{almost-involution-CDG-modules-subsecn}.
 In view of these category equivalences, part~(a) is
a particular case of Lemma~\ref{Phi-Psi-lambda-presentability},
and part~(b) is a particular case of
Proposition~\ref{locally-coherent-DG-category}.
 We recall that the DG\+category $\biR^\cu\bModl$ is always locally
finitely presentable by
Example~\ref{locally-presentable-abelian-DG-of-CDG-modules}.
 (See also~\cite[Example~9.15]{Pedg}.)
\end{proof}

\begin{ex} \label{locally-coherent-DG-category-of-factorizations}
 (1)~Let $\sA$ be a locally finitely presentable abelian category,
$\Delta\:\sA\rarrow\sA$ be an autoequivalence, and $v\:\Id_\sA\rarrow
\Delta$ be a potential.
 Consider the abelian DG\+category $\bF(\sA,\Delta,v)$ of
factorizations of~$v$ in $\sA$, as per
Example~\ref{abelian-DG-category-of-factorizations-example}.
 According to
Sections~\ref{DG-category-of-factorizations-defined-subsecn}
and~\ref{almost-involution-factorizations-subsecn}, we have
$\sZ^0(\bF(\sA,\Delta,v)^\bec)\simeq\sP(\sA,\Delta)\simeq\sA\times\sA$.
 By Example~\ref{locally-presentable-abelian-DG-of-factorizations}(1),
\,$\bF(\sA,\Delta,v)$ is a locally finitely presentable abelian
DG\+category.

 Obviously, a $2$\+$\Delta$-periodic object in $\sA$ is finitely
presentable if and only if all its grading components are finitely
presentable in~$\sA$ (there are essentially only two such grading
componets).
 By Lemma~\ref{Phi-Psi-lambda-presentability},
the finitely presentable objects of the abelian category of
factorizations $\sF(\sA,\Delta,v)=\sZ^0(\bF(\sA,\Delta,v))$ are
precisely all the factorizations of~$v$ with finitely presentable
grading components in~$\sA$.

 (2)~Now if $\sA$ is a locally coherent abelian category, then so is
$\sP(\sA,\Delta)\simeq\sA\times\sA$.
 By Proposition~\ref{locally-coherent-DG-category}, it follows that
$\bF(\sA,\Delta,v)$ is a locally coherent abelian DG\+category.
\end{ex}

\subsection{Absolute derived category}
\label{absolute-derived-subsecn}
 The following definition is taken from~\cite[Section~5.1]{Pedg},
but it goes back to~\cite[Section~2.1]{Psemi}
and~\cite[Sections~3.3 and~4.2]{Pkoszul}.

 Let $\bE$ be an abelian DG\+category.
 An object $X\in\bE$ is said to be \emph{absolutely acyclic} if it
belongs to the minimal thick subcategory of the homotopy category
$\sH^0(\bE)$ containing the totalizations $\Tot(K\to L\to M)$ of
all short exact sequences $0\rarrow K\rarrow L\rarrow M\rarrow0$
in the abelian category~$\sZ^0(\bE)$.
 The thick subcategory of absolutely acyclic objects is denoted by
$\sH^0(\bE)_\ac^\abs\subset\sH^0(\bE)$, and its full preimage
under the obvious functor $\sZ^0(\bE)\rarrow\sH^0(\bE)$ is denoted
by $\sZ^0(\bE)_\ac^\abs\subset\sZ^0(\bE)$.

 The triangulated Verdier quotient category $\sD^\abs(\bE)=
\sH^0(\bE)/\sH^0(\bE)_\ac^\abs$ of the homotopy category $\sH^0(\bE)$
by its thick subcategory of absolutely acyclic objects is called
the \emph{absolute derived category} of an abelian DG\+category~$\bE$.

\begin{lem} \label{absolutely-acyclic-closure-properties}
 The full subcategory of absolutely acyclic objects\/
$\sZ^0(\bE)_\ac^\abs$ is closed under the kernels of epimorphisms,
the cokernels of monomorphisms, extensions, and direct summands in
the abelian category\/~$\sZ^0(\bE)$
(i.~e.\ it is thick there in the sense of Section~\ref{abelian-model-structures-subsecn}).
\end{lem}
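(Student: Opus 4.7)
My plan is to reduce all four closure properties to the fact that $\sH^0(\bE)_\ac^\abs$ is a thick subcategory of the triangulated category $\sH^0(\bE)$ by definition, and then to exploit the $2$-out-of-$3$ principle for distinguished triangles in the Verdier quotient $\sD^\abs(\bE) = \sH^0(\bE)/\sH^0(\bE)_\ac^\abs$. Closure of $\sZ^0(\bE)_\ac^\abs$ under direct summands is immediate: thick subcategories are closed under direct summands by definition, and any direct sum decomposition in $\sZ^0(\bE)$ descends to one in $\sH^0(\bE)$ under the identity-on-objects functor $\sZ^0(\bE) \rarrow \sH^0(\bE)$, so this case requires no further argument.

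For the three remaining properties, the key step will be to produce a natural distinguished triangle $X \rarrow Y \rarrow Z \rarrow X[1]$ in $\sD^\abs(\bE)$ from any short exact sequence $0 \rarrow X \overset{f}\rarrow Y \overset{g}\rarrow Z \rarrow 0$ in $\sZ^0(\bE)$. To achieve this, I would first construct a closed morphism $\bar g\:\cone(f) \rarrow Z$ of degree~$0$ extending~$g$: recalling from Section~\ref{twists-and-cones-subsecn} that $\cone(f)$ has underlying graded object $Y \oplus X[1]$, I would define $\bar g$ to act as $g$ on the $Y$\+summand and as zero on the $X[1]$\+summand, with closedness being a direct check using the twisted differential on $\cone(f)$ together with the identity $gf = 0$. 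A routine identification of underlying graded structures should then show that $\cone(\bar g)$ is isomorphic (up to shift) to the totalization $\Tot(X\to Y\to Z)$ of Section~\ref{totalizations-subsecn}.

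Since $\Tot(X\to Y\to Z)$ is absolutely acyclic by the very definition of $\sH^0(\bE)_\ac^\abs$, the morphism $\bar g$ will become an isomorphism in $\sD^\abs(\bE)$. Splicing this isomorphism with the standard distinguished triangle $X \rarrow Y \rarrow \cone(f) \rarrow X[1]$ in $\sH^0(\bE)$ (which remains distinguished after passing to the Verdier quotient) then yields the asserted triangle $X \rarrow Y \rarrow Z \rarrow X[1]$ in $\sD^\abs(\bE)$. Closure under kernels of epimorphisms, cokernels of monomorphisms, and extensions will follow at once from the $2$-out-of-$3$ principle: for any short exact sequence $0 \rarrow X \rarrow Y \rarrow Z \rarrow 0$ in $\sZ^0(\bE)$ with two of $X$, $Y$, $Z$ absolutely acyclic (equivalently, zero in $\sD^\abs(\bE)$), the third is forced to vanish in $\sD^\abs(\bE)$ as well, hence to lie in $\sZ^0(\bE)_\ac^\abs$. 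The only bookkeeping of any substance is the explicit identification of $\cone(\bar g)$ with a shift of $\Tot(X\to Y\to Z)$, which I expect to be a straightforward calculation at the level of underlying graded objects and their twisted differentials.
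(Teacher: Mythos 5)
Your proof is correct and takes essentially the same route as the paper's: the paper's one-line argument (thickness of $\sH^0(\bE)_\ac^\abs$ plus the fact that it contains the totalizations of short exact sequences) implicitly rests on exactly your observation that $\Tot(X\to Y\to Z)$ is, up to shift, the iterated cone $\cone(\bar g)$ for $\bar g\:\cone(f)\rarrow Z$, so that $X$, $Y$, $Z$ form a distinguished triangle modulo absolutely acyclic objects. Your only cosmetic departure is running the two-out-of-three argument in the Verdier quotient $\sD^\abs(\bE)$ (using that an object vanishes there if and only if it lies in the thick subcategory $\sH^0(\bE)_\ac^\abs$) instead of arguing directly with triangles inside $\sH^0(\bE)$.
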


\begin{proof}
 By construction, the full subcategory of absolutely acyclic objects
is closed under shifts, cones, and direct summands in $\sH^0(\bE)$,
hence also in~$\sZ^0(\bE)$.
 Since the full subcategory $\sZ^0(\bE)_\ac^\abs$ contains
the totalizations of short exact sequences, it follows that it is
closed under the kernels of epimorphisms, the cokernels of
monomorphisms, and extensions in~$\sZ^0(\bE)$.
\end{proof}

\begin{prop} \label{absolute-acyclics-described}
 For any abelian DG\+category\/ $\bE$, the full subcategory of
absolutely acyclic objects\/ $\sZ^0(\bE)_\ac^\abs$ is precisely
the closure of the class of all contractible objects in\/ $\bE$
under extensions and direct summands in the abelian
category\/~$\sZ^0(\bE)$.
\end{prop}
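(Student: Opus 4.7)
Plan: Prove the two inclusions of the claimed equality separately. Let $\sC \subseteq \sZ^0(\bE)$ denote the closure of the class of contractible objects under extensions and direct summands. The inclusion $\sC \subseteq \sZ^0(\bE)_\ac^\abs$ is immediate: every contractible object is isomorphic to zero in $\sH^0(\bE)$, hence lies in $\sH^0(\bE)_\ac^\abs$, and by Lemma~\ref{absolutely-acyclic-closure-properties} the class $\sZ^0(\bE)_\ac^\abs$ is closed under extensions and direct summands.

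For the reverse inclusion I will verify that the essential image $\bar\sC \subseteq \sH^0(\bE)$ of $\sC$ is a thick triangulated subcategory containing every totalization $\Tot(K \to L \to M)$ of a short exact sequence $0 \to K \to L \to M \to 0$ in $\sZ^0(\bE)$; by minimality of $\sH^0(\bE)_\ac^\abs$ this yields $\sH^0(\bE)_\ac^\abs \subseteq \bar\sC$, i.e.\ $\sZ^0(\bE)_\ac^\abs \subseteq \sC$. Containment of totalizations comes from the short exact sequence $0 \to \cone(\id_K)[1] \to \Tot(K \to L \to M) \to \cone(\id_M) \to 0$ in $\sZ^0(\bE)$ recorded in the proof of Lemma~\ref{becker-and-lp-coacyclic}, whose outer terms are contractible. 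Closure of $\bar\sC$ under shifts is immediate since $\sC$ is closed under shifts in $\sZ^0(\bE)$ (shifts are exact autoequivalences preserving contractibles). Closure under cones in $\sH^0(\bE)$ is handled by lifting any morphism $\tilde f : X \to Y$ in $\sH^0(\bE)$ between representatives $X', Y' \in \sC$ to a closed morphism $f : X' \to Y'$ of degree zero; then $\cone(f)$ fits into the short exact sequence $0 \to Y' \to \cone(f) \to X'[1] \to 0$ in $\sZ^0(\bE)$ (Section~\ref{twists-and-cones-subsecn}), so $\cone(f) \in \sC$ and represents $\cone(\tilde f)$ in $\sH^0(\bE)$.

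The key step is closure of $\bar\sC$ under existing direct summands in $\sH^0(\bE)$. Given $W \in \sC$ decomposing as $W \simeq X \oplus X'$ in $\sH^0(\bE)$, lift the idempotent projection onto $X$ to a closed morphism $\bar e : W \to W$ of degree zero in $\bE$ and form $V := \cone(\id_W - \bar e) \in \sZ^0(\bE)$. Then $V$ fits into a short exact sequence $0 \to W \to V \to W[1] \to 0$ in $\sZ^0(\bE)$, so $V \in \sC$. In $\sH^0(\bE)$, however, $\id - \bar e$ is the idempotent projection onto $X'$, so its cone is isomorphic to $\cone(0_X) \oplus \cone(\id_{X'}) \simeq X \oplus X[1]$; thus $V \simeq X \oplus X[1]$ in $\sH^0(\bE)$. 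Invoking the standard fact that homotopy-equivalent objects in any DG-category with shifts and cones differ by a contractible direct summand in $\sZ^0$, there exist contractibles $C, C' \in \bE$ and an isomorphism $V \oplus C \cong X \oplus X[1] \oplus C'$ in $\sZ^0(\bE)$. Since $V$ and $C$ lie in $\sC$, so does $V \oplus C$; hence $X \oplus X[1] \oplus C' \in \sC$, and direct summand closure yields $X \oplus X[1] \in \sC$ and then $X \in \sC$, confirming $X \in \bar\sC$.

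The main technical input will be the mapping cylinder fact: if $f : V \to W$ is a homotopy equivalence in any additive DG-category with shifts and cones, then $V \oplus C \cong W \oplus C'$ in $\sZ^0$ for suitable contractibles $C, C'$. I expect this to be dispatched by an explicit construction of the mapping cylinder of $f$ as an object of $\sZ^0(\bE)$ fitting into two strictly split short exact sequences whose complementary summands are contractible. Once that is in hand, the proof above goes through entirely by finite constructions, without recourse to infinite coproducts or Neeman--B\"okstedt-style idempotent lifting.
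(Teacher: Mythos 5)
Your proposal is correct, and it follows the paper's overall strategy (show that the closure $\sC$ of the contractibles under extensions and direct summands maps onto a thick triangulated subcategory of $\sH^0(\bE)$ containing all totalizations of short exact sequences, then invoke minimality), but it handles the one genuinely nontrivial step---closure under direct summands and isomorphisms taken in $\sH^0(\bE)$---by a markedly different and heavier route. The paper disposes of this step with a one-line retract trick: if $X\overset{f}\rarrow C\overset{g}\rarrow X$ with $C\in\sC$ and $gf$ homotopic to $\id_X$, then $\id_X-gf$ is null-homotopic and hence factors through the natural morphism $X\rarrow\cone(\id_X)$, so $\id_X$ factors as $X\rarrow C\oplus\cone(\id_X)\rarrow X$ and $X$ is already a \emph{strict} direct summand of $C\oplus\cone(\id_X)\in\sC$ in the idempotent-complete abelian category $\sZ^0(\bE)$; this simultaneously covers homotopy direct summands and homotopy equivalences, with no idempotent lifting and no cylinder. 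Your route---lift the idempotent to a closed morphism $\bar e$, note that $V=\cone(\id_W-\bar e)\in\sC$ as an extension of $W[1]$ by $W$, identify $V\simeq X\oplus X[1]$ in $\sH^0(\bE)$, and then trade the homotopy equivalence for a strict isomorphism $V\oplus C\cong X\oplus X[1]\oplus C'$ modulo contractible summands---is valid, and your deferred mapping-cylinder lemma is indeed true in any abelian DG\+category: taking $\mathrm{Cyl}(h)=\cone\bigl(V\to V\oplus X\oplus X[1]\bigr)$ for a closed lift $h$ of the homotopy equivalence, one gets a strictly split epimorphism onto $X\oplus X[1]$ with contractible kernel, while the extension $0\rarrow V\rarrow\mathrm{Cyl}(h)\rarrow\cone(h)\rarrow0$ splits in $\bE^0$ (cone sequences always do) and therefore splits strictly, since by Lemma~\ref{Ext-in-cocycles-and-Hom-in-homotopy} the obstruction lies in $\Hom_{\sH^0(\bE)}(\cone(h),V[1])=0$, the object $\cone(h)$ being contractible. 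So the comparison comes down to this: your argument proves along the way a statement of independent interest (homotopy-equivalent objects of $\bE$ agree in $\sZ^0(\bE)$ up to contractible direct summands), but the cylinder lemma you only sketch carries the real content of the step and still needs to be written out in the abstract setting, whereas the paper's factorization trick makes the whole proposition essentially formal; you might note that your final reduction (splitting the idempotent via $\cone(\id_W-\bar e)$) becomes unnecessary once the retract trick is available, since $X$ is a homotopy retract of $W$ from the outset.
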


\begin{proof}
 Denote by $\sC\subset\sZ^0(\bE)$ the closure of the class of
all contractible objects under extensions and direct summands.
 Since all contractible objects are absolutely acyclic, it follows
from Lemma~\ref{absolutely-acyclic-closure-properties} that
$\sC\subset\sZ^0(\bE)_\ac^\abs$.

 To prove the converse inclusion, we have to show that the class $\sC$
contains the totalizations of short exact sequences in $\sZ^0(\bE)$
and is closed under shifts, cones, homotopy equivalences, and
direct summands in the homotopy category~$\sH^0(\bE)$.
 Indeed, the totalizations of short exact sequences can be obtained
as extensions of contractible objects, as explained in the proof
of Lemma~\ref{becker-and-lp-coacyclic}.
 The class $\sC$ is closed under shifts, because the class of all
contractible objects is closed under shifts and the shift is
an auto-equivalence of the abelian category~$\sZ^0(\bE)$.
 The cone of a closed morphism of degree~$0$ in $\sZ^0(\bE)$ is
a particular case of an extension
(see Lemma~\ref{cone-kernel-cokernel}).

 It remains to explain that the class $\sC$ is closed under homotopy
equivalences and homotopy direct summands, and it suffices to
consider the homotopy direct summands.
 Let $C\in\sC$ and $X\in\sZ^0(\bE)$ be two objects and
$X\overset f\rarrow C\overset g\rarrow X$ be two morphisms in
$\sZ^0(\bE)$ such that the composition~$gf$ is homotopic to
the identity endomorphism of the object~$X$.
 Then the difference $\id_X-gf\:X\rarrow X$ is homotopic to zero
in~$\bE$.
 It follows that the morphism $\id_X-gf$ in the category $\sZ^0(\bE)$ 
factorizes through the natural morphism $X\rarrow\cone(\id_X)$.
 Thus the identity endomorphism of the object $X$ in $\sZ^0(\bE)$
factorizes as $X\rarrow C\oplus\cone(\id_X)\rarrow X$, so $X$ is
a direct summand of the object $C\oplus\cone(\id_X)$ in
the abelian category~$\sZ^0(\bE)$.
 Obviously, $C\oplus\cone(\id_X)\in\sC$, hence $X\in\sC$, and
we are done.
\end{proof}

\begin{prop} \label{coacyclics-as-varinjlim-of-abs-acyclics}
 Let\/ $\bA$ be a locally coherent abelian DG\+category.
 Then the class of all Becker-coacyclic objects $\sZ^0(\bA)_\ac^\bco$
consists precisely of all the directed colimits, taken in
the abelian category\/ $\sZ^0(\bA)$, of (directed diagrams of) objects
absolutely acyclic with respect to the abelian
DG\+category\/~$\bA_\bfp$.
 In other words, $\sZ^0(\bA)_\ac^\bco=
\varinjlim\bigl(\sZ^0(\bA_\bfp)_\ac^\abs\bigr)\subset\sZ^0(\bA)$.
\end{prop}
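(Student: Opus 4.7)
The plan is to prove both inclusions separately, each time reducing to Corollary~\ref{coacyclics-as-closure-extensions-directed-colimits}, which identifies the Becker-coacyclics in $\sZ^0(\bA)$ as the closure of the contractible objects of $\bA$ under extensions and directed colimits. For the inclusion $\varinjlim(\sZ^0(\bA_\bfp)_\ac^\abs)\subset\sZ^0(\bA)_\ac^\bco$, I would first observe that $\bA_\bfp$ is an exactly embedded full abelian DG\+subcategory of $\bA$ (by local coherence): a contractible object of $\bA_\bfp$ is contractible in $\bA$, hence Becker-coacyclic, while extensions and direct summands in $\sZ^0(\bA_\bfp)$ remain extensions and direct summands in $\sZ^0(\bA)$. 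Combined with Proposition~\ref{absolute-acyclics-described}, this yields $\sZ^0(\bA_\bfp)_\ac^\abs\subset\sZ^0(\bA)_\ac^\bco$. Since the latter class is closed under directed colimits in $\sZ^0(\bA)$ by Corollary~\ref{coacyclics-as-closure-extensions-directed-colimits}, the desired inclusion follows.

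For the reverse inclusion, denote $\sD=\varinjlim(\sZ^0(\bA_\bfp)_\ac^\abs)\subset\sZ^0(\bA)$. Again by Corollary~\ref{coacyclics-as-closure-extensions-directed-colimits}, it suffices to verify the following three properties: (i)~$\sD$ is closed under directed colimits; (ii)~$\sD$ is closed under extensions; (iii)~every contractible object of $\bA$ lies in $\sD$. Property~(i) is an instance of Proposition~\ref{varinjlim-of-class-of-finitely-presentables}, noting that $\sZ^0(\bA_\bfp)_\ac^\abs$ consists of coherent objects and is closed under finite direct sums, which is itself a consequence of closure under extensions from Lemma~\ref{absolutely-acyclic-closure-properties}. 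Property~(ii) follows directly from Proposition~\ref{varinjlim-closed-under-extensions}, where the local coherence assumption on $\sZ^0(\bA)$ is essential.

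The principal technical step, and what I expect to be the main obstacle, is establishing~(iii). Given a contractible $C\in\sZ^0(\bA)$, Lemma~\ref{contractibles-described-lemma} presents $C$ as a direct summand of $\Xi(A)=\cone(\id_A[-1])$ for some $A\in\bA$. Writing $A=\varinjlim_i A_i$ with $A_i\in\sZ^0(\bA_\bfp)$ (possible since $\sZ^0(\bA)$ is locally finitely presentable), and using that the functor $\Xi$ preserves colimits (it has adjoints on both sides) as well as coherence (Lemma~\ref{Xi-preserves-coherence}), we obtain $\Xi(A)=\varinjlim_i \Xi(A_i)$ with each $\Xi(A_i)$ contractible in $\sZ^0(\bA_\bfp)$, hence absolutely acyclic there. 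Thus $\Xi(A)\in\sD$. Finally, the direct summand $C$ can be expressed as a countable directed colimit of copies of $\Xi(A)$ connected by the relevant idempotent endomorphism (the same trick used at the end of the proof of Corollary~\ref{coacyclics-as-closure-extensions-directed-colimits}), and property~(i) then yields $C\in\sD$, completing the argument.
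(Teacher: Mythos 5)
Your proof is correct and follows essentially the same route as the paper's: both arguments combine Proposition~\ref{absolute-acyclics-described} (applied to $\bE=\bA_\bfp$) with the closure properties of $\varinjlim$ from Propositions~\ref{varinjlim-of-class-of-finitely-presentables} and~\ref{varinjlim-closed-under-extensions}, the $\Xi(A)=\varinjlim_i\Xi(A_i)$ trick via Lemma~\ref{contractibles-described-lemma} to capture the contractibles of $\bA$, and finally Corollary~\ref{coacyclics-as-closure-extensions-directed-colimits}. The only difference is organizational: the paper identifies $\varinjlim\bigl(\sZ^0(\bA_\bfp)_\ac^\abs\bigr)$ outright as the closure of the contractible objects of $\bA$ under extensions and directed colimits and then cites the corollary once, whereas you split the same ingredients into two separate inclusions.
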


\begin{proof}
 Put $\sC=\sZ^0(\bA_\bfp)_\ac^\abs$ for brevity.
 According to Proposition~\ref{absolute-acyclics-described} applied
to the abelian DG\+category $\bE=\bA_\bfp$, the class $\sC$ is
the closure of the class of all contractible objects in $\bA_\bfp$
under extensions and direct summands in the abelian category
$\sZ^0(\bA_\bfp)$.

 By Proposition~\ref{varinjlim-of-class-of-finitely-presentables},
the class $\varinjlim\sC\subset\sZ^0(\bA)$ is closed under directed
colimits.
 By Proposition~\ref{varinjlim-closed-under-extensions}, the class
$\varinjlim\sC$ is also closed under extensions in~$\sZ^0(\bA)$.
 So $\varinjlim\sC$ is the closure of the class of all contractible
objects in $\bA_\bfp$ under extensions and directed colimits
in~$\sZ^0(\bA)$.
 (It is helpful to keep in mind that direct summands can be obtained
as countable directed colimits.)

 Furthermore, by Lemma~\ref{contractibles-described-lemma},
the contractible objects of $\bA$ are the direct summands of
the objects of the form $\Xi_\bA(A)$ with $A\in\bA$.
 The object $A$ is a directed colimit of finitely presentable objects
in $\sZ^0(\bA)$, and the functor $\Xi$ preserves directed colimits.
 So all the contractible objects of $\bA$ are direct summands
of some directed colimits of contractible objects of $\bA_\bfp$ in
the category~$\sZ^0(\bA)$.

 Hence the class $\varinjlim\sC\subset\sZ^0(\bA)$ contains all
the contractible objects of~$\bA$.
 Therefore, it can be described as the closure of the class of all
contractible objects of $\bA$ under extensions and directed
colimits in~$\sZ^0(\bA)$.
 Glancing into
Corollary~\ref{coacyclics-as-closure-extensions-directed-colimits},
we conclude that $\varinjlim\sC=\sZ^0(\bA)_\ac^\bco$, as desired.
\end{proof}

 A different proof of
Proposition~\ref{coacyclics-as-varinjlim-of-abs-acyclics}
will be indicated in
Remark~\ref{rem-another-proof-of-approachability-of-becker-coacyclics}
in the next Section~\ref{approachability-subsecn}.

\subsection{Approachability of Becker-coacyclic objects}
\label{approachability-subsecn}
 The following definition is taken from~\cite[Section~7.2]{Pedg}.
 
 Let $\sT$ be a triangulated category and $\sS$, $\sY\subset\sT$
be two full subcategories.
 An object $X\in\sT$ is said to be \emph{approachable from\/ $\sS$
via\/~$\sY$} if every morphism $S\rarrow X$ in $\sT$ with
$S\in\sS$ factorizes through an object of~$\sY$.

 Equivalently, an object $X$ is approachable from $\sS$ via $\sY$
if and only if, for every morphism $S\rarrow X$ as above
there exists an object $S'\in\sT$ and a morphism $S'\rarrow S$
with a cone belonging to $\sY$ such that the composition $S'\rarrow S
\rarrow X$ vanishes in~$\sT$.
 When (as it will be the case in our applications) $\sS$ is a full
triangulated subcategory in $\sT$ and $\sY\subset\sS$, the conditions
in this criterion imply that $S'\in\sS$.

\begin{lem} \label{approachables-are-thick-subcategory}
 Let\/ $\sT$ be a triangulated category and\/ $\sS$, $\sY\subset\sT$
be full triangulated subcategories such that\/ $\sY\subset\sS$.
 Then the full subcategory of all objects approachable from\/ $\sS$
via\/ $\sY$ is a thick subcategory in\/ $\sT$, i.~e., it is closed
under shifts, cones, and direct summands in\/~$\sT$.
\end{lem}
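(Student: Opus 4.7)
The plan is to verify each closure property (shifts, cones, direct summands) separately, working with the equivalent formulation of approachability in terms of killing morphisms by precomposition with a cone from~$\sY$. Throughout, I will use the key auxiliary fact (an immediate consequence of the octahedral axiom): if $h\colon S''\to S'$ and $h'\colon S'\to S$ are morphisms in $\sT$ whose cones both lie in the triangulated subcategory $\sY$, then the cone of $h'\circ h\colon S''\to S$ also lies in $\sY$. Indeed, there is a triangle $\cone(h)\to\cone(h'h)\to\cone(h')\to\cone(h)[1]$ by the octahedron, and $\sY$ is closed under extensions (being a full triangulated subcategory).

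First I would handle shifts and direct summands, which are routine. For shifts: if $X$ is approachable and $f\colon S\to X[1]$ with $S\in\sS$, then $f[-1]\colon S[-1]\to X$ factors through some $Y\in\sY$ (as $S[-1]\in\sS$), hence $f$ factors through $Y[1]\in\sY$. For direct summands: if $X=X_1\oplus X_2$ is approachable and $f\colon S\to X_1$, then $\iota_1 f\colon S\to X$ factors as $S\to Y\to X$ for some $Y\in\sY$, and composing with the projection $\pi_1$ gives $f=\pi_1\iota_1 f\colon S\to Y\to X_1$.

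The main work is showing closure under cones. Suppose $X\to Y\to Z\to X[1]$ is a distinguished triangle with $X$ and $Y$ approachable; I must show $Z$ is approachable. Given $f\colon S\to Z$ with $S\in\sS$, consider the composition $g\colon S\to Z\to X[1]$. Since $X[1]$ is approachable (by the shift case already proved), there exists $h'\colon S'\to S$ with $\cone(h')\in\sY$ such that $g\circ h'=0$; hence $S'\in\sS$ (as $\sS$ is triangulated and contains the cone), and $f\circ h'\colon S'\to Z$ factors through $Y$: write it as $S'\xrightarrow{\varphi}Y\to Z$. Now apply approachability of $Y$ to $\varphi$: there exists $h\colon S''\to S'$ with $\cone(h)\in\sY$ and $\varphi\circ h=0$. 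Combining, $f\circ h'\circ h=0$. By the octahedral fact above, $\cone(h'\circ h)\in\sY$, so $f\colon S\to Z$ factors through this cone, which lies in $\sY$. This gives approachability of~$Z$.

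The main obstacle, and the only genuinely nontrivial step, is the cone closure, and specifically the bookkeeping that the auxiliary $S'$ and $S''$ stay in $\sS$ so that approachability of $Y$ is applicable at the next stage. This is precisely where the hypothesis $\sY\subset\sS$ is used: it ensures the cone of the lifting morphism lies in $\sS$, so $\sS$\nobreakdash-membership is preserved along the construction. The octahedral argument for composing two ``cone-in-$\sY$'' morphisms is then what delivers a single morphism $S''\to S$ witnessing factorization of $f$ through an object of~$\sY$.
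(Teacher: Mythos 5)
Your proof is correct and is essentially the argument behind this lemma: the paper itself gives no proof beyond citing \cite[Lemmas~7.3 and~7.5]{Pedg}, and your three ingredients --- the killing-morphism reformulation of approachability, the octahedral triangle $\cone(h)\to\cone(h'h)\to\cone(h')$ for composing two morphisms with cones in $\sY$, and the use of $\sY\subset\sS$ to keep the auxiliary objects $S'$, $S''$ inside $\sS$ --- are exactly what that cited proof runs on. The shift and direct-summand cases are handled in the same routine way there, so nothing is missing.
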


\begin{proof}
 This is~\cite[Lemmas~7.3 and~7.5]{Pedg}.
\end{proof}

If $\sS=\sH^0(\bA_\bfp)$, approachability is very closely related
to the closure under directed colimits as discussed in
Section~\ref{loc-fin-pres-abelian-subsecn}.

\begin{lem} \label{approachability-and-direct-limits}
Let\/ $\bA$ be a locally coherent abelian DG\+category and
$\sY\subset\sH^0(\bA_\bfp)$ be a full subcategory
closed under finite direct sums.
 Then $X\in\sH^0(\bA)$ is approachable from $\sH^0(\bA_\bfp)$
via $\sY$ if and only if $X\in\varinjlim\widetilde\sY$
in $\sZ^0(\bA)$, where $\widetilde\sY$ stands for the preimage
of\/ $\sY$ under the obvious functor
$\sZ^0(\bA_\bfp)\rarrow\sH^0(\bA_\bfp)$.
\end{lem}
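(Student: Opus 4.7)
The plan is to apply Proposition~\ref{varinjlim-of-class-of-finitely-presentables} to the locally finitely presentable abelian category $\sZ^0(\bA)$. Since $\sZ^0(\bA_\bfp) = \sZ^0(\bA)_\fp$ and since $\widetilde\sY$ is closed under finite direct sums (being the preimage of a subcategory of $\sH^0(\bA_\bfp)$ closed under finite direct sums, and containing all contractible objects of $\sZ^0(\bA_\bfp)$), the proposition translates the condition $X \in \varinjlim\widetilde\sY$ into: every closed morphism $\tilde f\:A\to X$ in $\sZ^0(\bA)$ with $A\in\sZ^0(\bA_\bfp)$ factors through an object of $\widetilde\sY$ in $\sZ^0(\bA)$. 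The lemma thus reduces to showing that this $\sZ^0$-factorization property is equivalent to the analogous $\sH^0$-factorization property (i.e., approachability).

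The ``only if'' direction is immediate: a $\sZ^0$-factorization passes to $\sH^0$. For the converse, assume $X$ is approachable from $\sH^0(\bA_\bfp)$ via $\sY$, and let $\tilde f\:A\to X$ be a closed morphism with $A\in\sZ^0(\bA_\bfp)$. By approachability, the image of $\tilde f$ in $\sH^0(\bA)$ factors through some $Y'\in\widetilde\sY$; lifting to closed representatives yields morphisms $\bar h\:A\to Y'$ and $\bar k\:Y'\to X$ in $\sZ^0(\bA)$ together with an element $s\in\Hom^{-1}_\bA(A,X)$ satisfying $\tilde f - \bar k\bar h = d(s)$.

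The crux is to absorb the correction $d(s)$ into an enlarged intermediate object. Using the universal property of the cone (Section~\ref{twists-and-cones-subsecn}), closed degree-$0$ morphisms $\cone(\id_A)\to X$ correspond bijectively to pairs $(\phi_0,\phi_1)$ with $\phi_0\in\Hom^0_\bA(A,X)$, $\phi_1\in\Hom^{-1}_\bA(A,X)$, and $\phi_0 = d(\phi_1)$. Taking $(\phi_0,\phi_1)=(d(s),s)$ produces a closed morphism $\beta\:\cone(\id_A)\to X$ satisfying $\beta\alpha = d(s)$, where $\alpha\:A\to\cone(\id_A)$ is the natural closed inclusion. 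Set $\widetilde Y = Y'\oplus\cone(\id_A)$. Since $\cone(\id_A)$ is contractible and lies in $\sZ^0(\bA_\bfp)$ (as the latter is closed under cones), we have $\widetilde Y\cong Y'$ in $\sH^0(\bA_\bfp)$, and so $\widetilde Y\in\widetilde\sY$. The closed morphisms $(\bar h,\alpha)\:A\to\widetilde Y$ and $(\bar k,\beta)\:\widetilde Y\to X$ compose to $\bar k\bar h + d(s) = \tilde f$, furnishing the required $\sZ^0$-factorization.

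The reasoning is largely routine; the only mild subtlety is the verification of signs in identifying closed morphisms out of $\cone(\id_A)$, which is a direct computation from the definitions in Section~\ref{twists-and-cones-subsecn}. Once this identification is in place, the construction absorbing the null-homotopy $s$ through the contractible summand $\cone(\id_A)$ is the unique substantive idea of the argument.
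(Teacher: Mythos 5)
Your proof is correct and follows essentially the same route as the paper: reduce via Proposition~\ref{varinjlim-of-class-of-finitely-presentables} to comparing factorizations in $\sZ^0(\bA)$ versus $\sH^0(\bA)$, and absorb the null-homotopy by factoring $f-\bar k\bar h$ through $E\rarrow\cone(\id_E)$ and enlarging the intermediate object to $Y'\oplus\cone(\id_E)\in\widetilde\sY$. Your explicit identification of closed morphisms out of $\cone(\id_A)$ with pairs $(d(s),s)$ just spells out the step the paper states directly, so the two arguments coincide.
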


\begin{proof}
The proof relies on Proposition~\ref{varinjlim-of-class-of-finitely-presentables}.
On the one hand, any morphism from a finitely presentable
object to an object $X\in\varinjlim\widetilde\sY$ factorizes
through an object of $\widetilde\sY$ in the abelian
category~$\sZ^0(\bA)$; hence it factorizes through an object
of $\sY$ in the homotopy category~$\sH^0(\bA)$.
In particular, any object $X\in\varinjlim\widetilde\sY$ is approachable.

On the other hand, suppose that $X$ is approachable
from $\sH^0(\bA_\bfp)$ via $\sY$.
 In view of Proposition~\ref{varinjlim-of-class-of-finitely-presentables},
it suffices to show that any morphism $f\:E\rarrow X$ in
the category $\sZ^0(\bA)$ from a finitely presentable object
$E\in\sZ^0(\bA_\bfp)$ factorizes through an object in $\widetilde\sY$.
 However, by approachability such a factorization exists
in the homotopy category~$\sH^0(\bA)$.
 So there is an object $Y\in\widetilde\sY$ and two
morphisms $g\:E\rarrow Y$ and $h\:Y\rarrow X$ in $\sZ^0(\bA)$
such that the morphism~$f$ is homotopic to~$hg$.
 Since the morphism $f-hg\:E\rarrow X$ is homotopic to zero in~$\bA$,
it follows that the morphism $f-hg$ in the category $\sZ^0(\bA)$
factorizes through the natural morphism $E\rarrow\cone(\id_E)$.
 Thus the morphism~$f$ in the abelian category $\sZ^0(\bA)$ factorizes
as $E\rarrow Y\oplus\cone(\id_E)\rarrow X$, and it remains to point
out that the contractible object $\cone(\id_E)$ as well as
the direct sum $Y\oplus\cone(\id_E)$ belong to $\widetilde\sY$.
\end{proof}

 As a consequence, we obtain the following proposition
which is the key technical result of
Section~\ref{locally-coherent-secn}.

\begin{prop} \label{becker-coacyclics-approachable-prop}
 Let\/ $\bA$ be a locally coherent abelian DG\+category.
 Then the class of Becker-coacyclic objects of\/ $\bA$
coincides precisely with the class of objects of\/
$\sH^0(\bA)$ which are approachable from the class $\sH^0(\bA_\bfp)$
of finitely presentable objects of\/ $\bA$ via the class 
$\sH^0(\bA_\bfp)_\ac^\abs$ of absolutely acyclic objects
with respect to the abelian DG\+category of finitely presentable
objects in\/~$\bA$.
\end{prop}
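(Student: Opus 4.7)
The plan is to combine the two immediately preceding results: Proposition~\ref{coacyclics-as-varinjlim-of-abs-acyclics} and Lemma~\ref{approachability-and-direct-limits}. Indeed, I claim the proposition follows by taking $\sY = \sH^0(\bA_\bfp)_\ac^\abs$ in Lemma~\ref{approachability-and-direct-limits} and matching the resulting direct limit description with the one supplied by Proposition~\ref{coacyclics-as-varinjlim-of-abs-acyclics}.

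First I would verify that $\sY = \sH^0(\bA_\bfp)_\ac^\abs$ satisfies the hypothesis of Lemma~\ref{approachability-and-direct-limits}, namely closure under finite direct sums. This is clear because $\sH^0(\bA_\bfp)_\ac^\abs$ is by construction a thick (in particular triangulated) subcategory of $\sH^0(\bA_\bfp)$. Next I would identify the preimage $\widetilde\sY$ of $\sY$ under the canonical functor $\sZ^0(\bA_\bfp) \rarrow \sH^0(\bA_\bfp)$. By the very definition of absolutely acyclic objects in $\bA_\bfp$ (recalled in Section~\ref{absolute-derived-subsecn}), this preimage is precisely $\sZ^0(\bA_\bfp)_\ac^\abs$.

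Lemma~\ref{approachability-and-direct-limits} then identifies the full subcategory of objects of $\sH^0(\bA)$ approachable from $\sH^0(\bA_\bfp)$ via $\sH^0(\bA_\bfp)_\ac^\abs$ with the full subcategory $\varinjlim\bigl(\sZ^0(\bA_\bfp)_\ac^\abs\bigr) \subset \sZ^0(\bA)$, where the directed colimits are computed in the abelian category $\sZ^0(\bA)$. Finally, Proposition~\ref{coacyclics-as-varinjlim-of-abs-acyclics} identifies this same class with the class $\sZ^0(\bA)_\ac^\bco$ of all Becker-coacyclic objects, completing the argument.

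There is essentially no obstacle here: the proposition is a direct corollary of the two cited results, provided one is careful that the subcategory $\sY$ taken in Lemma~\ref{approachability-and-direct-limits} is the image in $\sH^0(\bA_\bfp)$ of the full subcategory $\sZ^0(\bA_\bfp)_\ac^\abs \subset \sZ^0(\bA_\bfp)$ (rather than the other way around), so that $\widetilde\sY$ recovers $\sZ^0(\bA_\bfp)_\ac^\abs$ on the nose. The only mild subtlety worth flagging is that Becker-coacyclicity is a property of objects in $\sH^0(\bA)$ (or equivalently in $\sZ^0(\bA)$) that is stable under homotopy equivalence, and likewise approachability is defined in $\sH^0(\bA)$; both descriptions pass without change between $\sZ^0(\bA)$ and $\sH^0(\bA)$, because $\sZ^0(\bA)_\ac^\bco$ is the full preimage of $\sH^0(\bA)_\ac^\bco$ and $\varinjlim(\sZ^0(\bA_\bfp)_\ac^\abs)$ is closed under contractible summands (the latter being built into the compatibility check in the proof of Lemma~\ref{approachability-and-direct-limits}).
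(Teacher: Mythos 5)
Your proposal is correct and follows exactly the same route as the paper's own proof: the paper likewise deduces Proposition~\ref{becker-coacyclics-approachable-prop} by applying Lemma~\ref{approachability-and-direct-limits} with\/ $\sY=\sH^0(\bA_\bfp)_\ac^\abs$ (so that\/ $\widetilde\sY=\sZ^0(\bA_\bfp)_\ac^\abs$ by the definition in Section~\ref{absolute-derived-subsecn}) and matching the result against Proposition~\ref{coacyclics-as-varinjlim-of-abs-acyclics}. Your preliminary checks (closure under finite direct sums, identification of the preimage, and passage between\/ $\sZ^0(\bA)$ and\/ $\sH^0(\bA)$) are all sound.
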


\begin{proof}
 The argument is based on
Proposition~\ref{coacyclics-as-varinjlim-of-abs-acyclics},
whose proof uses the theory of
Sections~\ref{directed-colimits-of-becker-coacyclics}
and~\ref{loc-fin-pres-abelian-subsecn}--%
\ref{locally-coherent-abelian-subsecn}.
 Indeed, by Proposition~\ref{coacyclics-as-varinjlim-of-abs-acyclics},
we have $\sZ^0(\bA)_\ac^\bco=
\varinjlim\bigl(\sZ^0(\bA_\bfp)_\ac^\abs\bigr)\subset\sZ^0(\bA)$,
and the conclusion follows from
Lemma~\ref{approachability-and-direct-limits}.
\end{proof}

\begin{rem} \label{rem-another-proof-of-approachability-of-becker-coacyclics}
One implication of the latter proposition, that each Becker-coacyclic
object is approachable from the finitely presentable objects
via absolutely acyclic finitely presentable objects, can be proved
by other means using the the theory of approachability
developed in~\cite[Sections~7.2--7.4]{Pedg}. 

 Consider the full subcategory $\sX$ in the homotopy category
$\sH^0(\bA)$ formed by all the objects approachable from
$\sH^0(\bA_\bfp)$ via $\sH^0(\bA_\bfp)_\ac^\abs$.
 By Lemma~\ref{approachables-are-thick-subcategory}, \,$\sX$ is
a thick subcategory in the triangulated category~$\sH^0(\bA)$.
 Denote by $\widetilde\sX\subset\sZ^0(\bA)$ the full preimage of $\sX$
under the obvious functor $\sZ^0(\bA)\rarrow\sH^0(\bA)$.
 By Lemma~\ref{approachability-and-direct-limits},
$\widetilde\sX=\varinjlim\sZ^0(\bA_\bfp)_\ac^\abs$ in $\sZ^0(\bA)$,
so $\widetilde\sX$ is closed under directed colimits by
Proposition~\ref{varinjlim-of-class-of-finitely-presentables}.
In fact, it is easy to prove the last fact directly from
the definition of approachability.

 There is a rather general technical result,
\cite[Proposition~7.10(a)]{Pedg}, implying that, in
the homotopy category $\sH^0(\bA)$, all the totalizations
of short exact sequences in $\sZ^0(\bA)$ are approachable from
$\sH^0(\bA_\bfp)$ via totalizations of short exact sequences in
$\sZ^0(\bA_\bfp)$ (since the full subcategory $\sZ^0((\bA_\bfp)^\bec)=
\sZ^0(\bA^\bec)_\fp$ is self-resolving in the abelian category
$\sZ^0(\bA^\bec)$, in the sense of~\cite[Section~7.1]{Pedg}).
 In the situation at hand, one could arrive at the same conclusion
by showing that all the short exact sequences in $\sZ^0(\bA)$ are
directed colimits of short exact sequences in
$\sZ^0(\bA_\bfp)=\sZ^0(\bA)_\fp$.
 Hence all the totalizations of short exact sequences in $\sZ^0(\bA)$
belong to~$\widetilde\sX$.


 It follows that the full subcategory $\widetilde\sX$ is closed
under extensions in~$\sZ^0(\bA)$ (as well as under kernels of
epimorphisms and cokernels of monomorphisms).
 Since we know that $\widetilde\sX$ is closed
under directed colimits in~$\sZ^0(\bA)$,
 it is also closed under transfinitely iterated
extensions (in the sense of the directed colimit).
 As obviously, all the contractible objects of $\bA$ belong to $\sX$,
and consequently to~$\widetilde\sX$,
 we have by Corollary~\ref{coacyclics-as-filtered-by-contractibles},
that $\sZ^0(\bA)_\ac^\bco\subset\widetilde\sX$; hence
$\sH^0(\bA)_\ac^\bco\subset\sX$.


The latter argument for a half of
Proposition~\ref{becker-coacyclics-approachable-prop}
is independent of
Proposition~\ref{coacyclics-as-varinjlim-of-abs-acyclics} and,
in fact, we can deduce 
Proposition~\ref{coacyclics-as-varinjlim-of-abs-acyclics}
back from it.
Indeed, we already know that $\sZ^0(\bA)_\ac^\bco\subset
\widetilde\sX=\varinjlim\sZ^0(\bA_\bfp)_\ac^\abs$.
On the other hand,
$\sZ^0(\bA_\bfp)_\ac^\abs\subset\sZ^0(\bA)_\ac^\bco$
and $\sZ^0(\bA)_\ac^\bco$ is closed under directed colimits
in $\sZ^0(\bA)$ by
Corollary~\ref{coacyclics-as-closure-extensions-directed-colimits}
and Proposition~\ref{absolute-acyclics-described};
hence the other inclusion.
%
%
\end{rem}

\subsection{Full-and-faithfulness and compactness}
\label{full-and-faithfulness-and-compactness-subsecn}
 We start with presenting an alternative proof of the theorem about
triangulated generation of the coderived category.

\begin{proof}[Second proof of
Corollary~\ref{generators-of-coderived-for-locally-presentable}]
 This argument is applicable in the particular case of a locally
coherent DG\+category~$\bA$.

 Let $A\in\bA$ be an object such that $\Hom_{\sD^\bco(\bA)}(E,A)=0$ for
all $E\in\bA_\bfp$.
 By the general property of the construction of the triangulated
Verdier quotient category, this means that any morphism $E\rarrow A$
in $\sH^0(\bA)$ factorizes as $E\rarrow X\rarrow A$ for some
Becker-coacyclic object $X\in\sH^0(\bA)_\ac^\bco$.
 According to Proposition~\ref{becker-coacyclics-approachable-prop},
the morphism $E\rarrow X$ in $\sH^0(\bA)$ factorizes as $E\rarrow Y
\rarrow X$ for some object $Y\in\sH^0(\bA_\bfp)_\ac^\abs$.
%
%
This, however, means that $A$ itself is approachable from $\sH^0(\bA_\bfp)$ via $\sH^0(\bA_\bfp)_\ac^\abs$ in $\sH^0(\bA)$.
Thus $A\in\sH^0(\bA)_\ac^\bco$ by Proposition~\ref{becker-coacyclics-approachable-prop}, as desired.
\end{proof}

 Let $\sT$ be a triangulated category with infinite coproducts.
 We recall than an object $S\in\sT$ is called \emph{compact} if
the functor $\Hom_\sT(S,{-})\:\sT\rarrow\Ab$ preserves coproducts.
 It is well-known~\cite[Theorems~4.1 and~2.1(2)]{Neem} that a set of
compact objects generates $\sT$ as a triangulated category with
coproducts if and only if it weakly generates~$\sT$ (in the terminology
of Section~\ref{generation-theorem-subsecn}).
 A triangulated category $\sT$ is said to be \emph{compactly generated}
if it has a set of generators consisting of compact objects.

 The following lemma describes our technique for proving
full-and-faithfulness of the triangulated functor
$\sD^\abs(\bA_\bfp)\rarrow\sD^\bco(\bA)$ and compactness of
objects in its essential image.

\begin{lem} \label{approachability-and-compactness}
 Let\/ $\sT$ be a triangulated category with infinite coproducts,
let\/ $\sS\subset\sT$ be a full triangulated subcategory, and let\/
$\sX\subset\sT$ be a strictly full triangulated subcategory closed
under coproducts.
 Let\/ $\sY\subset\sS\cap\sX$ be a full triangulated subcategory
in the intersection.
 Assume that all the objects of\/ $\sS$ are compact in\/ $\sT$ and
all the objects of\/ $\sX$ are approachable from\/ $\sS$ via $\sY$
in\/~$\sT$.
 Then the induced triangulated functor between the Verdier quotient
categories
$$
 \sS/\sY\lrarrow\sT/\sX
$$
is fully faithful, and the objects in its essential image are
compact in\/~$\sT/\sX$.
\end{lem}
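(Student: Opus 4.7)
The plan is to handle full-and-faithfulness and compactness separately, in both cases using approachability as the central technique for converting roof representations of morphisms in $\sT/\sX$ into roofs living inside $\sS$ with first legs having cone in $\sY$.

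For \emph{fullness}, any $\phi\:S_1\to S_2$ in $\sT/\sX$ with $S_1,S_2\in\sS$ is representable by a roof $S_1\xleftarrow{s}T\xrightarrow{g}S_2$ with $X:=\cone(s)\in\sX$. The triangle $T\xrightarrow{s}S_1\xrightarrow{p}X\to T[1]$ exhibits $p$ as a morphism from $\sS$ to $\sX$, so approachability yields $\sigma\:S_1'\to S_1$ with $\cone(\sigma)\in\sY$ (hence $S_1'\in\sS$, as $\sS$ is triangulated and $\sY\subset\sS$) such that $p\sigma=0$. Then $\sigma$ lifts through $s$ as $\sigma=s\tau$ for some $\tau\:S_1'\to T$, and the refined roof $(\sigma,g\tau)$ lives entirely in $\sS$, has first leg with cone in $\sY$, and represents $\phi$. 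For \emph{faithfulness}, a morphism $f\:A\to B$ in $\sS$ vanishing in $\sT/\sX$ factorizes as $A\to X'\to B$ with $X'\in\sX$, and approachability gives a further factorization $A\to Y_0\to X'$ through some $Y_0\in\sY$; thus $f$ factors through $Y_0$ inside $\sS$, and is zero in $\sS/\sY$. The case of an arbitrary roof in $\sS/\sY$ reduces to this, since such a roof represents $g\sigma^{-1}$ in $\sT/\sX$, and if this vanishes we obtain $g=0$ in $\sT/\sX$ upon multiplying by $\sigma$.

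For \emph{compactness} of $F(S)\in\sT/\sX$ (where the quotient $F\:\sT\to\sT/\sX$ preserves coproducts because $\sX$ is closed under coproducts in $\sT$), I would verify that the canonical map $\bigoplus_i\Hom_{\sT/\sX}(F(S),F(A_i))\to\Hom_{\sT/\sX}(F(S),F(\coprod_i A_i))$ is an isomorphism for every family $(A_i)_{i\in I}$ in $\sT$. Surjectivity comes from the fullness argument, which applies verbatim when the target is a general object of $\sT$: any morphism is representable by a roof $S\xleftarrow{\sigma}S'\to\coprod_i A_i$ with $S'\in\sS$, after which the second leg factors through some finite subcoproduct $\bigoplus_{i\in J}A_i$ by compactness of $S'$ in $\sT$. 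For injectivity, represent the finitely many nonzero $\phi_i$'s by roofs $(\sigma_i,g_i)$ and form a common refinement $P\in\sS$ by iterated homotopy pullback, with $\sigma\:P\to S$ still having cone in $\sY$; if the sum $\sum_i F(\iota_i)\phi_i$ vanishes in $\sT/\sX$, a further refinement $u\:P'\to P$ with $\cone(\sigma u)\in\sX$ kills the total $\sT$-morphism $P'\to\coprod_i A_i$, and projection onto the finite direct summand $\bigoplus_{i\in J}A_i$ of $\coprod_i A_i$ (available because $J$ is finite) forces each $\phi_i$ to vanish in $\sT/\sX$.

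The main obstacle is verifying, in the common-refinement step, that the homotopy pullback $P=S_1'\times_S^h S_2'$ of two morphisms $\sigma_1,\sigma_2$ with cones in $\sY$ comes equipped with a morphism $\sigma=\sigma_1 p_1=\sigma_2 p_2\:P\to S$ whose cone is again in $\sY$. This requires an octahedron applied to the composition $\sigma=\sigma_1\circ p_1\:P\to S_1'\to S$, yielding a triangle $\cone(p_1)\to\cone(\sigma)\to\cone(\sigma_1)$. The term $\cone(\sigma_1)$ lies in $\sY$ by hypothesis, while $\cone(p_1)$ essentially agrees with $\cone(\sigma_2)$ up to shift because a homotopy pullback of a morphism retains its cone, and $\cone(\sigma_2)\in\sY$ by hypothesis. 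Since $\sY$ is triangulated, the middle term $\cone(\sigma)$ also lies in $\sY$. This makes the category of ``$\sY$-refinements'' $\sigma\:S'\to S$ (with $S'\in\sS$ and $\cone(\sigma)\in\sY$) cofiltered, and it is precisely this cofilteredness, combined with the finiteness-of-$J$ trick for splitting off direct summands, that drives the whole compactness argument.
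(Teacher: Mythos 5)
Your proof is correct, and it follows essentially the same Verdier-localization strategy as the paper's source for this lemma (the paper itself only cites \cite[Lemma~9.20]{Pedg}, and the argument there is of the same Krause-style type): use the ``vanishing'' reformulation of approachability to refine an arbitrary roof $S_1\leftarrow T\rightarrow S_2$ into one with apex in $\sS$ and denominator with cone in $\sY$, deduce faithfulness from the standard factorization criterion for vanishing in a Verdier quotient, and obtain surjectivity of the comparison map for coproducts by factoring the refined numerator through a finite subcoproduct, using compactness of the apex in~$\sT$. The individual steps all check out, including the homotopy-pullback step you flag: in a homotopy cartesian square the parallel arrows do have isomorphic cones, so the octahedron for $\sigma_1\circ p_1$ indeed places $\cone(\sigma)$ in~$\sY$.

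The one substantive comment is that the part you call ``the main obstacle'' is dispensable. Compactness of $F(S)$ only requires \emph{surjectivity} of the canonical map $\bigoplus_i\Hom_{\sT/\sX}(F(S),F(A_i))\rarrow\Hom_{\sT/\sX}(F(S),F(\coprod_i A_i))$, because injectivity is automatic in any additive category with coproducts: for each~$j$ the universal property of the coproduct provides a projection $\pi_j\:\coprod_i A_i\rarrow A_j$ with $\pi_j\iota_i=\delta_{ij}$, and applying $F(\pi_j)$ to a vanishing finite sum $\sum_i F(\iota_i)\phi_i=0$ gives $\phi_j=0$ directly. So the entire common-refinement/cofilteredness machinery via iterated homotopy pullbacks can simply be deleted; it is correct as written, but it buys nothing here. (Even on your route, injectivity would only need the denominators' cones to lie in $\sX$ rather than $\sY$, since all that is used is invertibility in $\sT/\sX$; the $\sY$-refinements are genuinely needed only for fullness and for surjectivity, where the apex must be a compact object of~$\sS$.)
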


\begin{proof}
 This is~\cite[Lemma~9.20]{Pedg}.
\end{proof}

 The following theorem is one of the main results of this paper.
 Its idea goes back to
Krause~\cite[Proposition~2.3]{Kra}, whose paper covers the case of 
the (DG\+category of) complexes in a locally Noetherian
abelian category.
 For complexes in a locally coherent abelian category, this result
was obtained by the second-named author of the present paper
in the preprint~\cite[Corollary~6.13]{Sto2}.

\begin{thm} \label{becker-coderived-compactly-generated}
 Let\/ $\bA$ be a locally coherent abelian DG\+category and\/
$\bA_\bfp\subset\bA$ be its (exactly embedded, full) abelian
DG\+subcategory of finitely presentable objects.
 Then the triangulated functor
$$
 \sD^\abs(\bA_\bfp)\lrarrow\sD^\bco(\bA)
$$
induced by the inclusion of abelian DG\+categories\/ $\bA_\bfp
\rightarrowtail\bA$ is fully faithful.
 The objects in its image are compact in\/ $\sD^\bco(\bA)$, and
form a set of compact generators for Becker's coderived category\/
$\sD^\bco(\bA)$.
\end{thm}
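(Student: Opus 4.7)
The plan is to apply Lemma~\ref{approachability-and-compactness} with the setup $\sT=\sH^0(\bA)$, \ $\sS=\sH^0(\bA_\bfp)$, \ $\sX=\sH^0(\bA)_\ac^\bco$, and $\sY=\sH^0(\bA_\bfp)_\ac^\abs\subset\sS\cap\sX$. Since, by Corollary~\ref{becker-coderived-corollary}, the Verdier quotient $\sT/\sX$ is Becker's coderived category $\sD^\bco(\bA)$, while $\sS/\sY=\sD^\abs(\bA_\bfp)$ by the very definition, the conclusion of the lemma produces precisely the fully faithful functor of the theorem with image in the compact objects of $\sD^\bco(\bA)$.

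To apply the lemma, two nontrivial hypotheses must be verified. First, one needs that $\sX=\sH^0(\bA)_\ac^\bco$ is a triangulated subcategory of $\sH^0(\bA)$ closed under coproducts, which is Lemma~\ref{becker-and-lp-coacyclic}(b). Second, and most importantly, every object of $\sX$ must be approachable from $\sS$ via $\sY$ in $\sT$, and this is exactly the content of Proposition~\ref{becker-coacyclics-approachable-prop}, built from the locally coherent description $\sZ^0(\bA)_\ac^\bco=\varinjlim\sZ^0(\bA_\bfp)_\ac^\abs$ of Proposition~\ref{coacyclics-as-varinjlim-of-abs-acyclics} together with Lemma~\ref{approachability-and-direct-limits}.

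The step I expect to require most care is the remaining hypothesis: the compactness of every $E\in\bA_\bfp$ in the homotopy category $\sH^0(\bA)$. Since coproducts in $\sH^0(\bA)$ are the coproducts in $\sZ^0(\bA)$, and since $\Hom_{\sH^0(\bA)}(E,X)=H^0\Hom_\bA^\bu(E,X)$ with $H^0$ and exact coproducts commuting, it suffices to show that for each $n\in\boZ$ the functor $\Hom_\bA^n(E,-)$ preserves coproducts. For this one uses the fully faithful extension $\widetilde\Phi_\bA\:\bA^0\rarrow\sZ^0(\bA^\bec)$ of Section~\ref{tilde-Phi-and-Psi-subsecn} to write
\[
\Hom_\bA^n(E,X)\;\simeq\;\Hom_{\sZ^0(\bA^\bec)}\bigl(\Phi(E),\Phi(X)[-n]\bigr),
\]
after which one combines that $\Phi$ preserves shifts (up to sign) and coproducts, that $\Phi(E)$ is a finitely presentable object of $\sZ^0(\bA^\bec)$ by Lemma~\ref{Phi-Psi-lambda-presentability}, and that a finitely presentable object in any locally finitely presentable category preserves coproducts (via writing an arbitrary coproduct as a directed colimit of finite subcoproducts in the locally finitely presentable, in particular Grothendieck, category $\sZ^0(\bA^\bec)$).

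Once Lemma~\ref{approachability-and-compactness} delivers the fully faithful triangulated functor $\sD^\abs(\bA_\bfp)\rarrow\sD^\bco(\bA)$ with compact image, it remains to check that the isomorphism classes of objects of $\bA_\bfp$ form a set of compact generators of $\sD^\bco(\bA)$. Since $\bA_\bfp$ is essentially small and the finitely presentable objects of $\sZ^0(\bA)$ do generate $\sD^\bco(\bA)$ as a triangulated category with coproducts by Corollary~\ref{generators-of-coderived-for-locally-presentable}, and since a set of compact objects that generates in this sense automatically weakly generates, this completes the proof.
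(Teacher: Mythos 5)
Your proposal is correct and takes essentially the same route as the paper's own proof: the identical application of Lemma~\ref{approachability-and-compactness} with $\sT=\sH^0(\bA)$, \ $\sS=\sH^0(\bA_\bfp)$, \ $\sX=\sH^0(\bA)_\ac^\bco$, \ $\sY=\sH^0(\bA_\bfp)_\ac^\abs$, with the hypotheses supplied by Lemma~\ref{becker-and-lp-coacyclic}(b), Proposition~\ref{becker-coacyclics-approachable-prop}, and Corollary~\ref{generators-of-coderived-for-locally-presentable}. Your verification that every $E\in\bA_\bfp$ is compact in $\sH^0(\bA)$, via $\widetilde\Phi$ and finite presentability of $\Phi(E)$ in $\sZ^0(\bA^\bec)$, is a correct spelled-out version of the observation the paper handles by noting that the DG\+functor $\Hom_\bA^\bu(E,{-})$ preserves coproducts.
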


\begin{proof}
 It was mentioned already before
Corollary~\ref{coderived-well-generated} that infinite coproducts
exist in Becker's coderived category $\sD^\bco(\bA)$ and
the Verdier quotient functor $\sH^0(\bA)\rarrow\sD^\bco(\bA)$
preserves coproducts.
 Concerning the more elementary question of existence of coproducts
in the homotopy category $\sH^0(\bA)$, see
Section~\ref{co-products-in-DG-categories}.

 To prove the theorem, we apply
Lemma~\ref{approachability-and-compactness} to the triangulated
category $\sT=\sH^0(\bA)$ with its full triangulated subcategories
$\sX=\sH^0(\bA)_\ac^\bco\supset\sY=\sH^0(\bA_\bfp)_\ac^\abs\subset
\sS=\sH^0(\bA_\bfp)$.
 The full subcategory of Becker-coacyclic objects $\sH^0(\bA)_\ac^\bco$
is closed under coproducts in $\sH^0(\bA)$ by
Lemma~\ref{becker-and-lp-coacyclic}(b).
 All objects of $\sH^0(\bA)_\ac^\bco$ are approachable from
$\sH^0(\bA_\bfp)$ via $\sH^0(\bA_\bfp)_\ac^\abs$ by
Proposition~\ref{becker-coacyclics-approachable-prop}.

 In order to finish verifying the assumptions of
Lemma~\ref{approachability-and-compactness}, it remains to mention
the essentially trivial observation that all finitely presentable
objects are compact in the homotopy category~$\sH^0(\bA)$.
 This can be explained in many ways; e.~g., one can observe that,
for any $E\in\bA_\bfp$, the DG\+functor $\Hom_\bA^\bu(E,{-})\:
\bA\rarrow\bC(\Ab)$ preserves coproducts (cf.~\cite[Lemma~9.18]{Pedg}),
and the passage to the cohomology of a complex of abelian groups
preserves coproducts.
 Alternatively, the fact that the functor $\Hom_{\sH^0(\bA)}(E,{-})\:
\sH^0(\bA)\rarrow\Ab$ preserves coproducts is easily deduced
directly from the facts that the functor $\Hom_{\sZ^0(\bA)}(E,{-})\:
\sZ^0(\bA)\rarrow\Ab$ does and the natural functor $\sZ^0(\bA)
\rarrow\sH^0(\bA)$ is surjective on objects and morphisms.

 Now Lemma~\ref{approachability-and-compactness} tells that
the triangulated functor $\sD^\abs(\bA_\bfp)\rarrow\sD^\bco(\bA)$
is fully faithful and all the objects in its image are compact.
 The assertion that the image of this functor generates
the target category is provided by
Corollary~\ref{generators-of-coderived-for-locally-presentable}.
\end{proof}

 In the case of CDG\+modules over a graded Noetherian CDG\+ring,
the following result was worked out in the first-named author's
memoir~\cite[Section~3.11]{Pkoszul} (the main part of the argument
is due to Arinkin).
 For a CDG\+ring $\biR^\cu$ whose underlying graded ring $R^*$ is
left coherent and all homogeneous left ideals in $R^*$ have at most
$\aleph_n$ generators (where $n$~is a fixed integer), a proof
can be found in~\cite[Theorem~9.39 and Corollary~9.42]{Pedg}.
 One reason why the additional assumption was needed is that
the coderived categories in the sense of Positselski, rather than in
the sense of Becker, are considered in~\cite{Pedg};
see~\cite[Theorem~9.38]{Pedg} for a comparison theorem.

\begin{cor} \label{CDG-modules-becker-coderived-comp-gen}
 Let\/ $\biR^\cu=(R^*,d,h)$ be a curved DG\+ring such that the graded
ring $R^*$ is graded left coherent.
 Let $\biR^\cu\bModl$ be the locally coherent abelian DG\+category of
left CDG\+modules over $\biR^\cu$, and let $\biR^\cu\bModl_\bfp\subset
\biR^\cu\bModl$ be the (exactly embedded full abelian) DG\+subcategory
of all CDG\+modules with finitely presentable underlying graded
left $R^*$\+modules.
 Then the induced triangulated functor from the absolute derived
category to Becker's coderived category
$$
 \sD^\abs(\biR^\cu\bModl_\bfp)\lrarrow\sD^\bco(\biR^\cu\bModl)
$$
is fully faithful.
 The objects in its image are compact in\/ $\sD^\bco(\biR^\cu\bModl)$,
and form a set of compact generators for\/ $\sD^\bco(\biR^\cu\bModl)$.
\end{cor}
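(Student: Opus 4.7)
The plan is to deduce this corollary as a direct specialization of Theorem~\ref{becker-coderived-compactly-generated}. There is essentially nothing new to prove beyond identifying the objects in the corollary with those in the theorem. Accordingly, I would organize the argument in two short verifications followed by an invocation of the main theorem.

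First, I would check that $\biR^\cu\bModl$ fits the setup of Theorem~\ref{becker-coderived-compactly-generated}, i.e., that it is a locally coherent abelian DG\+category. By Example~\ref{abelian-DG-category-of-CDG-modules-example}, $\biR^\cu\bModl$ is an abelian DG\+category, and by Example~\ref{locally-presentable-abelian-DG-of-CDG-modules} it is locally finitely presentable. Then Corollary~\ref{finitely-presentable-CDG-modules-coherent-CDG-rings}(b) tells us that local coherence of the DG\+category $\biR^\cu\bModl$ is equivalent to graded left coherence of $R^*$, which holds by assumption.

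Second, I would identify $\biR^\cu\bModl_\bfp$ as defined in the statement with the DG\+subcategory of finitely presentable objects in the sense of the main theorem. The latter means, by the definition in Section~\ref{locally-coherent-abelian-subsecn} (the paragraph after Proposition~\ref{locally-coherent-DG-category}), the full DG\+subcategory of $\biR^\cu\bModl$ consisting of CDG\+modules that are finitely presentable as objects of $\sZ^0(\biR^\cu\bModl)\simeq R^*[\delta]\sModl$. Corollary~\ref{finitely-presentable-CDG-modules-coherent-CDG-rings}(a) says this property is equivalent to finite presentability of the underlying graded $R^*$\+module, which is precisely the condition used in the corollary's statement. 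So the two notions agree.

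Once these identifications are in place, the conclusion is immediate from Theorem~\ref{becker-coderived-compactly-generated}: the induced functor $\sD^\abs(\biR^\cu\bModl_\bfp)\rarrow\sD^\bco(\biR^\cu\bModl)$ is fully faithful, its image consists of compact objects, and this image generates $\sD^\bco(\biR^\cu\bModl)$ as a triangulated category with coproducts. There is no genuine obstacle here beyond book\+keeping of the dictionary between graded modules over $R^*$ and CDG\+modules over $\biR^\cu$; the real technical content of this result is concentrated in the main theorem and in Corollary~\ref{finitely-presentable-CDG-modules-coherent-CDG-rings}, both already established.
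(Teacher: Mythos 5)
Your proof is correct and follows essentially the same route as the paper, which also deduces the corollary directly from Theorem~\ref{becker-coderived-compactly-generated} with $\bA=\biR^\cu\bModl$, citing Corollary~\ref{finitely-presentable-CDG-modules-coherent-CDG-rings} for applicability. Your write-up merely spells out in more detail the two identifications (local coherence via part~(b), and the match between $\biR^\cu\bModl_\bfp$ and the finitely presentable objects via part~(a)) that the paper's one-line proof leaves implicit.
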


\begin{proof}
 This is the particular case of
Theorem~\ref{becker-coderived-compactly-generated} for
$\bA=\biR^\cu\bModl$.
 It is clear from
Corollary~\ref{finitely-presentable-CDG-modules-coherent-CDG-rings}
that Theorem~\ref{becker-coderived-compactly-generated} is applicable.
\end{proof}

\Section{Coherent Schemes and Matrix Factorizations}
\label{coherent-schemes-matrix-factorizations-secn}

 In this section we apply the results of
Section~\ref{locally-coherent-secn} to the locally coherent
abelian DG\+categories of complexes of quasi-coherent sheaves,
or more generally, quasi-coherent matrix factorizations on
coherent schemes.
 The main result is a generalization of the existence and description
of compact generators in the coderived category from the case of
a Noetherian scheme treated in~\cite[Section~3.11]{Pkoszul},
\cite[Proposition~1.5(d) and Corollary~2.3(l)]{EP} to that of
a coherent scheme.

\subsection{Coherent schemes and coherent sheaves}
\label{coherent-schemes-subsecn}
 The following lemma establishes the key fact of Zariski locality of
the coherence property for commutative rings.

\begin{lem} \label{coherence-locality}
\textup{(a)} Let $U$ be an an affine scheme and $V\subset U$ be
an affine open subscheme in~$U$.
 Assume that the ring $O(U)$ is coherent.
 Then the ring $O(V)$ is coherent, too. \par
\textup{(b)} Let $U$ be an affine scheme and $U=\bigcup_{\alpha=1}^d
V_\alpha$ be a finite affine open covering of~$U$.
 Assume that all the rings $O(V_\alpha)$ are coherent.
 Then the ring $O(U)$ is coherent as well.
\end{lem}

\begin{proof}
 Part~(a): notice that the restriction map $O(U)\rarrow O(V)$ is
a flat epimorphism of commutative rings (in the sense
of~\cite[Sections~XI.1\<2]{Sten}).
 For any flat epimorphism of commutative rings $R\rarrow S$,
coherence of the ring $R$ implies coherence of the ring~$S$
\,\cite[Proposition~3.7]{CEI}.

 Part~(b): notice that the restriction map $O(U)\rarrow
\bigoplus_{\alpha=1}^d O(V_\alpha)$ is a faithfully flat
homomorphism of commutative rings.
 For any faithfully flat homomorphism of commutative rings
$R\rarrow S$, coherence of the ring $S$ implies coherence of
the ring~$R$ \,\cite[Corollary~2.1]{Harr}, \cite[Propositions~I.5.9
and~I.6.11]{Bour}.
\end{proof}

 Given a scheme $X$, we denote by $X\rQcoh$ the abelian category of
quasi-coherent sheaves on~$X$.
 A scheme $X$ is said to be \emph{locally coherent}~\cite{CEI,EG} if,
for every affine open subscheme $U\subset X$, the commutative ring
$O(U)$ is coherent.
 It follows from Lemma~\ref{coherence-locality} that it suffices to
check this property for affine open subschemes $U_\alpha\subset X$
ranging over any given affine open covering of a scheme~$X$.

 We will say that a scheme $X$ is \emph{coherent} if $X$ is locally
coherent, quasi-compact, and quasi-separated.

\begin{prop} \label{quasi-coherent-locally-coherent}
 Let $X=\bigcup_\alpha U_\alpha$ be an affine open covering of
a coherent scheme~$X$.
 Then the abelian category $X\rQcoh$ is locally coherent.
 An object $M\in X\rQcoh$ is coherent (equivalently, finitely
presentable) if and only if, for every index~$\alpha$,
the $O(U_\alpha)$\+module $M(U_\alpha)$ is coherent (equivalently,
finitely presentable).
\end{prop}

\begin{proof}
 We refer to~\cite[Proposition~I.6.11]{Bour}
or~\cite[Lemma~2.1]{PS6} for a discussion of Zariski locality of
finite presentability of modules over commutative rings.
 For any quasi-compact quasi-separated scheme $X$, the category
of quasi-coherent sheaves $X\rQcoh$ is locally finitely presentable,
and an object $M\in X\rQcoh$ is finitely presentable if and only if
the $O(U_\alpha)$\+module $M(U_\alpha)$ is finitely
presentable for every index~$\alpha$ of an affine open covering
$X=\bigcup_\alpha U_\alpha$ \,\cite[0.5.2.5 and
Corollaire~I.6.9.12]{EGA1}.
 Now the condition that the kernel of any epimorphism of
finitely presentable quasi-coherent sheaves on $X$ is finitely
presentable can be checked locally, and it holds for a coherent
scheme $X$ because it holds for a coherent affine scheme.
 Finally, in a locally coherent abelian category an object is coherent
if and only if it is finitely presentable.
\end{proof}

 A quasi-coherent sheaf $M$ on a coherent scheme $X$ is said to be
\emph{coherent} if it satisfies the equivalent conditions of
the second assertion of
Proposition~\ref{quasi-coherent-locally-coherent}.
 We denote the full subcategory of coherent sheaves by
$X\rcoh=(X\rQcoh)_\fp\subset X\rQcoh$.
 According to Proposition~\ref{quasi-coherent-locally-coherent},
$X\rcoh$ is an abelian category for any coherent scheme~$X$.

%

\begin{cor}
 Let $X$ be a coherent scheme and\/ $\sH^0(\bC(X\rQcoh_\inj))$ be
the homotopy category of unbounded complexes of injective
quasi-coherent sheaves on~$X$.
 Then\/ $\sH^0(\bC(X\rQcoh_\inj))$ is a compactly generated
triangulated category.
 The full subcategory of compact objects in\/
$\sH^0(\bC(X\rQcoh_\inj))$ is equivalent to the bounded derived
category\/ $\sD^\bb(X\rcoh)$ of the abelian category of coherent
sheaves on~$X$.
\end{cor}

\begin{proof}
 This is a particular case of~\cite[Corollary~6.13]{Sto2}, which is
applicable in view of Proposition~\ref{quasi-coherent-locally-coherent};
cf.\ Theorem~\ref{locally-coherent-abelian-theorem} from
the introduction.
 Disregarding the direct summand closure issue, this is also
a particular case of Theorem~\ref{becker-coderived-compactly-generated}
applied to the locally coherent abelian DG\+category $\bA=\bC(X\rQcoh)$
of complexes in $X\rQcoh$ (as per
Examples~\ref{locally-coherent-DG-category-of-complexes});
cf.\ Theorem~\ref{locally-coherent-abelian-DG-category-theorem}.
\end{proof}

\subsection{Quasi-coherent and coherent factorizations}
 The setting in this section is a common particular case
of~\cite[Example~4.41]{Pedg} and~\cite[Example~4.42 with
Remark~2.7]{Pedg}.

 Let $X$ be a scheme, $L$ be a line bundle (invertible quasi-coherent
sheaf) on~$X$, and $w\in L(X)$ be a global section.
 In the context of
Example~\ref{Grothendieck-abelian-DG-of-factorizations}, put
$\sA=X\rQcoh$, and let $\Delta\:\sA\rarrow\sA$ be the functor of
twisting with $L$, that is $\Delta(M)=L\ot_{O_X}M$ for all
$M\in X\rQcoh$.
 Let $v\:\Id_\sA\rarrow\Delta$ be the natural transformation of
multiplication with~$w$, i.~e., the map $v_M(U)\:M(U)\rarrow
(L\ot_{O_X}M)(U)$ takes a local section $s\in M(U)$ to the local
section $w|_U\ot s\in(L\ot_{O_X}M)(U)$ for all $M\in X\rQcoh$ and
all open subschemes $U\subset X$.

 Then the objects of the DG\+category $\bF_\qc(X,L,w)=\bF(\sA,\Delta,v)$
are called \emph{quasi-coherent} (\emph{matrix}) \emph{factorizations}
of the potential $w\in L(X)$ on the scheme~$X$.
 Explicitly, a quasi-coherent factorization $N^\cu$ on $X$ is
a sequence of quasi-coherent sheaves $N^n\in X\rQcoh$, \,$n\in\boZ$,
together with a sequence of periodicity isomorphisms $\delta_N^{n+2,n}\:
L\ot_{O_X}N^n\simeq N^{n+2}$ and a sequence of differentials
$d_{N,n}\:N^n\rarrow N^{n+1}$ such that the composition
$d_{N,n+1}\circ d_{N,n}\:N^n\rarrow N^{n+2}$ is equal to the composition
of the multiplication map $w\:N^n\rarrow L\ot_{O_X}N^n$ with
the isomorphism~$\delta_N^{n+2,n}$ for every $n\in\boZ$.
 Example~\ref{Grothendieck-abelian-DG-of-factorizations} tells that
the DG\+category of quasi-coherent factorizations $\bF_\qc(X,L,w)$ is
a Grothendieck abelian DG\+category.

 Let us mention that, following the discussion in
Section~\ref{almost-involution-factorizations-subsecn} (commutative
diagram~\eqref{factorizations-Phi-diagram}), the full DG\+subcategory
of graded-injective objects $\bF_\qc(X,L,w)_\binj\subset
\bF_\qc(X,L,w)$ consists of all the \emph{injective quasi-coherent
factorizations}.
 The latter term means quasi-coherent factorizations $N^\cu$ of
the potential~$w$ on $X$ such that the quasi-coherent sheaf
$N^n$ is injective in $X\rQcoh$ for every $n\in\boZ$.
 So Becker's coderived category $\sD^\bco(\bF_\qc(X,L,w))\simeq
\sH^0(\bF_\qc(X,L,w)_\binj)$ is the homotopy category of injective
quasi-coherent factorizations.

 Now let us assume that $X$ is a coherent scheme (as defined in
Section~\ref{coherent-schemes-subsecn}).
 Then the full DG\+subcategory of \emph{coherent factorizations}
$\bF_\coh(X,L,w)\subset\bF_\qc(X,L,w)$ consists of all the factorizations
$N^\cu$ such that $N^n$ is a coherent sheaf on $X$ for every $n\in\boZ$.
 Following Example~\ref{abelian-DG-category-of-factorizations-example}
applied to the abelian category $\sE=X\rcoh$, the DG\+category
$\bF_\coh(X,L,w)$ is abelian; in fact, it is an exactly embedded full
abelian DG\+subcategory in $\bF_\qc(X,L,w)$ in the sense of
Section~\ref{exactly-embedded-subsecn}.

 Moreover,
Example~\ref{locally-coherent-DG-category-of-factorizations} together
with Proposition~\ref{quasi-coherent-locally-coherent} tell that
the DG\+category of quasi-coherent factorizations $\bF_\qc(X,L,w)$ on
a coherent scheme $X$ is a locally coherent abelian DG\+category, and
the full DG\+subcategory of coherent factorizations it its full
DG\+subcategory of finitely presentable/coherent objects, 
$\bF_\coh(X,L,w)=\bF_\qc(X,L,w)_\bfp$.

 The following corollary generalizes~\cite[Corollary~2.3(l)]{EP}
from the case of a Noetherian scheme to that of a coherent one.

\begin{cor} \label{qcoh-factorizations-coderived-comp-gen}
 Let $X$ be a coherent scheme, $L$ be a line bundle on $X$, and
$w\in L(X)$ be a global section.
 Let\/ $\bF_\qc(X,L,w)$ be the locally coherent DG\+category of
quasi-coherent factorizations of the potential $w\in L(X)$ on $X$,
and let $\bF_\coh(X,L,w)\subset\bF_\qc(X,L,w)$ be the (exactly
embedded full abelian) DG\+subcategory of coherent factorizations.
 Then the induced triangulated functor from the absolute derived
category to Becker's coderived category
$$
 \sD^\abs(\bF_\coh(X,L,w))\lrarrow\sD^\bco(\bF_\qc(X,L,w))
$$
is fully faithful.
 The objects in its image are compact in\/
$\sD^\bco(\bF_\qc(X,L,w))$, and form a set of compact generators for\/
$\sD^\bco(\bF_\qc(X,L,w))$.
\end{cor}

\begin{proof}
 This is the particular case of
Theorem~\ref{becker-coderived-compactly-generated} for
$\bA=\bF_\qc(X,L,w)$.
 Example~\ref{locally-coherent-DG-category-of-factorizations} with
Proposition~\ref{quasi-coherent-locally-coherent} imply that
Theorem~\ref{becker-coderived-compactly-generated} is applicable.
\end{proof}

\bigskip

\end{document}